\documentclass{article}

\PassOptionsToPackage{numbers, compress}{natbib}
\usepackage[final]{neurips_2021}

\usepackage{graphicx}
\usepackage[export]{adjustbox}
\usepackage{amsmath,amssymb,amsthm,enumerate,color,ifthen,graphicx,overpic,bm}		
\usepackage{xargs}
\usepackage{subfigure}
\usepackage{comment}
\usepackage{algorithmic}

\usepackage{booktabs,colortbl}

\newcolumntype{a}{>{\columncolor{green!30}}c}
\newcolumntype{R}{>{\columncolor{red!30}}c}

\usepackage{wrapfig}

\usepackage[colorlinks=true,
linkcolor=blue,
urlcolor=blue,
citecolor=blue]{hyperref}
 \usepackage[textwidth=4cm, textsize=footnotesize]{todonotes}
 \setlength{\marginparwidth}{4cm}    

\usepackage[utf8]{inputenc}

\usepackage{cleveref}

\usepackage[linesnumbered,vlined,ruled,commentsnumbered]{algorithm2e}
\usepackage[shortlabels]{enumitem}
\usepackage{dashundergaps}
\usepackage{aliascnt,bbm}

\usepackage{tabularx}
\usepackage{wrapfig}
\usepackage{enumerate}

\makeatletter
\newcommand{\settitle}{\@maketitle}
\makeatother


\def\nset{{\mathbb{N}}}
\def\rset{\mathbb R}

\def\Zset{\mathsf{Z}}

\def\rmd{\mathrm{d}}
\def\argmin{\operatorname{Argmin}}

\def\max{\mathrm{max}}

\def\1{\mathsf{1}} 

\def\PP{\mathbb{P}} 
\def\PE{\mathbb{E}} 

\newcommand{\F}{\mathcal{F}} 
\def\lawPP{\mathbb P_{\mathrm{PP}}}

\def\VRFEDEM{\texttt{VR-FedEM}}
\def\FEDEM{\texttt{FedEM}}
\def\Q{\mathrm{Quant}}
\newcommand{\QEM}[1]{\ensuremath{\mathsf{Q}_{#1}}}
\newcommand{\pscal}[2]{\left\langle#1,#2\right\rangle}

\newcommand{\eqdef}{\ensuremath{:=}}
\newcommand{\kmax}{k_\mathrm{max}}
\newcommand{\kouter}{k_\mathrm{out}}
\newcommand{\kin}{k_\mathrm{in}}

\newcommand\curr{\mathrm{curr}}
\newcommand\init{\mathrm{init}}
\newcommand{\R}{\mathsf{R}}
\newcommand{\loss}[1]{\ensuremath{\mathcal{L}_{#1}}}
\newcommand{\bars}{\bar{\mathsf{s}}}

\newcommand{\mf}{\mathsf{h}}

\newcommand{\hatS}{\widehat{S}}
\newcommand{\Smem}{\mathsf{S}}

\newcommand{\CE}{\mathrm{CE}}
\newcommand{\opt}{\mathrm{opt}}

\newcommand{\param}{\theta}
\newcommand{\Param}{\Theta}
\newcommand{\map}{\mathsf{T}}
\newcommand{\lyap}{\operatorname{W}}
\newcommand{\batch}{\mathcal{B}}
\newcommand{\lbatch}{\mathsf{b}}
\newcommand{\pas}{\gamma}

\def\pa{\mathsf{a}}

\def\set{\mathcal A}

\newcommand\sequence[3] {\ifthenelse{\equal{#3}{}}{\ensuremath{\{
#1_{#2}\}}}{\ensuremath{\{ #1^{#2}, \eqsp #2 \in #3 \}}}}
\newcommand\sequencedown[3] {\ifthenelse{\equal{#3}{}}{\ensuremath{\{
#1_{#2}\}}}{\ensuremath{\{ #1_{#2}, \eqsp #2 \in #3 \}}}}

\def\eqsp{\;}
\newcommand{\ie}{i.e.}

\newcommand{\coint}[1]{\left[#1\right)}
\newcommand{\ocint}[1]{\left(#1\right]}
\newcommand{\ooint}[1]{\left(#1\right)}
\newcommand{\ccint}[1]{\left[#1\right]}

\newcommandx{\CPE}[3][1=]{{\mathbb E}_{#1}\left[\left. #2 \middle \vert #3 \right. \right]} 
\newcommandx{\CPVar}[3][1=]{\mathrm{Var}^{#3}_{#1}\left\{ #2 \right\}}
\newcommand{\CPP}[3][]
{\ifthenelse{\equal{#1}{}}{{\mathbb P}\left(\left. #2 \, \right| #3 \right)}{{\mathbb P}_{#1}\left(\left. #2 \, \right | #3 \right)}}

\newtheorem{assumption}{A\hspace{-3.1pt}}
\Crefname{assumption}{A\hspace{-3pt}}{A\hspace{-3pt}}
\crefname{assumption}{A}{A}

\newtheorem{theorem}{Theorem}
\newaliascnt{proposition}{theorem}
\newtheorem{proposition}[proposition]{Proposition}
\aliascntresetthe{proposition}
\newaliascnt{lemma}{theorem}
\newtheorem{lemma}[lemma]{Lemma}
\aliascntresetthe{lemma}
\newaliascnt{corollary}{theorem}
\newtheorem{corollary}[corollary]{Corollary}
\aliascntresetthe{corollary}

\newaliascnt{definition}{theorem}

\aliascntresetthe{definition}
\newaliascnt{remark}{theorem}

\aliascntresetthe{remark}

\title{Federated Expectation Maximization with heterogeneity
  mitigation and variance reduction}
 \author{ Aymeric Dieuleveut
  \\ Centre de Math\'{e}matiques Appliqu\'{e}es \\ Ecole
  Polytechnique, France\\ Institut Polytechnique de Paris
  \\ \texttt{aymeric.dieuleveut@polytechnique.edu} \And Gersende Fort
  \\ Institut de Mathématiques de Toulouse \\ Université de Toulouse;
  CNRS \\ UPS, Toulouse, France
  \\ \texttt{gersende.fort@math.univ-toulouse.fr} \\ \And Eric
  Moulines \\ Centre de Math\'{e}matiques Appliqu\'{e}es \\ Ecole
  Polytechnique, France\\ CS Dpt, HSE University, Russian Federation
  \\ \texttt{eric.moulines@polytechnique.edu} \\ \And Genevi\`eve
  Robin \\  Laboratoire de Mathématiques \\ et Modélisation d'Évry  \\ Université d'Évry Val d'Essonne; CNRS\\ Évry-Courcouronnes,
  France\\ \texttt{genevieve.robin@cnrs.fr} }

\begin{document}
\maketitle

\begin{abstract}
  The Expectation Maximization (EM) algorithm is the default algorithm
  for inference in latent variable models. As in any other field of
  machine learning, applications of latent variable models to very
  large datasets makes the use of advanced parallel and distributed
  architectures mandatory. This paper introduces \FEDEM, which is the
  first extension of the EM algorithm to the federated learning
  context. \FEDEM~is a new communication efficient method, which
  handles partial participation of local devices, and is robust to
  heterogeneous distributions of the datasets. To alleviate the
  communication bottleneck, \FEDEM~compresses appropriately defined
  complete data sufficient statistics. We also develop and analyze an
  extension of \FEDEM~to further incorporate a variance reduction
  scheme. In all cases, we derive finite-time complexity bounds for
  smooth non-convex problems.  Numerical results are presented to
  support our theoretical findings, as well as an application to
  federated missing values imputation for biodiversity monitoring.
\end{abstract}

\section{Introduction}
 The Expectation Maximization (EM) algorithm is the most popular
 approach for inference in latent variable models.  The EM algorithm,
 a special instance of the Majorize/Minimize algorithm
 \cite{lange2016mm}, was formalized by \cite{Dempster:em:1977} and is
 without doubt one of the fundamental algorithms in machine
 learning. Applications include among many others finite mixture
 analysis, latent factor models inference, and missing data
 imputation; see
 \cite{xu1996convergence,murphy2002dynamic,maclachlan:2008,book:EM:mixture:2019}
 and the references therein. As in any other field of machine
 learning, training latent variable models on very large datasets make
 the use of advanced parallel and distributed architectures
 mandatory. Federated Learning (FL)
 \cite{konevcny2016federated,yang2019federated}, which exploits the
 computation power of a large number of edge devices to perform
 distributed machine learning, is a powerful framework to achieve this
 goal.

The conventional EM algorithm is not suitable for FL settings. We
propose several new distributed versions of the EM algorithm
supporting compressed communication.  More precisely, our objective is
to minimize a non-convex finite-sum smooth objective function
\begin{equation}\label{eq:problem}
\argmin_{\param\in \Param} F(\param), \qquad F(\param) \eqdef
\frac{1}{n}  \sum_{i=1}^n \loss{i}(\param) + \R(\param) \eqsp, \qquad \quad \Theta
\subseteq \rset^d \eqsp,\vspace{-0.5em}
\end{equation}
where $n$ is the number of workers/devices which are connected to a
central server, and the worker $\#i$ only has access to its local data; finally $\R$ is a penalty term which may be
introduced to promote sparsity, regularity, etc. In latent variable
models, $\loss{i}(\param) = - m^{-1} \sum_{j=1}^{m}\log p
(y_{ij};\param)$, where $\lbrace y_{ij}\rbrace_{j=1}^{m}$ are the $m$
observations available for worker $\# i$, and $ p (y;\param)$ is the
\emph{incomplete} likelihood.  $p(y;\param)$ is defined by
marginalizing the \emph{complete-data} likelihood $p(y,z;\param)$
defined as the joint probability density function of the observation
$y$ and a non-observed latent variable $z \in \Zset$,
i.e. $p(y;\param) = \int_\Zset p(y,z;\param) \mu(\rmd z)$ where
$\Zset$ is the \emph{latent space} and $\mu$ is a measure on $\Zset$.
We focus in this paper on the case where $p(y,z;\theta)$ belongs to a
curved exponential family, given by
\begin{equation}
\label{eq:complete-likelihood}
p (y, z;\param) \eqdef \rho( y,z) \exp \big\{ \pscal{ s(y,z) }{ \phi( \param) } - \psi(\param) \big\} \eqsp;
\end{equation}
where $s(y,z) \in \rset^q$ is the \emph{complete-data sufficient
  statistics}, $\phi: \Param \to \rset^q$ and $\psi : \Param
\rightarrow \rset$, $\rho: {\sf Y} \times \Zset \rightarrow \rset^+$
are vector/scalar functions.

In absence of communication constraints, the EM algorithm is a popular
method to solve \eqref{eq:problem}. It alternates between two steps: in the Expectation ({\sf E})
step, using the current value of the iterate $\param_\curr$, it
computes a majorizing function $\param \mapsto
\QEM{}(\param,\param_\curr)$ given up to an additive constant by
\vspace{-0.5em}
\begin{equation} \label{eq:majorize}
\QEM{}(\param, \param_\curr) \eqdef - \pscal{ \bars( \param_\curr) }{
  \phi( \param ) } + \psi(\param) + \R( \param ) \quad \text{where} \quad \bars(
\param ) \eqdef \frac{1}{n} \sum_{i=1}^n \bars_i( \param ) \eqsp;\vspace{-0.5em}
\end{equation}
 and $\bars_i( \param )$ is the $i$th device conditional expectation
 of the complete-data sufficient statistics:
\begin{equation}
\label{eq:definition-bar-s}
\bars_i( \param ) \eqdef \frac{1}{m}\sum_{j=1}^{m} \bars_{ij}(\param)
\eqsp, \quad \bars_{ij}(\param) \eqdef \int_{\Zset} s(y_{ij},z) p( z
\vert y_{ij}; \param ) \mu( \rmd z ) \eqsp, \quad \eqsp
\vspace{-0.5em}
\end{equation}
where $p(z|y_{ij}; \param) \eqdef p(y_{ij},z;\param)/ p(y_{ij};
\param)$.  As for the {\sf M} step, an updated value of $\param_\curr$
is computed as a minimizer of $\param \mapsto \QEM{}(\param,
\param_\curr)$. The majorizing function is then updated with the new
$\param_\curr$; this process is iterated until convergence.  The EM
algorithm is most useful when for any $\param_\curr \in \Param$, the
function $\param \mapsto \QEM{}(\param, \param_\curr)$ is a convex
function of the parameter $\theta$ which is solvable in $\theta$
either explicitly or with little computational effort.  A major
advantage of the EM algorithm stems from its invariance under
homeomorphisms, contrary to classical first-order methods: the EM
updates are the same for any continuous invertible re-parametrization
\cite{kunstner2021homeomorphic}.

In the FL context, the vanilla EM algorithm is affected by three major
problems: (1) the communication bottleneck, (2) data heterogeneity,
and (3) partial participation (PP) of the workers.

When the number of workers is large, the cost of communication becomes
overwhelming. A classical technique to alleviate this problem is to
use \emph{communication compression}. Most FL algorithms are first
order methods and compression is typically applied to stochastic
gradients.  Yet, these methods are not appropriate to
solve~\eqref{eq:problem} since \textit{(i)} they do not preserve the
desirable homeomorphic invariance property, and \textit{(ii)} the full
EM iteration is not distributed since the M step is performed by the
central server only. This calls for an extension of the EM algorithm
to the FL setting.

Since workers are often user personal devices, the issue of data
heterogeneity naturally arises.
Our model in \Cref{eq:problem,eq:majorize,eq:definition-bar-s} allows the local loss functions to depend on the worker $ i \in \{1, \ldots, n\}$ and the observations $ y_{ij} $ to be independent but not necessarily identically distributed. In addition, our theoretical results deal with specific behaviors for each worker $ i \in \{1, \ldots, n\} $, see e.g., \Cref{hyp:lipschitz,hyp:variance:oracle,hyp:DS:lipschitz}.
In the FL-EM setting, heterogeneity
manifests itself by the non-equality of the \emph{local} conditional
expectations of the complete-data sufficient statistics $\bars_i$'s;
modifications to the algorithms must be performed to ensure convergence at the central server. 

Finally, a subset of users are potentially inactive in
each learning round, being unavailable or unwilling
to participate. Thus, taking into account PP of the
workers and its impact on the convergence of algorithms, is a
major issue.

\begin{itemize}[topsep=0pt,itemsep=1pt,leftmargin=*,noitemsep,wide]
\item \textbf{\FEDEM. }The main contribution of our paper is a new
  method called \FEDEM, supporting communication compression, partial
  participation and data heterogeneity. In this algorithm, the workers
  compute an estimate of the \emph{local complete-data sufficient
    statistics} $\bars_i$ using a minibatch of data, apply an unbiased
  compression operator to a noise compensated version (using a
  technique inspired by
  \cite{horvath2019stochastic,gorbunov2020unified}) and send the
  result to the central server, which performs aggregation and the
  M-step (i.e. the parameter update).
\item \textbf{\VRFEDEM. } We improve \FEDEM~by adding a variance
  reduction method inspired by the \texttt{SPIDER} framework
  \cite{fang:etal:2018} which has recently been extended to the EM
  framework \cite{SPIDER-EM}.  For both \FEDEM\ and \VRFEDEM, the
  central server updates the expectations of the global complete-data
  sufficient statistics through a Stochastic Approximation procedure
  \cite{benveniste:etal:1990,borkar:2008}.  When compared to \FEDEM,
  \VRFEDEM~additionally performs variance reduction for each worker,
  progressively alleviating the variance brought by the random oracles
  which provide approximations of the local complete-data sufficient
  statistics.
\item \textbf{Theoretical analysis.} EM in the curved exponential
  family setting converges to the roots of a function $\mf$ (see
  e.g. \Cref{sec:FEDEM}).  We introduce a unified theoretical
  framework which covers the convergence of \FEDEM\, and
  \VRFEDEM\ algorithms in the non-convex case and establish
  convergence guarantees for finding an $\epsilon$-stationary point
  (see \Cref{theo:dianaem} and~\Cref{theo:DS}). In both cases, we
  provide the number $K_\opt(\epsilon)$ of optimization steps and the
  number $K_\CE(\epsilon)$ of computed conditional expectations
  $\bars_{ij}$'s required to reach $\epsilon$-stationarity. These
  results show that in the Stochastic Approximation steps of
  \VRFEDEM\ , the step sizes are independent of $m$, the number of
  observations per server. Furthermore, the computational cost in
  terms of $\mathcal{K}_{\CE}(\epsilon)$ improves on earlier
  results. In this respect, \VRFEDEM~has the same advantages as
  \texttt{SPIDER} \cite{fang:etal:2018} compared to \texttt{SVRG}
  \cite{johnson:zhang:2013} and \texttt{SAGA}
  \cite{Defazio:bach:2014}, or as \texttt{SPIDER-EM} \cite{SPIDER-EM}
  compared to \texttt{sEM-vr} \cite{chen:etal:2018} and \texttt{FIEM}
  \cite{karimi:etal:2019,fort:gach:moulines:2021}. Lastly, our bounds
  demonstrate the robustness of \FEDEM~and \VRFEDEM~to data
  heterogeneity.
  \item Finally, seen as a root finding algorithm in
    a quantized FL setting, \VRFEDEM\ can be compared to {\tt
      VR-DIANA}~\cite{horvath2019stochastic}: we show that
    \VRFEDEM\ does not require the step sizes to decrease with $m$ and
    provides state of the art iteration complexity to reach a
    precision $\epsilon$.
\end{itemize}

\textbf{Notations.}
For  vectors $a,b $ in  $\rset^q$, $\pscal{a}{b}$ is the Euclidean
 scalar product, and $\|\!\cdot\!\|$ denotes the asso\-ciated
norm. For $r\geq 1$, $\|a\|_r$ is the $\ell_r$-norm of a vector
$a$. The Hadamard product $a \odot b$ denotes the entrywise product of the two vectors
$a,b$. By convention, vectors are column-vectors. For a matrix $A$,
$A^{\top}$ is its transpose and $\|A\|_F$ is its Frobenius norm;
for two matrices $A,B$, $\pscal{A}{B} \eqdef \mathrm{Trace}(B^\top A)$.
For a positive integer $n$, set $[n]^\star \eqdef \{1, \cdots, n\}$
and $[n] \eqdef \{0, \cdots, n\}$. The set of non-negative integers
(resp. positive) is denoted by $\nset$ (resp. $\nset^\star$).
The minimum (resp. maximum) of two real numbers
  $a,b$ is denoted by $a \wedge b$ (resp. $a \vee b$). We will use the
  Bachmann-Landau notation $a(x) = O(b(x))$ to characterize an upper
  bound of the growth rate of $a(x)$ as being~$b(x)$. 

\section{{\tt \FEDEM}:  Expectation Maximization algorithms for federated learning}
\label{sec:FEDEM}
Recall the definition of the negative penalized (normalized)
log-likelihood $F(\param)$ from \eqref{eq:problem}. Along the entire
paper, we make the following assumptions \Cref{hyp:model} to
\Cref{hyp:Tmap},which define the model at hand.
\begin{assumption} \label{hyp:model} The parameter set $\Param \subseteq \rset^d$ is a convex open set.   The functions $\R: \Param \to \rset$, $\phi : \Param \to \rset^q$, $\psi: \Param \to \rset$,  and $\rho(y_{ij},\cdot): \Zset \to \rset_+$, $s(y_{ij},\cdot): \Zset \to \rset^q$ for $i \in [n]^{\star}$ and $j\in [m]^{\star}$ are measurable functions. For any $\param \in \Param$ and $i \in [n]^{\star}$, the log-likelihood is finite: $-\infty<\loss{i}(\param) < \infty$.
\end{assumption}
\begin{assumption} \label{hyp:bars} For all $\param \in \Param$ and
  $i \in [n]^\star$, the conditional expectation $\bars_{i}(\param)$
  is well-defined.
\end{assumption}
\begin{assumption} \label{hyp:Tmap}  For any $s \in \rset^q$, the map $s \mapsto \argmin_{\param \in \Param} \ \left\{  \psi(\param) + \R(\param) -  \pscal{s}{\phi(\param)} \right\}$ exists and is unique; the singleton is denoted by $\{\map(s)\}$.
\end{assumption}

EM defines a sequence $\{\param_k, k \geq 0 \}$ that can be computed
recursively as $ \param_{k+1} = \map \circ \bars (\param_k)$, where
the map $\map$ is defined in \Cref{hyp:Tmap} and $\bars$ is defined in
\eqref{eq:majorize}.  On the other hand, the EM algorithm can be
defined through a mapping in the complete-data sufficient statistics,
referred to as the {\em expectation space}. In this setting, the EM
iteration defines a $\rset^q$-valued sequence $\{\hatS_k, k \geq 0 \}$
given by $ \hatS_{k+1} = \bars \circ \map(\hatS_k)$. Thus, we observe
that the EM algorithm admits two equivalent representations:
\begin{equation}
\label{eq:EM-equiv-param-stat}
    \text{(Parameter space)}~~\param_{k+1} = \map \circ \bars (\param_k); \quad \text{(Expectation space)}~~\hatS_{k+1} = \bars \circ \map(\hatS_k).
\end{equation}
In this paper, we focus on the expectation space representation; see
\cite{kunstner2021homeomorphic} for an interesting discussion on the
connection of EM and mirror descent.  It has been shown in
\citep{Delyon:etal:1999} that if $s_\star$ is a fixed point to the EM
algorithm in the expectation space, then $\param_\star \eqdef
\map(s_\star)$ is a fixed point of the EM algorithm in the parameter
space, i.e., $\param_\star = \map \circ \bars( \param_\star )$; note
that the converse is also true.  Define the functions $\mf_i$ and
$\mf$ from $\rset^q$ to $\rset^q$ by
$ \mf(s) \eqdef \frac{1}{n} \sum_{i=1}^n \mf_i(s) $ with $\  \mf_i(s)
\eqdef \bars_i \circ \map(s) - s \eqsp. $
\begin{equation}
\label{eq:hfield}
\mf(s) \eqdef \frac{1}{n} \sum_{i=1}^n \mf_i(s) \eqsp, \qquad \mf_i(s)
\eqdef \bars_i \circ \map(s) - s \eqsp.
\end{equation}
A key property is that the fixed points of EM in the expectation space
are the roots of the {\em mean field} $s \mapsto \mf(s)$ (see
\eqref{eq:majorize} for the definition of $\bars$). Therefore,
convergence of EM-based algorithms is evaluated in terms of
\textbf{$\epsilon$-stationarity} (see
\cite{ghadimi:lan:2013,SPIDER-EM}): for all $\epsilon>0$, there exists
a (possibly random) termination time $K$ s.t.  $ \PE\Big[ \|
  \mf(\hatS_K) \|^2 \Big] \leq \epsilon \eqsp. $ Another key property
of EM is that it is a monotonic algorithm: each iteration leads to a
decrease of the negative penalized log-likelihood
i.e. $F(\param_{k+1}) \leq F(\param_k)$ or, equivalently in the
expectation space $F \circ \map(\hatS_{k+1}) \leq F \circ
\map(\hatS_k)$ (for sequences $\{\param_k, k \geq 0\}$ and $\{\hatS_k,
k \geq 0\}$ given by
\eqref{eq:EM-equiv-param-stat}). \Cref{hyp:DS:lyap} assumes that the
roots of the mean field $\mf$ are the roots of the gradient of $F
\circ \map$ (see \cite{Delyon:etal:1999} for the same assumption when
studying Stochastic EM). \Cref{hyp:lipschitz} assumes global Lipschitz
properties of the functions $\mf_i$'s.
\begin{assumption}  \label{hyp:DS:lyap} The function $\lyap \eqdef F \circ \map: \rset^q \to \rset$ is
  continuously differentiable on $\rset^q$ and its gradient is globally
  Lipschitz with constant $L_{\dot \lyap}$.  Furthermore, for any $s
  \in \rset^q$,
$
\nabla \lyap(s) = - B(s) \mf(s)
$
where $B(s)$ is a $q \times q$ positive definite matrix. In addition,
there exist $0 < v_{\min} \leq v_\max$ such that for any $s \in
\rset^q$, the spectrum of $B(s)$ is in $\ccint{v_{\min}, v_{\max}}$.
\end{assumption}

\begin{assumption}
  \label{hyp:lipschitz}
  For any $i \in [n]^\star$, there exists $L_i>0$ such that for any $s,s' \in \rset^q$,
 $
\|\mf_i(s) - \mf_i(s') \| = \| (\bars_i \circ \map(s) -s) - (\bars_i
\circ \map(s') - s')\| \leq L_i \| s-s' \| \eqsp.
  $
\end{assumption}

\textbf{A Federated EM algorithm.}
 \begin{wrapfigure}[30]{R}{0.59\textwidth}
  \flushright
 \vspace{-3em}
\begin{minipage}{0.95\linewidth}
			\SetInd{0.25em}{0.3em}
\begin{algorithm}[H]\DontPrintSemicolon
  \caption{\FEDEM~with partial
    participation \label{algo:dianaem-main}} \KwData{ $\kmax \in
    \nset^\star$; for $i \in [n]^\star$, $V_{0,i} \in \rset^q$;
    $\hatS_0 \in \rset^q$; a positive sequence $\{\pas_{k+1}, k \in
         [\kmax-1]\}$; $\alpha>0$; a coefficient $p=\PE_{\mathcal
           A\sim \lawPP}[\mathrm{card}(\mathcal A)]/n$.}  \KwResult{
    The \texttt{\FEDEM-PP} sequence: $\{\hatS_{k}, k \in [\kmax]\}$}
  Set $V_0 = n^{-1} \sum_{i=1}^n V_{0,i}$ \; \For{$k=0, \ldots,
    \kmax-1$}{ Sample $\set_{k+1} \sim \lawPP$
    \label{line:PPworker} \; 
    \For{$i \in \mathcal \set_{k+1} $} {{\em
        (worker $\# i$)}\; 
    Sample $\Smem_{k+1,i}$, an approximation of
      $\bars_i \circ \map(\hatS_k)$ \label{line:SampleSk} \; 
      Set
      $\Delta_{k+1,i} = \Smem_{k+1,i} - V_{k,i} - \hatS_k$ \label{line:diffVk}\;
       Set
      $V_{k+1,i} = V_{k,i} + \alpha \, \Q(\Delta_{k+1,i})$.  \label{line:upV}\; 
      Send
      $\Q(\Delta_{k+1,i})$ to the central server \label{line:compDelta} \;
  }
    \For{$i \notin \mathcal \set_{k+1} $} {
    	{\em (worker $\# i$)}\; 
    	Set
      $V_{k+1,i} = V_{k,i}$ (no update) \;}
   {\em (the central server)}
    \; 
    Set
    $H_{k+1} = V_k + (np)^{-1} \sum_{i\in \set_{k+1}}
    \Q(\Delta_{k+1,i})$ \label{line:reconstruct_H_central}
    \; 
    Set $\hatS_{k+1} = \hatS_k + \pas_{k+1} H_{k+1}$ \label{line:upModel_central}\; 
    Set
    $V_{k+1} = V_k + \alpha n ^{-1} \sum_{i\in \set_{k+1}}
    \Q(\Delta_{k+1,i})$ \label{line:upV_central}
    \; Send $\hatS_{k+1}$ and $\map (\hatS_{k+1})$ to the $n$ workers}
\end{algorithm}
\end{minipage}
\end{wrapfigure}

Our first contribution, the novel algorithm \FEDEM\, is described by \autoref{algo:dianaem-main}. 
The algorithm encompasses partial participation of the workers: at
iteration $\# (k+1)$, only a subset $ \set_{k+1}$ of active workers
participate to the training, see \autoref{line:PPworker}. The averaged
fraction of participating workers is denoted $p$. Each of the active
workers $\#i$ computes an {\em unbiased} approximation $\Smem_{k+1,i} $ (\autoref{line:SampleSk})
of $\bars_i \circ \map(\hatS_k)$; conditionally to the past (see
\Cref{sec:FEDEMwithPP:tribu} for a rigorous definition), these
approximations are independent. The workers then transmit to the
central server a compressed information about the new sufficient
statistics. A naive solution would be to compress and transmit $\Smem_{k+1,i} - \hatS_k$, but  data heterogeneity between servers often prevents these
local differences from vanishing at the optimum, leading to large
compression errors and impairing convergence of the algorithm. Following \cite{mishchenko2019distributed}, a
memory $V_{k,i}$ (initialized to $\mf_i(\hatS_0)$ at $k=0$) is introduced; and the \textit{differences} $\Delta_{k+1,i} \eqdef
\Smem_{k+1,i} - \hat{S}_k - V_{k,i}$ are compressed for $i \in
\set_{k+1}$ (\autoref{line:diffVk} and \autoref{line:compDelta}). These memories are updated locally: $V_{k+1,i} = V_{k,i}
+ \alpha\, \Q(\Delta_{k+1,i})$, at \autoref{line:upV}, with $\alpha > 0$ (typically set to
$1/(1+\omega)$ where $\omega$ is defined in \Cref{hyp:var:quantif}). On
its side, the central server releases an aggregated
estimate $\hatS_{k+1}$ of the complete-data sufficient statistics by
averaging the quantized difference $(np)^{-1} \sum_{i\in \set_{k+1}}
\Q(\Delta_{k+1,i})$ and by adding $V_k$ (\autoref{line:reconstruct_H_central} and \autoref{line:upModel_central}). Then, it  updates $V_{k+1}= V_k + \alpha n^{-1}
\sum_{i=1}^n \Q(\Delta_{k+1,i})$, see \autoref{line:upV_central}.  The final step consists in
solving the {\sf M}-step of the EM algorithm, \ie\ in computing $
\map(\hatS_{k+1})$ (see \Cref{hyp:Tmap}).

We finally state our assumption on the compression process. We consider a large class of \textit{unbiased} compression operators $\Q$ satisfying a variance bound:
\begin{assumption} \label{hyp:var:quantif}   There exists $\omega \!\geq\! 0$ s.t. for any $s \in \rset^q$: $\PE\left[ \Q(s) \right]=s $, and $\PE\left[\| \Q(s) \|^2 \right]
\leq (1+ \omega) \|s \|^2$.
\end{assumption}
Intuitively, the stronger the compression is, the larger $\omega$ will
be. Remark that if no compression is used, or equivalently for all
$ s\in \rset^q $, $ \Q(s) = s $, then \Cref{hyp:var:quantif} is
satisfied with $\omega=0$.  An example of quantization operator
satisfying \Cref{hyp:var:quantif} is the random dithering that can be
described as the random operator $\Q: \rset^q \to \rset^q$,
$\Q(x)= (1/s_{\operatorname{quant}}) \|x\|_{r} \,
\operatorname{sign}(x) \odot \left\lfloor s_{\operatorname{quant}}
  (|x|/\|x\|_r)+\xi \right\rfloor$
where $r \geq 1$ is user-defined, $\xi$ is a uniform random variable
on $[0,1]^q$ and $s_{\operatorname{quant}} \in \nset^\star$ is the
number of levels of roundings; see
\cite{horvath2019stochastic,alistarh2018convergence}. This operator
satisfies \Cref{hyp:var:quantif} with
$\omega= s_{\operatorname{quant}}^{-1} O(q^{1/r} + q^{1/2})$; see
\cite[Example~1]{horvath2019stochastic}.  Another example, namely the
block-$p$-quantization, is provided in the supplemental (see
\Cref{app:quantization}). More generally, this assumption is valid for
many compression operators, for example resulting in
sparsification~\cite[see. e.g.][]{mishchenko2019distributed}.

The convergence analysis is under the following
  assumptions on the oracle $\Smem_{k+1,i}$: for any $i \in
          [n]^\star$, the approximations $\Smem_{k+1,i}$ are unbiased
          and their conditional variances are uniformly bounded in
          $k$. For each $k \in \nset$, denote by $\F_k$ the
          $\sigma$-algebra generated by $\{
            \Smem_{\ell,i}, \set_\ell ; i \in [n]^\star, \ell \in [k]
            \}$ and including the randomness inherited from the quantization
            operator $\Q$ up to iteration $\# k$.
\begin{assumption} \label{hyp:variance:oracle}
For all $k \in \nset$, conditional to $\F_k$, $\{\Smem_{k+1,i}
\}_{i=1}^n$ are independent. Moreover, for any $i \in [n]^\star$,
$\CPE{\Smem_{k+1,i}}{\F_{k}}= \bars_i \circ \map(\hatS_k)$ and there
exists $\sigma_i^2>0$ such that for any $k \geq 0$ $\CPE{\|
  \Smem_{k+1,i} - \bars_i \circ \map(\hatS_k) \|^2}{\F_k} \leq
\sigma_i^2$.
\end{assumption}
\Cref{hyp:variance:oracle} covers both the finite-sum setting
described in the introduction, and the online setting.  In the
finite-sum setting, $\bars_i$ is of the form
$m^{-1} \sum_{j=1}^m \bars_{ij}$. In that case, $\Smem_{k+1,i}$ can be the
sum over a minibatch $\batch_{k+1,i}$ of size $\lbatch$ sampled at
random in $[m]^\star$, with or without replacement and independently
of the history of the algorithm: we have
$\Smem_{k+1,i} = \lbatch^{-1} \sum_{j \in \batch_{k+1,i}} \bars_{ij}
\circ \map(\hatS_{k})$.
In the online setting, the oracles $\Smem_{k+1,i}$ come from an online
processing of streaming informations; in that case $\Smem_{k+1,i}$ can
be computed from a minibatch of independent examples so that the
conditional variance $\sigma_i^2$, which will be inversely
proportional to the size of the minibatch, can be made arbitrarily
small.

\textbf{Reduction of communication complexity for FL.} Reducing the communication cost between workers is a crucial aspect of the FL approach~\cite{kairouz_advances_2019}. In gradient based optimization, four techniques have been used to reduce the amount of communication: (i) increasing the mini\-batch size and reducing the number of iterations, (ii) increasing the number of \textit{local steps} between two communication rounds, (iii) using compression, (iv) sampling clients at each step. Here, we provide a tight analysis of strategies (i),  (iii) and (iv) (sampling client is part of PP). 

Regarding the interest of performing multiple iterations (ii), as
analyzed for example
in~\cite{karimireddy_scaffold_2019,mcmahan_communication-efficient_2017}
for the classical gradient settings, we note that: first, from a
theoretical standpoint, tradeoffs between larger minibatch and more
local iterations are unclear~\cite{2020arXiv200207839W}. Secondly,
\textit{performing local iterations is not possible in the EM
  setting}: one iteration of EM is the combination of two steps E and
M and the M step, which required the use of the map $T$, is only
performed by the central server; this remark is a fundamental
specificity of the EM framework (which is not shared by the gradient
framework).  In applications, we usually do not want $T$ to be
available at each local node. However, our work allows to perform
multiple local iterations of the E step before communicating with the
central server. In \autoref{algo:dianaem-main}, the local statistics
$S_{k+1,i}$ are general enough to cover this case; see the comment
above on ~\Cref{hyp:variance:oracle}.

Finally, as we do not perform local full EM iterations, we do not face
the well-identified \textit{client-drift} challenge (in the presence
of heterogeneity). Yet, we stress that combining compression and
heterogeneity results in other challenges: it is known in the Gradient
Descent setting (see
e.g. \cite{mishchenko2019distributed,philippenko2020artemis}), that
heterogeneity strongly hinders convergence in the presence of
compression. To alleviate the impact of heterogeneity, we introduce
the $ V_{k,i}$'s memory-variables.

\textbf{Convergence analysis, full participation regime.}  In this paragraph, we focus on the \textit{full-participation regime} ($p=1$): for all $ k\in [\kmax]^\star$, $\set_k=[n]^*$. We now present in \Cref{theo:dianaem} our key result, from which complexity expressions are derived. The proof is postponed to \Cref{app:proof-fedem}. 
\begin{theorem} \label{theo:dianaem}
   Assume \Cref{hyp:model} to \Cref{hyp:variance:oracle} and set $L^2
   \eqdef n^{-1} \sum_{i=1}^n L_i^2$, $\sigma^2 \eqdef{n}^{-1}
   \sum_{i=1}^n \sigma_i^2$.  Let $\{\hatS_k, k \in [\kmax] \}$ be
   given by \autoref{algo:dianaem-main}, with $\omega
     >0$, $\alpha \eqdef (1+\omega)^{-1}$ and $\pas_k = \pas
   \in\ocint{0, \gamma_{\max}}$ where
   \vspace{-0.3em}
         \begin{equation}\label{eq:maxLR}
         \gamma_{\max} \eqdef \frac{v_{\min}}{2L_{\dot \lyap}} \wedge
         \frac{\sqrt{n}}{2 \sqrt{2} L (1+\omega) \sqrt{\omega}} \eqsp.
         \end{equation}
            \vspace{-1em}
            
Denote by $K$ the uniform random variable on
           $[\kmax-1]$. Then, taking $V_{0,i} = \mf_i(\hatS_0)$ for
           all $i \in [n]^\star$:
   \begin{equation}\label{eq:theo:fedem}
 {v_{\min}} \left (1- \pas \frac{L_{\dot \lyap}}{v_{\min}} \right)
 \PE\ \left[ \|\mf(\hatS_K)\|^2 \right] \leq \frac{1}{\pas
 \kmax}\left( \lyap(\hatS_0) - \min \lyap \right) + \pas L_{\dot
   \lyap} \frac{1+5\omega}{n} \sigma^2 \eqsp.
  \end{equation}
\end{theorem}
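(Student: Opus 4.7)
The plan is to build a Lyapunov function combining the Lyapunov function $\lyap = F\circ\map$ with the worker memory errors $W_k := n^{-1}\sum_{i=1}^n \|V_{k,i}-\mf_i(\hatS_k)\|^2$, namely $\Phi_k := \lyap(\hatS_k) - \min\lyap + c\,W_k$ for a carefully chosen constant $c>0$, and to telescope a one-step drift inequality for $\Phi_k$.

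First I would establish the basic probabilistic identities under \Cref{hyp:var:quantif} and \Cref{hyp:variance:oracle}. Since $\CPE{\Smem_{k+1,i}}{\F_k}=\bars_i\circ\map(\hatS_k)$ and $\CPE{\Q(\Delta_{k+1,i})}{\F_k}=\Delta_{k+1,i}$'s conditional mean equals $\mf_i(\hatS_k)-V_{k,i}$, averaging gives $\CPE{H_{k+1}}{\F_k}=\mf(\hatS_k)$. Exploiting conditional independence across workers plus the total-variance decomposition with the quantization bound, I get
\begin{equation*}
\CPE{\|H_{k+1}\|^2}{\F_k}\leq \|\mf(\hatS_k)\|^2 + \frac{\omega}{n}W_k + \frac{1+\omega}{n}\sigma^2.
\end{equation*}
Combining this with the descent inequality coming from the $L_{\dot\lyap}$-smoothness of $\lyap$, the identity $\nabla\lyap=-B\,\mf$ and the spectrum bound on $B$ in \Cref{hyp:DS:lyap} gives
\begin{equation*}
\CPE{\lyap(\hatS_{k+1})}{\F_k}\leq \lyap(\hatS_k)-\gamma v_{\min}\|\mf(\hatS_k)\|^2 + \tfrac{\gamma^2 L_{\dot\lyap}}{2}\bigl(\|\mf(\hatS_k)\|^2+\tfrac{\omega}{n}W_k+\tfrac{1+\omega}{n}\sigma^2\bigr).
\end{equation*}

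Next I would derive a recursion for $W_k$. Expanding $V_{k+1,i}-\mf_i(\hatS_k)=(V_{k,i}-\mf_i(\hatS_k))+\alpha\Q(\Delta_{k+1,i})$, and using the choice $\alpha=1/(1+\omega)$ together with \Cref{hyp:var:quantif,hyp:variance:oracle}, the cross term is absorbed and one gets the per-worker contraction $\CPE{\|V_{k+1,i}-\mf_i(\hatS_k)\|^2}{\F_k}\leq \tfrac{\omega}{1+\omega}\|V_{k,i}-\mf_i(\hatS_k)\|^2 + \tfrac{\sigma_i^2}{1+\omega}$. A Young inequality $\|a+b\|^2\leq(1+\beta)\|a\|^2+(1+1/\beta)\|b\|^2$ with $\beta=1/(2\omega)$, combined with the Lipschitz property \Cref{hyp:lipschitz} and $\|\hatS_{k+1}-\hatS_k\|=\gamma\|H_{k+1}\|$, transfers the reference point from $\hatS_k$ to $\hatS_{k+1}$ and yields
\begin{equation*}
\CPE{W_{k+1}}{\F_k}\leq (1+\beta)\tfrac{\omega}{1+\omega}W_k + (1+\beta)\tfrac{\sigma^2}{1+\omega} + (1+1/\beta)L^2\gamma^2\CPE{\|H_{k+1}\|^2}{\F_k}.
\end{equation*}

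Adding $c$ times this to the descent inequality produces a bound of the form $\CPE{\Phi_{k+1}}{\F_k}-\Phi_k\leq -A_\gamma\|\mf(\hatS_k)\|^2 - B_\gamma W_k + C_\gamma\sigma^2$. The main obstacle is to choose $c$ so that $A_\gamma\geq \gamma(v_{\min}-\gamma L_{\dot\lyap})$ and simultaneously $B_\gamma\geq 0$. With $\beta=1/(2\omega)$, non-negativity of $B_\gamma$ reduces to $(1+1/\beta)L^2\gamma^2\omega/n\leq \tfrac{1}{4(1+\omega)}$, which after simplification with $(1+2\omega)(1+\omega)\leq 2(1+\omega)^2$ is exactly $\gamma\leq \sqrt{n}/(2\sqrt{2}L(1+\omega)\sqrt{\omega})$, the second term in $\gamma_{\max}$; the first term $v_{\min}/(2L_{\dot\lyap})$ is the usual condition ensuring $v_{\min}-\gamma L_{\dot\lyap}>0$. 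Accumulating the $\sigma^2$-contributions from both recursions (one $(1+\omega)$ from $\|H_{k+1}\|^2$ and one $(1+\beta)/(1+\omega)$ scaled by $c$ from $W_{k+1}$) with the prescribed value of $c\propto L_{\dot\lyap}\gamma^2\omega(1+\omega)/n$ produces exactly $C_\gamma=\gamma^2 L_{\dot\lyap}(1+5\omega)/(2n)$. The initialization $V_{0,i}=\mf_i(\hatS_0)$ gives $W_0=0$, hence $\Phi_0=\lyap(\hatS_0)-\min\lyap$. Telescoping from $k=0$ to $\kmax-1$, dropping the non-negative $\Phi_{\kmax}$, dividing by $\gamma\kmax$ and using that $\kmax^{-1}\sum_{k=0}^{\kmax-1}\PE[\|\mf(\hatS_k)\|^2]=\PE[\|\mf(\hatS_K)\|^2]$ for $K$ uniform on $[\kmax-1]$ delivers \eqref{eq:theo:fedem}.
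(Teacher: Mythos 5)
Your proposal follows essentially the same route as the paper: the paper's proof is exactly a descent inequality for $\lyap$ driven by the conditional unbiasedness and variance bound on $H_{k+1}$, a contraction recursion for $G_k=n^{-1}\sum_i\|V_{k,i}-\mf_i(\hatS_k)\|^2$ (your $W_k$), and a telescoped Lyapunov function $\lyap(\hatS_k)+C\,G_k$ with $C=2L_{\dot\lyap}\pas^2\omega(1+\omega)/n$, which is precisely your $c$; your derivation of the two step-size constraints and of the $W_0=0$ initialization also matches. The one point to fix is the noise constant: with your fixed Young parameter $\beta=1/(2\omega)$ the factor multiplying the injected noise $\alpha\sigma_i^2$ in the $W$-recursion is $1+\tfrac{1}{2\omega}$, which exceeds $2$ when $\omega<1/2$ and makes the accumulated $\sigma^2$-coefficient come out as roughly $\pas L_{\dot\lyap}(2+3\omega)/n$ rather than the stated $\pas L_{\dot\lyap}(1+5\omega)/n$; the paper avoids this by splitting on $\alpha\gtrless 2/3$ so that the Young factor on the noise term is always capped at $2$ while the contraction factor stays $\leq 1-\alpha/2$, and you would need the same (one-line) adjustment to recover the exact constant of \eqref{eq:theo:fedem} for all $\omega>0$.
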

When there is no compression ($\omega =0$ so that
  $\Q(s)=s$), we prove that the introduction of the random variables
  $V_{k,i}$'s play no role whatever $\alpha >0$ and the choice of the
  $V_{0,i}$'s, and we have for any $\pas \in \ooint{0,2 v_{\min} /
    L_{\dot \lyap}}$ (see \eqref{eq:fedem:nocompression} in the
  supplemental)
\begin{equation} \label{eq:dianaem:noomega}
\Big (1- \pas \frac{L_{\dot \lyap}}{2 v_{\min}} \Big) \PE\ \Big[
  \|\mf(\hatS_K)\|^2 \Big] \leq \frac{1}{\pas
  	\kmax}\left(
\lyap(\hatS_0) - \min \lyap \right) + \pas L_{\dot \lyap}
\frac{\sigma^2}{n} \eqsp.
\end{equation}
Optimizing the learning rate $\pas$, we derive the following corollary
(see the proof in \Cref{app:proof-fedem}).
\begin{corollary}[of \Cref{theo:dianaem}]\label{cor:diana-em}
 Choose $\pas \eqdef  \big( \frac{(\lyap(\hatS_0) - \min \lyap )n}{
   \kmax L_{\dot \lyap} (1+5\omega)\sigma^2 }\big )^{1/2} \wedge
 \pas_{\max}$. We get
\begin{align*}
 \PE\ \left[ \|\mf(\hatS_K)\|^2 \right] \leq \frac{4}{ v_{\min}}
 \bigg( \sqrt{ \frac{\big ( \lyap(\hatS_0) - \min \lyap \big)
     L_{\dot \lyap} (1+5\omega)\sigma^2 }{n \kmax} } \vee \frac{\big(
   \lyap(\hatS_0) - \min \lyap \big)}{\pas_{\max} \kmax} \bigg)
 \eqsp.
\end{align*}
\vspace{-0.5em}
\end{corollary}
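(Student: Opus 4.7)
The plan is to specialize the bound from \Cref{theo:dianaem} by optimizing the step size. Writing $\Delta \eqdef \lyap(\hatS_0) - \min \lyap$ for brevity, my first move is to simplify the prefactor $v_{\min}(1 - \pas L_{\dot \lyap}/v_{\min})$ appearing in~\eqref{eq:theo:fedem}. Since the admissible range~\eqref{eq:maxLR} forces $\pas \leq \pas_{\max} \leq v_{\min}/(2 L_{\dot \lyap})$, this prefactor is bounded below by $v_{\min}/2$, yielding
\[
\frac{v_{\min}}{2}\, \PE\bigl[\|\mf(\hatS_K)\|^2\bigr] \leq \frac{\Delta}{\pas \kmax} + \pas\, L_{\dot \lyap}\,\frac{1+5\omega}{n}\,\sigma^2.
\]

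Second, I would minimize the right-hand side as a function of $\pas$. Introducing the abbreviations $A \eqdef \Delta/\kmax$ and $B \eqdef L_{\dot \lyap}(1+5\omega)\sigma^2/n$, this reduces to the standard one-dimensional problem $\min_{\pas>0} (A/\pas + B\pas)$, whose unconstrained minimizer is $\pas^\star = \sqrt{A/B}$, matching exactly the unclipped value in the definition of $\pas$ in the statement of the corollary.

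Third, because $\pas$ must respect the upper bound $\pas_{\max}$, I would take $\pas = \pas^\star \wedge \pas_{\max}$ and conclude by a two-case analysis. If $\pas^\star \leq \pas_{\max}$, the unconstrained optimum is admissible and the right-hand side equals $2\sqrt{AB}$. If $\pas^\star > \pas_{\max}$, the map $\pas \mapsto A/\pas + B\pas$ is still decreasing at $\pas_{\max}$, which rearranges to $B\pas_{\max} \leq A/\pas_{\max}$; hence its value at $\pas_{\max}$ is at most $2A/\pas_{\max}$. Both cases are absorbed into the single expression $2(\sqrt{AB} \vee A/\pas_{\max})$, and multiplying by $2/v_{\min}$ produces the announced factor $4/v_{\min}$ together with the maximum structure of the statement.

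No analytic obstacle is expected: the proof reduces to balancing a convex scalar objective subject to a box constraint. The only points requiring care are checking the sign of $1 - \pas L_{\dot \lyap}/v_{\min}$ over the entire admissible range (guaranteed by the first term of $\gamma_{\max}$) and writing the two regimes compactly as a single maximum rather than as a piecewise expression.
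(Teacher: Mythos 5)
Your proposal is correct and follows essentially the same route as the paper's proof: lower-bound the prefactor by $v_{\min}/2$ via $\pas\leq\pas_{\max}\leq v_{\min}/(2L_{\dot\lyap})$, balance $A/\pas+B\pas$ at $\pas^\star=\sqrt{A/B}$, and handle the clipped case by noting the objective is still decreasing at $\pas_{\max}$ so that $B\pas_{\max}\leq A/\pas_{\max}$. The only cosmetic difference is the order in which the prefactor bound is applied; the two-case analysis and the final $\frac{4}{v_{\min}}(\sqrt{AB}\vee A/\pas_{\max})$ expression match the paper exactly.
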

\Cref{theo:dianaem} and
\Cref{cor:diana-em} do not require any assumption regarding the
distributional heterogeneity of workers. These results remain thus
valid when workers have access to data resulting from different
distributions — a widespread situation in FL frameworks. Crucially,
without assumptions on the heterogeneity of workers,
  the convergence of a ``naive" implementation of compressed
distributed EM  (\ie\ an implementation without the variables
$V_{k,i}$'s) would not converge.

Let us comment the complexity to reach an $\epsilon$-stationary point,
and more precisely how the complexity evaluated in terms of the number
of optimization steps depend on $\omega,n, \sigma^2$ and $\epsilon$.
Since $\mathcal{K}_{\operatorname{Opt}}(\epsilon) = \kmax$, from
\Cref{cor:diana-em} we have that: $ \mathcal{K}_\opt (\epsilon) =
O\Big( \frac{(1+\omega)\sigma^2}{\ n\epsilon^2 } \Big) \vee
O\Big(\frac{1}{ \gamma_{\max} \epsilon } \Big) \eqsp.  $
 
 \textbf{Maximal learning rate and compression.}  The comparison of \Cref{theo:dianaem} with the no
   compression case (see \eqref{eq:dianaem:noomega}) shows that compression impacts $\pas_{\max}$ by a
 factor proportional to $\sqrt{n}/\omega^{3/2} $ as $\omega$ increases
 (similar constraints were observed in the risk optimization
 literature, e.g. in
 \cite{horvath2019stochastic,philippenko2021preserved}). This
 highlights two different regimes depending on the ratio
 $\sqrt{n}/\omega^{3/2} $: if the number of workers $n$ scales at
 least as $\omega^{3}$, the maximal learning rate is not impacted by
 compression; on the other hand, for smaller numbers of workers $n\ll
 \omega^3$, compression can degrade the maximal learning rate. We
 highlight this conclusion with a small example in the case of scalar
 quantization for which $\omega \sim \sqrt{q}/
 s_{\mathrm{quant}}$: for $q = 10^2$ and $s_{\mathrm{quant}}=4$
 (obtaining a compression rate of a factor $16$), the maximal learning
 rate is almost unchanged if $n\geq 16$.

\textbf{Dependency on $\epsilon$. }
The complexity $\mathcal{K}_\opt(\epsilon)$ is decomposed into two
terms scaling respectively as $\sigma^2 \epsilon^{-2}$ and
$\gamma_{\max}^{-1}\epsilon^{-1}$, the first term being dominant when
$\epsilon \to 0$.  This  observation highlights two different
regimes: a \emph{high noise regime} corresponding to
$ \gamma_{\max} (1+\omega) \sigma^2 / (n \epsilon^{-1}) \ge 1 $ where
the complexity is of order $\sigma^2 \epsilon^{-2}$, and a \emph{low
  noise regime} where
$\gamma_{\max} (1+\omega) \sigma^2 / (n \epsilon^{-1}) \le 1 $ and the
complexity is of order $\gamma_{\max}^{-1} \epsilon^{-1}$. An extreme
example of the low noise case is $\sigma^2=0$, occurring
for example in the finite-sum case (i.e., when $\bars_i = m^{-1} \sum_{j=1}^m \bars_{ij}$) with the oracle
$\Smem_{k+1,i}= \bars_i \circ \map(\hatS_k)$.

\textbf{Impact of compression for $\epsilon$-stationarity.} As
mentioned above, the compression simultaneously impacts the maximal
learning rate (as in \eqref{eq:maxLR}) and the complexity $
\mathcal{K}_\opt (\epsilon) $.  Consequently, the impact of the
compression depends on the balance between $ \omega, n, \sigma^2 $ and
$ \epsilon $, and we can distinguish four different ``main''
regimes. In the following tabular, for each of the four situations, we
summarize the \textit{increase in complexity} $ \mathcal{K}_\opt
(\epsilon) $ resulting from compression.  \resizebox{\linewidth}{!}{
  \setlength{\aboverulesep}{0pt} \setlength{\belowrulesep}{0pt}
  \setlength{\extrarowheight}{.75ex}
	\begin{tabular}{c|ccc}
\toprule
&\begin{tabular}{c}
Complexity regime: \\ (Dominating term in $  \mathcal{K}_\opt (\epsilon)  $) 
\end{tabular} &  {$ 
	\frac{(1+\omega)\sigma^2}{\ n\epsilon^2 }  $} &  {$ 
	\frac{1}{ \gamma_{\max} \epsilon }   $} \\
\midrule
	\begin{tabular}{c}
$ \gamma_{\max} $ regime:  \\ (Dominating term in \eqref{eq:maxLR}) 
\end{tabular} & Example situation & \begin{tabular}{c}
High noise $ \sigma^2 $, \\small $ \epsilon $ \end{tabular}  &  \begin{tabular}{c}
Low $ \sigma^2 $ (e.g., large minibatch) \\  larger $ \epsilon $
\end{tabular}\\
  $ \frac{v_{\min}}{2L_{\dot \lyap}} $ &large ratio $ n / \omega^3 $ &    \cellcolor{orange}  $ \times \omega $  & \cellcolor{green!80!black} $ \times 1 $\\
   $ \frac{\sqrt{n}}{2 \sqrt{2} L (1+\omega) \sqrt{\omega}}  $ & low ratio $ n / \omega^3 $ &   \cellcolor{orange}  $ \times \omega $  & \cellcolor{yellow} $ \times \omega^{3/2}/\sqrt{n}  $ \\
   \bottomrule
\end{tabular}
}

Depending on the situation, the complexity can be multiplied by a factor ranging from 1 to $ \omega \vee (\omega^{3/2}/\sqrt{n})$ . Remark that the communication cost of each iteration is typically reduced by compression of a factor at least $ \omega $.  Moreover, the  benefit of compression is  most
significant in the \textit{low noise} regime and when the maximal
learning rate is $v_{\min}/(2 L_{\dot \lyap})$ (e.g., when $ n $ large enough).  We then improve the
communication cost of each iteration without increasing the
optimization complexity, effectively reducing the communication budget ``for free''.

Because of space constraints, the results in the PP regime are
postponed to \Cref{app:mainresultsPP}.

\section{\VRFEDEM: Federated EM algorithm with variance reduction}
A novel algorithm, called \VRFEDEM~and described
by~\autoref{algo:DIANASPIDEREM}, is derived to additionally
incorporate a variance reduction scheme in \FEDEM. It is described in
the finite-sum setting when for all $i \in [n]^\star$, $\bars_i \eqdef
m^{-1} \sum_{j=1}^m \bars_{ij}$: at each iteration $\# (t,k+1)$, the
oracle on $\bars_i \circ \map(\hatS_{t,k})$ will use a minibatch
$\batch_{t,k+1,i}$ of examples sampled at random (with or without
replacement) in $[m]^\star$.

\begin{wrapfigure}[34]{R}{0.60\textwidth}
	\flushright
	 \vspace{-2em}
\begin{minipage}{0.98\linewidth}
\SetInd{0.25em}{0.3em}
\begin{algorithm}[H]\DontPrintSemicolon
\caption{\VRFEDEM\ \label{algo:DIANASPIDEREM}} \KwData{
$\kouter, \kin, \lbatch \in \nset^\star$; for $i \in [n]^\star$,
$V_{1,0,i} \in \rset^q$; $\hatS_\init \in \rset^q$; a positive
sequence $\{\pas_{t,k+1}, t \in [\kouter]^\star, k \in
[\kin-1]\}$; $\alpha >0$}
\KwResult{sequence:
$\{\hatS_{t,k}, t \in [\kouter]^\star, k \in [\kin]\}$}{
$\hatS_{1,0} = \hatS_{1,-1} = \hatS_\init$, $V_{1,0} = n^{-1}
\sum_{i=1}^n V_{1,0,i}$ \label{algo:DS:init:V0} \; \For{$i=1,
\ldots, n$}{$\Smem_{1,0,i} = \frac{1}{m} \sum_{j=1}^m
\bars_{ij} \circ \map(\hatS_\init)$ } \For{$t=1, \ldots,
\kouter$}{ \For{$k=0, \ldots, \kin-1$}{ \For{$i=1, \ldots,
n$ {\em (worker $\# i$, locally)}}{  Sample at random a
batch $\batch_{t,k+1,i}$ of size $\lbatch$ in $[m]^\star$
\; Set $\Smem_{t,k+1,i} = \Smem_{t,k,i} + \lbatch^{-1}
\sum_{j \in \batch_{t,k+1,i}} \left( \bars_{ij} \circ
\map(\hatS_{t,k}) - \bars_{ij} \circ \map(\hatS_{t,k-1})
\right)$ \label{line:DS:localS} \; Set $\Delta_{t,k+1,i} =
\Smem_{t,k+1,i} - \hatS_{t,k} - V_{t,k,i}$  \; Set
$V_{t,k+1,i} = V_{t,k,i} + \alpha\, \Q(\Delta_{t,k+1,i})$.
\label{line:DS:Vi}
    \; Send $\Q(\Delta_{t,k+1,i})$ to the central server \;}
  {\em (the central server)} \label{line:DS:sendcontroller}\; 
  Set $H_{t,k+1} =  V_{t,k} + n^{-1}   \sum_{i=1}^n \Q(\Delta_{t,k+1,i})$ \; 
  Set   $\hatS_{t,k+1} =  \hatS_{t,k} +   \pas_{t,k+1} H_{t,k+1}$  \label{line:DS:SA} \;
   Set $V_{t,k+1} = V_{t,k} + \alpha n^{-1} \sum_{i=1}^n \Q(\Delta_{t,k+1,i})$ \label{line:DS:V} \;
 Send $\hatS_{t,k+1}$ and $\map (\hatS_{t,k+1})$ to the $n$ workers \;}
$\hatS_{t+1,0} = \hatS_{t+1,-1} = \hatS_{t,\kin}$ \;
$V_{t+1,0} = V_{t,\kin}$ \label{algo:DS:init:V1} \;
\For{$i=1, \ldots, n$}
{$\Smem_{t+1,0,i}= \frac{1}{m}\sum_{j=1}^m\bars_{ij} \circ\map(\hatS_{t+1,0})$\label{line:DS:initS} \;
  $V_{t+1,0,i} = V_{t,\kin,i}$ \label{algo:DS:init:V2}}
} }\end{algorithm}
\end{minipage}
\end{wrapfigure}
The algorithm is decomposed into $\kouter$ outer loops (indexed by
$t$), each of them having $\kin$ inner loops (indexed by $k$). At
iteration $\# (k+1)$ of the inner loops, each worker $\# i$ updates a
local statistic $\Smem_{t,k+1,i}$ based on a minibatch
$\batch_{t,k+1,i}$ of its own examples $\{ \bars_{ij}, j \in
\batch_{t,k+1,i} \}$ (see Line~\ref{line:DS:localS}): starting from
$\hatS_{t,0,i} \eqdef m^{-1} \sum_{j=1}^m \bars_{ij} \circ
\map(\hatS_{t,-1})$, $\hatS_{t,k+1,i}$ is defined in such a way that
it approximates $m^{-1} \sum_{j=1}^m \bars_{ij} \circ
\map(\hatS_{t,k})$ (see \Cref{coro:DS:biasS}). Then, the worker $\# i$
sends to the central server a quantization of $\Delta_{t,k+1,i}$ (see
Line~\ref{line:DS:sendcontroller}) which can be seen as an
approximation of $\alpha^{-1}\{ \mf_i(\hatS_{t,k}) -
\mf_i(\hatS_{t,k-1})\}$ upon noting that the variable $V_{t,k+1,i}$
defined by Line~\ref{line:DS:Vi} approximates $\mf_i(\hatS_{t,k})$
(see \Cref{cor:control_var}). The central server learns the mean value
$V_{t,k+1} = n^{-1} \sum_{i=1}^n V_{t,k+1,i}$ (see
Line~\ref{line:DS:V} and \Cref{lem:DS:V}) and, by adding the quantized
quantities, defines a field $H_{t,k+1}$ which approximates $n^{-1}
\sum_{i=1}^n \mf_i(\hatS_{t,k})$ (see
\Cref{prop:DS:Hvariance}). Line~\ref{line:DS:SA} can be seen as a
Stochastic Approximation update, with learning rate $\pas_{t,k+1}$ and
mean field $s \mapsto n^{-1} \sum_{i=1}^n \mf_i(s)$ (see
\eqref{eq:hfield} for the definition of $\mf_i$).

The variance reduction is encoded in the definition of
$\Smem_{t,k+1,i}$, Line~\ref{line:DS:localS}. We have
$\Smem_{t,k+1,i}= \lbatch^{-1} \sum_{j \in \batch_{t,k+1,i}}
\bars_{ij} \circ \map(\hatS_{t,k}) + \Upsilon_{t,k+1,i}$. The first
term is the natural approximation of $\bars_i \circ \map(\hatS_{t,k})$
based on a minibatch $\batch_{t,k+1,i}$. Conditionally to the past,
$\Upsilon_{t,k+1,i}$ is correlated to the first term and biased, but
its bias is canceled at the beginning of each outer loop (see
Line~\ref{line:DS:initS} and \Cref{sec:DS:controlvariate}):
$\Upsilon_{t,k+1,i}$ defines a {\em control variate}. Such a variance
reduction technique was first proposed in the stochastic gradient
setting \cite{nguyen:liu:etal:2017,fang:etal:2018,wang:etal:nips:2019}
and then extended to the EM setting
\cite{SPIDER-EM,fortetal:2021:icassp}.  At the end of each outer loop,
the local approximations $\Smem_{t+1,0,i}$ are initialized to the full
sum $m^{-1} \sum_{j=1}^m \bars_{ij} \circ \map(\hatS_{t,\kin})$ (see
Line~\ref{line:DS:initS}) thus canceling the bias of $\Smem_{\cdot,i}$
(see \Cref{prop:DS:biasS}).

When there is a single worker and no compression is used ($n=1$,
$\omega=0$), \VRFEDEM~reduces to \texttt{SPIDER-EM}, which has been
shown to be rate optimal for smooth, non-convex finite-sum
optimization~\cite{SPIDER-EM}. \Cref{theo:DS} studies the FL setting
($n \geq 1$ and $\omega \geq 0$): it establishes a finite time control
of convergence in expectation for \VRFEDEM\ . Assumptions
\Cref{hyp:lipschitz} and \Cref{hyp:variance:oracle} are replaced with
\Cref{hyp:DS:lipschitz}.
\begin{assumption} \label{hyp:DS:lipschitz} For any $i \in [n]^\star$ and $j \in [m]^\star$, the conditional expectations
  $\bars_{ij}(\param)$ are well defined for any $\param \in \Theta$,
  and there exists $L_{ij}$ such that for any $s,s' \in \rset^q$, $ \|
  (\bars_{ij} \circ \map(s) -s) - (\bars_{ij} \circ \map(s') -s')\|
  \leq L_{ij} \|s-s'\| \eqsp.  $
  \end{assumption} 
\begin{theorem}\label{theo:DS}
  Assume \Cref{hyp:model,hyp:bars,hyp:Tmap}, \Cref{hyp:DS:lyap},
  \Cref{hyp:var:quantif} and \Cref{hyp:DS:lipschitz}. Set
  $L^2 \eqdef n^{-1} m^{-1} \sum_{i=1}^n \sum_{j=1}^m L^2_{ij}$. Let
  $\{\hatS_{t,k}, t \in [\kouter]^\star, k \in [\kin-1] \}$ be given
  by~\autoref{algo:DIANASPIDEREM} run with
  $\alpha \eqdef 1/(1+\omega)$, $V_{1,0,i} \eqdef \mf_i(\hatS_{1,0})$
  for any $i \in [n]^\star$,
  $ \lbatch \eqdef \lceil\frac{\kin}{(1+\omega)^2} \rceil$ and
\begin{align}
\pas_{t,k} = \pas \eqdef \frac{v_{\min}}{L_{\dot \lyap}}  \bigg (1 +  4 \sqrt{2} \frac{v_{\max}}{L_{\dot \lyap}}  \frac{L}{\sqrt{n}} (1+\omega) \Big( \omega + \frac{1+10 \omega}{8}\Big)^{1/2} \bigg )^{-1}. \label{eq:pasmax_DS}
\end{align} 
Let $(\tau, K)$ be the uniform random variable on
$[\kouter]^\star \times [\kin-1]$, independent of
$\{\hatS_{t,k}, t \in [\kouter]^\star, k \in [\kin]\}$. Then, it
holds
  \begin{align}
\PE\left[\|H_{\tau,K+1}\|^2\right] & \leq    \frac{2 \big( \PE\big[ \lyap(\hatS_{1,0})\big] -
	\min \lyap\big) }{{v_{\min}}\pas \kin \kouter} \eqsp,  \label{eq:thm-VR-conv}\\ 
  \PE\left[\|\mf(\hatS_{\tau,K})\|^2 \right] & \leq 2 \Big(1+ \pas^2
  \frac{L^2 (1+\omega)^2}{n}\Big) \PE\left[\|H_{\tau,K+1}\|^2\right] \label{eq:thm-VR-control}
  \eqsp. 
    \end{align}
  \end{theorem}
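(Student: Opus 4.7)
The proof follows the template for compressed variance-reduced stochastic methods on smooth non-convex objectives, with the additional twist that the target is the EM mean field. I would organize it in four steps: establish invariants and conditional moments of $H_{t,k+1}$, control the three error sources (SPIDER bias/variance, quantization, memory drift), assemble a Lyapunov descent inequality, and finally telescope and derive the second bound.

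\textbf{Step 1 — Invariants and conditional unbiasedness.} First, a short induction using the algorithm's initialization (Line~\ref{algo:DS:init:V0}) and update rule (Line~\ref{line:DS:V}) shows that $V_{t,k} = n^{-1}\sum_{i=1}^n V_{t,k,i}$ at every iterate. Combined with the unbiasedness of $\Q$ from \Cref{hyp:var:quantif}, this cancels the $V$-terms in $\CPE{H_{t,k+1}}{\F_{t,k}}$ and yields $\CPE{H_{t,k+1}}{\F_{t,k}} = n^{-1}\sum_{i=1}^n \CPE{\Smem_{t,k+1,i}}{\F_{t,k}} - \hatS_{t,k}$. Thus $H_{t,k+1}$ is an estimator of $\mf(\hatS_{t,k})$ whose only bias comes from the SPIDER correction in $\Smem_{t,k+1,i}$.

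\textbf{Step 2 — Error bounds.} I would separately control three quantities. (a) \emph{SPIDER error}: leveraging the recursive definition of $\Smem_{t,k+1,i}$ in Line~\ref{line:DS:localS}, the conditional independence of the minibatches $\batch_{t,k+1,i}$, and \Cref{hyp:DS:lipschitz}, an induction on $k$ within an outer loop (starting from the unbiased full-sum reset in Line~\ref{line:DS:initS}) gives $n^{-1}\sum_i\CPE{\|\Smem_{t,k+1,i} - \bars_i\circ\map(\hatS_{t,k})\|^2}{\F_{t,0}} \le \lbatch^{-1}L^2 \sum_{l=0}^{k}\PE\bigl[\|\hatS_{t,l}-\hatS_{t,l-1}\|^2 \mid \F_{t,0}\bigr]$. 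Since $\hatS_{t,l}-\hatS_{t,l-1} = \pas H_{t,l}$ and $\lbatch \geq \kin/(1+\omega)^2$, this becomes a bound of order $\pas^2 L^2 (1+\omega)^2 \kin^{-1}\sum_l \|H_{t,l}\|^2$. (b) \emph{Memory contraction}: the DIANA-style update $V_{t,k+1,i}=V_{t,k,i}+\alpha\Q(\Delta_{t,k+1,i})$ with $\alpha=1/(1+\omega)$, combined with \Cref{hyp:var:quantif}, yields the classical contraction $\CPE{\|V_{t,k+1,i}-\mf_i(\hatS_{t,k})\|^2}{\F_{t,k}} \le (1-\alpha)\|V_{t,k,i}-\mf_i(\hatS_{t,k})\|^2 + \text{(stochastic+SPIDER residuals)}$, with the initial memory error vanishing thanks to $V_{1,0,i}=\mf_i(\hatS_{1,0})$. (c) \emph{Quantization variance}: by independence of the $n$ compressions conditional on $\F_{t,k}$ and \Cref{hyp:var:quantif}, $\CPE{\|n^{-1}\sum_i (\Q(\Delta_{t,k+1,i}) - \Delta_{t,k+1,i})\|^2}{\F_{t,k}} \le \omega n^{-2}\sum_i \|\Delta_{t,k+1,i}\|^2$, which is the $1/n$ variance reduction that ultimately gives the $n^{-1/2}$ scaling in \eqref{eq:pasmax_DS}.

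\textbf{Step 3 — Lyapunov descent and telescoping.} Define $\mathcal{L}_{t,k} := \lyap(\hatS_{t,k}) + c_1 \pas \Phi^{\mathrm{SP}}_{t,k} + c_2\pas \Phi^{\mathrm{M}}_{t,k}$, where $\Phi^{\mathrm{SP}}$ and $\Phi^{\mathrm{M}}$ are the SPIDER and memory potentials from Step~2 and $c_1,c_2>0$ are tuning weights. The smoothness in \Cref{hyp:DS:lyap} and $\nabla\lyap = -B\mf$ give
\begin{equation*}
\lyap(\hatS_{t,k+1}) \le \lyap(\hatS_{t,k}) - \pas\langle B(\hatS_{t,k})\mf(\hatS_{t,k}), H_{t,k+1}\rangle + \tfrac{L_{\dot\lyap}\pas^2}{2}\|H_{t,k+1}\|^2.
\end{equation*}
Writing $-\langle B\mf, H\rangle \le -v_{\min}\|H\|^2 + \tfrac{v_{\min}}{2}\|H\|^2 + \tfrac{v_{\max}^2}{2v_{\min}}\|H-\mf\|^2$ via Young's inequality and substituting the error bounds from Step~2 for $\PE\|H_{t,k+1}-\mf(\hatS_{t,k})\|^2$, I would tune $c_1, c_2$ so that all $\|H_{t,l}\|^2$ and memory-potential cross-terms are absorbed into the main $-v_{\min}\pas\|H_{t,k+1}\|^2$, yielding $\PE[\mathcal{L}_{t,k+1}] \le \PE[\mathcal{L}_{t,k}] - \tfrac{v_{\min}\pas}{2}\PE\|H_{t,k+1}\|^2$ for $\pas$ satisfying \eqref{eq:pasmax_DS}. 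Telescoping over $k\in[\kin-1]$ and $t\in[\kouter]^\star$, and using that $\Phi^{\mathrm{SP}}$ resets at each outer loop and that $\Phi^{\mathrm{M}}_{1,0}=0$, gives
\begin{equation*}
\frac{v_{\min}\pas}{2}\sum_{t=1}^{\kouter}\sum_{k=0}^{\kin-1}\PE\|H_{t,k+1}\|^2 \le \PE[\lyap(\hatS_{1,0})] - \min \lyap,
\end{equation*}
which dividing by $\kin \kouter$ and using the uniform law of $(\tau,K)$ yields \eqref{eq:thm-VR-conv}. For \eqref{eq:thm-VR-control}, I would use $\|\mf(\hatS_{t,k})\|^2 \le 2\|H_{t,k+1}\|^2 + 2\|H_{t,k+1}-\mf(\hatS_{t,k})\|^2$, take expectation, and substitute the $\pas^2 L^2 (1+\omega)^2/n$ bound on $\PE\|H-\mf\|^2$ from Step~2.

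\textbf{Main obstacle.} The delicate point is Step~3: balancing the weights $c_1, c_2$ so that the SPIDER residuals (which feed back on $\sum_l\|H_{t,l}\|^2$), the memory drift (a DIANA-type geometric recursion feeding back on $\|\Delta\|^2$ and hence on $\|H\|^2$ and $\|\text{memory error}\|^2$), and the quantization variance all fit inside the descent budget provided by $v_{\min}\pas\|H_{t,k+1}\|^2$. The exact combinatorics of these cross-terms — together with the role of $\lbatch = \lceil\kin/(1+\omega)^2\rceil$ — produces the specific constants $4\sqrt{2}$ and $(1+10\omega)/8$ in \eqref{eq:pasmax_DS}; this bookkeeping is the only place where those numerical factors arise.
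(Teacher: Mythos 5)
Your architecture is essentially the paper's: the same preliminary controls (unbiasedness of $\Q$ and the invariant $V_{t,k}=n^{-1}\sum_{i=1}^n V_{t,k,i}$; the SPIDER error bound $\frac{L^2}{\lbatch}\sum_{\ell\le k}\pas^2\|H_{t,\ell}\|^2$ reset at each outer loop; the DIANA-type contraction on $G_{t,k}=n^{-1}\sum_i\|V_{t,k,i}-\mf_i(\hatS_{t,k})\|^2$; the $\omega/n$ quantization variance), followed by a Lyapunov descent and telescoping. Your use of two potentials instead of the paper's single weight on $G_{t,k}$ (the SPIDER partial sums are absorbed there by direct summation over $k$, via $\sum_{k}\sum_{\ell\le k}a_\ell\le\kin\sum_k a_k$) is a harmless variation. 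But there are two concrete points where the sketch as written would not deliver the theorem as stated.

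First, your Young step $-\pscal{B\mf}{H}\le -v_{\min}\|H\|^2+\frac{v_{\min}}{2}\|H\|^2+\frac{v_{\max}^2}{2v_{\min}}\|H-\mf\|^2$ fixes the Young parameter at $v_{\min}$, which sacrifices half of the descent $-v_{\min}\pas\|H\|^2$ \emph{independently of $\pas$}. Together with the smoothness term $\frac{L_{\dot \lyap}}{2}\pas^2\|H\|^2$, this already exhausts the budget when $\pas$ approaches $v_{\min}/L_{\dot \lyap}$ --- precisely the value that \eqref{eq:pasmax_DS} prescribes in the regime $\omega^3\ll n$, where the theorem must recover the uncompressed \texttt{SPIDER-EM} rate. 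The paper instead keeps the parameter free, sets $\beta^2=\pa\pas$, and optimizes over $\pa$, obtaining $\pa_\star\propto v_{\max}\frac{L}{\sqrt{n}\alpha}\big(\omega+\frac{\kin\alpha^2}{8\lbatch}(1+10\omega)\big)^{1/2}$; this is not mere bookkeeping: it makes the loss from the Young step scale with $\pas$ and is exactly where the $4\sqrt{2}$ and the $(1+10\omega)/8$ in \eqref{eq:pasmax_DS} come from. Second, for \eqref{eq:thm-VR-control} you split $\|\mf(\hatS_{t,k})\|^2\le 2\|H_{t,k+1}\|^2+2\|H_{t,k+1}-\mf(\hatS_{t,k})\|^2$ and substitute the full mean-squared-error bound; but that bound contains the quantization variance $\frac{\omega}{n}\cdot n^{-1}\sum_i\|\Delta_{t,k+1,i}\|^2$, which feeds back on $G_{t,k}$ and does not reduce to $\pas^2 L^2(1+\omega)^2 n^{-1}\,\PE[\|H_{\tau,K+1}\|^2]$. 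The correct split goes through the conditional mean, $\|\mf(\hatS_{t,k})\|^2\le 2\|\CPE{H_{t,k+1}}{\F_{t,k}}\|^2+2\|\mf(\hatS_{t,k})-\CPE{H_{t,k+1}}{\F_{t,k}}\|^2$, using Jensen for the first term and the \emph{bias-only} bound $\pas^2\frac{L^2}{n\lbatch}\sum_{\ell\le k-1}\PE[\|H_{t,\ell}\|^2]$ for the second, followed by a Ces\`aro summation over $(t,k)$.
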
 The proof is postponed to~\Cref{sec:proof:DS}.  This
  result is a consequence of the more general~\Cref{prop:DS}.  We make
  the following comments:
\begin{enumerate}[topsep=0pt,itemsep=1pt,leftmargin=*,noitemsep,wide]
\item Eq.~\eqref{eq:thm-VR-conv} provides the convergence of $
  \PE\left[\|H_{\tau,K+1}\|^2\right ] $, and
  Eq.~\eqref{eq:thm-VR-control} ensures that the quantity of interest
  $ \PE[\|\mf(\hatS_{\tau,K})\|^2 ] $ is controlled by $
  \PE[\|H_{\tau,K+1}\|^2 ] $. We observe that $ 2 (1+ \pas^2 \frac{L^2
    (1+\omega)^2}{n}) $ is uniformly bounded w.r.t. $ \omega $ as, by
  \eqref{eq:pasmax_DS}, $ \gamma^2 = O_{\omega\to \infty}(\omega^{-3})
  $.
    \item Up to our knowledge, this is the first result on Federated
      EM, that leverages advanced variance reduction techniques, while
      being robust to distribution heterogeneity (the theorem is valid
      without any assumption on heterogeneity) and while reducing the
      communication cost.
    \item Without compression ($\omega=0$) and in the single-worker
      case ($n=1$), \citet{SPIDER-EM} use $\kin = \lbatch$: we recover
      this result as a particular case. When $n>1$ and $\omega>0$, the
      recommended batch size $\lbatch$ decreases as $1/(1+\omega)^2$.
\end{enumerate}

\textbf{Convergence rate and optimization complexity.} Our step-size $
\pas $ is chosen constant and \textit{independent} of $ \kin, \kouter
$. Indeed, contrary to \Cref{theo:dianaem}, there is no Bias-Variance
trade-off (as typically observed with variance reduced methods), and
the optimal choice of $ \pas $ is the largest one to ensure
convergence. Consequently, since the number of optimization steps is
$\kouter \kin$, we have $ \mathcal{K}_\opt (\epsilon) =
O(\frac{1}{\gamma \epsilon })$.

\textbf{Impact of compression on the learning rate and $ \epsilon $-stationarity.} The compression constant  $ \omega $ does not directly appear in \eqref{eq:thm-VR-conv}, but impacts the value of $ \pas $. Two different regimes appear: 
\begin{enumerate}[topsep=0pt,itemsep=1pt,leftmargin=*,noitemsep,wide]
\item if $ 4 \sqrt{2} \frac{v_{\max}}{L_{\dot \lyap}}
  \frac{L}{\sqrt{n}} (1+\omega) \left( \omega + \frac{1+10
    \omega}{8}\right)^{1/2} \ll 1 $ (i.e. we focus on the large $
  \omega,n $ asymptotics when $ \omega^{3} \ll n$), then $ \pas
  \simeq \frac{v_{\min}}{L_{\dot \lyap}}$ has nearly the same value as
  without compression \cite{SPIDER-EM}. The complexity is then similar
  to the one of \texttt{SPIDER-EM}~\cite{SPIDER-EM}, with a smaller
  communication cost. The gain from compression is maximal in this
  regime.
\item if $ 4 \sqrt{2} \frac{v_{\max}}{L_{\dot \lyap}}
  \frac{L}{\sqrt{n}} (1+\omega) \left( \omega + \frac{1+10
    \omega}{8}\right)^{1/2} \gg 1 $ (i.e. we focus on the large $
  \omega,n $ asymptotics when $ \omega^{3} \gg n$), then $ \pas =
  O\left ( \frac{v_{\min} \sqrt{n}}{v_{\max }L \omega^{3/2}}\right )$
  is strictly smaller than without compression. The optimization
  complexity is then higher to the one of \texttt{SPIDER-EM}\footnote{As a corollary
    of \cite[Theorem 2]{SPIDER-EM}, the optimization complexity of
    \texttt{SPIDER-EM} is $\kouter + \kin \kouter$ that is
    $\epsilon^{-1}$ in order to reach $\epsilon$-stationarity.} (by a
  factor proportional to $ \omega^{3/2}/\sqrt{n}$) with a smaller
  communication cost (typically at least $\omega$ times less bits
  exchanged per iteration). The overall trade-off thus depends on the
  comparison between $ \omega $ and $ n $.
\end{enumerate}
\begin{wrapfigure}[4]{R}{0.450\textwidth}
	\flushright
	\vspace{-2.5em}
	\begin{minipage}{0.98\linewidth}
		\setlength{\aboverulesep}{0pt}
		\setlength{\belowrulesep}{0pt}
		\setlength{\extrarowheight}{.75ex}
		\resizebox{\linewidth}{!}{	\begin{tabular}{l|cc}
				\toprule
				&		Complexity : &  {$ 
					{1} / {( \gamma \epsilon )}   $} \\
				\midrule
				\begin{tabular}{c}
					$ \gamma $ regime:   (Dominating \\ term in \eqref{eq:pasmax_DS}) 
				\end{tabular} & Example situation &  \\
				$ v_{\min}/L_{\dot \lyap}$ &large ratio $ n / \omega^3 $  &\cellcolor{green!80!black} $ \times 1 $\\
				$ {v_{\min} \sqrt{n}} / ({v_{\max }L \omega^{3/2}}) $ & low ratio $ n / \omega^3 $ & \cellcolor{yellow} $ \times \omega^{3/2}/\sqrt{n}  $ \\
				\bottomrule
		\end{tabular}}
	\end{minipage}
\end{wrapfigure}

We summarize these two regimes in this  tabular, focusing on
the large $ n $, large $ \omega $ asymptotic regimes. For the two
regimes, we indicate the \textit{increase in complexity}
$ \mathcal{K}_\opt (\epsilon) $ resulting from compression.

We provide a discussion on  \textit{computed conditional expectations} complexity $ \mathcal K_\CE $ in \Cref{app:KCE}.

\section{Numerical illustrations}
\label{sec:numerical}
In this section, we illustrate the performance of \FEDEM~and \VRFEDEM~
applied to inference in Gaussian Mixture Models (GMM), on a synthetic
data set and on the MNIST data set. We also present an application to
Federated missing data imputation, in the context of citizen science
data analysis for biodiversity monitoring with the analysis of a
subsample of the eBird data set \cite{SULLIVAN20092282,eBird}.

\textbf{Synthetic data. }  The synthetic data are from the following
GMM model: for all $\ell \in[N]^{\star}$ and $g\in\{0,1\}$,
$\PP(Z_\ell = g) = \pi_g$; and conditionally to $Z_\ell = g$, $Y_\ell
\sim \mathcal{N}_2(\mu_g,\Sigma)$. The $2 \times 2$ covariance matrix
$\Sigma$ is known, and the parameters to be fitted are the weights
$(\pi_0,\pi_1)$ and the expectations $(\mu_0,\mu_1)$.  The total
number of examples is $N=10^4$, the number of agents is $n=10^2$, and
the probability of participation of servers is $p=0.75$. \FEDEM\ and
\VRFEDEM\ are run with $\gamma = 10^{-2}$, $\omega=1$ and $\alpha =
10^{-2}$.  For \FEDEM, we consider the finite-sum setting when
$\bars_i = m^{-1} \sum_{j=1}^m \bars_{ij}$ with $m=10^2$; the oracle
$\Smem_{k+1,i}$ is obtained by a sum over a minibatch of $\lbatch =
20$ examples. For \VRFEDEM, we set $\lbatch =5$ and $\kin= 20$.  We
run the two algorithms for $500$ epochs (one epoch corresponds to $N$
conditional expectation evaluations $\bars_{ij}$).
Figure~\ref{fig:gmm_synthet} shows a trajectory of $\|H_k\|^2$ given
by \FEDEM~(and $\| H_{t,k}\|^2$ given by \VRFEDEM), along with the
theoretical value of the mean field $\| \mf(\hatS_k)\|^2$ for $\FEDEM$
(and $\| \mf(\hatS_{t,k})\|^2$ for \VRFEDEM). The results illustrate
the variance reduction, and gives insight on the variability of the
trajectories resulting from the two algorithms.

\textbf{MNIST Data set. }
We perform a similar experiment on the MNIST dataset to illustrate the
behaviour of \FEDEM~and \VRFEDEM~on a GMM inference problem with real
data.  The dataset consists of $N = 7 \times 10^4$ images of
handwritten digits, each with $784$ pixels. We pre-process the dataset
by removing $67$ uninformative pixels (which are always zero across
all images) to obtain $d = 717$ pixels per image.  Second, we apply
principal component analysis to reduce the data dimension. We keep the
$d_{\operatorname{PC}} = 20$ principal components of each
observation. These $N$ preprocessed observations are distributed at
random across $n =10^2$ servers, each containing $m = 700$
observations. We estimate a
$\rset^{d_{\operatorname{PC}}}$-multivariate GMM model with $G=10$
components. Details on the multivariate Gaussian mixture model are
given in the supplementary material (see \autoref{app:numerical}).
Here again, $\bars_i$ is a sum over the $m$ examples available at
server $\# i$; the minibatches are independent and sampled at random
in $[m]^\star$ with replacement; we choose $\lbatch= 20$ and the
step size is constant and set to $\gamma= 10^{-3}$. The same initial
value $\hatS_\init$ is used for all experiments: we set $\hatS_\init
\eqdef \bar{s}(\pi^0, \mu^0,\widehat{\Sigma}^0)$, where $\pi_g^0 =
1/G$ for all $g\in [G]^\star$, the expectations $\mu_g^0$ are sampled
uniformly at random among the available examples, and
$\widehat{\Sigma}^0$ is the empirical covariance matrix of the $N$
examples. \Cref{fig:estimation:weight} shows the sequence of
parameter estimates for the weights and the squared norm of the mean
field $ \| H_k \|^2$ for \FEDEM\ (resp. $\|H_{t,k}\|^2$ for
\VRFEDEM\ ) vs the number of epochs.

 \begin{figure}
	\begin{minipage}{.46\textwidth}
		\centering
		\includegraphics[width=0.49\textwidth]{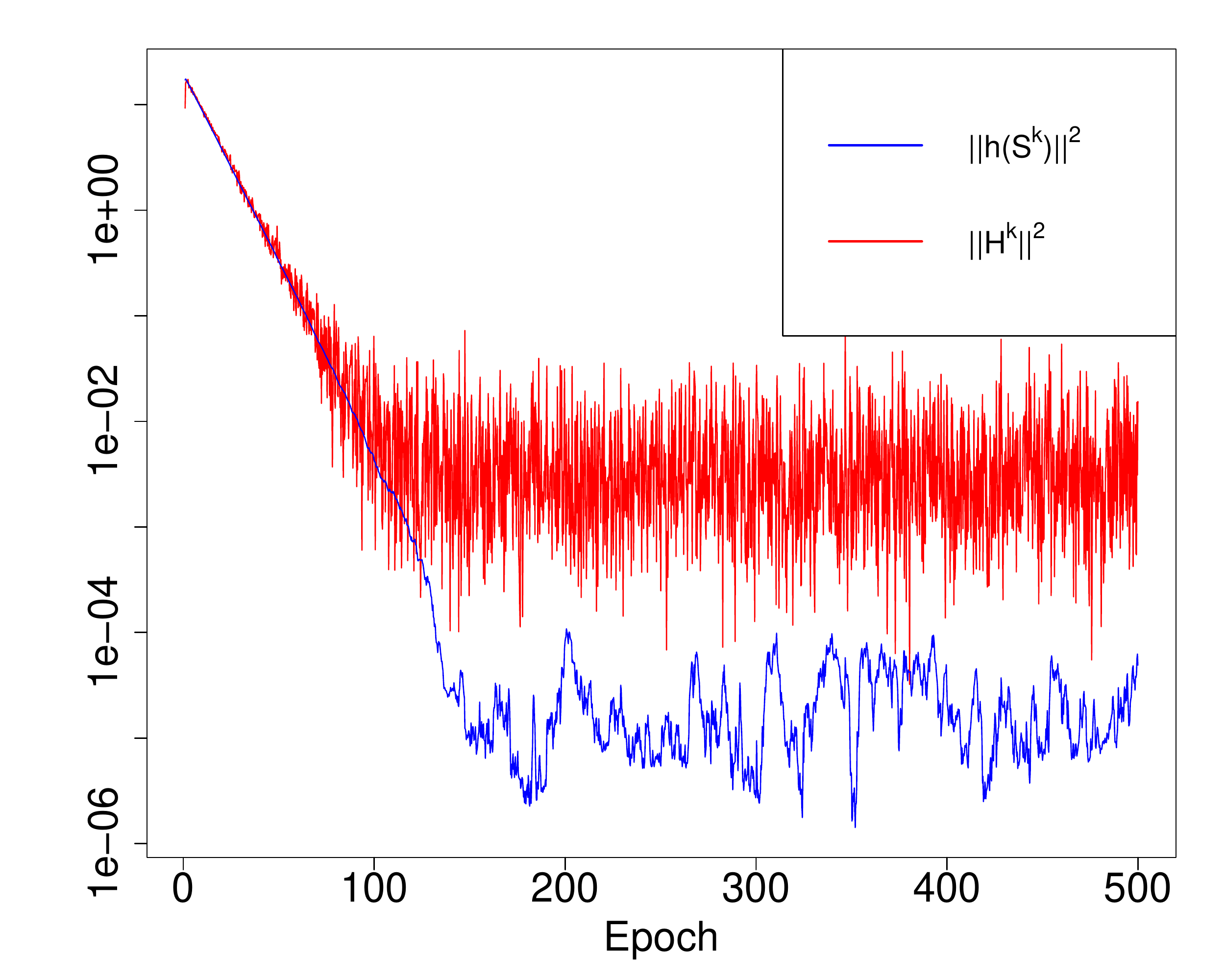} 
		\includegraphics[width=0.49\textwidth]{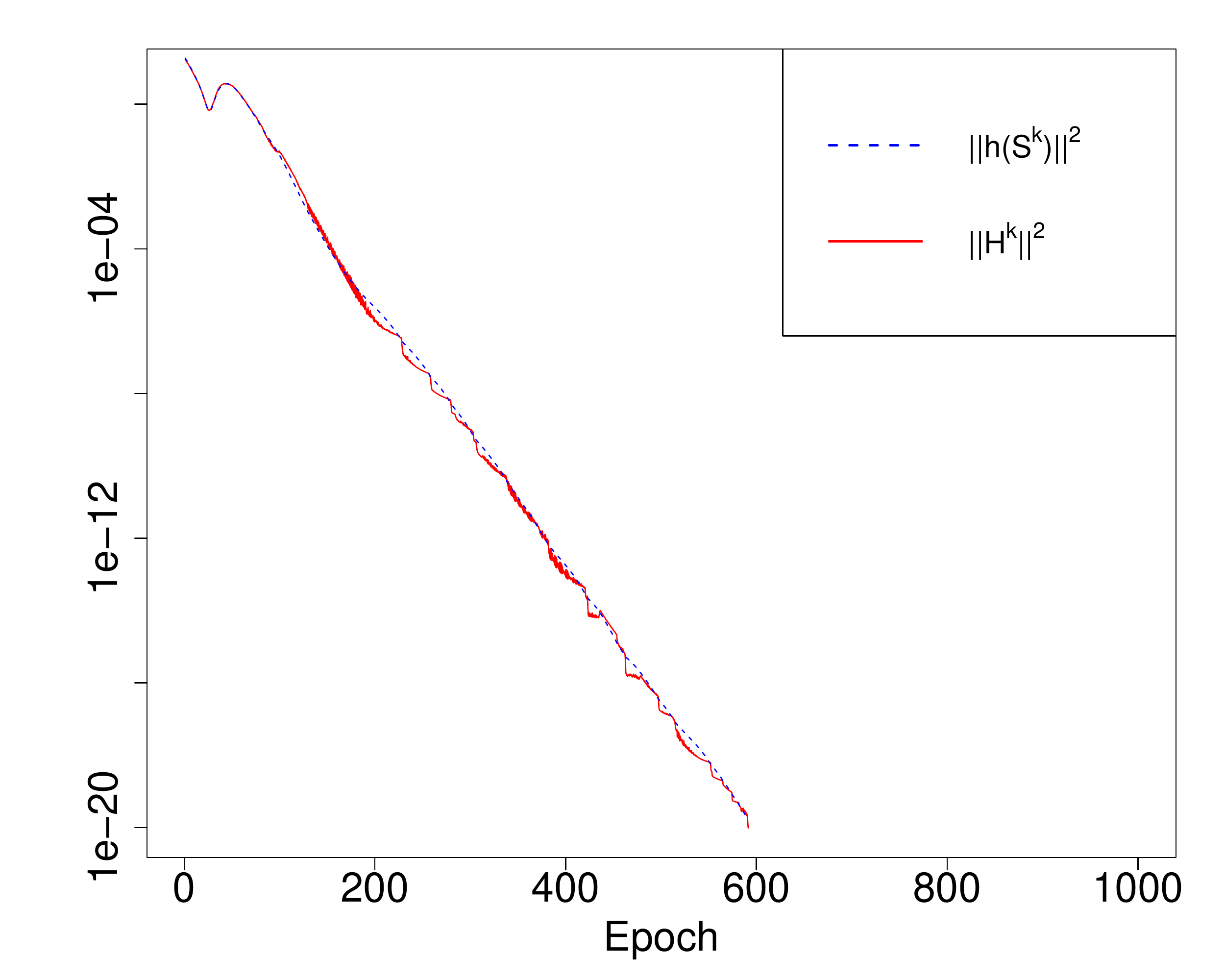}
		\caption{Trajectory of \FEDEM~vs the number of epochs (left; blue line: $\|\mf(\hatS^k)\|^2$; red line: $\|H_{k}\|^2$) and of \VRFEDEM~(right; dashed blue line: $\|\mf(\hatS^k)\|^2$; solid red line: $\|H_{t,k}\|^2$).}
		\label{fig:gmm_synthet}\vspace{-.2cm}
		\label{fig:test1}
	\end{minipage}%
	\hspace{0.12cm}
	\begin{minipage}{.53\textwidth}
		\centering
		\includegraphics[width=0.49\textwidth]{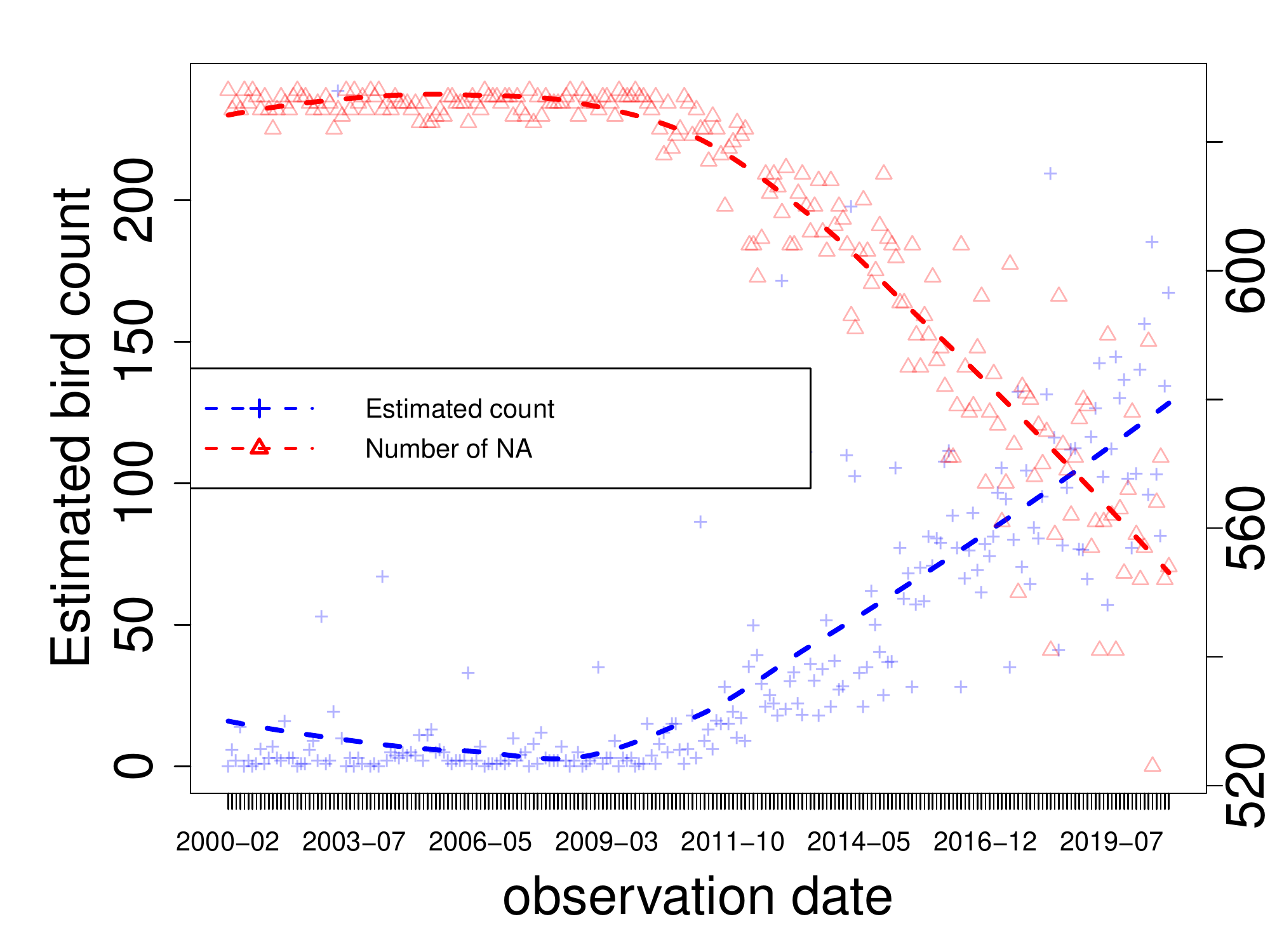}
		\includegraphics[width=0.49\textwidth]{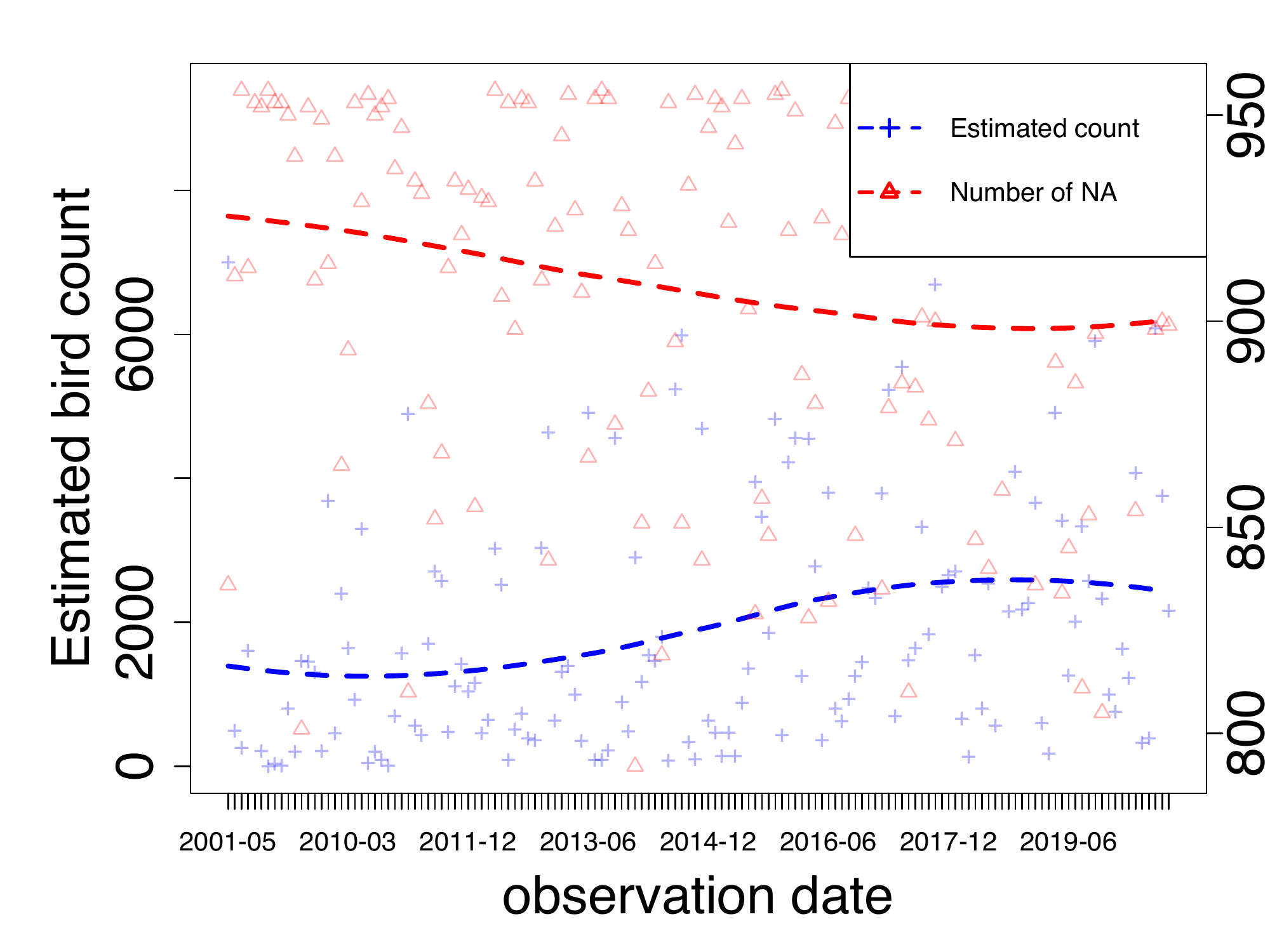}
		\caption{Estimated temporal trends for Common Buzzard (Left) and Mallard (right). Blue crosses: estimated monthly counts; Red triangles: number of missing values. Dotted lines: LOESS regressions for the estimated counts (blue) and the number of missing values (red).}
		\label{fig:ebird}\vspace{-0cm}
	\end{minipage}\\%
	\begin{minipage}{0.99\textwidth}
	\centering
	\includegraphics[width=0.22\textwidth]{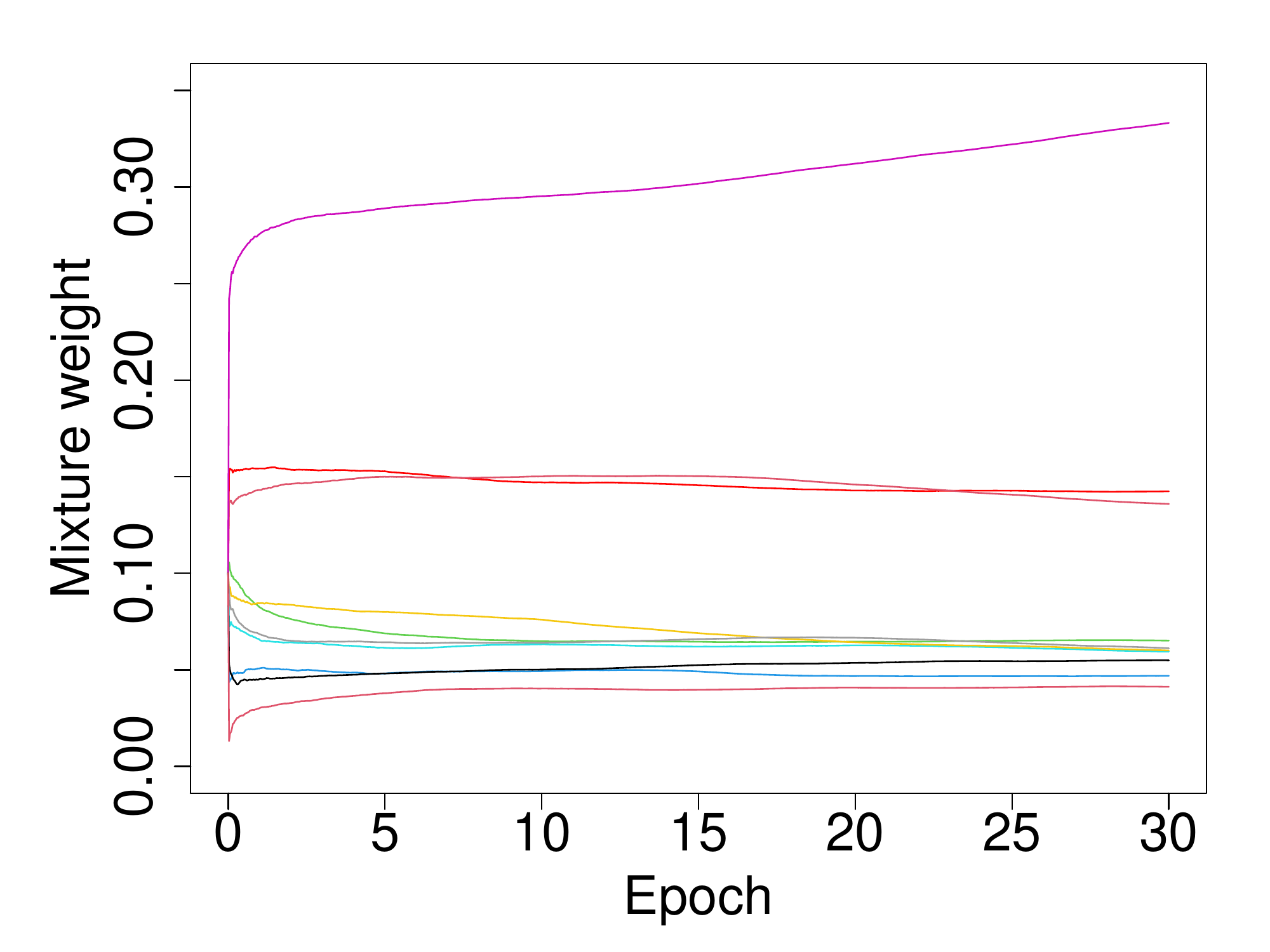}
	\hfill
	\includegraphics[width=0.22\textwidth]{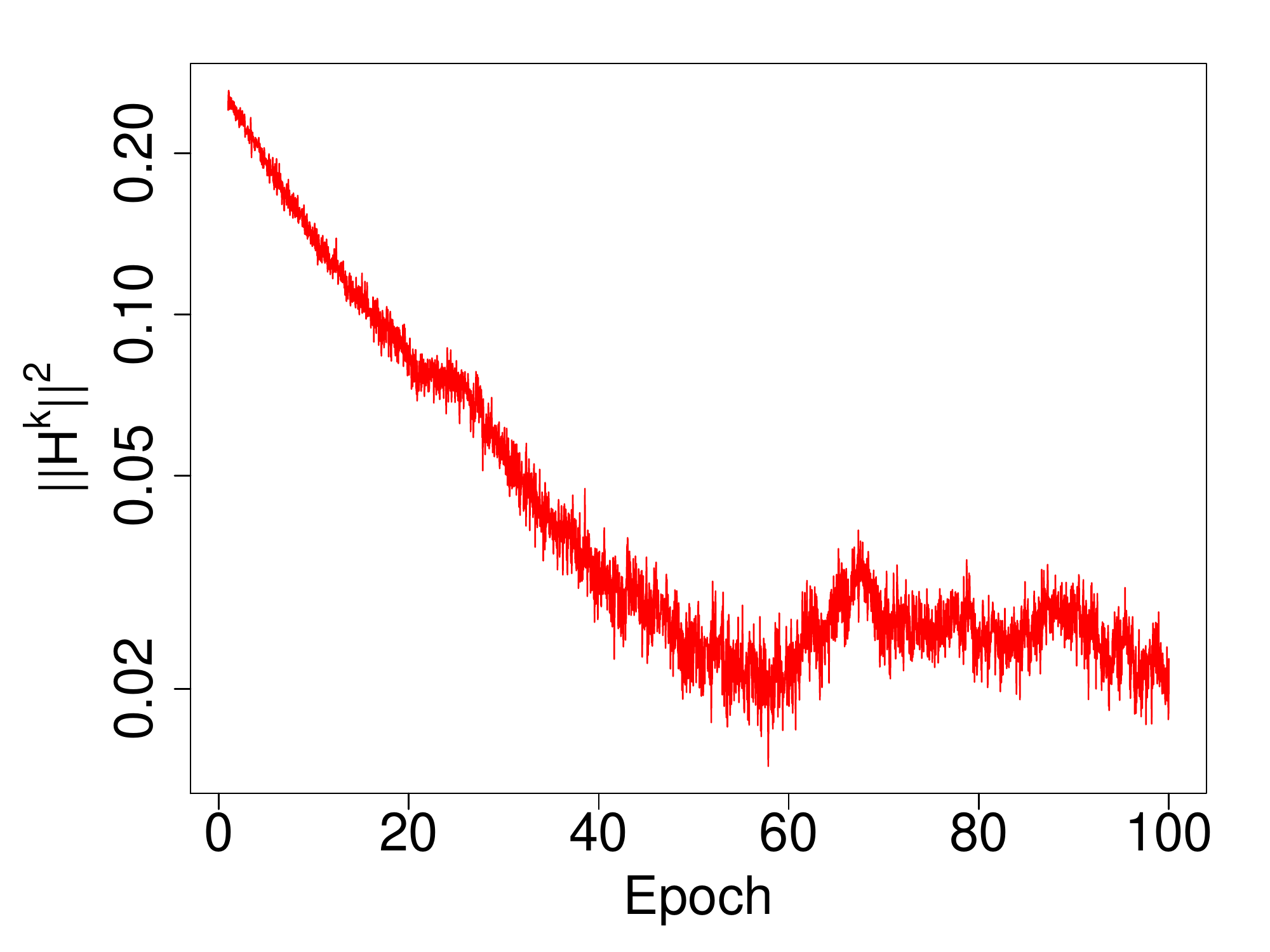}
        \hfill
	\includegraphics[width=0.22\textwidth]{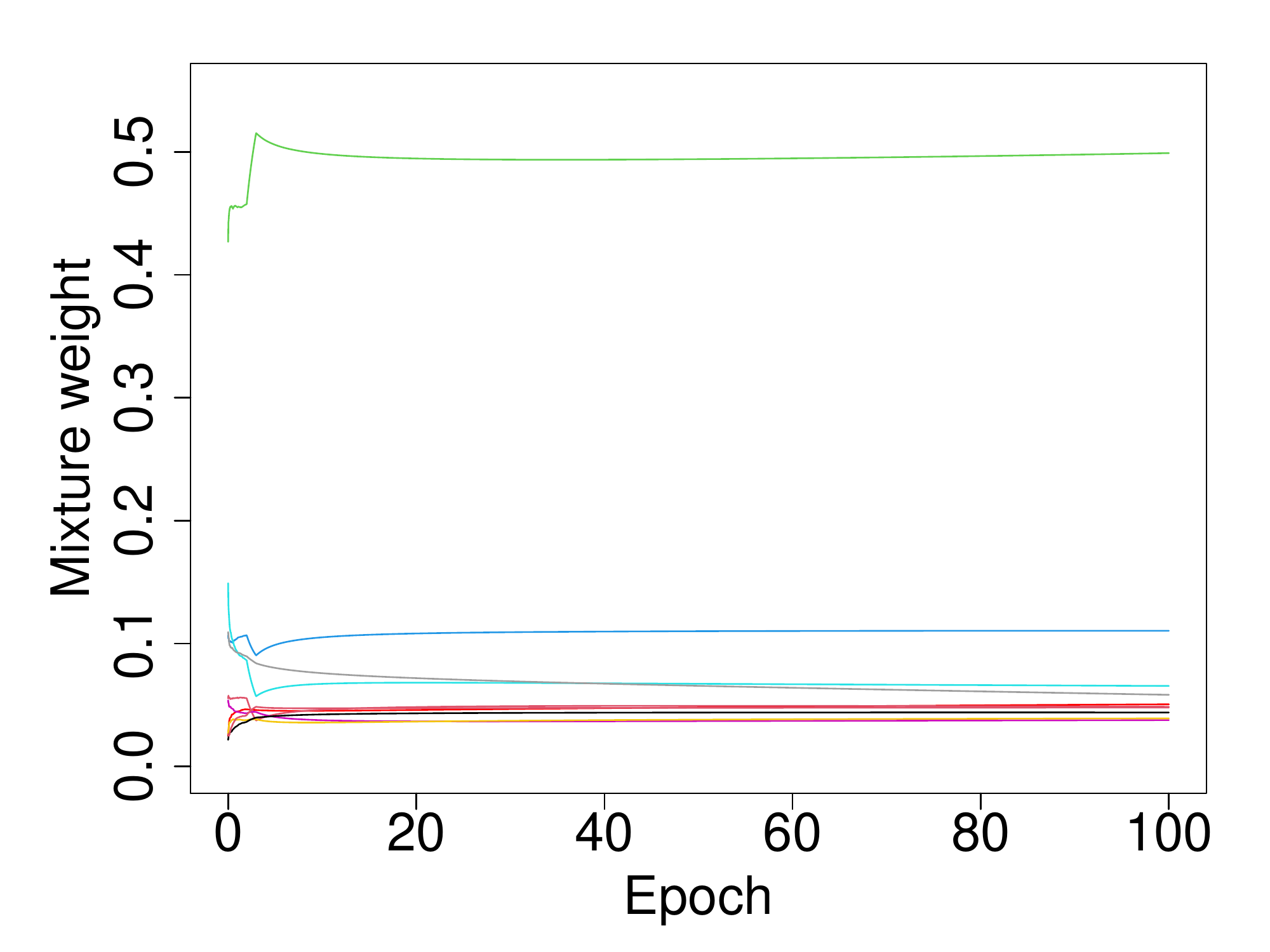}
	\hfill
	\includegraphics[width=0.22\textwidth]{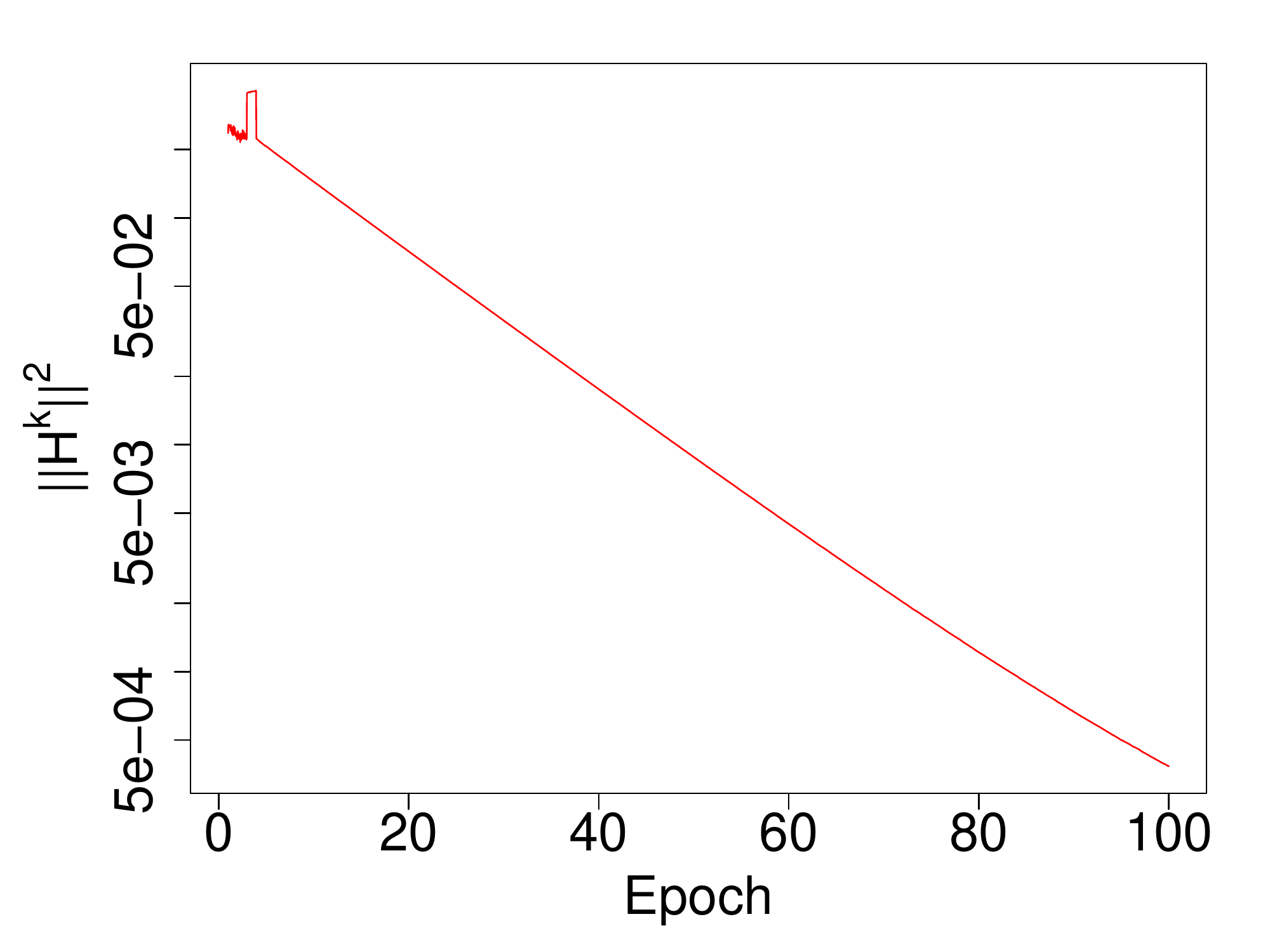}
	\caption{[Left to right] For \FEDEM~: Evolution of the
          estimates of the weights $\pi_\ell$ for $\ell \in [G]^\star$
          vs the number of epochs (first plot) and Evolution of the
          squared norm of the mean field $\|H_k\|^2$ vs the number of
          epochs (second plot). Then, the same things for
          \VRFEDEM\ (third and fourth plots).  \vspace{-0.4cm}}
	\label{fig:estimation:weight}\vspace{-.2cm}
	\label{fig:test2}
\end{minipage}%
\hspace{0.12cm}
\end{figure}

\textbf{Federated missing values imputation for citizen science.} 
We develop \texttt{FedMissEM}, a special instance of \FEDEM\ designed
to missing values imputation in the federated setting; we apply it to
the analysis of part of the {\em eBird} data base
\cite{SULLIVAN20092282,eBird}, a citizen science smartphone
application for biodiversity monitoring. In {\em eBird}, citizens
record wildlife observations, specifying the ecological site they
visited, the date, the species and the number of observed
specimens. Two major challenges occur: (i) ecological sites are
visited irregularly, which leads to missing values and (ii)
non-professional observers have heterogeneous wildlife counting
schemes. 

{\em $\bullet$ Model and the \texttt{FedMissEM} algorithm.}  $I$
observers participate in
the programme, there are $J$ ecological sites and $L$ time
stamps. Each observer $\# i$ provides a $J \times L$ matrix $X^i$ and
a subset of indices $\Omega^i \subseteq [J]^\star \times
[L]^\star$. For $j \in [J]^{\star}$ and $\ell \in [L]^{\star}$, the
variable $X_{j\ell}^i$ encodes the observation that would be collected
by observer $\# i$ if the site $\# j$ were visited at time stamp $\#
\ell$; since there are unvisited sites, we denote by $Y^i \eqdef
\{X^i_{j\ell}, (j,\ell)\in\Omega^i\}$ the set of observed values and
$Z^i \eqdef \{X^i_{j\ell}, (j,\ell)\notin\Omega^i\}$ the set of
unobserved values.  The statistical model is parameterized by a matrix
$\theta\in\rset^{J \times L}$, where $\theta_{j\ell}$ is a scalar
parameter characterizing the distribution of species individuals at
site $j$ and time stamp $\ell$. For instance, $\theta_{j\ell}$ is the
log-intensity of a Poisson distribution when the observations are
count data or the log-odd of a binomial model when the observations
are presence-absence data. This model could be extended to the case
observers $\# i$ and $\# i'$ count different number of specimens on
average at the same location and time stamp, because they
do not have access to the same material or do not have the same level
of expertise: heterogeneity between observers could be modeled by
using different parameters for each individual $\# i$ say
$\theta^{i}\in\rset^{J \times L}$.  Here, we consider the case when $
\theta_{j\ell}^{i} = \theta_{j\ell}$ for all $(j,\ell)\in
      [J]^\star\times[L]^\star$ and $i \in [I]^\star$.
We further assume that the entries $\{ X_{j\ell}^i, i \in [I]^\star,
  j \in [J]^\star, \ell \in [L]^\star\}$ are independent with a
  distribution from an exponential family with respect to some
  reference measure $\nu$ on $\rset$ of the form: $x
  \mapsto \rho(x) \exp\{ x \theta_{j\ell} - \psi(\theta_{j\ell})\}.$
  Algorithm~\ref{algo:fedImput} in Appendix~\ref{app:fedmiss} provides details on the model, and
the pseudo-code for \texttt{FedMissEM}.

$\bullet$ {\em Application to eBird data analysis.}  We apply
\texttt{FedMissEM} to the analysis of part of the {\em eBird} data
base \cite{SULLIVAN20092282,eBird} of field observations reported in
France by $I = 2,465$ observers, across $J = 9,721$ sites and at $L =
525$ monthly time points. We analyze successively two data sets
corresponding to observations of two relatively common species: the
Common Buzzard and the Mallard. These subsamples correspond
respectively to $N=5,980$ and $N=12,185$ field observations. The $I$
field observers are randomly assigned into $n = 10$ groups (the
observations of the field observers from the group $c \in [n]^\star$
are allocated to the server $\# c$).  For $c\in[n]^{\star}$, server
$c$ contains $N_c$ observations; in our two examples, $N_c$ ranges
between $400$ and $1,500$. We run \texttt{FedMissEM} for $150$ epochs;
with $\gamma = 10^{-4}$, $\alpha = 10^{-3}$, $\lbatch = 10^2$, a rank
$r = 2$ and $\lambda = 0$; for the distribution of the variables
$X^i_{j\ell}$, we use a Gaussian distribution with unknown expectation
$\theta_{j\ell}$ and variance $1$.  We recover aggregated temporal
trends at the national French level for these two bird species by
summing the estimated counts across ecological sites, for each time
stamp; the trends are displayed in Figure~\ref{fig:ebird}, along with
a locally estimated scatterplot smoothing (LOESS).

\section{Conclusions}
We introduced \FEDEM\ which is, to the best of our knowledge, the first
algorithm implementing EM in a FL setting, and handles compression of
exchanged information, data heterogeneity and partial
participation. We further extended it to incorporate a variance
reduction scheme, yielding \VRFEDEM. We derived complexity
bounds which highlight the efficiency of the two algorithms, and
illustrated our claims with numerical simulations, as well as an
application to biodiversity monitoring data. In a simultaneously published work,
\citet{marfoq2021federated} consider a different Federated EM
algorithm, in order to address the personalization challenge by
considering a mixture model.  Under the assumption that each local
data distribution is a mixture of unknown underlying distributions,
their algorithm computes a model corresponding to each
distribution. On the other hand, we focus on the curved exponential
family, with variance reduction, partial participation and compression
and on limiting the impact of heterogeneity, but do not address
personalization.

\textbf{Acknowledgments}
The work of A. Dieuleveut and E. Moulines is partially supported by
ANR-19-CHIA-0002-01 /chaire SCAI, and Hi!Paris.  The work of G. Fort is partially
supported by the Fondation Simone et Cino del Duca under the project
OpSiMorE.

\textbf{Broader Impact of this work} This work is mostly theoretical, and we believe it does not currently present any direct societal consequence. However, the methods described in this paper can be used to train machine learning models which could themselves have societal consequences. For instance, the deployment of machine learning models can suffer from gender and racial bias, or amplify existing inequalities.

\newpage


\appendix

\newpage
\begin{center}
\begin{LARGE}
\textbf{Supplementary materials for ``Federated Expectation Maximization with heterogeneity mitigation and variance reduction"}
\end{LARGE}
\end{center}


This supplementary material is organized as
follows. \Cref{app:mainresultsPP} extends the results obtained in
\Cref{theo:dianaem} to the Partial Participation
regime. \Cref{app:quantization} contains additional details on
compression mechanisms satisfying \Cref{hyp:var:quantif}, including an
example of admissible quantization operator. \Cref{app:proof-fedem}
contains the pseudo-code for algorithm \FEDEM~in the full
participation regime case, and the proof of \Cref{theo:dianaem} --
including necessary technical lemmas. \Cref{app:PP} contains details
concerning the extension to partial participation of the workers and
the proof of \Cref{prop:PP}. \Cref{app:VRFEDEM} is devoted to the
proof of \Cref{theo:DS} concerning the convergence of \VRFEDEM~and
necessary technical results; it also contains a discussion on the
complexity of \VRFEDEM~in terms of conditional expectations
evaluations. Finally, \Cref{app:numerical} contains additional details
about the latent variable models used in the numerical section, as
well as the pseudo code for \texttt{FedMissEM}. \\ Note that, in order
to make our numerical results reproducible, code is also provided as
supplementary material.

\paragraph{Notations}
For two vectors $a,b \in \rset^q$, $\pscal{a}{b}$ is the Euclidean
standard scalar product, and $\|\cdot \|$ denotes the associated
norm. For $r\geq 1$, $\|a\|_r$ is the $\ell_r$-norm of a vector
$a$. The Hadamard product $a \odot b$ denotes the entrywise product of
the two vectors $a,b$. By convention, vectors are column-vectors. For
a matrix $A$, $A^{\top}$ denotes its transpose and $\|A\|_F$ is its
Frobenius norm.  For a positive integer $n$, set
$[n]^\star \eqdef \{1, \cdots, n\}$ and $[n] \eqdef \{0, \cdots,
n\}$.
The set of non-negative integers (resp. positive) is denoted by
$\nset$ (resp. $\nset^\star$).  The minimum (resp. maximum) of two
real numbers $a,b$ is denoted by $a \wedge b$ (resp. $a \vee b$).  We
will use the Bachmann-Landau notation $a(x) = O(b(x))$ to characterize
an upper bound of the growth rate of $a(x)$ as being
$b(x)$. \\
We denote by $\mathcal{K}_p(\mu,\Sigma)$ the Gaussian distribution in
$\rset^p$, with expectation $\mu$ and covariance matrix~$\Sigma$.

\section{Results for \FEDEM~with partial participation and compression.}\label{app:mainresultsPP}
In this paragraph, we extend the results of \Cref{theo:dianaem} to the
\textit{Partial Participation} (PP) regime, in which only a fraction of the
workers participate to the training at each step of the learning
process. This is a key feature in the FL framework, as individuals may
not always be available or willing to participate
\cite{mcmahan_communication-efficient_2017}.  To analyze the
convergence in this situation, we make the following assumption.
\begin{assumption} \label{hyp:PP} For all $ k\in [\kmax-1] $,  $ \set_{k+1} \eqdef \{i \in [n]^\star  \ \text{s.t.} \ B_{k+1,i}=1\} $ where the random variables $B_{k+1,i}$ for $ i\in [n]^\star$ and $k\in [\kmax-1]$  are independent Bernoulli random variables with success probability $p \in \ooint{0,1}$.
\end{assumption}
This assumption is standard in the FL
literature~\citep{sattler_robust_2019,tang_doublesqueeze_2019,philippenko2020artemis},
and can easily be extended to worker dependent probabilities of
participation~\citep{horvath_better_2020}. 

\textbf{Usage of the control variates $ (V_{k,i})_{i\in [n]^*} $ with PP.} We have $V_k = n^{-1}\sum_{i=1}^n V_{k,i}$ for all
$k \geq 0$ (see \Cref{prop:meanV:PP}) even when the workers are not
all active at iteration $\# k$. A noteworthy point is
that, upon receiving $\Q(\Delta_{k+1,i})$ for all $i \in \set_{k+1}$,
the central server computes
\[
H_{k+1} = V_k + (np)^{-1} \sum_{i\in \set_{k+1}} \Q(\Delta_{k+1,i})
\]
and \textit{not}
\[
(np)^{-1} \sum_{i\in \set_{k+1}} (V_{k,i} +
\Q(\Delta_{k+1,i}) ) \eqsp.
\]
Though the later solution may appear more natural, it would actually
not only require to store all values $V_{k,i}$ for $i\in [n]^*$ on
the central server, but also impair convergence in the heterogeneous
setting. Indeed, even in the \textit{uncompressed} regime, in which
$\Q(\Delta_{k+1,i})=\Delta_{k+1,i}$, our algorithm differs from a
naive implementation of a distributed EM: \FEDEM~computes
\[
H_{k+1} =
V_k - (np)^{-1} \sum_{i \in \set_{k+1}} V_{k,i} + (np)^{-1} \sum_{i
  \in \set_{k+1}} \left( \Smem_{k+1,i} - \hatS_k \right)
\]
while a naive distributed EM would compute
\[
H_{k+1}^{\mathrm{dEM}}  \eqdef 
(np)^{-1} \sum_{i \in \set_{k+1}} \left( \Smem_{k+1,i} - \hatS_k
\right) \eqsp.
\]
Such an update $H_{k+1}^{\mathrm{dEM}}$ is expected not to be robust
to data heterogeneity as proved in \cite{philippenko2020artemis} for
the Stochastic Gradient algorithm in the FL setting.

The following theorem extends \Cref{theo:dianaem} to the partial
participation regime. Its proof is in \Cref{app:PP}.
\begin{theorem}
	\label{prop:PP}
	Assume \Cref{hyp:model} to \Cref{hyp:PP} and set $L^2 \eqdef n^{-1}
	\sum_{i=1}^n L_i^2$, $\sigma^2 \eqdef n^{-1} \sum_{i=1}^n
	\sigma_i^2$.  Let $\{\hatS_k, k \in [\kmax] \}$ be given by
	\autoref{algo:dianaem-main}, run with $\alpha\eqdef (1+\omega)^{-1}$
	and $\pas_k = \pas \in\ocint{0, \gamma_{\max}}$, where
	\[
	\gamma_{\max} \eqdef \frac{v_{\min}}{2L_{\dot \lyap}} \wedge \frac{
		p \sqrt{n}}{2 \sqrt{2} L (1+\omega) \sqrt{ \omega + (1-p) (1+\omega)/p}} \eqsp.
	\]
	Denote by $K$ the uniform random variable on $[\kmax-1]$. Then,
	taking $V_{0,i} \eqdef \mf_i(\hatS_0)$ for $i \in [n]^\star$, we get
	\[
	v_{\min} \left(1 - \pas \frac{L_{\dot \lyap}}{v_{\min}} \right)
	\PE\ \left[ \|\mf(\hatS_K)\|^2 \right] \leq \frac{\left(
		\lyap(\hatS_0) - \min \lyap \right)}{\pas \kmax} + \pas L_{\dot
		\lyap} \frac{1+5 \left( \omega + (1-p) (1+\omega) /p \right)}{n}
	\sigma^2 \eqsp.
	\]
\end{theorem}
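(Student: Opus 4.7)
The plan is to adapt the proof of \Cref{theo:dianaem} with two modifications capturing the effect of partial participation: (i) the stochastic field $H_{k+1}$ acquires extra variance because only a random subset of workers contributes to the aggregation, and (ii) the contraction of the memory error for the inactive workers is slowed down. The net effect will be captured by replacing $\omega$ by an effective compression constant $\omega_\mathrm{eff} \eqdef p^{-1}(1+\omega) - 1 = \omega + (1-p)(1+\omega)/p$ in the variance bound, and by a factor of $p$ in the rate at which the memory variables $V_{k,i}$ track the $\mf_i(\hatS_k)$'s.

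First, I would verify that $\CPE{H_{k+1}}{\F_k} = \mf(\hatS_k)$. Using the identity $V_k = n^{-1}\sum_{i=1}^n V_{k,i}$ (a straightforward induction on the updates at \autoref{line:upV} and \autoref{line:upV_central}, formalized in \Cref{prop:meanV:PP}), and the Bernoulli independence of \Cref{hyp:PP} combined with the unbiasedness of $\Q$ (\Cref{hyp:var:quantif}) and of $\Smem_{k+1,i}$ (\Cref{hyp:variance:oracle}), one has $\CPE{\indiacc{i \in \set_{k+1}} \Q(\Delta_{k+1,i})}{\F_k} = p(\mf_i(\hatS_k) - V_{k,i})$, hence $\CPE{H_{k+1}}{\F_k} = V_k + n^{-1}\sum_i (\mf_i(\hatS_k) - V_{k,i}) = \mf(\hatS_k)$. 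For the variance, letting $Y_{k+1,i} \eqdef \indiacc{i \in \set_{k+1}} \Q(\Delta_{k+1,i})/p$, conditional independence across $i$ reduces the computation to a sum. Using $\CPE{\indiacc{i \in \set_{k+1}}^2}{\F_k} = p$ together with \Cref{hyp:var:quantif} and \Cref{hyp:variance:oracle}, I would derive
\[
\CPE{\|H_{k+1} - \mf(\hatS_k)\|^2}{\F_k} \leq \frac{(1+\omega_\mathrm{eff})\sigma^2}{n} + \frac{\omega_\mathrm{eff}}{n^2} \sum_{i=1}^n \| \mf_i(\hatS_k) - V_{k,i} \|^2,
\]
the same structural bound as in the full-participation case but with $\omega_\mathrm{eff}$ in place of $\omega$.

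Next, combine this with the standard descent argument. By $L_{\dot \lyap}$-smoothness of $\lyap$ and \Cref{hyp:DS:lyap},
\[
\CPE{\lyap(\hatS_{k+1})}{\F_k} \leq \lyap(\hatS_k) - \pas v_{\min} \|\mf(\hatS_k)\|^2 + \tfrac{1}{2}\pas^2 L_{\dot \lyap} \, \CPE{\|H_{k+1}\|^2}{\F_k},
\]
where $\CPE{\|H_{k+1}\|^2}{\F_k} = \|\mf(\hatS_k)\|^2 + \CPE{\|H_{k+1} - \mf(\hatS_k)\|^2}{\F_k}$. To control the memory residual $e_k \eqdef n^{-1} \sum_i \|\mf_i(\hatS_k) - V_{k,i}\|^2$, I would use a Lyapunov-type argument: for an active worker $i \in \set_{k+1}$, a DIANA-style computation with $\alpha = 1/(1+\omega)$ produces a contraction factor $1-1/(1+\omega)^2$ on $\|\mf_i(\hatS_k) - V_{k,i}\|^2$; for an inactive worker, $V_{k+1,i} = V_{k,i}$, so progress only comes from the motion of $\hatS$ and the perturbation is controlled via \Cref{hyp:lipschitz} as $\|\mf_i(\hatS_{k+1}) - \mf_i(\hatS_k)\|^2 \leq L_i^2 \pas^2 \|H_{k+1}\|^2$. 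Averaging over the Bernoulli participation replaces the plain DIANA contraction by one of rate $p/(1+\omega)^2$ in expectation, which is what produces the factor $p$ in $\gamma_{\max}$.

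Finally, I would form a combined Lyapunov function $\mathcal{L}_k \eqdef \lyap(\hatS_k) - \min \lyap + c \pas^2 e_k$, choose $c$ so that the cross terms in $\|H_{k+1}\|^2$ and $e_k$ telescope across consecutive steps, sum the resulting one-step inequality over $k \in [\kmax-1]$, take total expectation, use the initialization $V_{0,i} = \mf_i(\hatS_0)$ so that $e_0 = 0$, and divide by $\kmax$ to obtain the stated bound. The constraint $\pas \in \ocint{0,\gamma_{\max}}$ arises from requiring the coefficient of $\|\mf(\hatS_k)\|^2$ in the telescoped inequality to stay non-negative; the term $p\sqrt{n}/(L(1+\omega)\sqrt{\omega_\mathrm{eff}})$ then appears through $L^2 = n^{-1}\sum_i L_i^2$. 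The main obstacle I anticipate is calibrating $c$ so that the slower contraction $p/(1+\omega)^2$ of active workers and the $L_i^2 \pas^2 \|H_{k+1}\|^2$ perturbation from inactive workers jointly dominate the variance term $(\omega_\mathrm{eff}/n^2)\sum_i \|\mf_i(\hatS_k) - V_{k,i}\|^2$; this balance is precisely what forces the effective compression constant to appear everywhere in the unified form $\omega + (1-p)(1+\omega)/p$.
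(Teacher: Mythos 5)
Your proposal follows essentially the same route as the paper's proof in \Cref{app:PP}: unbiasedness of $H_{k+1}$ via $V_k = n^{-1}\sum_{i=1}^n V_{k,i}$, a variance bound in which $\omega$ is replaced by the effective constant $\omega_p = \omega + (1-p)(1+\omega)/p$ (your computation of $\mathrm{Var}(Y_{k+1,i})$ matches the paper's three-term orthogonal decomposition in \Cref{fieldH:PP} combined with \Cref{prop:Delta:PP}), a memory-error contraction degraded by the factor $p$, and a Lyapunov combination $\lyap(\hatS_k) + C_k G_k$ that telescopes with $G_0=0$ from the initialization. The only quibble is cosmetic: the per-step contraction the paper actually obtains in \Cref{prop:varcont-PP2} is $1-\alpha p/2$ with $\alpha=(1+\omega)^{-1}$, i.e.\ rate $p/(2(1+\omega))$ rather than your quoted $p/(1+\omega)^2$, but this does not affect the structure of the argument or the final constants you state.
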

The above expressions can be simplified upon noting
that $\omega + (1-p) (1+\omega)/p \leq (1+\omega)/p$. When $p=1$,
\Cref{theo:dianaem} and \Cref{prop:PP} coincide.
More generally, \Cref{prop:PP}  highlights that partial participation impacts both the limiting variance (which increases by a factor proportional to $p^{-1}$) and the maximal learning rate.

\section{An example of quantization mechanisms: the block-$p$-quantization}
\label{app:quantization}
In this section, we recall the definition of a common lossy data
compression mechanism in FL (see,
e.g. \cite{mishchenko2019distributed}), called block-$p$-quantization,
and demonstrate that such quantizations satisfy the assumptions
required to derive our theoretical results. 

\paragraph{Block-$p$-quantization.}  Let $x \in \rset^q$. Choose
$\{q_\ell, \ell \in [m]^\star\}$ a sequence of positive integers such
that $\sum_{\ell=1}^m q_\ell = q$; and $p \in \nset^\star$.  For
$x\in \rset^q$, we define the block partition
\[ x = \left[\begin{matrix} x_{(1)} \\ \cdots \\ x_{(m)} \end{matrix}
\right],\eqsp x_{(l)}\in\rset^{q_\ell} \text{ for all
}\ell\in[m]^{\star}.\] For all $\ell \in [m]^\star$, set
\begin{equation}
 \label{eq:app:Q-def1}
 \hat X_{(\ell)} \eqdef \|x_{(\ell)}\|_p \left[\begin{matrix}
     \mathrm{sign}(x_{(\ell),1}) \\ \cdots
     \\ \mathrm{sign}(x_{(\ell),q_\ell}) \end{matrix} \right] \odot
 \left[\begin{matrix} U_{\ell,1} \\ \cdots
     \\ U_{\ell,q_\ell} \end{matrix} \right] \qquad U_{\ell,j}
 \stackrel{indep}{\sim} \mathcal{B}\left(
 \frac{|x_{(\ell),j}|}{\|x_{(\ell)}\|_p}\right) \eqsp,
\end{equation}
where $x_{(\ell)} = (x_{(\ell),1}, \cdots,
  x_{(\ell),q_\ell})^{\top} \in \rset^{q_\ell}$ and $\mathcal{B}(u)$
  denotes the Bernoulli random variable with success probability $u$.
The block-$p$-quantization operator $\Q: \rset^q \to \rset^q$ is
defined by
\begin{equation}
\label{eq:app:Q-def2}
\Q(x) \eqdef \left[ \begin{matrix} \hat X_{(1)} \\ \cdots \\ \hat X_{(m)} \end{matrix} \right] \eqsp.
\end{equation}
The following Lemma ensures the block-$p$-quantization operator $\Q$
satisfies the assumption \Cref{hyp:var:quantif} on the compression
mechanism required by \Cref{theo:dianaem}, \Cref{prop:PP} and
\Cref{theo:DS}.
\begin{lemma} \label{lem:quant} Let $p \in \nset^\star$ and
  $\{q_\ell, \ell \in [m]^\star \}$ be positive integers such that
  $\sum_{\ell=1}^m q_\ell =q$. For any $x \in \rset^q$, we have
\[
\PE\left[\Q(x) \right] = x \eqsp, \qquad \PE\left[ \| \Q(x) - x \|^2
\right] = \sum_{\ell=1}^m \left(\|x_{(\ell)}\|_1 \|x_{(\ell)}\|_p -
  \|x_{(\ell)}\|^2 \right) \eqsp,
  \]
  where $\Q$ is the block-$p$-quantization operator defined
  in~\eqref{eq:app:Q-def1} and \eqref{eq:app:Q-def2}. Thus,
  \Cref{hyp:var:quantif} holds. In particular, for $p=2$, we may take
  $\omega= \max_{\ell \in [m]^*} (\sqrt{q_{\ell}} -1)$.
\end{lemma}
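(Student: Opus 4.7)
The plan is to verify unbiasedness and the variance identity by working block by block, exploiting the fact that the blocks of $\Q(x)$ are independent (the randomness $\{U_{\ell,j}\}$ in block $\ell$ is independent across $\ell$) and that within a block the coordinates are independent Bernoullis.

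First, I would compute the entrywise expectation. Fix $\ell \in [m]^\star$ and $j \in [q_\ell]^\star$. Using \eqref{eq:app:Q-def1} and the fact that $\mathbb{E}[U_{\ell,j}] = |x_{(\ell),j}|/\|x_{(\ell)}\|_p$, I get
\[
\mathbb{E}[\hat X_{(\ell),j}] = \|x_{(\ell)}\|_p \,\mathrm{sign}(x_{(\ell),j}) \cdot \frac{|x_{(\ell),j}|}{\|x_{(\ell)}\|_p} = x_{(\ell),j},
\]
proving $\mathbb{E}[\Q(x)] = x$. (The degenerate case $x_{(\ell)} = 0$ forces $\hat X_{(\ell)} = 0$ by convention, and is handled trivially.)

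Next, for the variance identity, I would exploit coordinate-wise independence within each block to write
\[
\mathbb{E}\bigl[\|\hat X_{(\ell)} - x_{(\ell)}\|^2\bigr] = \sum_{j=1}^{q_\ell} \mathrm{Var}(\hat X_{(\ell),j}) = \sum_{j=1}^{q_\ell} \|x_{(\ell)}\|_p^2 \,\mathrm{Var}(U_{\ell,j}).
\]
Since $\mathrm{Var}(U_{\ell,j}) = (|x_{(\ell),j}|/\|x_{(\ell)}\|_p)(1 - |x_{(\ell),j}|/\|x_{(\ell)}\|_p)$, the sum telescopes to $\|x_{(\ell)}\|_1 \|x_{(\ell)}\|_p - \|x_{(\ell)}\|^2$. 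Summing over the $m$ independent blocks yields the announced identity.

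Finally, to check \Cref{hyp:var:quantif}, I would rewrite $\mathbb{E}[\|\Q(x)\|^2] = \|x\|^2 + \mathbb{E}[\|\Q(x)-x\|^2]$ (using unbiasedness) and bound the variance term by $\sum_\ell (\sqrt{q_\ell}-1)\|x_{(\ell)}\|^2$. For $p=2$ this follows from the elementary inequality $\|u\|_1 \leq \sqrt{d}\,\|u\|_2$ (a direct Cauchy--Schwarz) applied on each block of dimension $q_\ell$. Factoring out $\max_\ell (\sqrt{q_\ell}-1)$ and using $\sum_\ell \|x_{(\ell)}\|^2 = \|x\|^2$ produces $\mathbb{E}[\|\Q(x)\|^2] \leq (1+\omega)\|x\|^2$ with $\omega = \max_\ell(\sqrt{q_\ell}-1)$. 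No step looks like a real obstacle; the only mild care is ensuring the zero-block case is treated consistently and that the block independence, which is implicit in the definition but not emphasized, is used explicitly to split the variance across blocks.
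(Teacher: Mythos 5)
Your proposal is correct and follows essentially the same route as the paper's proof: blockwise computation of the Bernoulli moments for unbiasedness and the variance identity, then the bound $\|u\|_1 \le \sqrt{d}\,\|u\|_2$ per block for the $p=2$ case. The only cosmetic difference is that you invoke independence of the coordinates and blocks to split the variance, whereas linearity of expectation already suffices since the squared norm decomposes exactly into a sum of squared coordinate deviations.
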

\begin{proof}
We start by noticing that, for all $\ell \in [m]^\star$,
  $ \left(\Q(x) \right)_{(\ell)} = \hat X_{(\ell)}$. Furthermore,
\begin{align*}
\PE\left[ \hat X_{(\ell)} \right] &= \|x_{(\ell)}\|_p
\ \left[\begin{matrix} \mathrm{sign}(x_{(\ell),1}) \\ \cdots
    \\ \mathrm{sign}(x_{(\ell),q_\ell}) \end{matrix} \right] \odot
\left[\begin{matrix} \PE\left[U_{\ell,1} \right] \\ \cdots
    \\ \PE\left[ U_{\ell,q_\ell} \right] \end{matrix} \right] =
\|x_{(\ell)}\|_p \ \left[\begin{matrix} \mathrm{sign}(x_{(\ell),1})
    \\ \cdots \\ \mathrm{sign}(x_{(\ell),q_\ell}) \end{matrix} \right]
\odot \left[\begin{matrix} \frac{|x_{(\ell),1}|}{\|x_{(\ell)}\|_p}
    \\ \cdots
    \\ \frac{|x_{(\ell),q_\ell}|}{\|x_{(\ell)}\|_p} \end{matrix}
  \right] \\ & = \left[\begin{matrix} \mathrm{sign}(x_{(\ell),1})
    \\ \cdots \\ \mathrm{sign}(x_{(\ell),q_\ell}) \end{matrix} \right]
\odot \left[\begin{matrix} |x_{(\ell),1}| \\ \vdots
    \\ |x_{(\ell),q_\ell}| \end{matrix} \right] = \left[\begin{matrix}
    x_{(\ell),1} \\ \vdots \\ x_{(\ell),q_\ell} \end{matrix} \right] =
x_{(\ell)} \eqsp,
      \end{align*}
which concludes the proof of the first statement. To prove the second statement, we write
\[
\| \Q(x) - x \|^2 = \sum_{\ell=1}^m \| \hat X_{(\ell)} - x_{(\ell)}
\|^2 = \sum_{\ell=1}^m \|x_{(\ell)}\|_p^2 \, \sum_{j=1}^{q_\ell}
\left( U_{\ell,j} - \PE\left[U_{\ell,j} \right] \right)^2 \eqsp.
\]
Since $U_{\ell,j}$ is a Bernouilli random variable with parameter $|x_{(\ell),j}| / \|x_{(\ell)}\|_p$, it holds that
\[
\PE\left[ \left( U_{\ell,j} - \PE\left[U_{\ell,j} \right] \right)^2
  \right] = \frac{|x_{(\ell),j}| \left(\|x_{(\ell)}\|_p - |x_{(\ell),j}|\right)}{\|x_{(\ell)}\|_p^2} \eqsp.
\]
Hence
\begin{multline*}
\PE\left[ \| \Q(x) - x \|^2 \right] = \sum_{\ell=1}^m
\sum_{j=1}^{q_\ell} \left\{ |x_{(\ell),j}| \left(\|x_{(\ell)}\|_p -
    |x_{(\ell),j}|\right) \right\} \\
    = \sum_{\ell=1}^m
\left(\|x_{(\ell)}\|_1 \|x_{(\ell)}\|_p - \|x_{(\ell)}\|^2 \right) \eqsp,
\end{multline*}
which proves the second statement. In the particular case where $p=2$,
using the fact that
$\| x_{(\ell)} \|_1 \leq \sqrt{q_\ell} \| x_{(\ell)} \|$, we obtain that
\[
\PE\left[ \| \Q(x) - x \|^2 \right] \leq \sum_{\ell=1}^m
(\sqrt{q_\ell}-1) \| x_{(\ell)} \|^2 \leq \max_{\ell \in [m]^*}
(\sqrt{q_{\ell}} -1) \, \| x \|^2,
\]
which concludes the proof.
\end{proof}

\section{Convergence analysis of {\tt \FEDEM}}
\label{app:proof-fedem}
 This section contains all the elements to derive the convergence
 analysis of \FEDEM~developed in \Cref{sec:FEDEM} in the full
 participation regime. The analysis is organized as follows. First,
 \Cref{app:sub:fedem-code} gives the pseudo code of the
 \FEDEM~algorithm; \Cref{sec:FEDEM:tribu} introduces rigorous
 definitions for filtrations and a technical Lemma, and
 \Cref{app:proof-dianaem-preliminary} presents preliminary
 results. Then, the proof of \Cref{theo:dianaem} is given in
 \Cref{app:proof-dianaem} and the proof of \Cref{cor:diana-em} is in
 \Cref{proof:coro:dianaem}.

The assumptions \Cref{hyp:model} to \Cref{hyp:Tmap} are assumed
throughout this section.

\newpage

\subsection{Pseudo code of the \FEDEM~algorithm}
\label{app:sub:fedem-code}

For the sake of completeness of the supplementary material, we start
by recalling the pseudo code which defines the \FEDEM~sequence in the
\textbf{full participation regime.} It is given in
\autoref{algo:dianaemNoPP} below.

\begin{algorithm}[htbp]
  \caption{\FEDEM~\label{algo:dianaemNoPP}} \KwData{ $\kmax \in
    \nset^\star$; for $i \in [n]^\star$, $V_{0,i} \in \rset^q$;
    $\hatS_0 \in \rset^q$; a positive sequence $\{\pas_{k+1}, k \in
         [\kmax-1]\}$; $\alpha>0$} \KwResult{ The sequence:
    $\{\hatS_{k}, k \in [\kmax]\}$} Set $V_0 = n^{-1} \sum_{i=1}^n
  V_{0,i}$ \label{algo:FEDEM:init:V0}\; \For{$k=0, \ldots,
      \kmax-1$}{\For{$i=1, \ldots, n$} {{\em (worker $\# i$)} \;
        Sample $\Smem_{k+1,i}$, an approximation of $\bars_i \circ
        \map(\hatS_k)$ \; Set $\Delta_{k+1,i} = \Smem_{k+1,i} -
        V_{k,i} - \hatS_k$ \; Set $V_{k+1,i} = V_{k,i} + \alpha\,
        \Q(\Delta_{k+1;i})$. Send $\Q(\Delta_{k+1;i})$ to the central
        server \;} {\em (the central server)} \; Compute $H_{k+1} =
      V_k + n^{-1} \sum_{i=1}^n \Q(\Delta_{k+1;i})$ \; Set
      $\hatS_{k+1} = \hatS_k + \pas_{k+1} H_{k+1}$ \; Set $V_{k+1} =
      V_k + \alpha n^{-1} \sum_{i=1}^n \Q(\Delta_{k+1;i})$ \; Send
      $\hatS_{k+1}$ and $\map (\hatS_{k+1})$ to the $n$ workers}
\end{algorithm}

\subsection{Notations and technical lemma}
\label{sec:FEDEM:tribu}
In this section, we start by introducing the appropriate filtrations employed later on to define conditional expectations. Then, we present a technical lemma used in the main proof of \Cref{theo:dianaem} (see \Cref{app:proof-dianaem}).
\paragraph{Notations.}
For any random variable $U$, we denote by $\sigma(U)$ the
sigma-algebra generated by $U$. For $n$ sigma-algebras
$\{ \F_k, k \in [n]^\star\}$, we denote by $\bigvee_{k=1}^n \F_k$ the
sigma-algebra generated by $\{ \F_k, k \in [n]^\star \}$.

\paragraph{Definition of filtrations.}
Let us define the following filtrations. For any $i \in [n]^\star$, we
set
$$\F_{0,i} = \F_{0,i}^+ \eqdef \sigma\left(\hatS_0; V_{0,i} \right)
\text{ and } \qquad \F_{0} \eqdef \bigvee_{i=1}^n \F_{0,i} \eqsp.$$
Then, for all $k\geq 0$,
\begin{enumerate}[(i)]
\item$ \F_{k+1/2,i} \eqdef \F_{k,i}^+ \vee \sigma\left( \Smem_{k+1,i}
  \right)$,
  \item $\F_{k+1,i} \eqdef \F_{k+1/2,i} \vee
  \sigma\left( \Q(\Delta_{k+1,i}) \right)$,
  \item $\F_{k+1} \eqdef
  \bigvee_{i=1}^n \F_{k+1,i}$,
  \item $\F_{k+1,i}^+ \eqdef
  \F_{k+1,i} \vee \F_{k+1}$.
\end{enumerate}
Note that, with these notations, for $k \geq 0$ and $i \in [n]^\star$,
the random variables of the \FEDEM~sequence defined in
Algorithm~\ref{algo:dianaemNoPP} belong to the filtrations defined above
as follows:
\begin{enumerate}[(i)]
\item $\hatS_{k} \in \F_{k,i}^+$, $\hatS_k \in \F_k$,
\item  $\Smem_{k+1,i} ,\Delta_{k+1,i} \in \F_{k+1/2,i}$,
\item  $V_{k+1,i} \in \F_{k+1,i}$,
\item $\hatS_{k+1} , H_{k+1},V_{k+1} \in \F_{k+1}$.
\end{enumerate}
Note also that we have the following inclusions for filtrations:
$\F_k \subset \F_{k,i}^+ \subset \F_{k+1/2,i} \subset \F_{k+1,i}
\subset \F_{k+1}$ for all $i \in [n]^\star$.

\paragraph{Elementary lemma.}
In the main proof of \Cref{theo:dianaem}, we use the following elementary lemma.
\begin{lemma}\label{lem:polar}
  For any $x,y\in \rset^q$ and for any $\alpha \in \rset$, one has:
	\begin{equation*}
	\|\alpha x + (1-\alpha) y\|^2 = \alpha \|x\|^2 + (1-\alpha) \|y\|^2 - \alpha (1-\alpha) \|x-y\|^2.
	\end{equation*}
\end{lemma}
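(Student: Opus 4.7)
The plan is to verify this identity by a direct expansion of both sides using the bilinearity of the Euclidean inner product on $\rset^q$. First I would expand the left-hand side as
\[
\|\alpha x + (1-\alpha) y\|^2 = \alpha^2 \|x\|^2 + 2\alpha(1-\alpha) \pscal{x}{y} + (1-\alpha)^2 \|y\|^2,
\]
which is just $\|u+v\|^2 = \|u\|^2 + 2\pscal{u}{v} + \|v\|^2$ applied to $u=\alpha x$ and $v=(1-\alpha)y$. Next, on the right-hand side, I would expand $\|x-y\|^2 = \|x\|^2 - 2\pscal{x}{y} + \|y\|^2$, substitute this into the expression, and collect terms.

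The remaining work is purely algebraic: after substitution, the coefficient of $\|x\|^2$ on the right is $\alpha - \alpha(1-\alpha) = \alpha^2$, the coefficient of $\|y\|^2$ is $(1-\alpha) - \alpha(1-\alpha) = (1-\alpha)^2$, and the coefficient of $\pscal{x}{y}$ is $2\alpha(1-\alpha)$. This matches the expansion of the left-hand side exactly, proving the identity. An equally short alternative would be to write $\alpha x + (1-\alpha) y = y + \alpha(x-y)$ and expand; both approaches are elementary and there is no real obstacle, since the identity is degree two in $\alpha$ with coefficients that are linear combinations of $\|x\|^2$, $\|y\|^2$, and $\pscal{x}{y}$. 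The identity holds for any $\alpha \in \rset$, including outside $[0,1]$, since the argument never uses convexity.
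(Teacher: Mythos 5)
Your proof is correct and follows essentially the same route as the paper's: both expand the left-hand side via bilinearity of the inner product, expand $\|x-y\|^2$ on the right, and conclude by matching coefficients using $\alpha - \alpha(1-\alpha) = \alpha^2$ and $(1-\alpha) - \alpha(1-\alpha) = (1-\alpha)^2$. Nothing is missing; the remark that the identity holds for all $\alpha \in \rset$ is a correct (if unneeded) bonus.
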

\begin{proof}
  The LHS is equal to
  \[
\alpha^2 \|x \|^2 + (1-\alpha)^2 \|y\|^2 + 2 \alpha(1-\alpha) \pscal{x}{y} \eqsp.
\]
The RHS is equal to
\[
\alpha \|x\|^2 + (1-\alpha) \|y \|^2 - \alpha (1-\alpha) \left(
\|x\|^2 + \|y\|^2 - 2\pscal{x}{y} \right) \eqsp.
\]
The proof is concluded upon noting that $\alpha - \alpha(1-\alpha) =
\alpha^2$ and $(1-\alpha) - \alpha (1-\alpha) = (1-\alpha)^2$.
  \end{proof}

\subsection{Preliminary results}
\label{app:proof-dianaem-preliminary}
In this section, we gather preliminary results on the control of the bias and variance of random variables of interest, which will be used in the main proof of \Cref{theo:dianaem}. Namely, \Cref{prop:fieldH} controls the random field $H_{k+1}$, \Cref{prop:FedEM:Delta} controls the local increments $\Delta_{k+1,i}$ and \Cref{prop:varcont2} controls the memory term $V_{k,i}$.

\subsubsection{Results on the memory terms $V_{k}$.}
\Cref{prop:sumV} shows that, even if the central
  server only receives the variation $\alpha^{-1}(V_{k+1,i} -
  V_{k,i})$ from each local worker $\# i$, it is able to compute
  $n^{-1} \sum_{i=1}^n V_{k+1,i}$ as soon as the quantity $V_0$ is
  correctly initialized.
\begin{proposition} \label{prop:sumV}
For any $k \in [\kmax]$, we have 
\[
V_{k} = \frac{1}{n} \sum_{i=1}^n V_{k,i} \eqsp.
\]
\end{proposition}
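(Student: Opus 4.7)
The plan is to prove this by a straightforward induction on $k \in [\kmax]$.

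For the base case $k=0$, the claim is immediate: the initialization step in \autoref{algo:dianaemNoPP} (\autoref{algo:FEDEM:init:V0}) sets $V_0 \eqdef n^{-1} \sum_{i=1}^n V_{0,i}$ by definition, so there is nothing to prove.

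For the inductive step, I would assume that $V_k = n^{-1} \sum_{i=1}^n V_{k,i}$ and show the same identity at step $k+1$. The two relevant updates in \autoref{algo:dianaemNoPP} are, for each worker $\# i$, $V_{k+1,i} = V_{k,i} + \alpha\, \Q(\Delta_{k+1,i})$, and on the central server side, $V_{k+1} = V_k + \alpha n^{-1} \sum_{i=1}^n \Q(\Delta_{k+1,i})$. Averaging the local updates over $i \in [n]^\star$ yields
\[
\frac{1}{n} \sum_{i=1}^n V_{k+1,i} = \frac{1}{n} \sum_{i=1}^n V_{k,i} + \alpha \frac{1}{n} \sum_{i=1}^n \Q(\Delta_{k+1,i}) \eqsp,
\]
and substituting the induction hypothesis for the first term on the right-hand side gives exactly the central server's update formula for $V_{k+1}$, concluding the inductive step.

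There is no real obstacle here; the statement is essentially a bookkeeping observation showing that the central server can maintain the exact mean $n^{-1} \sum_i V_{k,i}$ by only receiving the quantized increments $\Q(\Delta_{k+1,i})$ from the workers, without ever storing the individual memories $V_{k,i}$. The PP case handled later in the appendix (\Cref{prop:meanV:PP}) will require slightly more care because only active workers update their memory, but in the full participation setting treated here the proof is a one-line induction.
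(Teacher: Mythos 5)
Your proof is correct and matches the paper's own argument: both are the same one-line induction on $k$, with the base case from the initialization of $V_0$ and the inductive step obtained by combining the server's update of $V_{k+1}$ with the averaged local updates $V_{k+1,i} = V_{k,i} + \alpha\,\Q(\Delta_{k+1,i})$. The only cosmetic difference is that you start from the average of the local memories and the paper starts from the server's recursion, which is the same algebra read in the opposite direction.
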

\begin{proof}
  The proof is by induction on $k$.  When $k=0$, the property holds
  true by Line~\ref{algo:FEDEM:init:V0} in
  \autoref{algo:dianaemNoPP}. Assume that the property holds for
  $k \leq \kin-2$. Then by definition of $V_{k+1}$ and by the
  induction assumption:
  \begin{align*}
  V_{k+1} &= V_{k} + \alpha \frac{1}{n} \sum_{i=1}^n
  \Q(\Delta_{k+1,i}) = \frac{1}{n} \sum_{i=1}^n \left( V_{k,i}
  + \alpha \Q(\Delta_{k+1,i}) \right) \\ &= \frac{1}{n}
  \sum_{i=1}^n V_{k+1,i} \eqsp.
  \end{align*}
  This concludes the induction.
\end{proof}

\subsubsection{Results on the random field $H_{k+1}$.} We compute in
\Cref{prop:fieldH} the conditional expectation of $H_{k+1}$ with
respect to the appropriate filtration $\F_k$ defined in
\Cref{sec:FEDEM:tribu}, as well as an upper bound on its
variance. These results are combined in an upper bound on the
conditional expectation of the square norm $\|H_{k+1}\|^2$ in
\Cref{coro:Hmoment2}.

\Cref{prop:fieldH} shows that the stochastic field
  $H_{k+1}$ is a (conditionally) unbiased estimator of $\mf(\hatS_k)$.
  In the case of no compression (i.e. $\omega=0$), the conditional
  variance of $H_{k+1}$ is $\sigma^2/n$ where $\sigma^2$ is the mean
  variance of the approximations $\Smem_{k+1,i}$ over the $n$ workers
  (see \Cref{hyp:variance:oracle}); when $\sup_i \sigma^2_i < \infty$,
  the variance is inversely proportional to the number of workers
  $n$.
\begin{proposition} \label{prop:fieldH} Assume \Cref{hyp:var:quantif} and
  \Cref{hyp:variance:oracle} and set
  $\sigma^2 \eqdef n^{-1} \sum_{i=1}^n \sigma_i^2$.  For any
  $k \geq 0$,
\begin{align}
\label{eq:fieldH1}
\CPE{H_{k+1}}{\F_k} &= \mf(\hatS_k)  \eqsp, \\
\label{eq:fieldH1}
  \CPE{\| H_{k+1} - \CPE{H_{k+1}}{\F_k} \|^2}{\F_k}
                    &\le \frac{\omega}{n} \left( \frac{1}{n} \sum_{i=1}^n \CPE{\| \Delta_{k+1,i}\|^2}{\F_k} \right) +\frac{\sigma^2}{n}  \eqsp.
\end{align}
\end{proposition}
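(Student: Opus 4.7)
The plan is to prove the two claims by carefully conditioning on the filtrations defined in \Cref{sec:FEDEM:tribu}, using the unbiasedness and variance bounds provided by \Cref{hyp:var:quantif} and \Cref{hyp:variance:oracle}, together with the identity $V_k = n^{-1}\sum_{i=1}^n V_{k,i}$ established in \Cref{prop:sumV}. The core observation is that the randomness entering $H_{k+1}$ decomposes into two ``layers'': the oracle noise of the $\Smem_{k+1,i}$ (conditionally independent across workers given $\F_k$), followed by the quantization noise of $\Q$ (conditionally independent across workers given $\bigvee_{j}\F_{k+1/2,j}$). Each layer is centered, which will kill the cross-terms after two levels of conditioning.

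For the first identity, I would note that $V_k, V_{k,i},\hatS_k$ are $\F_k$-measurable, so applying the tower property through $\F_{k+1/2,i}$ and using \Cref{hyp:var:quantif} gives $\CPE{\Q(\Delta_{k+1,i})}{\F_k}=\CPE{\Delta_{k+1,i}}{\F_k}$. By the definition of $\Delta_{k+1,i}$ and the unbiasedness part of \Cref{hyp:variance:oracle}, this equals $\bars_i\circ\map(\hatS_k)-V_{k,i}-\hatS_k$. Averaging over $i$ and using $V_k=n^{-1}\sum_i V_{k,i}$ (\Cref{prop:sumV}) cancels the memory contributions and yields $\CPE{H_{k+1}}{\F_k}=n^{-1}\sum_i\bigl(\bars_i\circ\map(\hatS_k)-\hatS_k\bigr)=\mf(\hatS_k)$ by the definition \eqref{eq:hfield} of the mean field.

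For the variance bound, I would write the centered quantity as
\[
H_{k+1}-\mf(\hatS_k)=\frac{1}{n}\sum_{i=1}^n A_i+\frac{1}{n}\sum_{i=1}^n B_i,
\]
where $A_i\eqdef\Q(\Delta_{k+1,i})-\Delta_{k+1,i}$ is the quantization noise and $B_i\eqdef\Smem_{k+1,i}-\bars_i\circ\map(\hatS_k)$ is the oracle noise; note that $n^{-1}\sum_i B_i=n^{-1}\sum_i(\Delta_{k+1,i}-\CPE{\Delta_{k+1,i}}{\F_k})$. Expanding $\|\cdot\|^2$ and conditioning on $\bigvee_{j}\F_{k+1/2,j}$, each $B_i$ becomes measurable while $\CPE{A_i}{\bigvee_j\F_{k+1/2,j}}=0$, so the cross term $\langle \sum_i A_i,\sum_i B_i\rangle$ vanishes. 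By the assumed cross-worker independence of the quantization randomness, $\CPE{\langle A_i,A_j\rangle}{\bigvee_\ell\F_{k+1/2,\ell}}=0$ for $i\ne j$, and \Cref{hyp:var:quantif} gives $\CPE{\|A_i\|^2}{\bigvee_\ell\F_{k+1/2,\ell}}\le\omega\|\Delta_{k+1,i}\|^2$. Taking $\F_k$-conditional expectations then yields $\CPE{\|n^{-1}\sum_i A_i\|^2}{\F_k}\le(\omega/n^2)\sum_i\CPE{\|\Delta_{k+1,i}\|^2}{\F_k}$. For the $B_i$ term, the conditional independence of the $\Smem_{k+1,i}$ across $i$ in \Cref{hyp:variance:oracle} kills the cross-products and the per-worker variance bound $\sigma_i^2$ gives $\CPE{\|n^{-1}\sum_i B_i\|^2}{\F_k}\le n^{-2}\sum_i\sigma_i^2=\sigma^2/n$. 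Summing the two contributions produces the stated inequality.

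The bulk of the proof is bookkeeping with the filtration tower $\F_k\subset\F_{k+1/2,i}\subset\F_{k+1,i}\subset\F_{k+1}$; the main subtlety I expect is justifying carefully that the quantization noises $A_i$ are mean-zero and have the $\omega\|\Delta_{k+1,i}\|^2$ variance bound conditionally on the joint $\sigma$-algebra $\bigvee_{j}\F_{k+1/2,j}$ (and not merely on $\F_{k+1/2,i}$), which is where the cross-worker independence of the compression draws is implicitly used.
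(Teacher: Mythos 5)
Your proposal is correct and follows essentially the same route as the paper's proof: the identical decomposition of $H_{k+1}-\mf(\hatS_k)$ into the quantization noise $\Q(\Delta_{k+1,i})-\Delta_{k+1,i}$ plus the oracle noise $\Smem_{k+1,i}-\bars_i\circ\map(\hatS_k)$, with the same use of \Cref{prop:sumV}, \Cref{hyp:var:quantif} and \Cref{hyp:variance:oracle}. The only difference is presentational: the paper disposes of the cross terms by simply invoking the conditional independence of the workers given $\F_k$, whereas you make the intermediate conditioning on $\bigvee_j\F_{k+1/2,j}$ explicit, which is a slightly more careful rendering of the same argument.
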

\begin{proof}
  Let $k \geq 0$. \Cref{hyp:var:quantif} guarantees
\begin{align} \label{eq:Quant:1/2}
\CPE{ \sum_{i=1}^n \Q(\Delta_{k+1,i})}{\F_{k+1/2,i}} & =
\sum_{i=1}^n \CPE{\Q(\Delta_{k+1,i})}{\F_{k+1/2,i}} \nonumber \\
  &= \sum_{i=1}^n  \{ \Smem_{k+1,i} - V_{k,i} - \hatS_k \} \eqsp.
\end{align}
Note also that, by \Cref{hyp:variance:oracle},
$\CPE{\Smem_{k+1,i}}{\F_{k,i}^+} = \bars_i \circ \map(\hatS_k)$, and
that $V_k\in \F_{k}$ and $\F_k \subset \F_{k,i}^+ \subset
\F_{k+1/2,i}$ (see \Cref{sec:FEDEM:tribu}). Combined
with~\eqref{eq:Quant:1/2} and using that $n^{-1} \sum_{i=1}^n V_{k,i}=
V_k$ (see \Cref{prop:sumV}), this yields
\begin{equation*}
  \CPE{H_{k+1}}{\F_{k}}  =\CPE{n^{-1}
    \sum_{i=1}^n \Q(\Delta_{k+1,i})}{\F_{k}} + V_k \\
  = \frac{1}{n} \sum_{i=1}^n
  \bars_i \circ \map(\hatS_k) - \hatS_k = \mf(\hatS_k)\eqsp.
\end{equation*}
We now prove the second statement, and start by writing
\begin{align*}
H_{k+1} - \mf(\hatS_k)  & = \frac{1}{n} \sum_{i=1}^n
\Q(\Delta_{k+1,i}) + V_k - \frac{1}{n} \sum_{i=1}^n \bars_i \circ
\map(\hatS_k) + \hatS_k \\ & = \frac{1}{n} \sum_{i=1}^n
\left\{\Q(\Delta_{k+1,i}) - \CPE{\Q(\Delta_{k+1,i})}{
  \F_{k+1/2,i}} \right\} \\ & \quad + \frac{1}{n} \sum_{i=1}^n
\{ \Smem_{k+1,i} - \bars_i \circ \map(\hatS_k) \} \eqsp,
\end{align*}
where we applied \eqref{eq:Quant:1/2} to obtain the last equality. Using the fact that $\Smem_{k+1,i} - \bars_i \circ \map(\hatS_k) \in \F_{k+1/2,i}$
and since, conditionally to $\F_k$, the workers are independent we have
\begin{align*}
\CPE{\| H_{k+1} - \mf(\hatS_k) \|^2}{\F_k}
& = \frac{1}{n^2} \sum_{i=1}^n \CPE{\| \Q(\Delta_{k+1,i})
  - \CPE{\Q(\Delta_{k+1,i})}{\F_{k+1/2,i}} \|^2}{\F_k}  \\
& + \frac{1}{n^2} \sum_{i=1}^n \CPE{\| \Smem_{k+1,i}
  - \bars_i \circ \map(\hatS_k) \|^2}{\F_k} \eqsp.
\end{align*}
The second terme in the RHS is upped bounded by $n^{-1} \sigma^2$ (see
\Cref{hyp:variance:oracle}).  For the first term, using
\Cref{hyp:var:quantif} and since $\Delta_{k+1,i} \in \F_{k+1/2,i}$,
for any $i \in [n]^\star$ we have
\begin{align*}
& \CPE{\| \Q(\Delta_{k+1,i}) - \CPE{\Q(\Delta_{k+1,i})}{\F_{k+1/2,i}} \|^2}{ \F_{k+1/2,i}}  \\
&  = \CPE{\| \Q(\Delta_{k+1,i}) \|^2}{\F_{k+1/2,i}} - \|\Delta_{k+1,i}\|^2 \\
& \qquad \leq (1+\omega) \| \Delta_{k+1,i} \|^2 - \| \Delta_{k+1,i}\|^2 = \omega \| \Delta_{k+1,i}\|^2 \eqsp,
\end{align*}
which concludes the proof upon conditioning with respect to $\F_k$.
\end{proof}
\begin{corollary}[of \Cref{prop:fieldH}] \label{coro:Hmoment2}
	\begin{align*}
          \CPE{\| H_{k+1} \|^2}{\F_k} \le \|
          \mf(\hatS_k) \|^2+ \frac{\omega}{n} \left( \frac{1}{n}\sum_{i=1}^n
          \CPE{\| \Delta_{k+1,i}\|^2}{\F_k}  \right) +\frac{\sigma^2}{n}  \eqsp.
	\end{align*}
 \end{corollary}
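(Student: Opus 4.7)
The plan is to derive this corollary from \Cref{prop:fieldH} by invoking the standard bias--variance decomposition of the conditional second moment. Specifically, since $H_{k+1}$ is $\F_{k+1}$-measurable and integrable under the assumptions in force, we have the identity
\begin{equation*}
\CPE{\|H_{k+1}\|^2}{\F_k} = \bigl\|\CPE{H_{k+1}}{\F_k}\bigr\|^2 + \CPE{\bigl\|H_{k+1} - \CPE{H_{k+1}}{\F_k}\bigr\|^2}{\F_k}.
\end{equation*}

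Next, I would substitute the two conclusions of \Cref{prop:fieldH}. The first equality of \Cref{prop:fieldH} gives $\CPE{H_{k+1}}{\F_k} = \mf(\hatS_k)$, so $\bigl\|\CPE{H_{k+1}}{\F_k}\bigr\|^2 = \|\mf(\hatS_k)\|^2$. The second inequality of \Cref{prop:fieldH} controls the conditional variance term by $(\omega/n)\bigl(n^{-1}\sum_{i=1}^n \CPE{\|\Delta_{k+1,i}\|^2}{\F_k}\bigr) + \sigma^2/n$. Plugging both into the decomposition above yields exactly the claimed bound.

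There is no real obstacle here: the result is a direct consequence of the identity above combined with the two statements of \Cref{prop:fieldH}. The only thing to check is the (implicit) integrability of $H_{k+1}$, which follows from \Cref{hyp:var:quantif} and \Cref{hyp:variance:oracle} (bounded conditional second moments of $\Q(\Delta_{k+1,i})$ and $\Smem_{k+1,i}$). The proof will therefore be a two-line computation.
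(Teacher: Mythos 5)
Your proof is correct and follows exactly the route the paper intends: the corollary is stated without a separate proof precisely because it is the bias--variance identity $\CPE{\|H_{k+1}\|^2}{\F_k} = \|\CPE{H_{k+1}}{\F_k}\|^2 + \CPE{\|H_{k+1}-\CPE{H_{k+1}}{\F_k}\|^2}{\F_k}$ combined with the two statements of \Cref{prop:fieldH}, which is what you wrote. Nothing is missing.
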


 \subsubsection{Results on the local increments $\Delta_{k+1,i}$.} We
 compute in \Cref{prop:FedEM:Delta} an upper bound on the second
 conditional moment of $\Delta_{k+1,i}$, with respect to the
 appropriate filtration $\F_k$ (see \Cref{sec:FEDEM:tribu}).

\begin{proposition}  \label{prop:FedEM:Delta}
  Assume \Cref{hyp:variance:oracle}.  For any $i \in [n]^\star$ and $k
  \in [\kmax -1]$,
  \[
\CPE{\|\Delta_{k+1,i} \|^2}{\F_k} \leq \| V_{k,i} - \mf_i(\hatS_k) \|^2 +\sigma_i^2 \eqsp.
  \]
  \end{proposition}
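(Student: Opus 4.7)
The plan is to perform a bias–variance decomposition of $\Delta_{k+1,i}$ by introducing $\mf_i(\hatS_k) = \bars_i \circ \map(\hatS_k) - \hatS_k$. Concretely, I would first rewrite
\[
\Delta_{k+1,i} \;=\; \Smem_{k+1,i} - V_{k,i} - \hatS_k
\;=\; \bigl(\Smem_{k+1,i} - \bars_i\circ\map(\hatS_k)\bigr) \;-\; \bigl(V_{k,i} - \mf_i(\hatS_k)\bigr),
\]
using only the definition $\mf_i(\hatS_k) = \bars_i\circ\map(\hatS_k) - \hatS_k$ and a simple addition–subtraction of $\bars_i\circ\map(\hatS_k)$.

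Next, I would argue that the second term on the right is $\F_k$-measurable while the first term has zero conditional mean. Indeed, by the filtration conventions in \Cref{sec:FEDEM:tribu} we have $V_{k,i}\in\F_{k,i}\subset \F_k$ and $\hatS_k\in\F_k$, so $V_{k,i}-\mf_i(\hatS_k)\in\F_k$. Moreover \Cref{hyp:variance:oracle} yields $\CPE{\Smem_{k+1,i}}{\F_k} = \bars_i\circ\map(\hatS_k)$, so
\[
\CPE{\Smem_{k+1,i} - \bars_i\circ\map(\hatS_k)}{\F_k} = 0.
\]
Therefore the two components of the decomposition are orthogonal in $L^2(\F_k)$, and Pythagoras (applied conditionally) gives
\[
\CPE{\|\Delta_{k+1,i}\|^2}{\F_k}
= \|V_{k,i}-\mf_i(\hatS_k)\|^2 + \CPE{\|\Smem_{k+1,i}-\bars_i\circ\map(\hatS_k)\|^2}{\F_k}.
\]

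Finally, I would invoke the variance bound from \Cref{hyp:variance:oracle}, namely $\CPE{\|\Smem_{k+1,i}-\bars_i\circ\map(\hatS_k)\|^2}{\F_k}\le \sigma_i^2$, to conclude the claimed inequality. There is no real obstacle here: the entire proof is a conditional bias–variance identity, and the only point that requires a little care is making sure the rewriting of $\Delta_{k+1,i}$ correctly isolates an $\F_k$-measurable ``memory discrepancy'' $V_{k,i}-\mf_i(\hatS_k)$ from the conditionally centered oracle error, so that Pythagoras applies without a cross term.
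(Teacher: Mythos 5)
Your proof is correct and takes essentially the same route as the paper's: both isolate the $\F_k$-measurable memory discrepancy $V_{k,i}-\mf_i(\hatS_k)$ from the conditionally centered oracle error $\Smem_{k+1,i}-\bars_i\circ\map(\hatS_k)$ and apply a conditional bias--variance (Pythagoras) identity followed by the variance bound of \Cref{hyp:variance:oracle}. The only cosmetic difference is that the paper first conditions on the intermediate $\sigma$-algebra $\F_{k,i}^+$ and then invokes the tower property via $\F_k \subset \F_{k,i}^+$, whereas you condition on $\F_k$ directly.
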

\begin{proof}
  Let $i \in [n]^\star$ and $k \in [\kmax -1]$.  By
  \Cref{hyp:variance:oracle},
  $\CPE{\Smem_{k+1,i} - \hatS_k}{\F_{k,i}^+} = \mf_i(\hatS_k)$; in
  addition, $\hatS_k \in \F_k$, $V_{k,i} \in \F_{k,i}^+$ and
  $\F_k \subset \F_{k,i}^+$. Hence, we get
\begin{align}
  \nonumber
  \CPE{\|\Delta_{k+1,i}\|^2}{\F_{k,i}^+}
  &= \CPE{\|\Smem_{k+1,i} - V_{k,i} - \hatS_k\|^2}{\F_{k,i}^+} \\
  &= \|\mf_i(\hatS_k)- V_{k,i}\|^2 +\CPE{\| \Smem_{k+1,i} - \hatS_k -\mf_i(\hatS_k) \|^2}{\F_{k,i}^+}
    \nonumber \\
  & = \|\mf_i(\hatS_k)- V_{k,i}\|^2 +\CPE{\| \Smem_{k+1,i} - \bars_i \circ \map(\hatS_k) \|^2}{\F_{k,i}^+}  \nonumber 
  \\
  &\overset{\Cref{hyp:variance:oracle}}{\le} \| \mf_i(\hatS_k)-
    V_{k,i}\|^2 +\sigma_i^2 \eqsp.
\end{align}
The proof is concluded upon noting that $\F_k \subset \F_{k,i}^+$,
$\hatS_k \in \F_k$ and $V_{k,i} \in \F_k$.
\end{proof}

\subsubsection{Results on the memory terms $V_{k,i}$.} Our final
preliminary result is to compute in \Cref{prop:varcont2} an upper
bound to control the conditional variance of the local memory terms
$V_{k,i}$ with respect to the appropriate filtration $\F_k$ (see
\Cref{sec:FEDEM:tribu}).
\begin{proposition} \label{prop:varcont2} Assume \Cref{hyp:lipschitz},
  \Cref{hyp:var:quantif} and \Cref{hyp:variance:oracle}; set
  $L^2 \eqdef n^{-1} \sum_{i=1}^n L_i^2$ and
  $\sigma^2 \eqdef n^{-1} \sum_{i=1}^n \sigma_i^2$. For any
  $k \geq 0$, set
        \[
        G_k \eqdef \frac{1}{n}\sum_{i=1}^n \| V_{k,i} - \mf_i(\hatS_k)
        \|^2 \eqsp.
        \]
        For any $k \in [\kmax -1]$ and  $\alpha \in \ocint{0,(1/(1+\omega))}$, it holds that
	\begin{multline*}
          \CPE{G_{k+1}}{\F_k} \le \left (1-\frac{\alpha}{2} + 2
            \pas_{k+1}^2 \frac{L^2}{\alpha} \frac{\omega}{n}\right )
          G_k
          + 2 \pas_{k+1}^2 \frac{L^2}{\alpha} \| \mf(\hatS_k) \| ^2 \\
          + 2 \left( \alpha + \pas_{k+1}^2 \frac{L^2}{\alpha}
            \frac{1+\omega}{n}\right ) \sigma^2 \eqsp.
	\end{multline*}
\end{proposition}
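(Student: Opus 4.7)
The plan is to decompose $V_{k+1,i} - \mf_i(\hatS_{k+1}) = (V_{k+1,i} - \mf_i(\hatS_k)) + (\mf_i(\hatS_k) - \mf_i(\hatS_{k+1}))$ and apply Young's inequality $\|a+b\|^2 \leq (1+c)\|a\|^2 + (1+c^{-1})\|b\|^2$ with a carefully chosen $c>0$. The crucial observation is that the choice $c = \alpha/(2-\alpha)$ simultaneously yields: (i) $(1+c)(1-\alpha) \leq 1 - \alpha/2$, giving the contraction factor on $G_k$; (ii) $1 + c^{-1} = 2/\alpha$ exactly, giving the prefactor $2 \pas_{k+1}^2 L^2/\alpha$ in front of the Lipschitz remainder; and (iii) $(1+c)\alpha \leq 2\alpha$, giving the $2\alpha$ factor multiplying $\sigma^2$. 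All three are elementary since $\alpha \in \ocint{0, 1/(1+\omega)} \subseteq \ocint{0,1}$.

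To control the first bracket, write $V_{k+1,i} - \mf_i(\hatS_k) = e_{k,i} + \alpha\, \Q(\Delta_{k+1,i})$ where $e_{k,i}\eqdef V_{k,i} - \mf_i(\hatS_k)$, and note that $\CPE{\Delta_{k+1,i}}{\F_{k,i}^+} = \mf_i(\hatS_k) - V_{k,i} = -e_{k,i}$ by \Cref{hyp:variance:oracle}; combined with the unbiasedness of $\Q$ (\Cref{hyp:var:quantif}) this gives $\CPE{\Q(\Delta_{k+1,i})}{\F_{k,i}^+} = -e_{k,i}$. Using then the variance bound on $\Q$ together with \Cref{prop:FedEM:Delta} one gets $\CPE{\|\Q(\Delta_{k+1,i})\|^2}{\F_k} \leq (1+\omega)(\|e_{k,i}\|^2 + \sigma_i^2)$. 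Expanding the square and invoking $\alpha(1+\omega)\leq 1$ collapses $1 - 2\alpha + \alpha^2(1+\omega)$ to at most $1-\alpha$, producing
\[
\CPE{\|V_{k+1,i} - \mf_i(\hatS_k)\|^2}{\F_k} \leq (1-\alpha)\|e_{k,i}\|^2 + \alpha \sigma_i^2 .
\]
The second bracket is handled by \Cref{hyp:lipschitz} and the update $\hatS_{k+1} = \hatS_k + \pas_{k+1} H_{k+1}$: $\|\mf_i(\hatS_k) - \mf_i(\hatS_{k+1})\|^2 \leq L_i^2 \pas_{k+1}^2 \|H_{k+1}\|^2$. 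Taking conditional expectation and inserting \Cref{coro:Hmoment2} together with \Cref{prop:FedEM:Delta} yields $\CPE{\|H_{k+1}\|^2}{\F_k} \leq \|\mf(\hatS_k)\|^2 + (\omega/n) G_k + ((1+\omega)/n)\sigma^2$.

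It then suffices to combine the two bounds via the Young decomposition, take conditional expectation with respect to $\F_k$, average over $i \in [n]^\star$ using $n^{-1}\sum_i L_i^2 = L^2$ and $n^{-1}\sum_i \sigma_i^2 = \sigma^2$, and finally apply the three inequalities on $c$ listed in the first paragraph. The identity of the target constants then falls out directly. The only delicate point — and the step I expect to cost the most bookkeeping — is the determination of the Young parameter $c=\alpha/(2-\alpha)$: a naive choice such as $c=\alpha/2$ would leave a parasitic factor $3/\alpha$ in place of the sharp $2/\alpha$; checking that the chosen $c$ simultaneously satisfies all three required inequalities (against the target $1-\alpha/2$, $2/\alpha$ and $2\alpha$ coefficients) is what makes the constants in the statement match exactly.
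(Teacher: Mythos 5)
Your proof is correct and follows essentially the same route as the paper's: the same Young-inequality split of $V_{k+1,i}-\mf_i(\hatS_{k+1})$ into the memory-update part and the Lipschitz drift of $\mf_i$, the same one-step contraction $\CPE{\|V_{k+1,i}-\mf_i(\hatS_k)\|^2}{\F_k}\le(1-\alpha)\|V_{k,i}-\mf_i(\hatS_k)\|^2+\alpha\sigma_i^2$ obtained from the unbiasedness and variance bound of $\Q$ together with \Cref{prop:FedEM:Delta}, and the same use of \Cref{coro:Hmoment2} to control $\CPE{\|H_{k+1}\|^2}{\F_k}$. The only (immaterial) deviation is your Young parameter $c=\alpha/(2-\alpha)$ versus the paper's $\beta^{-2}=\alpha/(2(1-\alpha))$ with a case split at $\alpha=2/3$; both choices satisfy the three required inequalities and produce identical constants.
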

\begin{proof}
  We start by computing an upper bound for the local conditional
  expectations $\CPE{\|V_{k+1,i}-\mf_i(\hatS_{k+1})\|^2}{\F_{k}}$,
  $i\in[n]^{\star}$ and then derive the result of \Cref{prop:varcont2}
  by averaging over the $n$ local workers.

Let $i\in[n]^{\star}$; from \Cref{lem:polar}, we have for any $s \in \rset^q$
\begin{multline*}
  \left\|\CPE{V_{k+1,i} -s}{\F_{k+1/2,i}} \right\|^2  =
   \|(1-\alpha) \, (V_{k,i} -s) + \alpha \ (\Smem_{k+1,i} - \hatS_k
   -s) \|^2 \\
    = (1-\alpha) \, \| V_{k,i} -s \|^2
   + \alpha \| \Smem_{k+1,i} - \hatS_k-s \|^2 - \alpha (1-\alpha) \|
   \Delta_{k+1,i}\|^2 \eqsp.
\end{multline*}
On the other hand,
  \[
 \left \|V_{k+1,i} - \CPE{V_{k+1,i}}{\F_{k+1/2,i}} \right\|^2 =
  \alpha^2 \left\|\Q(\Delta_{k+1,i}) - \CPE{\Q(\Delta_{k+1,i})}{\F_{k+1/2,i}}\right \|^2
  \]
  and by \Cref{hyp:var:quantif} (see the proof of
  Proposition~\autoref{prop:fieldH} for the same computation)
    \begin{align*}
 \CPE{\left\|V_{k+1,i} - \CPE{V_{k+1,i}}{\F_{k+1/2,i}}
   \right\|^2}{\F_{k+1/2,i}}  \leq  \alpha^2 \omega \| \Delta_{k+1,i}\|^2 \eqsp.
    \end{align*}
    Hence
 \begin{multline} \label{eq:tool127}
 \CPE{\|V_{k+1,i} -s \|^2}{\F_{k+1/2,i}} \leq 
\CPE{\left\|V_{k+1,i}-s   - \CPE{V_{k+1,i} -s}{\F_{k+1/2,i}} \right\|^2}{\F_{k+1/2,i}}  \\
+ \CPE{\left\| \CPE{V_{k+1,i} -s }{\F_{k+1/2,i}}\right\|^2}{\F_{k+1/2,i}} \\
 \leq
 (1-\alpha) \, \| V_{k,i} -s \|^2 + \alpha \| \Smem_{k+1,i} - \hatS_k
 -s \|^2  + \alpha \left( \alpha (1+ \omega) -1 \right)
 \|\Delta_{k+1,i}\|^2 \eqsp.
 \end{multline}
For any $\beta>0$, using that $\|a+b\|^2 \leq (1+\beta^2) \|a\|^2 + (1+\beta^{-2}) \|b\|^2$, we have
\begin{align*}
& \CPE{\|V_{k+1,i} - \mf_i(\hatS_{k+1}) \|^2}{\F_{k}} \\ 
& \le (1+\beta^{-2}) \CPE{\|V_{k+1,i} - \mf_i(\hatS_{k}) \|^2}{\F_{k}} +
(1+\beta^2)\CPE{\|\mf_i(\hatS_{k}) - \mf_i(\hatS_{k+1})\|^2}{\F_{k}} \\ 
&\overset{\Cref{hyp:lipschitz}}{\le} (1+\beta^{-2})
          \CPE{\CPE{\|V_{k+1,i} - \mf_i(\hatS_{k}) \|^2}{\F_{k+1/2,i}}}{\F_{k}} +
          (1+\beta^2) L_i^2 \pas_{k+1}^2 \PE[\| H_{k+1} \| ^2 \vert
            \F_{k} ] \\ &\overset{\eqref{eq:tool127}}{\le}
          (1+\beta^{-2}) \bigg ( (1-\alpha) \, \| V_{k,i}
          -\mf_i(\hatS_{k}) \|^2 \\ & + \alpha \PE[ \| \Smem_{k+1,i} -
            \hatS_k -\mf_i(\hatS_{k}) \|^2 \vert \F_k] + \alpha \left(
          \alpha (1+ \omega) -1 \right ) \CPE{\|\Delta_{k+1,i}\|^2}{\F_k} \bigg ) \\ 
          & + (1+\beta^2) L_i^2 \pas_{k+1}^2 \CPE{\|H_{k+1} \| ^2}{\F_{k}} \eqsp,
	\end{align*}
	where we have used \eqref{eq:tool127} with $ s=
        \mf_i(\hatS_{k}) \in \F_k \subset \F_{k+1/2,i}$. Choose $\beta >0$ such that
        \[
        \beta^{-2} \eqdef \left\{ \begin{array}{cc}
          \frac{\alpha}{2(1-\alpha)} &  \text{if $\alpha \leq 2/3$} \\
          1 & \text{if $\alpha \geq 2/3$}
          \end{array}
        \right.
        \]
which implies that $ (1+\beta^{-2})(1-\alpha) \leq 1-\alpha/2$; note also
that $1 \leq 1+\beta^{-2} \le 2$. By \Cref{coro:Hmoment2}, we have (remember that $ \alpha(1+\omega)-1 \leq 0$)
\begin{align*}
  & \CPE{\|V_{k+1,i} - \mf_i(\hatS_{k+1}) \|^2}{\F_{k}}
    {\le} \left (1-\frac{\alpha}{2} \right ) \, \| V_{k,i}
    -\mf_i(\hatS_{k}) \|^2 \\ 
  & + 2 \alpha \CPE{\| \Smem_{k+1,i} - \bars_i \circ \map(\hatS_k) \|^2}{\F_k} 
    + \alpha \left( \alpha (1+ \omega) -1 \right) \CPE{\|\Delta_{k+1,i}\|^2}{\F_k} \\ 
  & + \frac{2}{\alpha} L_i^2 \pas_{k+1}^2 \bigg(
    \frac{\omega}{n^2}\sum_{i=1}^n \CPE{\| \Delta_{k+1,i}\|^2}{\F_k} + 
    \|\mf(\hatS_k) \|^2+ \frac{\sigma^2}{n} \bigg)\eqsp .
\end{align*}
Since $\alpha (1+ \omega) -1 \le 0 $, using \Cref{hyp:variance:oracle}
and finally \Cref{prop:FedEM:Delta}, we get:
	\begin{align*}
          \PE\left[ \|V_{k+1,i} - \mf_i(\hatS_{k+1}) \|^2 \vert \F_{k}
          \right] & {\le}\left (1-\frac{\alpha}{2} \right ) \, \|
                    V_{k,i} -\mf_i(\hatS_{k}) \|^2 + 2 \alpha\sigma_i^2 \\ & +2 \pas_{k+1}^2
                                                                             \frac{L_i^2}{\alpha} 
                                                                             \frac{\omega}{n^2}\sum_{i=1}^n \| h_i(\hatS_k)- V_{k,i}\|^2 +
                                                                             2 \pas_{k+1}^2 \frac{L_i^2}{\alpha}  \| \mf(\hatS_k) \|^2 \\ &+
                                                                                                                                            2 \pas_{k+1}^2 \frac{L_i^2}{\alpha}  \frac{1+\omega}{n} \sigma^2 \eqsp.
	\end{align*}
	Overall, by averaging the previous inequality over all
        workers, we get:
	\begin{multline*}
          \PE[G_{k+1}|\F_k] \le \left (1-\frac{\alpha}{2} +
           2 \pas_{k+1}^2 \frac{L^2}{\alpha}  \frac{\omega}{n}\right )
          G_k
          + 2 \pas_{k+1}^2 \frac{L^2}{\alpha} \| \mf(\hatS_k) \|^2 \\
          + 2 \left (\alpha + \pas_{k+1}^2 \frac{L^2}{\alpha}
            \frac{1+\omega}{n}\right ) \sigma^2 \eqsp.
	\end{multline*}
\end{proof}

\subsection{Proof of \autoref{theo:dianaem}}
\label{app:proof-dianaem}
Equipped with the necessary results, we now provide the main proof of
\Cref{theo:dianaem}. We proceed in three steps, as follows. First, for
$k\geq 1$, we compute an upper bound on the average decrement
$\CPE{\lyap(\hatS_{k+1})}{\F_k} - \lyap(\hatS_{k})$ of the Lyapunov
function $\lyap$ (defined in \Cref{hyp:DS:lyap}).  Second, we
introduce the maximal value of the learning rate. Third and finally,
we deduce the result of \Cref{theo:dianaem} by computing the
expectation w.r.t. a randomly chosen termination time $K$ in
$[\kmax-1]$; in this step, we restrict the
  computations to the case the step sizes are constant ($\pas_{k+1} =
  \pas$ for any $k \geq 0$).

\paragraph{Step 1: Upper bound on the decrement.}
 Let $k \geq 0$; from \Cref{hyp:DS:lyap}, we have
\begin{align}
\lyap(\hatS_{k+1}) & \leq  \lyap(\hatS_k) + \pscal{\nabla \lyap(\hatS_k)}{
	\hatS_{k+1} - \hatS_k} + \frac{L_{\dot \lyap}}{2} \| \hatS_{k+1} -
\hatS_{k} \|^2  \nonumber \\
& \leq  \lyap(\hatS_k) -  \pas_{k+1} \, \pscal{B(\hatS_k) \, \mf(\hatS_k)}{
	H_{k+1}} + \frac{L_{\dot \lyap}}{2} \pas_{k+1}^2 \| H_{k+1} \|^2   \eqsp. \label{eq:FedEM:DL2}
\end{align}
Since $\hatS_k \in \F_k$, by \Cref{prop:fieldH} and \Cref{hyp:DS:lyap}
we have
\begin{equation}\label{key}
\CPE{\pscal{B(\hatS_k) \, \mf(\hatS_k)}{	H_{k+1}}}{\F_{k}}  =\pscal{B(\hatS_k) \, \mf(\hatS_k)}{	\mf(\hatS_k)} \geq v_{\min}  \|\mf(\hatS_k)\|^2.
\end{equation}

Hence, combining \eqref{eq:FedEM:DL2} and \eqref{key}, we have
\begin{align*}
&\CPE{\lyap(\hatS_{k+1})}{\F_k} \leq \lyap(\hatS_k) -\pas_{k+1} {v_{\min}} \|\mf(\hatS_k)\|^2 + \pas_{k+1}^2 \frac{L_{\dot
       \lyap}}{2} \CPE{\| H_{k+1} \|^2}{\F_k}\\ 
& \leq \lyap(\hatS_k) -\pas_{k+1} {v_{\min}} \|\mf(\hatS_k)\|^2 +
   \pas_{k+1}^2 \frac{L_{\dot \lyap}}{2} 
   \CPE{\| H_{k+1} - \CPE{H_{k+1}}{\F_k} \|^2}{\F_k} + \pas_{k+1}^2 \frac{L_{\dot \lyap}}{2} \| \mf(\hatS_k)
   \|^2 \\ 
& \leq \lyap(\hatS_k) -\pas_{k+1} {v_{\min}} \left (1-
   \pas_{k+1} \frac{L_{\dot \lyap}}{2v_{\min}} \right )
   \|\mf(\hatS_k)\|^2  + \pas_{k+1}^2 \frac{L_{\dot \lyap}}{2} \CPE{\| H_{k+1} - \CPE{H_{k+1}}{\F_k}\|^2}{\F_k} \eqsp.
\end{align*}
Applying~\Cref{prop:fieldH}, we obtain that
\begin{multline}
  \CPE{\lyap(\hatS_{k+1})}{\F_k} \leq \lyap(\hatS_k) -\pas_{k+1}
  {v_{\min}} \left (1- \pas_{k+1} \frac{L_{\dot \lyap}}{2v_{\min}}
  \right ) \|\mf(\hatS_k)\|^2 \\ + \pas_{k+1}^2 \frac{L_{\dot
      \lyap}}{2} \frac{\omega}{n} \left( \frac{1}{n} \sum_{i=1}^n
    \CPE{\| \Delta_{k+1,i}\|^2}{\F_k} \right) + \pas_{k+1}^2
  \frac{L_{\dot \lyap}}{2n} \sigma^2 \eqsp. \label{eq:upperbound1}
\end{multline}
Finally, using \Cref{prop:FedEM:Delta} and \eqref{eq:upperbound1}, we
get:
\begin{align}
  \PE[\lyap(\hatS_{k+1}) \vert \F_k] & \leq \lyap(\hatS_k) -\pas_{k+1}
                                       {v_{\min}} \left (1- \pas_{k+1} \frac{L_{\dot \lyap}}{2v_{\min}}
                                       \right ) \|\mf(\hatS_k)\|^2 \nonumber \\ & + \pas_{k+1}^2
                                                                                  \frac{L_{\dot \lyap}}{2} \frac{\omega}{n} G_k + \pas_{k+1}^2
                                                                                  \frac{L_{\dot \lyap}}{2n} (1+\omega) \sigma^2 \eqsp, \label{eq:expansion}
\end{align}
where
\[
G_k \eqdef \frac{1}{n}\sum_{i=1}^n \| V_{k,i} - \mf_i(\hatS_k)\|^2 \eqsp.
\]

\paragraph{Step 2: Maximal learning rate $\gamma_{k+1}$ when $\omega \neq 0$.}
From \Cref{prop:varcont2}, for any non-increasing positive sequence
$\{\pas_k, k \in [\kmax-1] \}$ such that
\[
\pas_{k+1}^2 \leq \frac{\alpha^2}{8 L^2} \frac{n}{\omega},
\]
and for any positive sequence $\{C_k, k \in [\kmax-1]\}$, it holds
\begin{multline}
  C_{k+1} \PE\left[G_{k+1} \vert \F_k \right] \leq C_{k+1} \left(
    1-\frac{\alpha}{4} \right) G_k \\ + C_{k+1} \pas_{k+1}^2
  \frac{2}{\alpha} L^2 \| \mf(\hatS_k) \| ^2 + 2 C_{k+1} \left( \alpha
    + \pas_{k+1}^2 \frac{L^2}{\alpha} \frac{1+\omega}{n}\right )
  \sigma^2 \eqsp. \label{eq:constantCk}
\end{multline}
Combining equations~\eqref{eq:expansion} and \eqref{eq:constantCk}, we thus have
\begin{align*}
\PE[\lyap(\hatS_{k+1}) \vert \F_k] & + C_{k+1} \PE\left[G_{k+1} \vert
  \F_k \right]  \leq \lyap(\hatS_k) + C_k G_k \\ & -\pas_{k+1}
   {v_{\min}} \left (1- \pas_{k+1} \frac{L_{\dot \lyap}}{2v_{\min}} -
   \frac{C_{k+1}}{v_{\min}} \pas_{k+1} \frac{2}{\alpha} L^2 \right )
   \|\mf(\hatS_k)\|^2 \\ & + \left(\pas_{k+1}^2 \frac{L_{\dot
       \lyap}}{2} \frac{\omega}{n} - C_k + C_{k+1} - C_{k+1}
   \frac{\alpha}{4} \right) G_k \\ & + \left\{2 \alpha C_{k+1} +
   \pas_{k+1}^2 \frac{(1+\omega)}{n} \left( \frac{L_{\dot \lyap}}{2} +
   2 C_{k+1} \frac{L^2}{\alpha} \right) \right\} \sigma^2 \eqsp.
  \end{align*}
We choose the sequence $\{C_k\}$ as follows: 
\[
C_{k} \eqdef \pas_{k}^2 \frac{2 L_{\dot \lyap}}{\alpha}
\frac{\omega}{n} \eqsp;
\]
the sequence satisfies $C_{k+1} \leq C_{k}$ (since $\pas_{k+1} \leq
\pas_k$) and $\pas_{k+1}^2 L_{\dot \lyap} \omega/(2n) \leq C_{k+1}
\alpha/4$.  By convention, $\pas_0 \in
\coint{\pas_1,+\infty}$. Therefore
\begin{align}
\PE[\lyap(\hatS_{k+1}) \vert \F_k] & + \pas_{k+1}^2 \frac{2 L_{\dot
    \lyap}}{\alpha} \frac{\omega}{n} \PE\left[G_{k+1} \vert \F_k
  \right] \leq \lyap(\hatS_k) + \pas_{k}^2 \frac{2 L_{\dot
    \lyap}}{\alpha} \frac{\omega}{n} G_k \\ & -\pas_{k+1} {v_{\min}}
\left (1- \pas_{k+1} \frac{L_{\dot \lyap}}{2v_{\min}} \left\{ 1 + 8
\pas_{k+1}^2 \frac{\omega}{\alpha^2 n} L^2 \right\}\right ) \|\mf(\hatS_k)\|^2
\\ & + 4 \pas_{k+1}^2 L_{\dot \lyap} \frac{\omega}{n} \left\{1 +
\frac{(1+\omega)}{8 \omega} \left( 1 + \pas_{k+1}^2 8
\frac{L^2}{\alpha^2} \frac{\omega}{n}\right) \right\} \sigma^2 \eqsp.\label{eq:GFbound}
\end{align}

\paragraph{Step 3: Computing the expectation.} Let us apply the expectations,  sum from $k=0$ to $k = \kmax-1$, and divide by $\kmax$.
This yields
\begin{align*}
  & \frac{v_{\min}}{\kmax} \sum_{k=0}^{\kmax-1} \pas_{k+1} \left (1-
    \pas_{k+1} \frac{L_{\dot \lyap}}{2v_{\min}} \left\{ 1 + 8 \pas_{k+1}^2
    \frac{\omega}{\alpha^2 n} L^2 \right\}\right ) \|\mf(\hatS_k)\|^2
  \\ & \leq \kmax^{-1} \left\{ \lyap(\hatS_{0}) + \pas_{0}^2 \frac{2
       L_{\dot \lyap}}{\alpha} \frac{\omega}{n} G_0 -
       \PE\left[\lyap(\hatS_{\kmax}) \right] - \pas_{\kmax}^2 \frac{2 L_{\dot
       \lyap}}{\alpha} \frac{\omega}{n} \PE\left[ G_{\kmax} \right]
       \right\} \\ & + 4 L_{\dot \lyap} \frac{\omega}{n} \frac{1}{\kmax
                     }\sum_{k=0}^{\kmax-1} \pas_{k+1}^2 \left\{1 + \frac{(1+\omega)}{8
                     \omega} \left( 1 + \pas_{k+1}^2 8 \frac{L^2}{\alpha^2}
                     \frac{\omega}{n}\right) \right\} \sigma^2 \eqsp.
\end{align*}
We now focus on the case when $\pas_{k+1} = \pas$ for any $k \geq
0$. Denote by $K$ a uniform random variable on $[\kmax-1]$,
independent of the path $\{\hatS_k, k \in [\kmax] \}$.  Since $\pas^2
\leq \alpha^2 n /(8 L^2 \omega)$, we have
\[
1 + 8 \pas^2 \frac{\omega}{\alpha^2 n} L^2  \leq 2 \eqsp.
\]
This yields
\begin{align}
  & v_{\min} \pas \left (1- \pas \frac{L_{\dot \lyap}}{v_{\min}}\right )
    \PE\left[  \|\mf(\hatS_K)\|^2 \right]  \nonumber\\ & \leq \kmax^{-1} \left\{ \lyap(\hatS_{0}) +
                                               \pas^2 \frac{2 L_{\dot \lyap}}{\alpha} \frac{\omega}{n} G_0 -
                                               \PE\left[\lyap(\hatS_{\kmax}) \right] - \pas^2 \frac{2 L_{\dot
                                               \lyap}}{\alpha} \frac{\omega}{n} \PE\left[ G_{\kmax} \right]
                                               \right\} \nonumber\\ & + 4 L_{\dot \lyap} \frac{\omega}{n} \pas^2 \left\{1 +
                                                             \frac{(1+\omega)}{4 \omega} \right\} \sigma^2 \eqsp. \label{eq:aux-maxLR}
\end{align}
Note that $4 (1 +(1+\omega)/(4 \omega)) = (5\omega+1)/\omega$.
\paragraph{Step 4.  Conclusion (when $\omega \neq 0$).} By
  choosing $V_{0,i} = \mf_i$ for any $i \in [n]^\star$, we have $G_0
  =0$. The roots of $\pas \mapsto \pas (1-\pas L_{\dot
    \lyap}/v_{\min})$ are $0$ and $v_{\min}/L_{\dot \lyap}$ and its
  maximum is reached at $v_{\min}/(2L_{\dot \lyap})$: this function is
  increasing on $\ocint{0, v_{\min}/(2L_{\dot \lyap})}$. We therefore
  choose $\pas \in \ocint{0,\pas_\max(\alpha)}$ where
\[
\pas_\max(\alpha) \eqdef \min \left( \frac{v_{\min}}{2 L_{\dot \lyap}};
  \frac{\alpha}{2 \sqrt{2} L} \frac{\sqrt{n}}{\sqrt{\omega}} \right)
\] 
Finally, since $\alpha \in \ocint{0, 1/(1+\omega)}$, we choose $\alpha = 1/(1+\omega)$. This yields
\[
\pas_\max \eqdef \min \left( \frac{v_{\min}}{2 L_{\dot \lyap}};
  \frac{1}{2 \sqrt{2} L} \frac{\sqrt{n}}{\sqrt{\omega}(1+\omega)} \right) \eqsp.
\]

\paragraph{Case $\omega = 0$.} From \eqref{eq:expansion}, applying the expectation we have
\[
\pas_{k+1} v_{\min} \left(1 - \pas_{k+1} \frac{L_{\dot \lyap}}{2
  v_{\min}} \right) \PE\left[ \| \mf(\hatS_k) \|^2\right] \leq
\PE\left[\lyap(\hatS_k)\right] - \PE\left[\lyap(\hatS_{k+1})\right] +
\pas_{k+1}^2 \frac{L_{\dot \lyap} \sigma^2}{2n} \eqsp.
\]
We now sum from $k=0$ to $k = \kmax-1$ and then divide by $\kmax$. In the case $\pas_{k+1}= \pas$, we have
\begin{equation} \label{eq:fedem:nocompression}
\pas v_{\min} \left(1 - \pas \frac{L_{\dot \lyap}}{2 v_{\min}} \right)
\PE\left[ \| \mf(\hatS_K) \|^2\right] \leq \kmax^{-1} \,
\left(\PE\left[\lyap(\hatS_0)\right] - \min \lyap \right) + \pas^2
\frac{L_{\dot \lyap} \sigma^2}{2n} \eqsp.
\end{equation}

\paragraph{Remark on the maximal learning rate.} The condition
$ \pas_{k+1} \le \frac{\alpha}{2 \sqrt{2} L}
\frac{\sqrt{n}}{\sqrt{\omega}} $ is used twice in the proof:
\begin{enumerate}[leftmargin=*]
	\item To ensure that $  \left (1-
	\pas_{k+1} \frac{L_{\dot \lyap}}{2v_{\min}} \left\{ 1 + 8 \pas_{k+1}^2
	\frac{\omega}{\alpha^2 n} L^2 \right\}\right ) \geq \left (1-
	\pas_{k+1} \frac{L_{\dot \lyap}}{v_{\min}}\right )$ in order to obtain \Cref{eq:aux-maxLR}. 
	\item To ensure that the process $ (G_k)_{k\geq0} $ is ``pseudo-contractive'' (i.e., satisfies a recursion of the form $ u_{k+1}\le \rho u_k + v_k $, with $ \rho<1 $) in \Cref{prop:varcont2}.
\end{enumerate}
A more detailed analysis can get rid of this condition (and thus the dependency $ \gamma = O_{\omega\to \infty} (\omega^{-3/2} )$, as we recall that $ \alpha^{1} \varpropto_{\omega\to \infty} \omega$) for the \textit{first point}. Indeed, we ultimately only require \begin{align}
\left (1-
\pas_{k+1} \frac{L_{\dot \lyap}}{2v_{\min}} \left\{ 1 + 8 \pas_{k+1}^2
\frac{\omega}{\alpha^2 n} L^2 \right\}\right ) \geq \frac{1}{2} \label{eq:aux-maxLR-2}
                                                                                                                                                                                                                                                                          \end{align}   to conclude the proof. This is for example  satisfied if $ \pas_{k+1} \frac{L_{\dot \lyap}}{2v_{\min}} \le \frac{1}{4} $ and $   8 \pas_{k+1}^3 \frac{L_{\dot \lyap}}{2v_{\min}} 
                                                                                                                                                                                                                                                                          \frac{\omega}{\alpha^2 n} L^2 \le \frac{1}{4}$. This approach results in a better asymptotic dependency of the maximal learning rate w.r.t. $ \omega $ to obtain \Cref{eq:aux-maxLR-2}:   $ \gamma = O_{\omega\to \infty} (\omega^{-1} )$.  \textit{However,} the condition $ 	\pas_{k+1} \le 
                                                                                                                                                                                                                                                                          \frac{\alpha}{2 \sqrt{2} L} \frac{\sqrt{n}}{\sqrt{\omega}}  $ seems to be \textit{necessary} to obtain the \textit{second point} and  \Cref{prop:varcont2}. The possibility of providing a similar result to  \Cref{prop:varcont2} without the $\omega^{-3/2}$ dependency, is an interesting open problem. 

\subsection{Proof of \Cref{cor:diana-em}}
\label{proof:coro:dianaem}
In \eqref{eq:theo:fedem}, the RHS is of the form
   $A/\pas + \pas B$ for some positive constants $A,B$: we have
   $A/\pas + \pas B \geq 2 \sqrt{AB}$ with equality reached with
   $\pas_\star \eqdef \sqrt{A/B}$.  Hence, we set
\[
\pas_\star \eqdef \frac{1}{\sigma}\left( \frac{n \left(\lyap(\hatS_0) - \min \lyap\right)}{L_{\dot \lyap} (1+5\omega)} \right)^{1/2} \frac{1}{\sqrt{\kmax}} \eqsp.
\]
If $\pas_\star \leq \pas_{\max}$, then let us apply
\eqref{eq:theo:fedem} with $\pas = \pas_\star$ which yields a RHS
given by $2 \sqrt{A/B}$ \ie\
\[
2 \sigma\left( \left(\lyap(\hatS_0) - \min \lyap\right) L_{\dot \lyap}
\frac{(1+5\omega)}{n} \right)^{1/2} \frac{1}{\sqrt{\kmax}} \eqsp.
\]
If $\pas_\star \geq \pas_\max$, we write
\[
\frac{A}{\pas_{\max}} + B \pas_{\max} \leq \frac{A}{\pas_{\max}}
+\frac{A}{\pas_{\max}} \frac{\pas_\max^2 B}{A}=\frac{A}{\pas_{\max}} +
\frac{A}{\pas_{\max}} \frac{\pas_\max^2}{\pas_\star^2} \leq 2 \frac{A}{\pas_{\max}} \eqsp.
\]
and the RHS is upper bounded by
\[
2 \frac{\lyap(\hatS_0) - \min \lyap}{ \pas_{\max} \kmax} \eqsp.
\]
Finally, in the LHS of \eqref{eq:theo:fedem}, we have
\[
1 - \pas \frac{L_{\dot \lyap}}{v_{\min}} \geq 1 - \pas_{\max}
\frac{L_{\dot \lyap}}{v_{\min}} \geq 1 - \frac{v_{\min}}{2 L_{\dot
    \lyap}} \frac{L_{\dot \lyap}}{v_{\min}} = \frac{1}{2}\eqsp.
\]
This concludes the proof.

\section{Partial Participation case}
\label{app:PP}
In this section, we generalize the result of \Cref{theo:dianaem} to
the \textit{partial participation case}. This extra scheme could be
incorporated into the main proof, but we choose to present it
separately to improve the readability of the main proof in
\Cref{app:proof-fedem}. We first provide an
  equivalent description of \autoref{algo:dianaem-main} in
  \Cref{sec:app:equivPP}; \autoref{algo:dianaem-main:equiv} will be
  used throughout this section. Then, we introduce a new family of
filtrations. In \Cref{sec:app:prop:PP}, we first
  establish preliminary results and then give the proof of
  \Cref{prop:PP} in \Cref{sec:app:propPP}.

The assumptions \Cref{hyp:model} to \Cref{hyp:Tmap} hold throughout
this section.

\subsection{An equivalent algorithm}
\label{sec:app:equivPP}
In this Section, we describe an equivalent algorithm, that outputs the same result as Algorithm~\ref{algo:dianaem-main}, and for which the analysis is conducted.

\begin{algorithm}[H]
  \caption{\FEDEM~with partial
    participation \label{algo:dianaem-main:equiv}} \KwData{ $\kmax \in
    \nset^\star$; for $i \in [n]^\star$, $V_{0,i} \in \rset^q$;
    $\hatS_0 \in \rset^q$; a positive sequence $\{\pas_{k+1}, k \in
         [\kmax-1]\}$; $\alpha>0$; $p \in \ooint{0,1}$.}  \KwResult{
    The \texttt{\FEDEM-PP} sequence: $\{\hatS_{k}, k \in [\kmax]\}$}
  Set $V_0 = n^{-1} \sum_{i=1}^n V_{0,i}$ \; \For{$k=0, \ldots,
    \kmax-1$}{ \For{$i=1, \ldots, n$} {{\em (worker $\# i$)}\; Sample
      $\Smem_{k+1,i}$, an approximation of $\bars_i \circ
      \map(\hatS_k)$ \; Set $\Delta_{k+1,i} = \Smem_{k+1,i} - V_{k,i}
      - \hatS_k$ \; Sample a Bernoulli r.v. $B_{k+1,i}$ with success
      probability $p$ \; Set $V_{k+1,i} = V_{k,i} + \alpha \,
      B_{k+1,i} \Q(\Delta_{k+1,i})$.  \; Send $ B_{k+1,i}
      \Q(\Delta_{k+1,i})$ to the central server \;} {\em (the central
      server)} \; Set $H_{k+1} = V_k + (np)^{-1} \sum_{i=1}^n
    B_{k+1,i} \Q(\Delta_{k+1,i})$ \; Set $\hatS_{k+1} = \hatS_k +
    \pas_{k+1} H_{k+1}$ \; Set $V_{k+1} = V_k + \alpha n ^{-1} \sum_{i
      =1}^n B_{k+1,i} \Q(\Delta_{k+1,i})$ \; Send $\hatS_{k+1}$ and
    $\map (\hatS_{k+1})$ to the $n$ workers}
\end{algorithm}

\subsection{Notations}\label{app:sub:PPhyp}
\label{sec:FEDEMwithPP:tribu}
Let us introduce a new sequence of filtrations.
For any $i \in [n]^\star$, we
set
$$\F_{0,i} = \F_{0,i}^+ \eqdef \sigma\left(\hatS_0; V_{0,i} \right)
\text{ and } \qquad \F_{0} \eqdef \bigvee_{i=1}^n \F_{0,i} \eqsp.$$
Then, for all $k\geq 0$,
\begin{enumerate}[(i)]
\item$ \F_{k+1/3,i} \eqdef \F_{k,i}^+ \vee \sigma\left( \Smem_{k+1,i}
  \right)$,
  \item$ \F_{k+2/3,i} \eqdef \F_{k+1/3,i} \vee \sigma\left(
  \Q(\Delta_{k+1,i})    \right)$,
  \item $\F_{k+1,i} \eqdef \F_{k+2/3,i} \vee \sigma\left( B_{k+1,i}
    \right)$,
  \item $\F_{k+1} \eqdef
  \bigvee_{i=1}^n \F_{k+1,i}$,
  \item $\F_{k+1,i}^+ \eqdef
  \F_{k+1,i} \vee \F_{k+1}$.
\end{enumerate}
Note that, with these notations, for $k \geq 0$ and $i \in [n]^\star$,
the random variables of the \FEDEM~sequence defined in
\autoref{algo:dianaem-main:equiv} belong to the filtrations defined
above as follows:
\begin{enumerate}[(i)]
\item $\hatS_{k} \in \F_{k,i}^+$, $\hatS_k \in \F_k$,
\item  $\Smem_{k+1,i} ,\Delta_{k+1,i} \in \F_{k+1/3,i}$,
\item  $V_{k+1,i} \in \F_{k+1,i}$,
\item $\hatS_{k+1} , H_{k+1}, V_{k+1} \in \F_{k+1}$.
\end{enumerate}
Note also that we have the following inclusions for filtrations: $\F_k
\subset \F_{k,i}^+ \subset \F_{k+1/3,i} \subset \F_{k+2/3,i} \subset
\F_{k+1,i} \subset \F_{k+1}$ for all $i \in [n]^\star$.

\subsection{Preliminary results}
\label{sec:app:prop:PP}
In this section, we extend \Cref{prop:sumV}, \Cref{prop:fieldH} (that
controls the random field $H_{k+1}$) and \Cref{prop:varcont2} (that
controls the memory term $V_{k,i}$). We start by verifying the simple
following proposition, that ensures that the global variable $ V_k $
corresponds to the mean of the local control variables
$ (V_{k,i})_{i\in [n]^{*}} $.

\begin{proposition} \label{prop:meanV:PP} 
  For any $k \in [\kmax]$,
  \[
V_{k} = \frac{1}{n} \sum_{i=1}^n V_{k,i} \eqsp.
  \]
  \end{proposition}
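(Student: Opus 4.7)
The plan is to prove this by a straightforward induction on $k \in [\kmax]$, mirroring the argument used for \Cref{prop:sumV} in the full-participation regime, and exploiting the fact that \autoref{algo:dianaem-main:equiv} was precisely constructed so that the central server's update of $V_k$ aggregates exactly the same quantities as the per-worker updates of $V_{k,i}$.

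For the base case $k=0$, the identity $V_0 = n^{-1} \sum_{i=1}^n V_{0,i}$ holds by definition of $V_0$ at the initialization line of \autoref{algo:dianaem-main:equiv}. For the inductive step, assume the identity holds at rank $k$. Then from the local update $V_{k+1,i} = V_{k,i} + \alpha B_{k+1,i}\, \Q(\Delta_{k+1,i})$ (whether or not worker $\#i$ is active, since inactive workers correspond to $B_{k+1,i}=0$ and thus leave $V_{k,i}$ unchanged) and from the central-server update $V_{k+1} = V_k + \alpha n^{-1} \sum_{i=1}^n B_{k+1,i}\, \Q(\Delta_{k+1,i})$, one gets
\begin{equation*}
V_{k+1} = V_k + \frac{\alpha}{n} \sum_{i=1}^n B_{k+1,i} \Q(\Delta_{k+1,i})
= \frac{1}{n} \sum_{i=1}^n \bigl( V_{k,i} + \alpha B_{k+1,i} \Q(\Delta_{k+1,i}) \bigr)
= \frac{1}{n} \sum_{i=1}^n V_{k+1,i},
\end{equation*}
which closes the induction.

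There is no real obstacle here: the only subtle point is that, although the original \autoref{algo:dianaem-main} is written so the central server receives the compressed messages only from the active subset $\set_{k+1}$, the equivalent formulation in \autoref{algo:dianaem-main:equiv} writes the same update as a sum over all workers weighted by the Bernoulli indicators $B_{k+1,i}$. This lets both the local and the global recursions be averaged term by term, so the identity $V_{k+1} - V_k = n^{-1} \sum_i (V_{k+1,i} - V_{k,i})$ is immediate, and the induction carries through exactly as in \Cref{prop:sumV}.
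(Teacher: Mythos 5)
Your proof is correct and follows essentially the same route as the paper: induction on $k$, with the base case given by the initialization of $V_0$ and the inductive step obtained by matching the central server's update $V_{k+1} = V_k + \alpha n^{-1}\sum_{i=1}^n B_{k+1,i}\,\Q(\Delta_{k+1,i})$ term by term with the local updates $V_{k+1,i} = V_{k,i} + \alpha B_{k+1,i}\,\Q(\Delta_{k+1,i})$. Your remark that inactive workers correspond to $B_{k+1,i}=0$ is exactly the observation that makes the equivalent formulation of \autoref{algo:dianaem-main:equiv} work, and is implicit in the paper's argument as well.
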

\begin{proof}
  By definition of $V_0$, the property holds true when $k=0$. Assume this holds true for $k \in [\kmax-1]$. We write
  \begin{align*}
    V_{k+1} &= V_k + \frac{\alpha}{n} \sum_{i =1}^n B_{k+1,i} \,
    \Q(\Delta_{k+1,i}) \\ & = \frac{1}{n} \sum_{i=1}^n V_{k,i} +
    \frac{1}{n} \sum_{i =1}^n \left( V_{k+1,i} - V_{k,i} \right) \\ &
    = \frac{1}{n} \sum_{i=1}^n V_{k+1,i} \eqsp.
    \end{align*}
  This concludes the induction.
\end{proof}
We now prove that the unbiased character of $ H_{k} $ is preserved,
and we provide a new control on its second order
moment. \Cref{fieldH:PP} is \Cref{prop:fieldH} with $ \omega $
replaced with $\omega_p$. When $p=1$, \Cref{fieldH:PP} and
\Cref{prop:fieldH} are the same.
\begin{proposition}\label{fieldH:PP} Assume \Cref{hyp:var:quantif}, \Cref{hyp:variance:oracle} and \Cref{hyp:PP}. Set $\sigma^2 \eqdef n^{-1} \sum_{i=1}^n \sigma_i^2$. For any $k \in [\kmax-1]$, we have 
	\[
	\PE\left[H_{k+1} \vert \F_k \right] = \mf(\hatS_k) \eqsp,
	\]
	and
	\begin{align*}
	\PE\left[ \| H_{k+1} - \PE\left[H_{k+1} \vert \F_k \right]
          \|^2 \vert \F_k \right] & \le \frac{\omega_p}{n}
        \frac{1}{n}\sum_{i=1}^n \PE\left[ \| \Delta_{k+1,i}\|^2 \vert
          \F_k \right] + \frac{\sigma^2}{n} \eqsp,
	\end{align*}
        where
        \begin{equation}\label{eq:PP:defomegap}
          \omega_p \eqdef \frac{1-p}{p} (1+\omega) +\omega \eqsp.
          \end{equation}
\end{proposition}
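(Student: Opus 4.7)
The first step is to write $H_{k+1} - \mf(\hatS_k)$ as a sum of conditionally centered contributions from each worker. Using \Cref{prop:meanV:PP} so that $V_k = n^{-1} \sum_i V_{k,i}$ and recalling $\mf(\hatS_k) = n^{-1}\sum_i \mf_i(\hatS_k)$, I would set
\[
Y_i \eqdef (np)^{-1} B_{k+1,i} \Q(\Delta_{k+1,i}) - n^{-1} \bigl(\mf_i(\hatS_k) - V_{k,i}\bigr),
\]
so that $H_{k+1} - \mf(\hatS_k) = \sum_{i=1}^n Y_i$. Unbiasedness follows by iterated conditioning along the chain $\F_k \subset \F_{k+1/3,i} \subset \F_{k+2/3,i} \subset \F_{k+1,i}$: one integrates out $B_{k+1,i}$ (independent Bernoulli, mean $p$), then $\Q$ (using $\PE[\Q(\Delta_{k+1,i})|\F_{k+1/3,i}] = \Delta_{k+1,i}$ from \Cref{hyp:var:quantif}), and finally $\Smem_{k+1,i}$ (using \Cref{hyp:variance:oracle}), which yields $\PE[Y_i|\F_k]=0$ and hence the first claim.

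For the variance bound, conditional independence of workers given $\F_k$ gives $\PE[\|H_{k+1}-\mf(\hatS_k)\|^2|\F_k] = \sum_i \PE[\|Y_i\|^2|\F_k]$. The next step is the conditional variance decomposition across $\F_{k+2/3,i}$:
\[
\PE[\|Y_i\|^2|\F_k] = \PE\bigl[\mathrm{Var}(Y_i|\F_{k+2/3,i})\bigm|\F_k\bigr] + \PE\bigl[\|\PE[Y_i|\F_{k+2/3,i}]\|^2\bigm|\F_k\bigr].
\]
Since $B_{k+1,i}$ is a Bernoulli$(p)$ independent of $\F_{k+2/3,i}$, the first term evaluates to $\frac{1-p}{n^2 p}\PE[\|\Q(\Delta_{k+1,i})\|^2|\F_k]$, which is bounded by $\frac{1-p}{n^2 p}(1+\omega)\PE[\|\Delta_{k+1,i}\|^2|\F_k]$ by \Cref{hyp:var:quantif}.

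The second term requires the key algebraic observation that, because $\Delta_{k+1,i} = \Smem_{k+1,i} - V_{k,i} - \hatS_k$ and $\mf_i(\hatS_k) = \bars_i\circ\map(\hatS_k) - \hatS_k$, one has the identity
\[
\Q(\Delta_{k+1,i}) - \mf_i(\hatS_k) + V_{k,i} = \underbrace{\bigl(\Q(\Delta_{k+1,i}) - \Delta_{k+1,i}\bigr)}_{Z_i} + \underbrace{\bigl(\Smem_{k+1,i} - \bars_i\circ\map(\hatS_k)\bigr)}_{W_i}.
\]
The quantization noise $Z_i$ is centered given $\F_{k+1/3,i}$ while the oracle noise $W_i$ is $\F_{k+1/3,i}$-measurable and centered given $\F_{k,i}^+$, so $\PE[\langle Z_i,W_i\rangle|\F_k]=0$. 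This gives $\PE[\|Z_i+W_i\|^2|\F_k] \leq \omega\,\PE[\|\Delta_{k+1,i}\|^2|\F_k] + \sigma_i^2$ by \Cref{hyp:var:quantif} and \Cref{hyp:variance:oracle}. Combining both terms yields $\PE[\|Y_i\|^2|\F_k] \leq n^{-2}\omega_p\,\PE[\|\Delta_{k+1,i}\|^2|\F_k] + n^{-2}\sigma_i^2$, and summing over $i$ gives the claimed inequality.

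The main obstacle is organizing the three independent sources of noise — partial participation, compression, and oracle — so that no cross-terms appear. The cleanest route is the nested sigma-algebra ordering of \Cref{sec:FEDEMwithPP:tribu}, which places $B_{k+1,i}$ last and thereby lets the conditional variance formula separate the $(1-p)/p$ contribution from the $\omega$ contribution; without the algebraic cancellation $\Delta_{k+1,i} - \mf_i(\hatS_k) + V_{k,i} = \Smem_{k+1,i} - \bars_i\circ\map(\hatS_k)$, one would not obtain the simple additive structure of $\omega_p$.
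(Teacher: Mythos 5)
Your proof is correct and follows essentially the same route as the paper's: the paper writes $H_{k+1}-\mf(\hatS_k)=n^{-1}\sum_i \Xi_{k+1,i}$ with $\Xi_{k+1,i}$ split into the same three mutually orthogonal noise terms (oracle, quantization, participation) along the nested filtration, and your $nY_i$ is exactly that $\Xi_{k+1,i}$. The only cosmetic difference is that you peel off the Bernoulli contribution via the law of total variance conditioned on $\F_{k+2/3,i}$, whereas the paper expands the square of the three-term sum and checks directly that all cross-terms vanish; both yield the identical bound $\omega_p\,\CPE{\|\Delta_{k+1,i}\|^2}{\F_k}+\sigma_i^2$ per worker.
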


\begin{proof} 
  Let $k \in [\kmax-1]$. By definition, we have
  \[
H_{k+1} = V_k + \frac{1}{np} \sum_{i=1}^n B_{k+1,i} \Q(\Delta_{k+1,i})
\]
where the Bernoulli random variables $\{B_{k+1,i}, i \in [n]^\star \}$
are independent with the same success probability $p$. By definition
of the filtrations, we have $B_{k+1,i} \in\F_{k+1,i}$,
$\Q(\Delta_{k+1,i}) \in \F_{k+2/3,i}$, $V_k \in \F_k$ and
$\Delta_{k+1,i} \in \F_{k+1/3,i}$; and the inclusions $\F_k \subset
\F_{k+1/3,i} \subset \F_{k+2/3,i} \subset \F_{k+1,i}$. Therefore,
\begin{align*}
	 \CPE{H_{k+1} }{ \F_k} & = V_k + \frac{1}{np} \sum_{i=1}^n
         \CPE{ \CPE{ B_{k+1,i}}{ \F_{k+2/3,i}}
           \Q(\Delta_{k+1,i})}{\F_{k}}\nonumber \\ & = V_k +
         \frac{1}{n} \sum_{i=1}^n \CPE{\CPE{
             \Q(\Delta_{k+1,i})}{\F_{k+1/3,i}}}{\F_k} = V_k +
         \frac{1}{n} \sum_{i=1}^n \CPE{ \Delta_{k+1,i}}{\F_k}
         \nonumber \\ & = V_k + \frac{1}{n} \sum_{i=1}^n \left(
         \CPE{\Smem_{k+1,i}}{\F_k} - \hatS_k - V_{k,i} \right) \\ & =
         \frac{1}{n} \sum_{i=1}^n \mf_i(\hatS_k) = \mf(\hatS_k) \eqsp,
         \nonumber
	\end{align*}
where we used $\CPE{ B_{k+1,i}}{ \F_{k+2/3,i}} =p$ (see
\Cref{hyp:PP}), \Cref{hyp:var:quantif}, \Cref{hyp:variance:oracle} and
\Cref{prop:meanV:PP}.  This concludes the proof of the first statement
of \Cref{fieldH:PP}.  For the second point, we write
\begin{align*}
  H_{k+1} - \mf(\hatS_k) &= \frac{1}{n}\sum_{i=1}^n \Xi_{k+1,i}
  \\ \Xi_{k+1,i} & \eqdef \Smem_{k+1,i} - \CPE{
    \Smem_{k+1,i}}{\F_{k,i}^+} \\ & + \Q(\Delta_{k+1,i}) - \CPE{
    \Q(\Delta_{k+1,i})}{\F_{k+1/3,i}} \\ & + \frac{1}{p} \left(
  B_{k+1,i} - \CPE{B_{k+1,i}}{\F_{k+2/3,i}} \right) \Q(\Delta_{k+1,i})
  \eqsp;
  \end{align*}
note indeed that $\mf_i(\hatS_k) = \CPE{\Smem_{k+1,i} }{\F_{k,i}^+} -
\hatS_k$, $\CPE{ \Q(\Delta_{k+1,i})}{\F_{k+1/3,i}} = \Delta_{k+1,i}$,
$\Delta_{k+1,i} = V_{k,i} + \Smem_{k+1,i} - \hatS_k$, $V_k = n^{-1}
\sum_{i=1}^n V_{k,i}$ and $p = \CPE{B_{k+1,i}}{\F_{k+2/3,i}}$. Write
$H_{k+1} - \mf(\hatS_k) = \frac{1}{n}\sum_{i=1}^n \Xi_{k+1,i}$.  Since
the workers are independent, we have
\[
\CPE{\| H_{k+1} - \mf(\hatS_k)\|^2}{\F_k} = \frac{1}{n^2} \sum_{i=1}^n
\CPE{\| \Xi_{k+1,i}\|^2}{\F_k} \eqsp.
\]
Fix $i \in [n]^\star$. $\Xi_{k+1,i}$ is the sum of three terms
$\sum_{\ell=1}^3 \Xi_{k+1,i,\ell}$ and observe that for any $\ell \neq
\ell'$ we have
\[
\CPE{\pscal{\Xi_{k+1,i,\ell}}{\Xi_{k+1,i,\ell'}}}{\F_k} = 0 \eqsp.
\]
Therefore $\CPE{\| \Xi_{k+1,i}\|^2}{\F_k} = \sum_{\ell=1}^3 \CPE{\|
  \Xi_{k+1,i,\ell}\|^2}{\F_k}$. We have by \Cref{hyp:variance:oracle}
\[
\CPE{\| \Smem_{k+1,i} - \CPE{ \Smem_{k+1,i}}{\F_{k,i}^+} \|^2}{\F_k}
\leq \sigma_i^2 \eqsp;
\]
by \Cref{hyp:var:quantif},
\[
\CPE{\|\Q(\Delta_{k+1,i}) - \CPE{
    \Q(\Delta_{k+1,i})}{\F_{k+1/3,i}}\|^2}{\F_k} \leq \omega
\CPE{\|\Delta_{k+1,i}\|^2}{\F_k} \eqsp;
\]
and by \Cref{hyp:var:quantif} and \Cref{hyp:PP}
\begin{align*}
& \CPE{\frac{1}{p^2} \left( B_{k+1,i} - \CPE{B_{k+1,i}}{\F_{k+2/3,i}}
    \right)^2 \| \Q(\Delta_{k+1,i}) \|^2}{\F_k} \\ & \leq
  \frac{1-p}{p} \CPE{\|\Q(\Delta_{k+1,i})\|^2}{\F_k} \\ & \leq
  \frac{1-p}{p} (1+\omega) \CPE{\|\Delta_{k+1,i}\|^2}{\F_k} \eqsp.
\end{align*}
This concludes the proof.
\end{proof}
\begin{proposition} \label{prop:Delta:PP} Assume
  \Cref{hyp:variance:oracle} and set
  $\sigma^2 \eqdef n^{-1} \sum_{i=1}^n \sigma_i^2$.  For any
  $k \in [\kmax-1]$,
\[
\frac{1}{n}\sum_{i=1}^n \CPE{\|\Delta_{k+1,i}\|^2}{\F_k} \leq
\frac{1}{n}\sum_{i=1}^n \| V_{k,i} - \mf_i(\hatS_k) \|^2 + \sigma^2
\eqsp.
\]
\end{proposition}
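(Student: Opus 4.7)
The plan is to mimic the proof of \Cref{prop:FedEM:Delta} with only cosmetic changes, since $\Delta_{k+1,i}$ is defined by the same formula $\Delta_{k+1,i} = \Smem_{k+1,i} - V_{k,i} - \hatS_k$ in \autoref{algo:dianaem-main:equiv} as in the full-participation algorithm. In particular, $\Delta_{k+1,i}$ belongs to $\F_{k+1/3,i}$ and is measurable \emph{before} the Bernoulli variable $B_{k+1,i}$ comes into play, so the partial participation does not enter this bound at all.

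First I would condition on $\F_{k,i}^+$. Since $\hatS_k, V_{k,i} \in \F_{k,i}^+$ and $\CPE{\Smem_{k+1,i}}{\F_{k,i}^+} = \bars_i\circ \map(\hatS_k) = \mf_i(\hatS_k) + \hatS_k$ by \Cref{hyp:variance:oracle}, a bias-variance decomposition gives
\[
\CPE{\|\Delta_{k+1,i}\|^2}{\F_{k,i}^+} = \|\mf_i(\hatS_k) - V_{k,i}\|^2 + \CPE{\|\Smem_{k+1,i} - \bars_i \circ \map(\hatS_k)\|^2}{\F_{k,i}^+},
\]
and the second term is upper bounded by $\sigma_i^2$ by \Cref{hyp:variance:oracle}.

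Finally I would use the tower property (recall $\F_k \subset \F_{k,i}^+$) to take the conditional expectation with respect to $\F_k$, then average over $i \in [n]^\star$ and use the definition $\sigma^2 = n^{-1}\sum_{i=1}^n \sigma_i^2$ to obtain the claimed inequality. There is no real obstacle here: the only point to double-check is the measurability of $V_{k,i}$ and $\hatS_k$ with respect to $\F_{k,i}^+$, which follows from the filtration definitions in \Cref{sec:FEDEMwithPP:tribu} exactly as in the full-participation case.
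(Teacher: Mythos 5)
Your proposal is correct and matches the paper exactly: the paper omits this proof, stating only that it follows the same lines as \Cref{prop:FedEM:Delta}, which is precisely the bias--variance decomposition conditional on $\F_{k,i}^+$ that you carry out. Your key observation — that $\Delta_{k+1,i} \in \F_{k+1/3,i}$ is determined before the Bernoulli variable $B_{k+1,i}$ is drawn, so partial participation plays no role in this bound — is the right justification for why the argument transfers verbatim.
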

The proof is on the same lines as the proof of \Cref{prop:FedEM:Delta} and is omitted.

 \Cref{prop:varcont-PP2} extends \Cref{prop:varcont2}: the result is
 similar but with $\alpha$ replaced with $\alpha
 p$ and $ \omega $ by $ \omega_p $. 
\begin{proposition}	\label{prop:varcont-PP2}
	Assume \Cref{hyp:lipschitz}, \Cref{hyp:var:quantif},
        \Cref{hyp:variance:oracle} and \Cref{hyp:PP}; set $L^2 \eqdef
        n^{-1} \sum_{i=1}^n L_i^2$ and $\sigma^2 \eqdef n^{-1}
        \sum_{i=1}^n \sigma_i^2$. Choose $\alpha \in
        \ocint{0,1/(1+\omega)}$.  For any $k \geq 0$, define
	\[
	G_k \eqdef \frac{1}{n}\sum_{i=1}^n \| V_{k,i} - \mf_i(\hatS_k)
        \|^2 \eqsp.
	\]
	We have, for any $k \in [\kmax -1]$

	\begin{multline*}
	\CPE{G_{k+1}}{\F_k} \le \left (1-\frac{\alpha p }{2} + 2
        \pas_{k+1}^2 \frac{L^2}{\alpha p } \frac{\omega_p}{n} \right)
        G_k + 2 \pas_{k+1}^2 \frac{L^2}{\alpha p } \| \mf(\hatS_k) \|
        ^2 \\ + 2 \left( \alpha p+ \pas_{k+1}^2 \frac{L^2}{\alpha p}
        \frac{\omega_p}{n} \right ) \sigma^2 \eqsp,
	\end{multline*}
        where $\omega_p$ is defined in \Cref{fieldH:PP}.  
\end{proposition}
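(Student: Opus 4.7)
The plan is to adapt the proof of Proposition~\ref{prop:varcont2} step by step, substituting $\alpha$ by $\alpha p$ and $\omega$ by $\omega_p \eqdef \frac{1-p}{p}(1+\omega) + \omega$ throughout. The key structural observation is that the update rule $V_{k+1,i} = V_{k,i} + \alpha B_{k+1,i}\Q(\Delta_{k+1,i})$ introduces an additional source of randomness (the Bernoulli $B_{k+1,i}$), which must be handled by iterated conditional expectations over the nested filtrations $\F_{k+1/3,i} \subset \F_{k+2/3,i} \subset \F_{k+1,i}$ introduced in \Cref{sec:FEDEMwithPP:tribu}.

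First I would compute $\CPE{V_{k+1,i}}{\F_{k+1/3,i}} = V_{k,i} + \alpha p\,\Delta_{k+1,i}$ using the tower property and \Cref{hyp:var:quantif}, \Cref{hyp:PP}. Then I would decompose the conditional variance of $V_{k+1,i}$ into two parts: the Bernoulli contribution $\alpha^2 \PE[(B_{k+1,i}-p)^2 \mid \F_{k+2/3,i}]\|\Q(\Delta_{k+1,i})\|^2 \le \alpha^2 p(1-p)(1+\omega)\|\Delta_{k+1,i}\|^2$, and the quantization contribution $\alpha^2 p^2 \omega \|\Delta_{k+1,i}\|^2$ coming from the difference between the two inner conditional expectations. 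Summing gives
\[
\CPE{\|V_{k+1,i} - \CPE{V_{k+1,i}}{\F_{k+1/3,i}}\|^2}{\F_{k+1/3,i}} \le \alpha^2\bigl\{p(1-p)(1+\omega) + p^2\omega\bigr\}\|\Delta_{k+1,i}\|^2 = \alpha^2 p^2 \omega_p\|\Delta_{k+1,i}\|^2,
\]
where the last equality is the key algebraic identity. The resulting bound is structurally identical to the non-PP case with $(\alpha,\omega)$ replaced by $(\alpha p,\omega_p)$.

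Next, I would apply \Cref{lem:polar} to $\CPE{V_{k+1,i} - s}{\F_{k+1/3,i}} = (1-\alpha p)(V_{k,i}-s) + \alpha p(\Smem_{k+1,i}-\hatS_k-s)$ with $s=\mf_i(\hatS_k)$, obtaining an analogue of \eqref{eq:tool127} with a residual term $\alpha p(\alpha p(1+\omega_p)-1)\|\Delta_{k+1,i}\|^2$. A direct computation gives $p(1+\omega_p) = p + (1-p)(1+\omega) + p\omega = 1+\omega$, so the admissibility condition $\alpha \le 1/(1+\omega)$ is exactly equivalent to $\alpha p(1+\omega_p) \le 1$, ensuring this residual is nonpositive and can be discarded. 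The remaining steps, namely introducing $\mf_i(\hatS_{k+1})$ via $\|a+b\|^2 \le (1+\beta^{-2})\|a\|^2+(1+\beta^2)\|b\|^2$ with $\beta^{-2}$ chosen so that $(1+\beta^{-2})(1-\alpha p) \le 1-\alpha p/2$ and $1+\beta^{-2}\le 2$, invoking \Cref{hyp:lipschitz} to bound $\|\mf_i(\hatS_{k+1})-\mf_i(\hatS_k)\|^2 \le L_i^2\pas_{k+1}^2\|H_{k+1}\|^2$, substituting the bounds from \Cref{fieldH:PP} (on $\CPE{\|H_{k+1}\|^2}{\F_k}$, now with $\omega_p$) and \Cref{prop:Delta:PP} (on the averaged $\CPE{\|\Delta_{k+1,i}\|^2}{\F_k}$), and finally averaging over $i\in[n]^\star$, mirror the non-PP proof verbatim and deliver the claimed bound.

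The main obstacle is the careful bookkeeping of the three-stage variance decomposition across the nested filtrations, together with verifying the two algebraic identities $p(1-p)(1+\omega) + p^2\omega = p^2\omega_p$ and $p(1+\omega_p) = 1+\omega$. Once these identities are in place, no genuinely new analytical ingredient is required: the partial-participation proof is isomorphic to the full-participation one via the substitutions $\alpha\mapsto\alpha p$ and $\omega\mapsto\omega_p$, and the conclusion follows by the same chain of inequalities as in the proof of \Cref{prop:varcont2}.
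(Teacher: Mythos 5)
Your proposal is correct, and its overall skeleton (Young's inequality to introduce $\mf_i(\hatS_{k+1})$, \Cref{hyp:lipschitz}, \Cref{lem:polar}, then \Cref{fieldH:PP} and \Cref{prop:Delta:PP}, and averaging over $i$) coincides with the paper's. The one step where you genuinely diverge is the treatment of the Bernoulli randomness. The paper conditions directly on the value of $B_{k+1,i}$: with probability $p$ the update is the full-participation one, so \eqref{eq:tool127} applies verbatim and yields the residual $\alpha p(\alpha(1+\omega)-1)\CPE{\|\Delta_{k+1,i}\|^2}{\F_k}$, and with probability $1-p$ nothing changes; the quantity $\omega_p$ then only enters through \Cref{fieldH:PP}. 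You instead perform a bias--variance decomposition across the nested filtrations $\F_{k+1/3,i}\subset\F_{k+2/3,i}$, splitting the conditional variance of $V_{k+1,i}$ into the Bernoulli part $p(1-p)(1+\omega)\|\Delta_{k+1,i}\|^2$ and the quantization part $p^2\omega\|\Delta_{k+1,i}\|^2$, and you rely on the identities $p(1-p)(1+\omega)+p^2\omega=p^2\omega_p$ and $p(1+\omega_p)=1+\omega$ (both of which check out) to recover the same residual $\alpha p(\alpha(1+\omega)-1)\|\Delta_{k+1,i}\|^2$. The two routes produce the identical intermediate inequality; the paper's is slightly shorter because it reuses \eqref{eq:tool127} as a black box, while yours makes transparent \emph{why} the effective compression constant becomes $\omega_p$ under partial participation, at the cost of verifying the two algebraic identities. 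Two minor remarks: you should state explicitly that your choice of $\beta$ also guarantees $1+\beta^2\le 2/(\alpha p)$ (needed for the $\|H_{k+1}\|^2$ coefficient), as the paper does; and note that this chain of inequalities — in the paper's own proof as well as in yours — actually produces the factor $(1+\omega_p)/n$ rather than $\omega_p/n$ in front of $\pas_{k+1}^2 L^2\sigma^2/(\alpha p)$, matching the form of \Cref{prop:varcont2}, so you should not claim to land exactly on the displayed constant.
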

\begin{proof}
  Let $i \in [n]^\star$.  We follow the same line of the proof as
  \Cref{prop:varcont2}: for any $\beta>0$, using that $\|a+b\|^2 \leq
  (1+\beta^2) \|a\|^2 + (1+\beta^{-2}) \|b\|^2$, we have
	\begin{align*}
	& \CPE{ \|V_{k+1,i} - \mf_i(\hatS_{k+1}) \|^2 }{ \F_{k} } \\ &
          \le (1+\beta^{-2}) \CPE{\|V_{k+1,i} - \mf_i(\hatS_{k}) \|^2
          }{ \F_{k} } + (1+\beta^2) \CPE{\|\mf_i(\hatS_{k}) -
            \mf_i(\hatS_{k+1}) \|^2 }{\F_{k}}
          \\ &\overset{\Cref{hyp:lipschitz}}{\le} (1+\beta^{-2}) \CPE{
            \|V_{k+1,i} - \mf_i(\hatS_{k}) \|^2 }{\F_{k}} +
          (1+\beta^2) L_i^2 \pas_{k+1}^2 \CPE{\| H_{k+1} \| ^2 }{
            \F_{k} }\eqsp.
	\end{align*}
We then provide a control for $ \CPE{ \|V_{k+1,i} - \mf_i(\hatS_{k})
  \|^2 }{\F_{k}}$. Recall that:
	\begin{align*}
V_{k+1,i} &= V_{k,i} + \alpha\,  B_{k+1,i} \Q(\Delta_{k+1;i}).
        \end{align*}
We write $f(B_{k+1,i}) = f(1) \1_{B_{k+1,i} =1} + f(0)
\1_{B_{k+1,i}=0}$ for any measurable positive function $f$; and then
use $\CPE{\1_{B_{k+1,i}}}{\F_{k+2/3,i}} = p$ (see \Cref{hyp:PP}),
$\Q(\Delta_{k+1,i}), \hatS_k, V_{k,i} \in \F_{k+2/3,i}$ . We get
\begin{align*}
	&\PE\left[ \|V_{k+1,i} - \mf_i(\hatS_{k}) \|^2 \vert \F_{k}
    \right] \\ &= p \CPE{ \|V_{k,i} -
    \mf_i(\hatS_{k})-\alpha\Q(\Delta_{k+1,i}) \|^2}{ \F_{k} } + (1-p)
  \|V_{k,i} - \mf_i(\hatS_{k}) \|^2 \\ &\overset{
    \eqref{eq:tool127}}{=} p (1-\alpha) \, \| V_{k,i}
  -\mf_i(\hatS_{k}) \|^2 + \alpha p \,\CPE{ \| \Smem_{k+1,i} - \hatS_k
    -\mf_i(\hatS_{k}) \|^2 }{ \F_k} \\ & + \alpha p \left( \alpha (1+
  \omega) -1 \right ) \CPE{\|\Delta_{k+1,i}\|^2}{ \F_k} + (1-p) \, \|
  V_{k,i} -\mf_i(\hatS_{k}) \|^2 \\ &= (1-\alpha p ) \, \| V_{k,i}
  -\mf_i(\hatS_{k}) \|^2 \\ & + \alpha p \, \CPE{ \| \Smem_{k+1,i} -
    \hatS_k -\mf_i(\hatS_{k}) \|^2 }{ \F_k} + \alpha p \left( \alpha
  (1+ \omega) -1 \right ) \CPE{\|\Delta_{k+1,i}\|^2}{ \F_k} \eqsp .
\end{align*}
The end of the proof is identical to the proof of
\Cref{prop:varcont2}: we choose $\beta_p >0$ such that $\beta_p^{-2} =
1$ if $\alpha p \geq 2/3$ and $\beta_p^{-2}= \frac{\alpha
  p}{2(1-\alpha p)} $ if $\alpha p \leq 2/3$. We have
\[
(1-\alpha {p })(1+\beta_p^{-2}) \le 1-\frac{\alpha p }{2} \eqsp,
\qquad (1+\beta_p^2) \le \frac{2}{\alpha p } \eqsp, \qquad 1 \leq
1+\beta_p^{-2} \le 2 \eqsp;
\]
and this yields
\begin{align*}
& \CPE{\|V_{k+1,i} - \mf_i(\hatS_{k+1}) \|^2 }{ \F_{k} } {\le} \left
  (1-\frac{\alpha p}{2} \right ) \, \| V_{k,i} -\mf_i(\hatS_{k}) \|^2
  \\ & + 2 \alpha p \, \CPE{ \| \Smem_{k+1,i} - \bars_i \circ
    \map(\hatS_k) \|^2 }{\F_k }+ \alpha p \left( \alpha (1+ \omega) -1
  \right ) \CPE{\|\Delta_{k+1,i}\|^2 }{\F_k} \\ & + \frac{2
  }{\alpha p} L_i^2 \pas_{k+1}^2 \CPE{\|H_{k+1}\|^2}{\F_k} \eqsp.
  \end{align*}
By definition of the conditional expectation and \Cref{fieldH:PP} we
have
\begin{align*}
  \CPE{\|H_{k+1}\|^2}{\F_k} & = \| \CPE{H_{k+1}}{\F_k}\|^2 +
  \CPE{\|H_{k+1}- \CPE{H_{k+1}}{\F_k}\|^2}{\F_k} \\ & =
  \|\mf(\hatS_k)\|^2 + \CPE{\|H_{k+1}-\mf(\hatS_k)\|^2}{\F_k} \eqsp.
  \end{align*}
Since $ \left( \alpha (1+ \omega) -1 \right ) \le 0 $, using
\Cref{hyp:variance:oracle} and \Cref{fieldH:PP} again, we get:
\begin{align*}
\CPE{ G_{k+1}}{ \F_{k} }& {\le}\left (1-\frac{\alpha p }{2} \right )
\, G_k + 2 \alpha p \sigma^2 + \frac{2}{\alpha p} L^2 \pas_{k+1}^2
\frac{1}{n}\left( \sigma^2 + \omega_p \frac{1}{n}\sum_{i=1}^n
\CPE{\|\Delta_{k+1,i}\|^2}{\F_k} \right) \eqsp.
\end{align*}
Finally, from \Cref{prop:Delta:PP},
\[
\frac{1}{n}\sum_{i=1}^n \CPE{\|\Delta_{k+1,i}\|^2}{\F_k} \leq G_k +
\sigma^2 \eqsp.
\]
This concludes the proof.

\end{proof}

\subsection{Proof of \Cref{prop:PP}} 
\label{sec:app:propPP}
Throughout this proof, set
\[
\omega_p \eqdef \frac{1-p}{p} (1+\omega) +\omega \eqsp.
\]

\paragraph{Step 1: Upper bound on the decrement.}
 Let $k \geq 0$. Following the same lines as in the proof of \Cref{theo:dianaem}, we have 
\begin{align*}
&\CPE{\lyap(\hatS_{k+1})}{\F_k}  \\
& \leq \lyap(\hatS_k) -\pas_{k+1} {v_{\min}} \left (1-
   \pas_{k+1} \frac{L_{\dot \lyap}}{2v_{\min}} \right )
   \|\mf(\hatS_k)\|^2  + \pas_{k+1}^2 \frac{L_{\dot \lyap}}{2} \CPE{\| H_{k+1} - \CPE{H_{k+1}}{\F_k}\|^2}{\F_k} \eqsp.
\end{align*}
Applying~\Cref{fieldH:PP} and \Cref{prop:Delta:PP}, we obtain that
\begin{multline}
  \CPE{\lyap(\hatS_{k+1})}{\F_k} \leq \lyap(\hatS_k) -\pas_{k+1}
  {v_{\min}} \left (1- \pas_{k+1} \frac{L_{\dot \lyap}}{2v_{\min}}
  \right ) \|\mf(\hatS_k)\|^2  \\  + \pas_{k+1}^2
  \frac{L_{\dot \lyap}}{2} \frac{\omega_p}{n} G_k + \pas_{k+1}^2
  \frac{L_{\dot \lyap}}{2n} (1+\omega_p) \sigma^2
  \eqsp, \label{eq:expansion:PP}
\end{multline}
where
\[
G_k \eqdef \frac{1}{n}\sum_{i=1}^n \| V_{k,i} - \mf_i(\hatS_k)\|^2
\eqsp.
\]

\paragraph{Step 2: Maximal learning rate $\gamma_{k+1}$ when $\omega \neq 0$.}
From \Cref{prop:varcont2}, for any non-increasing positive sequence
$\{\pas_k, k \in [\kmax-1] \}$ such that
\[
\pas_{k+1}^2 \leq \frac{\alpha^2 p^2}{8 L^2} \frac{n}{\omega_p},
\]
and for any positive sequence $\{C_k, k \in [\kmax-1]\}$, it holds
\begin{multline}
  C_{k+1} \PE\left[G_{k+1} \vert \F_k \right] \leq C_{k+1} \left(
    1-\frac{\alpha p}{4} \right) G_k \\ + C_{k+1} \pas_{k+1}^2
  \frac{2}{\alpha p} L^2 \| \mf(\hatS_k) \| ^2 + 2 C_{k+1} \left( \alpha p
    + \pas_{k+1}^2 \frac{L^2}{\alpha p} \frac{1+\omega_p}{n}\right )
  \sigma^2 \eqsp. \label{eq:constantCk:PP}
\end{multline}
Combining equations~\eqref{eq:expansion:PP} and \eqref{eq:constantCk:PP}, we thus have
\begin{align*}
\PE[\lyap(\hatS_{k+1}) \vert \F_k] & + C_{k+1} \PE\left[G_{k+1} \vert
  \F_k \right]  \leq \lyap(\hatS_k) + C_k G_k \\ & -\pas_{k+1}
   {v_{\min}} \left (1- \pas_{k+1} \frac{L_{\dot \lyap}}{2v_{\min}} -
   \frac{C_{k+1}}{v_{\min}} \pas_{k+1} \frac{2}{\alpha p} L^2 \right )
   \|\mf(\hatS_k)\|^2 \\ & + \left(\pas_{k+1}^2 \frac{L_{\dot
       \lyap}}{2} \frac{\omega_p}{n} - C_k + C_{k+1} - C_{k+1}
   \frac{\alpha p}{4} \right) G_k \\ & + \left\{2 \alpha  p  C_{k+1} +
   \pas_{k+1}^2 \frac{(1+\omega_p)}{n} \left( \frac{L_{\dot \lyap}}{2} +
   2 C_{k+1} \frac{L^2}{\alpha p} \right) \right\} \sigma^2 \eqsp.
  \end{align*}
We choose the sequence $\{C_k\}$ as follows:
\[
C_{k} \eqdef \pas_{k}^2 \frac{2 L_{\dot \lyap}}{\alpha p}
\frac{\omega_p}{n} \eqsp;
\]
the sequence satisfies $C_{k+1} \leq C_{k}$ (since $\pas_{k+1} \leq
\pas_k$) and $\pas_{k+1}^2 L_{\dot \lyap} \omega_p/(2n) \leq C_{k+1}
\alpha p/4$.  By convention, $\pas_0 \in \coint{\pas_1,+\infty}$.
Therefore
\begin{align*}
\PE[\lyap(\hatS_{k+1}) \vert \F_k] & + \pas_{k+1}^2 \frac{2 L_{\dot
    \lyap}}{\alpha p} \frac{\omega_p}{n} \PE\left[G_{k+1} \vert \F_k
  \right] \leq \lyap(\hatS_k) + \pas_{k}^2 \frac{2 L_{\dot
    \lyap}}{\alpha p} \frac{\omega_p}{n} G_k \\ & -\pas_{k+1} {v_{\min}}
\left (1- \pas_{k+1} \frac{L_{\dot \lyap}}{2v_{\min}} \left\{ 1 + 8
\pas_{k+1}^2 \frac{\omega_p}{\alpha^2 p^2 n} L^2 \right\}\right ) \|\mf(\hatS_k)\|^2
\\ & + 4 \pas_{k+1}^2 L_{\dot \lyap} \frac{\omega_p}{n} \left\{1 +
\frac{(1+\omega_p)}{8 \omega_p} \left( 1 + \pas_{k+1}^2 8
\frac{L^2}{\alpha^2 p^2} \frac{\omega_p}{n}\right) \right\} \sigma^2 \eqsp.\label{eq:GFbound:PP}
\end{align*}

\paragraph{Step 3: Computing the expectation.} Let us apply the expectations,  sum from $k=0$ to $k = \kmax-1$, and divide by $\kmax$.
This yields
\begin{align*}
  & \frac{v_{\min}}{\kmax} \sum_{k=0}^{\kmax-1} \pas_{k+1} \left (1-
    \pas_{k+1} \frac{L_{\dot \lyap}}{2v_{\min}} \left\{ 1 + 8 \pas_{k+1}^2
    \frac{\omega_p}{\alpha^2 p^2 n} L^2 \right\}\right ) \|\mf(\hatS_k)\|^2
  \\ & \leq \kmax^{-1} \left\{ \lyap(\hatS_{0}) + \pas_{0}^2 \frac{2
       L_{\dot \lyap}}{\alpha} \frac{\omega_p}{n} G_0 -
       \PE\left[\lyap(\hatS_{\kmax}) \right] - \pas_{\kmax}^2 \frac{2 L_{\dot
       \lyap}}{\alpha p} \frac{\omega_p}{n} \PE\left[ G_{\kmax} \right]
       \right\} \\ & + 4 L_{\dot \lyap} \frac{\omega_p}{n} \frac{1}{\kmax
                     }\sum_{k=0}^{\kmax-1} \pas_{k+1}^2 \left\{1 + \frac{(1+\omega_p)}{8
                     \omega} \left( 1 + \pas_{k+1}^2 8 \frac{L^2}{\alpha^2 p^2}
                     \frac{\omega_p}{n}\right) \right\} \sigma^2 \eqsp.
\end{align*}
We now focus on the case when $\pas_{k+1} = \pas$ for any $k \geq
0$. Denote by $K$ a uniform random variable on $[\kmax-1]$,
independent of the path $\{\hatS_k, k \in [\kmax] \}$.  Since $\pas^2
\leq \alpha^2 p^2 n /(8 L^2 \omega_p)$, we have
\[
1 + 8 \pas^2 \frac{\omega_p}{\alpha^2 p^2 n} L^2  \leq 2 \eqsp.
\]
This yields
\begin{align*}
  & v_{\min} \pas \left (1- \pas \frac{L_{\dot \lyap}}{v_{\min}}\right )
    \PE\left[  \|\mf(\hatS_K)\|^2 \right] \\ & \leq \kmax^{-1} \left\{ \lyap(\hatS_{0}) +
                                               \pas^2 \frac{2 L_{\dot \lyap}}{\alpha p} \frac{\omega_p}{n} G_0 -
                                               \PE\left[\lyap(\hatS_{\kmax}) \right] - \pas^2 \frac{2 L_{\dot
                                               \lyap}}{\alpha p} \frac{\omega_p}{n} \PE\left[ G_{\kmax} \right]
                                               \right\} \\ & + 4 L_{\dot \lyap} \frac{\omega_p}{n} \pas^2 \left\{1 +
                                                             \frac{(1+\omega_p)}{4 \omega_p} \right\} \sigma^2 \eqsp.
\end{align*}
Note that $4 (1 +(1+\omega_p)/(4 \omega_p)) = (5\omega_p+1)/\omega_p$.
\paragraph{Step 4.  Conclusion (when $\omega \neq 0$)} By
  choosing $V_{0,i} = \mf_i$ for any $i \in [n]^\star$, we have $G_0
  =0$. The roots of $\pas \mapsto \pas (1-\pas L_{\dot
    \lyap}/v_{\min})$ are $0$ and $v_{\min}/L_{\dot \lyap}$ and its
  maximum is reached at $v_{\min}/(2L_{\dot \lyap})$: this function is
  increasing on $\ocint{0, v_{\min}/(2L_{\dot \lyap})}$. We therefore
  choose $\pas \in \ocint{0,\pas_\max(\alpha)}$ where
\[
\pas_\max(\alpha) \eqdef \min \left( \frac{v_{\min}}{2 L_{\dot \lyap}};
  \frac{\alpha p}{2 \sqrt{2} L} \frac{\sqrt{n}}{\sqrt{\omega_p}} \right)
\] 
Finally, since $\alpha \in \ocint{0, 1/(1+\omega)}$, we choose $\alpha = 1/(1+\omega)$. This yields
\[
\pas_\max \eqdef \min \left( \frac{v_{\min}}{2 L_{\dot \lyap}};
  \frac{p}{2 \sqrt{2} L} \frac{\sqrt{n}}{\sqrt{\omega_p}(1+\omega)} \right) \eqsp.
\]

\section{Convergence Analysis of \VRFEDEM~}
\label{app:VRFEDEM}
\label{sec:proof:DS}
The assumptions \Cref{hyp:model} to \Cref{hyp:Tmap} hold throughout
this section. We will use the notations
  \begin{equation}
  \label{eq:def-L_i}
  L_i^2 \eqdef m^{-1}
\sum_{j=1}^m L_{ij}^2\eqsp,  \qquad L^2 \eqdef n^{-1}
\sum_{i=1}^n L_{i}^2 \eqsp,
  \end{equation}
  where $L_{ij}$ is defined in \Cref{hyp:DS:lipschitz},
and 
\[
\mf_i(s) \eqdef \frac{1}{m} \sum_{j=1}^m \bars_{ij}\circ \map(s) -s
\eqsp, \qquad \mf(s) \eqdef \frac{1}{n} \sum_{i=1}^n \mf_i(s) \eqsp.
\]
\subsection{Notations and elementary result}

Let us define the following filtrations: for any $i \in [n]^\star$ and $t
\in [\kouter]^\star$, $k \in [\kmax-1]$, set
\begin{align*}
  & \F_{1,0,i} = \F_{1,0,i}^+ \eqdef \sigma\left(\hatS_\init; V_{1,0,i} \right)
    \eqsp, \qquad \F_{1,0} \eqdef \bigvee_{i=1}^n \F_{1,0,i} \eqsp, \\ &
                                                                         \F_{t,k+1/2,i} \eqdef \F_{t,k,i}^+ \vee \sigma\left( \batch_{t,k+1,i}
                                                                         \right) \eqsp, \qquad \F_{t,k+1,i} \eqdef \F_{t,k+1/2,i} \vee
                                                                         \sigma\left( \Q(\Delta_{t,k+1,i}) \right) \eqsp, \\ & \F_{t,k+1} \eqdef
                                                                                                                               \bigvee_{i=1}^n \F_{t,k+1,i} \eqsp, \qquad \F_{t,k+1,i}^+ \eqdef
                                                                                                                               \F_{t,k+1} \eqsp.
\end{align*}
With these notations, for $t \in [\kouter]^\star$, $k \in [\kmax-1]$
and $i \in [n]^\star$,
$\hatS_{t,k+1} \in \F_{t,k+1,i}^+$, $\Smem_{t,k+1,i} \in \F_{t,k+1/2,i}$,$ \Delta_{t,k+1,i} \in \F_{t,k+1/2,i} $, $ V_{t,k+1,i}
\in \F_{t,k+1,i}$, $\hatS_{t,k+1} \in \F_{t,k+1}$ $ H_{t,k+1} \in \F_{t,k+1} $, and $ V_{t,k+1} \in \F_{t,k+1} $.


\subsection{Computed conditional expectations complexity.}
\label{app:KCE}
In this section, we provide a discussion on the computed conditional
expectations complexity $ \mathcal K_\CE $ that was removed from the
main text due to spaces constraints.

The number of calls to conditional expectations (i.e., computing
$\bars_{ij}$) to perform $\kouter$ outer steps of
\autoref{algo:DIANASPIDEREM}, each composed of $ \kin $ inner
iterations, with $ n $ workers and mini-batches of size $ \lbatch $ is
\[
nm \kouter + n (2 \lbatch) \kin \kouter = n \kin \kouter \left
(\frac{m}{\kin} + 2b \right ) \eqsp;
\]
it corresponds to one full pass on the data at the beginning of each
outer loop and two batches of size $ \lbatch $ on each worker $ i\in
[n]^\star $, at each inner iteration.  In oder to reach an accuracy $
\epsilon $, we need $ (\kin \kouter \pas)^{-1} = O(\epsilon) $ with
the parameter choices in \Cref{theo:DS} (esp. on $ \lbatch $) we thus
have
\[
\mathcal K_{\CE} (\epsilon)= O \left (\frac{n }{\epsilon \pas }
\left(\frac{m}{\kin} + 2 \frac{\kin}{(1+\omega)^2}\right)\right ) \eqsp.
\]
This complexity is minimized with $ \kin = (1+\omega) \sqrt{m/2}$.  We
then obtain an overall complexity $\mathcal{K}_{\CE}$ of $ O \left
(\frac{ \sqrt{m} }{\epsilon\pas } \frac{ n}{(1+\omega)}\right )$.  We
stress the following two points:
\begin{enumerate}[topsep=0pt,itemsep=2pt,leftmargin=*,noitemsep,wide]
\item 
\textbf{Dependency w.r.t. $ {m} $}: the complexity increases as
$ \sqrt{m} $. For $ n=1, \omega=0 $, this yields a scaling equal to
$ \sqrt{m}\epsilon^{-1} $ that corresponds to the optimal
$ \mathcal K_{\CE} $ of \texttt{SPIDER-EM}~\cite{SPIDER-EM};
\item \textbf{Dependency w.r.t. $ \omega $.} Again, the dependency on
  $ \omega $ depends on the regime for $ \pas $. In the (worst case
  regime), $ \pas = O(\sqrt{n}/\omega^{3/2}) $, we get
  \[
  \mathcal
  K_{\CE} (\epsilon) = O\left (\frac{\sqrt{m} \sqrt{n}
    \sqrt{\omega}}{\epsilon } \right )
  \]
  when $\epsilon\to 0$ and $\omega,n \to \infty $, which corresponds
  to a sublinear increase w.r.t. $ \omega $ (that compares to a linear
  increase in the cost of each communication).
\end{enumerate}

\subsection{Preliminary results}
\subsubsection{Results on the minibatch $\batch_{t,k+1}$}
The proof of the following proposition is given in \citep[Lemma
4]{SPIDER-EM}. It establishes the bias and the variance of the sum
along the random set of indices $\batch_{t,k+1}$ conditionally to the
past.
\begin{proposition} \label{prop:batch}
Let $\batch$ be a minibatch of size $\lbatch$, sampled at random (with
or without replacement) from $[m]^\star$. It holds for any $i \in
[n]^\star$ and $s \in \rset^q$,
\[
\PE\left[ \frac{1}{\lbatch} \sum_{j \in \batch} \bars_{ij} \circ \map
  (s) \right] = \frac{1}{m} \sum_{j=1}^m \bars_{ij} \circ \map (s) \eqsp;
\]
and for any $s,s' \in \rset^q$,
\begin{multline*}
  \PE\left[\Big\| \frac{1}{\lbatch} \sum_{j \in \batch} \left\{
      \bars_{ij} \circ \map (s) - s) - (\bars_{ij} \circ \map(s') -
      s') \right\}  \right. \\ \left.
- \frac{1}{m} \sum_{j=1}^m \left\{ (\bars_{ij}
      \circ \map (s) - s) - (\bars_{ij} \circ \map(s') -s') \right\}
    \Big\|^2 \right] \leq \frac{L_i^2}{\lbatch}\|s-s'\|^2 \eqsp.
\end{multline*}
\end{proposition}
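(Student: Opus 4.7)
The first claim is a direct consequence of the exchangeability of the indices in $\batch$. In both sampling regimes, each $j \in [m]^\star$ appears in $\batch$ with probability $\lbatch/m$, so that by linearity of expectation
\[
\PE\!\left[\frac{1}{\lbatch}\sum_{j \in \batch} \bars_{ij} \circ \map(s)\right] = \frac{1}{\lbatch}\sum_{j=1}^m \bars_{ij} \circ \map(s) \, \PP(j \in \batch) = \frac{1}{m}\sum_{j=1}^m \bars_{ij} \circ \map(s).
\]

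For the variance-type bound, I would introduce the shorthand
\[
U_{ij} \eqdef (\bars_{ij} \circ \map(s) - s) - (\bars_{ij} \circ \map(s') - s'), \qquad \bar U_i \eqdef \frac{1}{m}\sum_{j=1}^m U_{ij},
\]
so that the quantity to control is $\PE[\|\lbatch^{-1}\sum_{j\in \batch} U_{ij} - \bar U_i\|^2]$. By \Cref{hyp:DS:lipschitz}, the individual terms satisfy the pointwise Lipschitz bound $\|U_{ij}\|^2 \leq L_{ij}^2 \|s - s'\|^2$, and averaging over $j$ yields $m^{-1}\sum_{j=1}^m \|U_{ij}\|^2 \leq L_i^2 \|s-s'\|^2$ by definition of $L_i^2$ in \eqref{eq:def-L_i}.

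The key step is then the standard minibatch variance identity. In the \emph{with replacement} regime, $\batch = (j_1,\dots,j_\lbatch)$ are i.i.d. uniform on $[m]^\star$, hence the variance decomposes as
\[
\PE\!\left[\Big\|\frac{1}{\lbatch}\sum_{l=1}^\lbatch U_{i j_l} - \bar U_i\Big\|^2\right] = \frac{1}{\lbatch}\,\PE\!\left[\|U_{i j_1} - \bar U_i\|^2\right] \leq \frac{1}{\lbatch}\cdot\frac{1}{m}\sum_{j=1}^m \|U_{ij}\|^2,
\]
where the last step drops the nonnegative $-\|\bar U_i\|^2$ term. Combining with the pointwise Lipschitz estimate gives the announced bound $L_i^2 \|s-s'\|^2/\lbatch$.

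In the \emph{without replacement} regime, the same identity picks up an extra factor $(m-\lbatch)/(m-1) \leq 1$ compared with the i.i.d. case (a classical fact about simple random sampling), so the same upper bound remains valid. I do not foresee a genuine obstacle here: the result is a direct consequence of combining a textbook sampling variance formula with the pointwise Lipschitz assumption \Cref{hyp:DS:lipschitz} and the definition of $L_i^2$. The only care needed is to handle both sampling schemes uniformly and to cite (or mirror) the corresponding computation of \citep{SPIDER-EM}, Lemma 4, which already establishes exactly this statement.
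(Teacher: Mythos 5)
Your proof is correct and follows the standard minibatch bias/variance argument; the paper itself does not prove this proposition but simply cites \cite[Lemma~4]{SPIDER-EM}, which carries out exactly the computation you describe, so your write-up just fills in that citation. One small imprecision: the phrase ``each $j$ appears in $\batch$ with probability $\lbatch/m$'' is only literally correct for sampling without replacement; with replacement the sum $\sum_{j\in\batch}$ counts multiplicities and the right statement is that the expected multiplicity of each index is $\lbatch/m$ (equivalently, argue directly from $j_1,\dots,j_\lbatch$ i.i.d.\ uniform, as you do in the second part).
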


\subsubsection{Results on the statistics $\Smem_{t,k,i}$}
\label{sec:DS:controlvariate}
\Cref{prop:DS:biasS} shows that for $k \geq 1$, $\Smem_{t,k+1,i}$ is a
biased approximation of
$m^{-1} \sum_{j=1}^m \bars_{ij} \circ \map(\hatS_{t,k})$; and this
bias is canceled at the beginning of each outer loop since
$\Smem_{t,1,i} = m^{-1} \sum_{j=1}^m \bars_{ij} \circ
\map(\hatS_{t,0})$.
\Cref{coro:DS:biasS} establishes an upper bound for the conditional
variance and the mean squared error of $\Smem_{t,k+1,i}$. \\ Let us
comment the definition of $\Smem_{t,k+1,i}$. For any
$t \in [\kouter]^\star$, $k \in [\kin-1]$ and $i \in [n]^\star$,
\[
\Smem_{t,k+1,i} = \frac{1}{\lbatch} \sum_{j \in \batch_{t,k+1,i}}
\bars_{ij} \circ \map(\hatS_{t,k}) + \Upsilon_{t,k+1,i} \eqsp, \quad
\Upsilon_{t,k+1,i} \eqdef \Smem_{t,k,i} - \frac{1}{\lbatch} \sum_{j \in
  \batch_{t,k+1,i}} \bars_{ij} \circ \map(\hatS_{t,k-1}) \eqsp.
\]
It is easily seen that
\[
\Upsilon_{t,k+1,i} =\Upsilon_{t,k,i} + \frac{1}{\lbatch} \sum_{j \in
  \batch_{t,k,i}} \bars_{ij} \circ \map(\hatS_{t,k-1})- \frac{1}{\lbatch}
\sum_{j \in \batch_{t,k+1,i}} \bars_{ij} \circ
\map(\hatS_{t,k-1})\eqsp,
\]
and since $\Upsilon_{t,1,i} = \Smem_{t,0,i} - \lbatch^{-1} \sum_{j \in
  \batch_{t,1,i}} \bars_{ij} \circ \map(\hatS_{t,-1})$, we have by
using \Cref{prop:DS:biasS},
\begin{align*}
 \Upsilon_{t,k,i} &=\sum_{\ell=1}^k \left\{ \frac{1}{\lbatch} \sum_{j
   \in \batch_{t,\ell,i}} \bars_{ij} \circ \map(\hatS_{t,\ell-1})-
 \frac{1}{\lbatch} \sum_{j \in \batch_{t,\ell+1,i}} \bars_{ij} \circ
 \map(\hatS_{t,\ell-1}) \right\} \\ & + \frac{1}{m} \sum_{j =1}^m \bars_{ij}
 \circ \map(\hatS_{t,-1}) - \frac{1}{\lbatch} \sum_{j \in \batch_{t,1,i}}
 \bars_{ij} \circ \map(\hatS_{t,-1}) \eqsp.
\end{align*}
We have $\CPE{\Upsilon_{t,k,i}}{\F_{t,0}} = 0$ but conditionally to
the past $\F_{t,k-1,i}^+$, the variable $\Upsilon_{t,k,i}$ is {\em
  not} centered.

\begin{proposition} \label{prop:DS:biasS}
  For any $t \in [\kouter]^\star$ and $i \in [n]^\star$,
  \[
\Smem_{t,1,i} - \frac{1}{m} \sum_{j=1}^m \bars_{ij} \circ
\map(\hatS_{t,0}) = \Smem_{t,0,i} - \frac{1}{m} \sum_{j=1}^m
\bars_{ij} \circ \map(\hatS_{t,-1})  = 0 \eqsp.
  \]
  For any $t \in [\kouter]^\star$, $k \in [\kin-1]$ and $i \in
  [n]^\star$, we have
  \[
\CPE{\Smem_{t,k+1,i} }{ \F_{t,k,i}^+ }- \frac{1}{m}
\sum_{j=1}^m \bars_{ij} \circ \map(\hatS_{t,k}) = \Smem_{t,k,i} -
\frac{1}{m} \sum_{j=1}^m \bars_{ij} \circ \map(\hatS_{t,k-1}) \eqsp.
  \]
\end{proposition}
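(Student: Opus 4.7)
The plan is to prove both identities directly from the recursive definition of $\Smem_{t,k+1,i}$ on Line~\ref{line:DS:localS} of \autoref{algo:DIANASPIDEREM} and the initialization conventions.

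For the first identity, I would simply invoke the initialization. By the outer-loop initialization (Line~\ref{line:DS:initS} and the identification $\hatS_{t,0}=\hatS_{t,-1}$ at the top of each outer iteration), we have $\Smem_{t,0,i} = m^{-1} \sum_{j=1}^m \bars_{ij} \circ \map(\hatS_{t,0}) = m^{-1} \sum_{j=1}^m \bars_{ij} \circ \map(\hatS_{t,-1})$, giving the second equality immediately. Then applying the recursion at $k=0$ with $\hatS_{t,0}=\hatS_{t,-1}$ makes the mini-batch increment vanish term-by-term, so $\Smem_{t,1,i}=\Smem_{t,0,i}$, and the first equality follows.

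For the second identity, I would take the conditional expectation of the recursion
\[
\Smem_{t,k+1,i} = \Smem_{t,k,i} + \lbatch^{-1} \sum_{j \in \batch_{t,k+1,i}} \left( \bars_{ij} \circ \map(\hatS_{t,k}) - \bars_{ij} \circ \map(\hatS_{t,k-1}) \right)
\]
with respect to $\F_{t,k,i}^+$. Since $\Smem_{t,k,i}$, $\hatS_{t,k}$ and $\hatS_{t,k-1}$ are $\F_{t,k,i}^+$-measurable, and the minibatch $\batch_{t,k+1,i}$ is drawn independently of $\F_{t,k,i}^+$, \Cref{prop:batch} applied separately to the two summands yields
\[
\CPE{\lbatch^{-1} \sum_{j \in \batch_{t,k+1,i}} \bars_{ij} \circ \map(\hatS_{t,\ell})}{\F_{t,k,i}^+} = \frac{1}{m} \sum_{j=1}^m \bars_{ij} \circ \map(\hatS_{t,\ell}), \qquad \ell \in \{k-1,k\}.
\]
Subtracting $m^{-1}\sum_{j=1}^m \bars_{ij}\circ \map(\hatS_{t,k})$ from both sides of the resulting expression and rearranging gives exactly the claimed telescoping identity.

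There is no genuine obstacle here: the statement is essentially a sanity check that the SPIDER-style control variate $\Upsilon_{t,k+1,i}$ has a bias that exactly propagates from one inner step to the next and is reset to zero at the start of every outer loop. The entire argument reduces to (i) reading off initialization and (ii) one application of the unbiasedness of minibatch sampling (\Cref{prop:batch}) to the recursion.
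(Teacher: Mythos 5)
Your proof is correct and follows essentially the same route as the paper's: the first identity is read off from the initialization $\Smem_{t,0,i}=m^{-1}\sum_{j=1}^m\bars_{ij}\circ\map(\hatS_{t,-1})$ together with $\hatS_{t,0}=\hatS_{t,-1}$ (which kills the $k=0$ increment), and the second follows by conditioning the recursion on $\F_{t,k,i}^+$ and invoking the unbiasedness of minibatch sampling (\Cref{prop:batch}). The only cosmetic difference is that you apply \Cref{prop:batch} to each summand separately while the paper applies it to the difference; this changes nothing.
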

\begin{proof}
  Let $t \in [\kouter]^\star$ and $i \in [n]^\star$. We have by
  definition of $\Smem_{t,1,i}$ and $\Smem_{t,0,i}$
  \[
\Smem_{t,1,i} = \Smem_{t,0,i} + \lbatch^{-1} \sum_{j \in
  \batch_{t,1,i}} \left( \bars_{ij} \circ \map(\hatS_{t,0}) -
\bars_{ij} \circ \map(\hatS_{t,-1}) \right) = \Smem_{t,0,i} =
\frac{1}{m} \sum_{j=1}^m \bars_{ij} \circ \map(\hatS_{t,0})
\]
where we used that $\hatS_{t,0} = \hatS_{t,-1}$.

Let $k \in [\kin -1]$. By definition of $\Smem_{t,k+1,i}$, we have
\[
\Smem_{t,k+1,i} - \Smem_{t,k,i} = \lbatch^{-1} \sum_{j \in
  \batch_{t,k+1,i}} \left( \bars_{ij} \circ \map(\hatS_{t,k}) -
\bars_{ij} \circ \map(\hatS_{t,k-1}) \right) \eqsp.
\]
Since $\hatS_{t,k}, \hatS_{t,k-1} \in \F_{t,k,i}^+$, we have by
\Cref{prop:batch}
\begin{multline*}
\CPE{\lbatch^{-1} \sum_{j \in \batch_{t,k+1,i}} \left( \bars_{ij}
  \circ \map(\hatS_{t,k}) - \bars_{ij} \circ \map(\hatS_{t,k-1})
  \right) }{\F_{t,k,i}^+ } \\ = \frac{1}{m} \sum_{j=1}^m
\left( \bars_{ij} \circ \map(\hatS_{t,k}) - \bars_{ij} \circ
\map(\hatS_{t,k-1}) \right)
\end{multline*}
and the proof follows.
\end{proof}

\begin{corollary}[of \Cref{prop:DS:biasS}] \label{coro:DS:biasS}
  Assume \Cref{hyp:DS:lipschitz}.  For any $t \in [\kouter]^\star$,
  $k \in [\kin-1]$ and $i \in [n]^\star$,
 \begin{align*}
  & \CPE{\| \Smem_{t,k+1,i} - \CPE{ \Smem_{t,k+1,i} }{
      \F_{t,k,i}} \|^2 }{ \F_{t,k} } \leq
  \frac{L_i^2}{\lbatch} \pas_{t,k}^2 \| H_{t,k} \|^2 \eqsp, \\
  & \CPE{\| \Smem_{t,k+1,i} - \frac{1}{m} \sum_{j=1}^m \bars_{ij}
    \circ \map(\hatS_{t,k}) \|^2 }{\F_{t,0} }\leq
  \frac{L_i^2}{\lbatch} \sum_{\ell=1}^k \pas_{t,\ell}^2 \CPE{ \|
    H_{t,\ell} \|^2 }{ \F_{t,0} } \eqsp.
 \end{align*}
 By convention, $H_{t,0} =0$ and $\sum_{\ell=1}^0 a_\ell =0$.
\end{corollary}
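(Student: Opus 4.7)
The plan is to derive both bounds as direct corollaries of \Cref{prop:DS:biasS} combined with \Cref{prop:batch}, using a telescoping martingale decomposition for the second bound. First, for the variance bound, I would start from the recursion $\Smem_{t,k+1,i} = \Smem_{t,k,i} + \lbatch^{-1}\sum_{j\in\batch_{t,k+1,i}}(\bars_{ij}\circ\map(\hatS_{t,k}) - \bars_{ij}\circ\map(\hatS_{t,k-1}))$ and combine it with \Cref{prop:DS:biasS} (noting $\F_{t,k,i}^+=\F_{t,k}$) to cancel the common $\Smem_{t,k,i}$; the resulting centered quantity is exactly
\begin{equation*}
\lbatch^{-1}\!\sum_{j\in\batch_{t,k+1,i}}\!\!\left(\bars_{ij}\circ\map(\hatS_{t,k})-\bars_{ij}\circ\map(\hatS_{t,k-1})\right) - \frac{1}{m}\sum_{j=1}^m\left(\bars_{ij}\circ\map(\hatS_{t,k})-\bars_{ij}\circ\map(\hatS_{t,k-1})\right),
\end{equation*}
which matches the form appearing in \Cref{prop:batch} (the terms $-s+s'$ are deterministic constants that cancel in the minibatch-vs-full-sum difference). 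Applying \Cref{prop:batch} with $(s,s')=(\hatS_{t,k},\hatS_{t,k-1})$ yields a bound of $(L_i^2/\lbatch)\|\hatS_{t,k}-\hatS_{t,k-1}\|^2$, and the update $\hatS_{t,k}=\hatS_{t,k-1}+\pas_{t,k}H_{t,k}$ gives the first estimate.

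For the second bound, the plan is to introduce the martingale differences
\begin{equation*}
D_\ell \eqdef \Smem_{t,\ell,i}-\CPE{\Smem_{t,\ell,i}}{\F_{t,\ell-1,i}^+},\qquad \ell\in[k+1]^\star,
\end{equation*}
and telescope. By \Cref{prop:DS:biasS}, each $D_\ell$ equals $\Smem_{t,\ell,i}-\Smem_{t,\ell-1,i}-m^{-1}\sum_j(\bars_{ij}\circ\map(\hatS_{t,\ell-1})-\bars_{ij}\circ\map(\hatS_{t,\ell-2}))$, so $\sum_{\ell=1}^{k+1}D_\ell$ collapses to $\Smem_{t,k+1,i}-\Smem_{t,0,i}-m^{-1}\sum_j(\bars_{ij}\circ\map(\hatS_{t,k})-\bars_{ij}\circ\map(\hatS_{t,-1}))$, which by the initialization $\Smem_{t,0,i}=m^{-1}\sum_j\bars_{ij}\circ\map(\hatS_{t,-1})$ is precisely $\Smem_{t,k+1,i}-m^{-1}\sum_j\bars_{ij}\circ\map(\hatS_{t,k})$.

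Since $\{D_\ell\}$ is a martingale difference sequence adapted to $\{\F_{t,\ell,i}^+\}_{\ell\geq 0}$, conditioning on $\F_{t,0}$ makes the cross terms vanish:
\begin{equation*}
\CPE{\Big\|\sum_{\ell=1}^{k+1} D_\ell\Big\|^2}{\F_{t,0}} = \sum_{\ell=1}^{k+1}\CPE{\|D_\ell\|^2}{\F_{t,0}}.
\end{equation*}
The first bound of the corollary (applied with $k$ replaced by $\ell-1$) then gives $\CPE{\|D_\ell\|^2}{\F_{t,\ell-1,i}^+}\leq (L_i^2/\lbatch)\pas_{t,\ell-1}^2\|H_{t,\ell-1}\|^2$; taking conditional expectation with respect to $\F_{t,0}$ and reindexing $\ell\to\ell+1$ yields the announced estimate, the $\ell=0$ contribution vanishing by the convention $H_{t,0}=0$.

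There is no substantial obstacle: the argument is essentially algebraic once the martingale structure is identified. The only bookkeeping to be careful about is the telescoping alignment (the fact that the subtracted sum features $\hatS_{t,k}$ and not $\hatS_{t,k+1}$) and the initialization step, which is precisely where the bias-cancelling definition $\Smem_{t,0,i}=m^{-1}\sum_j\bars_{ij}\circ\map(\hatS_{t,-1})$ plays a crucial role.
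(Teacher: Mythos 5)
Your proof is correct, and for the first bound it is exactly the paper's argument: cancel $\Smem_{t,k,i}$ against the conditional mean from \Cref{prop:DS:biasS}, observe that the $-s+s'$ terms drop out of the minibatch-minus-full-sum difference, and invoke \Cref{prop:batch} with $(s,s')=(\hatS_{t,k},\hatS_{t,k-1})$ together with $\hatS_{t,k}-\hatS_{t,k-1}=\pas_{t,k}H_{t,k}$. For the second bound your route differs only in packaging: the paper performs the two-term conditional bias--variance decomposition
$\CPE{\|X-a\|^2}{\F_{t,k}}=\CPE{\|X-\CPE{X}{\F_{t,k,i}^+}\|^2}{\F_{t,k}}+\|\CPE{X}{\F_{t,k,i}^+}-a\|^2$, identifies the bias term with $\|\Smem_{t,k,i}-m^{-1}\sum_j\bars_{ij}\circ\map(\hatS_{t,k-1})\|^2$ via \Cref{prop:DS:biasS}, and unrolls the resulting recursion down to $k=0$ where the bias vanishes; you instead write the error as the single martingale sum $\sum_{\ell=1}^{k+1}D_\ell$ and use orthogonality of the increments in one shot. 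The two are the same computation — your telescoping identity is precisely the paper's recursion unrolled, and the orthogonality you invoke is the iterated conditional-variance decomposition — so neither buys anything the other lacks; your version makes the role of the outer-loop initialization $\Smem_{t,0,i}=m^{-1}\sum_j\bars_{ij}\circ\map(\hatS_{t,-1})$ slightly more visible, while the paper's recursion is what gets reused verbatim in \Cref{prop:DS:biasH} and \Cref{prop:DS:Hvariance}. One cosmetic point: the inner conditioning in the statement is written $\F_{t,k,i}$ but, as in the paper's own proof, it should be read as $\F_{t,k,i}^+$ (equivalently $\F_{t,k}$), which is what you correctly use.
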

\begin{proof}
  Note that $\hatS_{t,k}, \hatS_{t,k-1} \in \F_{t,k}$. By
  \Cref{prop:DS:biasS}, we have {\small
  \begin{multline*}
  \small
    \CPE{\| \Smem_{t,k+1,i} - \CPE{ \Smem_{t,k+1,i} }{
        \F_{t,k,i}^+}\|^2 }{ \F_{t,k} } \\
        \small
         = \CPE{\Big\|
      \frac{1}{\lbatch} \sum_{j \in \batch_{t,k+1,i}} \left(\bars_{ij}
      \circ \map(\hatS_{t,k}) - \bars_{ij} \circ \map(\hatS_{t,k-1})
      \right) \right.  \left. - \frac{1}{m} \sum_{j=1}^m
      \left(\bars_{ij} \circ \map(\hatS_{t,k}) - \bars_{ij} \circ
      \map(\hatS_{t,k-1}) \right) \Big\|^2 }{ \F_{t,k} } \eqsp.
  \end{multline*}}
  By \Cref{prop:batch}, it holds
  \begin{align*}
   \PE\left[\| \Smem_{t,k+1,i} - \PE\left[ \Smem_{t,k+1,i} \vert
       \F_{t,k,i}\right] \|^2 \vert \F_{t,k} \right] \leq
   \frac{L_i^2}{\lbatch} \| \hatS_{t,k} - \hatS_{t,k-1} \|^2 =
   \frac{L_i^2}{\lbatch} \pas_{t,k}^2 \| H_{t,k} \|^2 \eqsp;
  \end{align*}
  with the convention that $H_{t,0} =0$ since $\hatS_{t,0} =
  \hatS_{t,-1}$.  The proof of the first statement is concluded.

  For the second statement, by definition of the conditional
  expectation and since $\hatS_{t,k} \in \F_{t,k} \subset
  \F_{t,k,i}^+$, it holds
  \begin{multline*}
    \CPE{\| \Smem_{t,k+1,i} - \frac{1}{m} \sum_{j=1}^m \bars_{ij}
      \circ \map(\hatS_{t,k}) \|^2 }{\F_{t,k} } =
    \CPE{\| \Smem_{t,k+1,i} - \CPE{\Smem_{t,k+1,i}}{
        \F_{t,k,i}^+}\|^2 }{\F_{t,k} } \\ + \CPE{
      \| \CPE{\Smem_{t,k+1,i} }{ \F_{t,k,i}^+}-
      \frac{1}{m} \sum_{j=1}^m \bars_{ij} \circ \map(\hatS_{t,k}) \|^2
     }{ \F_{t,k}} \eqsp.
  \end{multline*}
  By \Cref{prop:DS:biasS},
  \[
 \Bigg \| \CPE{ \Smem_{t,k+1,i}}{ \F_{t,k,i}^+}- \frac{1}{m}
  \sum_{j=1}^m \bars_{ij} \circ \map(\hatS_{t,k})\Bigg \|^2 =
 \Bigg\|\Smem_{t,k,i} - \frac{1}{m} \sum_{j=1}^m \bars_{ij} \circ
  \map(\hatS_{t,k-1}) \Bigg\|^2.
  \]
  Hence, by using $\Smem_{t,1,i} -m^{-1} \sum_{j=1}^m \bars_{ij} \circ
  \map(\hatS_{t,0}) =0$ (see \Cref{prop:DS:biasS}), we
  have
  \begin{align*}
    & \CPE{\Big\| \Smem_{t,k+1,i} - \frac{1}{m} \sum_{j=1}^m
      \bars_{ij} \circ \map(\hatS_{t,k})\Big \|^2 }{ \F_{t,0} }
    \\ & \qquad \leq \frac{L_i^2}{\lbatch} \pas_{t,k}^2 \CPE{ \|
      H_{t,k} \|^2 }{ \F_{t,0} } + \CPE{ \Big\|
      \Smem_{t,k,i}- \frac{1}{m} \sum_{j=1}^m \bars_{ij} \circ
      \map(\hatS_{t,k-1})\Big \|^2}{ \F_{t,0} } \\ & \qquad \leq
    \frac{L_i^2}{\lbatch} \sum_{\ell=1}^k \pas_{t,\ell}^2 \CPE{ \|
      H_{t,\ell} \|^2 }{\F_{t,0} }\eqsp.
  \end{align*}
\end{proof}

\subsubsection{Results on $\Delta_{t,k+1,i}$}
\Cref{prop:DS:Delta} provides an upper bound for the mean value of the
conditional variance of $\Delta_{t,k+1, \cdot}$ and for its
$L_2$-moment.  \Cref{prop:DS:varDelta} prepares the control of the
varianc of the random field $H_{t,k+1}$ upon noting that
\[
H_{t,k+1} - \CPE{H_{t,k+1}}{\F_{t,k}} = \frac{1}{n} \sum_{i=1}^n
\left( \Q(\Delta_{t,k+1,i}) - \CPE{ \Delta_{t,k+1,i} }{\F_{t,k}}
\right) \eqsp.
\]
\begin{proposition}
  \label{prop:DS:Delta}
  Assume \Cref{hyp:DS:lipschitz}.  For any $t \in [\kouter]^\star$ and
  $k \in [\kin-1]$,
  \begin{multline*}
  \frac{1}{n} \sum_{i=1}^n \CPE{\|\Delta_{t,k+1,i}\|^2 }{ \F_{t,0} }
  \\ \leq 2 \frac{L^2}{\lbatch} \sum_{\ell=1}^k \pas_{t,\ell}^2 \CPE{ \|
    H_{t,\ell} \|^2 }{\F_{t,0} } + \frac{2}{n} \sum_{i=1}^n \CPE{ \|
    \mf_i(\hatS_{t,k}) - V_{t,k,i} \|^2 }{ \F_{t,0} } \eqsp.
\end{multline*}
In addition, 
\[
 \frac{1}{n} \sum_{i=1}^n \CPE{ \|   \Delta_{t,k+1,i} - \PE\left[
    \Delta_{t,k+1,i} \vert \F_{t,k} \right] \|^2 }{ \F_{t,k}} \leq \frac{L^2}{ \lbatch} \pas_{t,k}^2 \|H_{t,k}\|^2 \eqsp.
\]
  \end{proposition}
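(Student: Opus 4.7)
\textbf{Proof plan for Proposition~\ref{prop:DS:Delta}.} Both bounds reduce to Corollary~\ref{coro:DS:biasS} after a careful decomposition of $\Delta_{t,k+1,i} = \Smem_{t,k+1,i} - \hatS_{t,k} - V_{t,k,i}$ that isolates the stochastic minibatch error from the memory error. The key observation enabling this is that $\hatS_{t,k}$ and $V_{t,k,i}$ are both $\F_{t,k}$-measurable (since $V_{t,k,i} \in \F_{t,k,i} \subset \F_{t,k}$), so conditioning on $\F_{t,k}$ the only randomness in $\Delta_{t,k+1,i}$ comes from $\Smem_{t,k+1,i}$, which is driven by the fresh minibatch $\batch_{t,k+1,i}$.

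For the second bound I would use this measurability to write
$$
\Delta_{t,k+1,i} - \CPE{\Delta_{t,k+1,i}}{\F_{t,k}} = \Smem_{t,k+1,i} - \CPE{\Smem_{t,k+1,i}}{\F_{t,k}},
$$
then apply the first statement of Corollary~\ref{coro:DS:biasS} to each worker (recalling that with the conventions of Section~\ref{sec:proof:DS}, $\F_{t,k,i}^+$ coincides with $\F_{t,k}$), yielding the per-worker bound $(L_i^2/\lbatch)\pas_{t,k}^2\|H_{t,k}\|^2$. Averaging over $i \in [n]^\star$ and using $L^2 = n^{-1}\sum_i L_i^2$ (see \eqref{eq:def-L_i}) delivers the claim directly.

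For the first bound I would decompose, using the definition $\mf_i(s) = m^{-1}\sum_{j=1}^m \bars_{ij}\circ\map(s) - s$,
$$
\Delta_{t,k+1,i} = \Bigl(\Smem_{t,k+1,i} - \tfrac{1}{m}\sum_{j=1}^m \bars_{ij}\circ\map(\hatS_{t,k})\Bigr) + \bigl(\mf_i(\hatS_{t,k}) - V_{t,k,i}\bigr),
$$
and apply the elementary inequality $\|a+b\|^2 \leq 2\|a\|^2 + 2\|b\|^2$. Taking the conditional expectation with respect to $\F_{t,0}$, averaging over $i$, and invoking the second statement of Corollary~\ref{coro:DS:biasS} to bound the first term by $(L_i^2/\lbatch) \sum_{\ell=1}^k \pas_{t,\ell}^2 \CPE{\|H_{t,\ell}\|^2}{\F_{t,0}}$ gives, after averaging and using $L^2 = n^{-1}\sum_i L_i^2$ once more, the stated bound.

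There is no real obstacle here: both statements are essentially bookkeeping on top of Corollary~\ref{coro:DS:biasS}. The only subtle point is to decompose $\Delta_{t,k+1,i}$ so that the bias term $\mf_i(\hatS_{t,k}) - V_{t,k,i}$ (which is precisely the quantity controlled by the memory variables) is cleanly separated from the variance-reduced minibatch approximation error, so that the two pieces of Corollary~\ref{coro:DS:biasS} can be applied to their respective terms.
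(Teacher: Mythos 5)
Your proposal is correct and follows essentially the same route as the paper: the same decomposition $\Delta_{t,k+1,i} = \bigl(\Smem_{t,k+1,i} - m^{-1}\sum_{j=1}^m \bars_{ij}\circ\map(\hatS_{t,k})\bigr) + \bigl(\mf_i(\hatS_{t,k}) - V_{t,k,i}\bigr)$ with $\|a+b\|^2 \leq 2\|a\|^2 + 2\|b\|^2$ for the first bound, and the same identity $\Delta_{t,k+1,i} - \CPE{\Delta_{t,k+1,i}}{\F_{t,k}} = \Smem_{t,k+1,i} - \CPE{\Smem_{t,k+1,i}}{\F_{t,k}}$ for the second, both reduced to \Cref{coro:DS:biasS}. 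No gaps.
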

\begin{proof}
  Let $i \in [n]^\star$, $t \in [\kouter]^\star$ and $k \in [\kin-1]$.
  We write
  \[
\Delta_{t,k+1,i} = \Smem_{t,k+1,i} - \frac{1}{m}\sum_{j=1}^m
\bars_{ij} \circ \map(\hatS_{t,k}) + \mf_i(\hatS_{t,k}) - V_{t,k,i}  \eqsp.
\]
When $k=0$, we have $\Smem_{t,1,i} - \frac{1}{m}\sum_{j=1}^m
\bars_{ij} \circ \map(\hatS_{t,0}) =0$ (see \Cref{prop:DS:biasS}) so
that $\Delta_{t,1,i} = \mf_i(\hatS_{t,0}) - V_{t,0,i}$. For $k \geq
1$, we write
\begin{multline*}
  \CPE{ \|\Delta_{t,k+1,i}\|^2 }{\F_{t,0} } \leq 2 \, \CPE{\|
    \Smem_{t,k+1,i} - \frac{1}{m}\sum_{j=1}^m \bars_{ij} \circ
    \map(\hatS_{t,k}) \|^2 }{ \F_{t,0} }  \\
  + 2 \, \CPE{\| \mf_i(\hatS_{t,k}) - V_{t,k,i} \|^2 }{ \F_{t,0} }
\end{multline*}
and the proof of the first statement is concluded by \Cref{coro:DS:biasS}.

  By definition of $\Delta_{t,k+1,i}$, it holds
  \begin{align}
    \Delta_{t,k+1,i} - \CPE{\Delta_{t,k+1,i} }{\F_{t,k}} =
    \Smem_{t,k+1,i} - \CPE{\Smem_{t,k+1,i}}{ \F_{t,k}}
    \eqsp. \label{eq:tool746}
\end{align}
The proof is concluded by \eqref{eq:tool746} and \Cref{coro:DS:biasS}.
\end{proof}

\begin{proposition}
\label{prop:DS:varDelta}
Assume \Cref{hyp:var:quantif}
  and \Cref{hyp:DS:lipschitz}. For any $t \in [\kouter]^\star$ and
  $k \in [\kin-1]$,
\begin{multline*}
  \frac{1}{n} \sum_{i=1}^n \CPE{\|\Q(\Delta_{t,k+1,i}) -
    \CPE{\Delta_{t,k+1,i} }{ \F_{t,k} } \|^2}{ \F_{t,0} } \leq
  \frac{\omega}{n} \sum_{i=1}^n  \CPE{ \| \Delta_{t,k+1,i} \|^2 }{ \F_{t,0}} \\
  + \frac{L^2}{\lbatch} \pas_{t,k}^2 \CPE{ \| H_{t,k}\|^2 }{ \F_{t,0}
  } \eqsp.
\end{multline*}
\end{proposition}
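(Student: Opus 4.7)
The plan is to decompose the quantity inside the squared norm into two orthogonal pieces and bound them separately using \Cref{hyp:var:quantif} and the second statement of \Cref{prop:DS:Delta}. Write
\[
\Q(\Delta_{t,k+1,i}) - \CPE{\Delta_{t,k+1,i}}{\F_{t,k}}
= \bigl(\Q(\Delta_{t,k+1,i}) - \Delta_{t,k+1,i}\bigr)
+ \bigl(\Delta_{t,k+1,i} - \CPE{\Delta_{t,k+1,i}}{\F_{t,k}}\bigr).
\]
By construction, $\Delta_{t,k+1,i} \in \F_{t,k+1/2,i}$ and $\Q$ is an unbiased compression operator applied \emph{after} $\F_{t,k+1/2,i}$ is revealed, so $\CPE{\Q(\Delta_{t,k+1,i}) - \Delta_{t,k+1,i}}{\F_{t,k+1/2,i}} = 0$; the second summand is $\F_{t,k+1/2,i}$-measurable. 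Conditioning first on $\F_{t,k+1/2,i}$ and then using the tower property shows that the cross term vanishes, giving the Pythagorean identity
\begin{align*}
& \CPE{\|\Q(\Delta_{t,k+1,i}) - \CPE{\Delta_{t,k+1,i}}{\F_{t,k}}\|^2}{\F_{t,k}} \\
&\quad = \CPE{\CPE{\|\Q(\Delta_{t,k+1,i}) - \Delta_{t,k+1,i}\|^2}{\F_{t,k+1/2,i}}}{\F_{t,k}}
+ \CPE{\|\Delta_{t,k+1,i} - \CPE{\Delta_{t,k+1,i}}{\F_{t,k}}\|^2}{\F_{t,k}}.
\end{align*}

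Next I would control the two terms. For the first, \Cref{hyp:var:quantif} applied to the $\F_{t,k+1/2,i}$-measurable vector $\Delta_{t,k+1,i}$ gives
\[
\CPE{\|\Q(\Delta_{t,k+1,i}) - \Delta_{t,k+1,i}\|^2}{\F_{t,k+1/2,i}} \leq \omega \, \|\Delta_{t,k+1,i}\|^2,
\]
so after averaging over $i \in [n]^\star$ and taking conditional expectation with respect to $\F_{t,0}$, this contributes at most $(\omega/n) \sum_{i=1}^n \CPE{\|\Delta_{t,k+1,i}\|^2}{\F_{t,0}}$. For the second term, the second statement of \Cref{prop:DS:Delta} yields
\[
\frac{1}{n} \sum_{i=1}^n \CPE{\|\Delta_{t,k+1,i} - \CPE{\Delta_{t,k+1,i}}{\F_{t,k}}\|^2}{\F_{t,k}} \leq \frac{L^2}{\lbatch} \pas_{t,k}^2 \|H_{t,k}\|^2,
\]
and taking conditional expectation with respect to $\F_{t,0}$ (using the tower property, since $\F_{t,0} \subset \F_{t,k}$) produces the second term of the claimed bound. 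Summing both contributions finishes the proof.

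There is no real obstacle: the argument is just a two-step bias/variance decomposition chained with the already established tools. The only thing that requires mild care is the filtration bookkeeping, namely verifying that $\Delta_{t,k+1,i}$ is measurable with respect to the filtration used to invoke the unbiasedness and variance bound on $\Q$, and that conditioning outwards with $\F_{t,0} \subset \F_{t,k} \subset \F_{t,k+1/2,i}$ is legitimate at each step.
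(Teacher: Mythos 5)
Your proof is correct and follows essentially the same route as the paper's: the same decomposition into $\Q(\Delta_{t,k+1,i})-\Delta_{t,k+1,i}$ plus $\Delta_{t,k+1,i}-\CPE{\Delta_{t,k+1,i}}{\F_{t,k}}$, the same orthogonality/Pythagorean step, the bound from \Cref{hyp:var:quantif} on the compression variance, and the second statement of \Cref{prop:DS:Delta} for the remaining term. The only cosmetic difference is that you condition first on $\F_{t,k}$ and then tower down to $\F_{t,0}$, whereas the paper works with $\F_{t,0}$ directly; both are legitimate since $\F_{t,0}\subset\F_{t,k}\subset\F_{t,k+1/2,i}$.
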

\begin{proof}
  Let $i \in [n]^\star$, $t \in [\kouter]^\star$ and $k \in [\kin-1]$.
  We write
\[
\Q(\Delta_{t,k+1,i}) - \CPE{\Delta_{t,k+1,i} }{ \F_{t,k} }= \Q(\Delta_{t,k+1,i}) - \Delta_{t,k+1,i} + \Delta_{t,k+1,i} -
\CPE{\Delta_{t,k+1,i} }{ \F_{t,k} } \eqsp;
\]
and use the property
\begin{multline*} 
\CPE{ \| \Q(\Delta_{t,k+1,i}) - \CPE{\Delta_{t,k+1,i} }{
    \F_{t,k} } \|^2}{ \F_{t,0} } = \CPE{ \|
  \Q(\Delta_{t,k+1,i}) - \Delta_{t,k+1,i} \|^2 }{ \F_{t,0} } \\
+ \CPE{ \|\Delta_{t,k+1,i} - \CPE{\Delta_{t,k+1,i} }{
    \F_{t,k} } \|^2 }{ \F_{t,0} }\eqsp.
\end{multline*}
By \Cref{hyp:var:quantif} and $\F_{t,k} \subset \F_{t,k+1/2,i}$, we have
\begin{align*}
  & \CPE{ \| \Q(\Delta_{t,k+1,i}) - \Delta_{t,k+1,i} \|^2 }{
    \F_{t,0} } \nonumber \\ &  = \CPE{ \CPE{\|
                              \Q(\Delta_{t,k+1,i}) - \Delta_{t,k+1,i} \|^2 }{
                              \F_{t,k+1/2,i} } }{ \F_{t,0} } \leq \omega
                              \CPE{ \| \Delta_{t,k+1,i} \|^2 }{ \F_{t,0}}
                              \eqsp; 
\end{align*}
in addition, by \Cref{prop:DS:Delta},
\begin{align*}
 n^{-1} \sum_{i=1}^n  \CPE{\| \Delta_{t,k+1,i} - \CPE{\Delta_{t,k+1,i} }{
  \F_{t,k} }\|^2 }{ \F_{t,0} }  \leq
  \frac{L^2}{\lbatch} \pas_{t,k}^2 \CPE{ \| H_{t,k}\|^2 }{   \F_{t,0} } \eqsp. 
\end{align*}
This concludes the proof.
\end{proof}

\subsubsection{Results on the memory terms  $V_{t,k+1,i}$}
\Cref{lem:DS:V} proves that the memory term $V_{t,k+1}$
  computed by the central server is the mean value of the local
  $V_{t,k+1,i}$ computed by each worker $\# i$.  \Cref{prop:DS:driftV}
  establishes a contraction-like inequality on the mean quantity
  $n^{-1} \sum_{i=1}^n \| V_{t,k+1,i} - \mf_i(\hatS_{t,k+1}) \|^2$
  thus providing the intuition that $V_{t,k+1,i}$ approximates
  $\mf_i(\hatS_{t,k+1})$.

\begin{lemma}\label{lem:DS:V}
  For any $t \in [\kouter]^\star$ and $k \in [\kin-1]$,
  \[
V_{t,k+1} = \frac{1}{n} \sum_{i=1}^n V_{t,k+1,i} \eqsp, \qquad V_{t,0}
= \frac{1}{n} \sum_{i=1}^n V_{t,0,i} \eqsp.
  \]
  \end{lemma}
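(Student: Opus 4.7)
The plan is to mimic the argument used for \Cref{prop:sumV} in the full-participation analysis of \FEDEM, adapting it to the two nested loops of \VRFEDEM. I would proceed by a double induction: an outer induction on $t \in [\kouter]^\star$ and, within each outer step, an inner induction on $k \in [\kin]$.

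For the base of the outer induction ($t=1$), the equality $V_{1,0} = n^{-1} \sum_{i=1}^n V_{1,0,i}$ holds by definition, as it is exactly Line~\ref{algo:DS:init:V0} of \autoref{algo:DIANASPIDEREM}. For the inner induction, assume $V_{t,k} = n^{-1} \sum_{i=1}^n V_{t,k,i}$ holds for some $k \in [\kin-1]$. Combining the update rules at Line~\ref{line:DS:Vi} and Line~\ref{line:DS:V}, one has
\[
V_{t,k+1} = V_{t,k} + \frac{\alpha}{n} \sum_{i=1}^n \Q(\Delta_{t,k+1,i}) = \frac{1}{n}\sum_{i=1}^n \left( V_{t,k,i} + \alpha\, \Q(\Delta_{t,k+1,i}) \right) = \frac{1}{n}\sum_{i=1}^n V_{t,k+1,i},
\]
which closes the inner induction up to $k = \kin$.

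For the outer inductive step, I would use the re-initialization lines of the algorithm: Line~\ref{algo:DS:init:V1} sets $V_{t+1,0} = V_{t,\kin}$ and Line~\ref{algo:DS:init:V2} sets $V_{t+1,0,i} = V_{t,\kin,i}$ for all $i \in [n]^\star$. Hence, assuming by the inner induction that $V_{t,\kin} = n^{-1} \sum_{i=1}^n V_{t,\kin,i}$, we immediately deduce
\[
V_{t+1,0} = V_{t,\kin} = \frac{1}{n}\sum_{i=1}^n V_{t,\kin,i} = \frac{1}{n}\sum_{i=1}^n V_{t+1,0,i},
\]
which yields both claimed identities ($k=\kin$ within outer step $t$, and $k=0$ at outer step $t+1$). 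The argument is entirely routine: the only thing to be careful about is to invoke the correct lines of \autoref{algo:DIANASPIDEREM} when transitioning between outer loops, so that the memory $V_{t,k}$ maintained by the server remains synchronized with the workers' local memories $V_{t,k,i}$ throughout the algorithm. No genuine obstacle is expected.
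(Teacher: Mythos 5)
Your proof is correct and follows essentially the same argument as the paper's: a double induction (outer on $t$, inner on $k$), using the update rules at Lines~\ref{line:DS:Vi} and~\ref{line:DS:V} for the inner step and the re-initialization Lines~\ref{algo:DS:init:V1} and~\ref{algo:DS:init:V2} to pass between outer loops. No issues.
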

\begin{proof}
  The proof is by induction on $t$ and $k$. Consider the case
  $t=1$. When $k=0$, the property holds true by
  Line~\ref{algo:DS:init:V0} in \autoref{algo:DIANASPIDEREM}. Assume
  that the property holds for $k \leq \kin-2$. Then by definition of
  $V_{1,k+1}$ and by the induction assumption:
  \begin{align*}
  V_{1,k+1} &= V_{1,k} + \alpha \frac{1}{n} \sum_{i=1}^n
  \Q(\Delta_{1,k+1,i}) = \frac{1}{n} \sum_{i=1}^n \left( V_{1,k,i}
  + \alpha \Q(\Delta_{1,k+1,i}) \right) \\ &= \frac{1}{n}
  \sum_{i=1}^n V_{1,k+1,i} \eqsp.
  \end{align*}
  By Lines~\ref{algo:DS:init:V1} and ~\ref{algo:DS:init:V2} in~\autoref{algo:DIANASPIDEREM} and by the induction on $k$,
  we obtain
  \[
  V_{2,0} = V_{1,\kin} = \frac{1}{n} \sum_{i=1}^n V_{1,\kin,i}
  =\frac{1}{n} \sum_{i=1}^n V_{2,0,i} \eqsp.
  \]
 Assume that for $t \in [\kouter-1]^\star$ we have $V_{t,0} = n^{-1}
 \sum_{i=1}^n V_{t,0,i}$. As in the case $t=1$, we prove by induction
 on $k$ that for any $k \in [\kin-1]$, $V_{t,k+1} = n^{-1}
 \sum_{i=1}^n V_{t,k+1,i}$ (details are omitted). This implies, by
 using Lines~\ref{algo:DS:init:V1} and ~\ref{algo:DS:init:V2}
 of~\autoref{algo:DIANASPIDEREM}, that
 \[
  V_{t+1,0} = V_{t,\kin} = \frac{1}{n} \sum_{i=1}^n V_{t,\kin,i}
  =\frac{1}{n} \sum_{i=1}^n V_{t+1,0,i} \eqsp.
  \]
  This concludes the induction.
  \end{proof}

  \begin{proposition} \label{prop:DS:driftV} Assume
    \Cref{hyp:var:quantif} and \Cref{hyp:DS:lipschitz}.  Let
    $\alpha \in \ocint{0,(1+\omega)^{-1}}$.  For any
    $t \in [\kouter]^\star$, $k \in [\kin-1]$ and $i \in [n]^\star$,
    it holds
\[
\CPE{ V_{t,k+1,i} }{\F_{t,k+1/2,i} } = (1-\alpha) \, V_{t,k,i}
+ \alpha \ \left( \Smem_{t,k+1,i} - \hatS_{t,k} \right) \eqsp,
\]
Define for $t \in [\kouter]^\star$ and $k \in [\kin]$
\[
G_{t,k} \eqdef \frac{1}{n}\sum_{i=1}^n \|V_{t,k,i} -
\mf_i(\hatS_{t,k}) \|^2 \eqsp.
\]
We have
\begin{align*}
  \CPE{G_{t,k+1}}{ \F_{t,0}} & \leq (1-\alpha/2)
                               \CPE{G_{t,k}}{ \F_{t,0}} \\ &  + \frac{2}{\alpha} L^2
                                                             \pas_{t,k+1}^2  \CPE{ \|H_{t,k+1}\|^2}{ \F_{t,0}} + 2
                                                             \alpha \frac{L^2}{\lbatch} \sum_{\ell=1}^k \pas_{t,\ell}^2
                                                             \CPE{\|H_{t,\ell}\|^2 }{ \F_{t,0} } \\ & +
                                                                                                      \alpha \left(\alpha (1+\omega) - 1  \right)
                                                                                                      \frac{1}{n}\sum_{i=1}^n  \CPE{ \| \Delta_{t,k+1,i}\|^2 }{
                                                                                                      \F_{t,0} } \eqsp.
 \end{align*}
\end{proposition}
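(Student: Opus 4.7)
The plan is to closely mirror the proof of Proposition~\ref{prop:varcont2}, but with the i.i.d.\ oracle variance bound $\sigma_i^2$ replaced by the SPIDER-type bias control of Corollary~\ref{coro:DS:biasS}.

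The first identity is immediate: the recursion gives $V_{t,k+1,i} = V_{t,k,i} + \alpha\,\Q(\Delta_{t,k+1,i})$, and since $\Delta_{t,k+1,i}, V_{t,k,i}, \hatS_{t,k}, \Smem_{t,k+1,i} \in \F_{t,k+1/2,i}$, Assumption~\ref{hyp:var:quantif} yields $\CPE{\Q(\Delta_{t,k+1,i})}{\F_{t,k+1/2,i}} = \Delta_{t,k+1,i} = \Smem_{t,k+1,i} - \hatS_{t,k} - V_{t,k,i}$, which, plugged back in, gives the stated expression.

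For the bound on $G_{t,k+1}$, I would first establish a pointwise analogue of \eqref{eq:tool127}: for any $s \in \rset^q$,
\begin{equation*}
\CPE{\|V_{t,k+1,i}-s\|^2}{\F_{t,k+1/2,i}} \leq (1-\alpha)\|V_{t,k,i}-s\|^2 + \alpha\|\Smem_{t,k+1,i}-\hatS_{t,k}-s\|^2 + \alpha\bigl(\alpha(1+\omega)-1\bigr)\|\Delta_{t,k+1,i}\|^2,
\end{equation*}
obtained from Lemma~\ref{lem:polar} applied to the conditional mean $(1-\alpha)(V_{t,k,i}-s) + \alpha(\Smem_{t,k+1,i}-\hatS_{t,k}-s)$, combined with the quantization variance bound $\CPE{\|V_{t,k+1,i}-\CPE{V_{t,k+1,i}}{\F_{t,k+1/2,i}}\|^2}{\F_{t,k+1/2,i}} \leq \alpha^2\omega\|\Delta_{t,k+1,i}\|^2$ from Assumption~\ref{hyp:var:quantif}. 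Then I introduce $\mf_i(\hatS_{t,k+1})$ by the Young inequality $\|a+b\|^2 \leq (1+\beta^{-2})\|a\|^2+(1+\beta^2)\|b\|^2$, with the second piece controlled by Assumption~\ref{hyp:DS:lipschitz} and the identity $\hatS_{t,k+1}-\hatS_{t,k} = \pas_{t,k+1} H_{t,k+1}$, giving $\|\mf_i(\hatS_{t,k})-\mf_i(\hatS_{t,k+1})\|^2 \leq L_i^2\pas_{t,k+1}^2\|H_{t,k+1}\|^2$. Applying the previous pointwise estimate with $s=\mf_i(\hatS_{t,k})\in\F_{t,k}\subset\F_{t,k+1/2,i}$ and choosing $\beta$ as in Proposition~\ref{prop:varcont2} (so that $(1+\beta^{-2})(1-\alpha)\leq 1-\alpha/2$, $1+\beta^{-2}\leq 2$, and $1+\beta^2\leq 2/\alpha$) produces, after conditioning on $\F_{t,k}$,
\begin{align*}
\CPE{\|V_{t,k+1,i}-\mf_i(\hatS_{t,k+1})\|^2}{\F_{t,k}} &\leq (1-\alpha/2)\|V_{t,k,i}-\mf_i(\hatS_{t,k})\|^2 + \tfrac{2}{\alpha} L_i^2\pas_{t,k+1}^2 \CPE{\|H_{t,k+1}\|^2}{\F_{t,k}} \\
&\quad + 2\alpha\,\CPE{\|\Smem_{t,k+1,i}-\tfrac{1}{m}\textstyle\sum_{j=1}^m \bars_{ij}\circ\map(\hatS_{t,k})\|^2}{\F_{t,k}} \\
&\quad + \alpha\bigl(\alpha(1+\omega)-1\bigr)\CPE{\|\Delta_{t,k+1,i}\|^2}{\F_{t,k}},
\end{align*}
where I used $\Smem_{t,k+1,i}-\hatS_{t,k}-\mf_i(\hatS_{t,k}) = \Smem_{t,k+1,i} - \tfrac{1}{m}\sum_j \bars_{ij}\circ\map(\hatS_{t,k})$.

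To finish, I condition on $\F_{t,0}$, average the above inequality over $i\in[n]^\star$, and plug in the second statement of Corollary~\ref{coro:DS:biasS} to replace the oracle error by $(L_i^2/\lbatch)\sum_{\ell=1}^k\pas_{t,\ell}^2\CPE{\|H_{t,\ell}\|^2}{\F_{t,0}}$. The Lipschitz coefficients average to $L^2$ via \eqref{eq:def-L_i}, yielding exactly the two $H$-dependent terms $\tfrac{2}{\alpha}L^2\pas_{t,k+1}^2\CPE{\|H_{t,k+1}\|^2}{\F_{t,0}}$ and $2\alpha \tfrac{L^2}{\lbatch}\sum_{\ell=1}^k\pas_{t,\ell}^2\CPE{\|H_{t,\ell}\|^2}{\F_{t,0}}$, as well as the residual non-positive contribution involving $\Delta_{t,k+1,i}$.

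The main obstacle, relative to Proposition~\ref{prop:varcont2}, is that the SPIDER oracle $\Smem_{t,k+1,i}$ is biased inside an outer loop and its fluctuation cannot be absorbed into a single constant; it must instead be unwound through the telescoping provided by Corollary~\ref{coro:DS:biasS}, which is precisely what produces the cumulative sum $\sum_{\ell=1}^k \pas_{t,\ell}^2\|H_{t,\ell}\|^2$ characteristic of the variance-reduced analysis (and is empty for $k=0$ thanks to the convention $H_{t,0}=0$, consistent with Proposition~\ref{prop:DS:biasS}).
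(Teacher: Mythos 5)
Your proposal is correct and follows essentially the same route as the paper's proof: the same pointwise inequality via Lemma~\ref{lem:polar} and the quantization variance bound, the same Young-inequality step with the same choice of $\beta$, the application with $s=\mf_i(\hatS_{t,k})$, and Corollary~\ref{coro:DS:biasS} to convert the oracle error into the cumulative sum $\sum_{\ell=1}^k\pas_{t,\ell}^2\|H_{t,\ell}\|^2$. The only detail worth making explicit is that when discarding the factor $(1+\beta^{-2})$ in front of the $\alpha(\alpha(1+\omega)-1)\|\Delta_{t,k+1,i}\|^2$ term you must use $1+\beta^{-2}\geq 1$ together with the non-positivity of that coefficient, which your remark about the ``residual non-positive contribution'' already covers.
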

\begin{proof}
Let $t \in
[\kouter]^\star$, $k \in [\kin-1]$ and $i \in [n]^\star$. By
definition of $V_{t,k+1,i}$, $\Delta_{t,k+1,i}$ and by
\Cref{hyp:var:quantif}, it holds
 \begin{align*}
\CPE{ V_{t,k+1,i}}{\F_{t,k+1/2,i} }& = V_{t,k,i} +
\alpha \CPE{\Q(\Delta_{t,k+1,i}) }{ \F_{t,k+1/2,i} }
\\ &= V_{t,k,i} + \alpha \, \left( \Smem_{t,k+1,i} - \hatS_{t,k} -
V_{t,k,i}\right) \eqsp.
  \end{align*}
This concludes the proof of the first statement.  For the second
statement, we write for any $\beta >0$:
\begin{align}
\|V_{t,k+1,i} - \mf_i(\hatS_{t,k+1}) \|^2 & \leq (1+\beta^2)
\|\mf_i(\hatS_{t,k+1}) - \mf_i(\hatS_{t,k}) \|^2 + (1+\beta^{-2}) \|
V_{t,k+1,i} - \mf_i(\hatS_{t,k}) \|^2 \nonumber \\ & \leq (1+\beta^2)
L_i^2 \pas_{t,k+1}^2 \|H_{t,k+1}\|^2 + (1+\beta^{-2}) \| V_{t,k+1,i} -
\mf_i(\hatS_{t,k}) \|^2 \eqsp, \label{eq:tool272}
\end{align}
where we used \Cref{hyp:DS:lipschitz} and the definition of
$\hatS_{t,k+1}$ in the last inequality. For any $s \in \rset^q$
  \begin{multline}
 \CPE{\|V_{t,k+1,i} - s \|^2 }{ \F_{t,k+1/2,i}} =
 \CPE{ \|V_{t,k+1,i} - \CPE{V_{t,k+1,i} }{ \F_{t,k+1/2,i}
     } \|^2 }{\F_{t,k+1/2,i} } \\ +
 \|\CPE{V_{t,k+1,i} - s}{\F_{t,k+1/2,i}} \|^2 \eqsp. \label{eq:tool273}
 \end{multline}
 On one hand,
\[  \|V_{t,k+1,i} - \CPE{V_{t,k+1,i}}{ \F_{t,k+1/2,i} } \|^2  =
  \alpha^2 \|\Q(\Delta_{t,k+1,i}) - \CPE{\Q(\Delta_{t,k+1,i}) }{
    \F_{t,k+1/2,i} } \|^2
  \]
  and by \Cref{hyp:var:quantif},
    \begin{equation}
 \CPE{ \|V_{t,k+1,i} - \CPE{V_{t,k+1,i} }{\F_{t,k+1/2,i}
     } \|^2 }{ \F_{t,k+1/2,i} }\leq \alpha^2 \omega
 \|\Delta_{t,k+1,i}\|^2 \eqsp. \label{eq:tool274}
\end{equation}
On the other hand, for any $s \in \rset^q$, and using \Cref{lem:polar}
  \begin{align}
  & \|\CPE{V_{t,k+1,i} -s }{ \F_{t,k+1/2,i} } \|^2 =
    \|(1-\alpha) \, (V_{t,k,i} -s) + \alpha \ (\Smem_{t,k+1,i} -
    \hatS_{t,k} -s) \|^2  \nonumber \\ & = (1-\alpha) \, \| V_{t,k,i} -s\|^2 +
    \alpha \| \Smem_{t,k+1,i} - \hatS_{t,k}-s \|^2 - \alpha (1-\alpha)
    \| V_{t,k,i} - \Smem_{t,k+1,i} + \hatS_{t,k}\|^2  \nonumber \\ & = (1-\alpha)
    \, \| V_{t,k,i} -s \|^2 + \alpha \| \Smem_{t,k+1,i} -
    \hatS_{t,k}-s \|^2 - \alpha (1-\alpha) \| \Delta_{t,k+1,i}\|^2 \label{eq:tool275}
    \eqsp.
  \end{align}
  Let us combine \eqref{eq:tool272} to \eqref{eq:tool275}, the last
  one being applied with $s \leftarrow \mf_i(\hatS_{t,k}) \in
  \F_{t,k,i}^+ \subseteq \F_{t,k+1/2,i}$. Since
\[
 \| \Smem_{t,k+1,i} - \hatS_{t,k}- \mf_i(\hatS_{t,k}) \|^2  = \|
 \Smem_{t,k+1,i} - \frac{1}{m} \sum_{j=1}^m \bars_{ij} \circ \map
 (\hatS_{t,k}) \|^2 \eqsp,
 \]
 we write
 \begin{align*}
& \CPE{\|V_{t,k+1,i} - \mf_i(\hatS_{t,k+1}) \|^2 }{\F_{t,k}
     } \leq (1+\beta^2) L_i^2 \pas_{t,k+1}^2 \CPE{
     \|H_{t,k+1}\|^2 }{ \F_{t,k} } \\ \qquad & +
   (1+\beta^{-2}) \left\{ \alpha^2 \omega \CPE{ \|
     \Delta_{t,k+1,i}\|^2 }{ \F_{t,k} } + (1-\alpha)
   \|V_{t,k,i} - \mf_i(\hatS_{t,k}) \|^2 \right. \\ & \left. +\alpha
   \CPE{ \| \Smem_{t,k+1,i} - \frac{1}{m} \sum_{j=1}^m \bars_{ij}
     \circ \map (\hatS_{t,k}) \|^2 }{ \F_{t,k} }-
   \alpha(1-\alpha) \CPE{\| \Delta_{t,k+1,i}\|^2 }{\F_{t,k}
    }\right\} \eqsp.
   \end{align*}
 Choose $\beta^2>0$ such that
 \[
\beta^{-2} \eqdef \left\{ \begin{array}{cc} 1 & \text{if $\alpha \geq
    2/3$} \\ \frac{\alpha}{2(1-\alpha)} & \text{if $\alpha \leq 2/3$}
   \end{array} \right.
 \]
This implies that
 \[
(1+\beta^{-2}) (1-\alpha) \leq 1 - \frac{\alpha}{2} \eqsp, \qquad
 1+\beta^2 \leq \frac{2}{\alpha}  \eqsp, \qquad 1+\beta^{-2} \leq 2 \eqsp.
 \]
Hence,
 \begin{align*}
& \CPE{\|V_{t,k+1,i} - \mf_i(\hatS_{t,k+1}) \|^2 }{ \F_{t,k}} \leq (1-\alpha/2) \|V_{t,k,i} - \mf_i(\hatS_{t,k}) \|^2
   \\ & \qquad + \frac{2}{\alpha} L_i^2 \pas_{t,k+1}^2 \CPE{
     \|H_{t,k+1}\|^2 }{ \F_{t,k} }+ 2 \alpha \CPE{ \|
     \Smem_{t,k+1,i} - \frac{1}{m} \sum_{j=1}^m \bars_{ij} \circ \map
     (\hatS_{t,k}) \|^2  }{\F_{t,k} } \\ & \qquad + \alpha
   \left(\alpha \omega - 1 +\alpha \right) \CPE{ \|
     \Delta_{t,k+1,i}\|^2 }{\F_{t,k}} \eqsp;
 \end{align*}
 (in the last equality, we use $1+\beta^{-2} \geq 1$ since $\alpha \omega-1+\alpha \leq 0$).
 Finally, by using \Cref{coro:DS:biasS}, we have
 \begin{align*}
   & \CPE{\|V_{t,k+1,i} - \mf_i(\hatS_{t,k+1}) \|^2 }{\F_{t,0}}\leq (1-\alpha/2) \CPE{\|V_{t,k,i} -
     \mf_i(\hatS_{t,k}) \|^2 }{ \F_{t,0} } \\ & \qquad +
                                                \frac{2}{\alpha} L_i^2 \pas_{t,k+1}^2 \CPE{ \|H_{t,k+1}\|^2
                                                }{ \F_{t,0} } + 2 \alpha \frac{L_i^2}{\lbatch}
                                                \sum_{\ell=1}^k \pas_{t,\ell}^2 \CPE{ \|H_{t,\ell}\|^2 }{
                                                \F_{t,0} } \\ & \qquad +  \alpha \left(\alpha \omega - 1
                                                                +\alpha \right) \CPE{ \| \Delta_{t,k+1,i}\|^2 }{ \F_{t,0}
                                                                }\eqsp.
 \end{align*}
 The proof is concluded.
\end{proof}

\subsubsection{Results on the random field $H_{t,k+1}$}
\Cref{prop:DS:biasH} shows that the random field
  $H_{t,k+1}$ is a biased approximation of the field
  $\mf(\hatS_{t,k})$, and this bias is canceled at the beginning of
  each outer loop. Observe also that the bias exists even when there is no compression: when $\omega=0$ (so that $\Q(u) = u$) we have 
\[
\CPE{H_{t,k+1}}{ \F_{t,k}} - \mf(\hatS_{t,k}) = H_{t,k} -
\mf(\hatS_{t,k-1}) \eqsp,
\]
and the bias is again canceled at the beginning of each outer
loop. \Cref{prop:DS:Hvariance} provides an upper bound for the
variance and the mean squared error of the random field
$H_{t,k+1}$. In the case of no compression ($\omega=0$) and of a
single worker ($n=1$) so that \VRFEDEM~is \texttt{SPIDER-EM},
\Cref{prop:DS:Hvariance} retrieves the variance and the mean squared
error of the random field $H_{t,k+1}$ in \texttt{SPIDER-EM} (see
\cite[Proposition 13]{SPIDER-EM}). 
\begin{proposition}
  \label{prop:DS:biasH} 
  Assume \Cref{hyp:var:quantif}.  For any $t \in [\kouter]^\star$,
  $\CPE{H_{t,2} }{ \F_{t,0} } - \mf(\hatS_{t,1}) =
  \CPE{H_{t,1}}{ \F_{t,0} } - \mf(\hatS_{t,0}) = 0$
  and for any $k \in [\kin-1]^\star$,
  \begin{align*}
    \CPE{H_{t,k+1}}{ \F_{t,k}} - \mf(\hatS_{t,k}) & =
                                                    H_{t,k} - \mf(\hatS_{t,k-1}) - n^{-1} \sum_{i=1}^n \left(
                                                    \Q(\Delta_{t,k,i}) - \Delta_{t,k,i} \right) \\ & = n^{-1}
                                                                                                     \sum_{i=1}^n \left( \CPE{ \Smem_{t,k+1,i} }{\F_{t,k} }
                                                                                                     - m^{-1} \sum_{j=1}^m \bars_{ij} \circ \map(\hatS_{t,k}) \right)
                                                                                                     \eqsp.
  \end{align*}
  \end{proposition}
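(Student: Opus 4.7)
The approach is to unfold the definition of $H_{t,k+1}$ from \autoref{algo:DIANASPIDEREM}, exploit the unbiasedness of the quantizer (\Cref{hyp:var:quantif}) via the tower property, and combine this with the consistency identity $V_{t,k} = n^{-1} \sum_{i=1}^n V_{t,k,i}$ (\Cref{lem:DS:V}) together with the bias formula for $\Smem_{t,k+1,i}$ established in \Cref{prop:DS:biasS}.

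First I would compute $\CPE{H_{t,k+1}}{\F_{t,k}}$ directly. Writing $H_{t,k+1} = V_{t,k} + n^{-1} \sum_{i=1}^n \Q(\Delta_{t,k+1,i})$ and observing that $V_{t,k}$, $\hatS_{t,k}$, and $V_{t,k,i}$ all belong to $\F_{t,k}$, the identity $\CPE{\Q(\Delta_{t,k+1,i})}{\F_{t,k+1/2,i}} = \Delta_{t,k+1,i}$ from \Cref{hyp:var:quantif} together with the tower property gives
\[
\CPE{H_{t,k+1}}{\F_{t,k}} = V_{t,k} + \frac{1}{n}\sum_{i=1}^n \left( \CPE{\Smem_{t,k+1,i}}{\F_{t,k}} - \hatS_{t,k} - V_{t,k,i} \right).
\]
By \Cref{lem:DS:V} the terms $V_{t,k,i}$ collapse to $V_{t,k}$ and cancel, and subtracting $\mf(\hatS_{t,k}) = n^{-1}\sum_{i=1}^n m^{-1}\sum_{j=1}^m \bars_{ij}\circ\map(\hatS_{t,k}) - \hatS_{t,k}$ yields the second displayed equality of the statement.

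Next I would derive the recursive form (first equality) for $k \geq 1$. By \Cref{prop:DS:biasS}, together with the fact that $\F_{t,k} = \F_{t,k,i}^+$ under the filtrations of \Cref{sec:proof:DS}, for each $i$
\[
\CPE{\Smem_{t,k+1,i}}{\F_{t,k}} - \frac{1}{m}\sum_{j=1}^m\bars_{ij}\circ\map(\hatS_{t,k}) = \Smem_{t,k,i} - \frac{1}{m}\sum_{j=1}^m\bars_{ij}\circ\map(\hatS_{t,k-1}).
\]
Substituting $\Delta_{t,k,i} = \Smem_{t,k,i} - \hatS_{t,k-1} - V_{t,k-1,i}$ and using $V_{t,k-1} = n^{-1}\sum_{i=1}^n V_{t,k-1,i}$, I rewrite $n^{-1}\sum_i \Smem_{t,k,i} - \hatS_{t,k-1}$ as $V_{t,k-1} + n^{-1}\sum_i \Delta_{t,k,i} = H_{t,k} - n^{-1}\sum_i(\Q(\Delta_{t,k,i}) - \Delta_{t,k,i})$, the last step being the definition of $H_{t,k}$. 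Subtracting $\mf(\hatS_{t,k-1})$ then delivers the first equality.

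Finally, for the boundary vanishings I apply the boundary statement of \Cref{prop:DS:biasS}, namely $\Smem_{t,1,i} = m^{-1}\sum_j \bars_{ij}\circ\map(\hatS_{t,0})$ and $\Smem_{t,0,i} = m^{-1}\sum_j \bars_{ij}\circ\map(\hatS_{t,-1})$, directly in the second equality just established; the inner differences vanish identically, so the residual is zero at the first two indices. The only real obstacle here is filtration bookkeeping: one must check the measurabilities $V_{t,k,i}, \hatS_{t,k}, V_{t,k} \in \F_{t,k}$ from the definitions at the start of \Cref{sec:proof:DS} and track that the randomness of $\Q(\Delta_{t,k+1,i})$ enters through $\F_{t,k+1,i}$ but not through $\F_{t,k}$, both of which are routine once the filtrations are written out.
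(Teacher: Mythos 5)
Your proposal is correct and follows essentially the same route as the paper's proof: it computes $\CPE{H_{t,k+1}}{\F_{t,k}}$ from the unbiasedness of $\Q$ and \Cref{lem:DS:V} to get the second equality, invokes \Cref{prop:DS:biasS} for the recursion of the $\Smem$-bias, and rewrites $n^{-1}\sum_i \Smem_{t,k,i}-\hatS_{t,k-1}$ through $\Delta_{t,k,i}$ to recover $H_{t,k}$ up to the quantization error term, with the boundary cases handled by the initialization identity of \Cref{prop:DS:biasS}. The only difference is the order in which the two displayed equalities are established, which is immaterial.
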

\begin{proof}
  Let $t \in [\kouter]^\star$.

  $\bullet$ By definition of $H_{t,1}$ and $\Delta_{t,1,i}$, by
  \Cref{hyp:var:quantif} and by \Cref{lem:DS:V}, we have
  \begin{align*}
\CPE{ H_{t,1} }{ \F_{t,0}} & =V_{t,0} + n^{-1}
\sum_{i=1}^n \CPE{ \Q(\Delta_{t,1,i}) }{ \F_{t,0} } =
V_{t,0} + n^{-1} \sum_{i=1}^n \CPE{\Delta_{t,1,i} }{ \F_{t,0}} \\ & = V_{t,0} + n^{-1} \sum_{i=1}^n \left( \CPE{
  \Smem_{t,1,i} }{\F_{t,0}} - \hatS_{t,0} - V_{t,0,i}
\right) \\ & = n^{-1} \sum_{i=1}^n \CPE{\Smem_{t,1,i} }{  \F_{t,0} } - \hatS_{t,0} \eqsp.
  \end{align*}
By \Cref{prop:DS:biasS} $n^{-1} \sum_{i=1}^n \CPE{ \Smem_{t,1,i}
  }{\F_{t,0} }- \hatS_{t,0} = \mf(\hatS_{t,0})$.

$\bullet$ Consider the case $k=1$. We have by definition of $H_{t,2}$
\[
\CPE{ H_{t,2} }{\F_{t,1} } - \mf(\hatS_{t,1}) =
\frac{1}{n} \sum_{i=1}^n \left( \CPE{ \Smem_{t,2,i}}{ \F_{t,1}} - m^{-1} \sum_{j=1}^m \bars_{ij} \circ \map(\hatS_{t,1})
\right) \eqsp;
\]
\Cref{prop:DS:biasS} concludes the proof.

$\bullet$ Let $k \geq 2$. As in the case $k=0$, we have
\begin{align*}
\CPE{H_{t,k+1} }{\F_{t,k} }& = V_{t,k} + n^{-1}
\sum_{i=1}^n \CPE{\Q(\Delta_{t,k+1,i}) }{ \F_{t,k} } =
V_{t,k} + n^{-1} \sum_{i=1}^n \CPE{\Delta_{t,k+1,i} }{  \F_{t,k} } \\ & = V_{t,k} + n^{-1} \sum_{i=1}^n \left(
 \CPE{ \Smem_{t,k+1,i} }{\F_{t,k} } - \hatS_{t,k} -
V_{t,k,i} \right) \\ & = n^{-1} \sum_{i=1}^n \CPE{ \Smem_{t,k+1,i}
  }{\F_{t,k} } - \hatS_{t,k} \eqsp,
\end{align*}
so that
\begin{equation} \label{eq:tool204}
\CPE{H_{t,k+1} }{\F_{t,k} } - \mf(\hatS_{t,k}) = n^{-1}
\sum_{i=1}^n \left( \CPE{ \Smem_{t,k+1,i}}{ \F_{t,k}} -
m^{-1} \sum_{j=1}^m \bars_{ij} \circ \map(\hatS_{t,k}) \right) \eqsp.
\end{equation}
By \Cref{prop:DS:biasS}, upon noting that $\F_{t,k} \subset
\F_{t,k,i}^+$ and $\Smem_{t,k,i}, \hatS_{t,k-1} \in \F_{t,k}$, we have
\begin{equation} \label{eq:tool205}
n^{-1} \sum_{i=1}^n \CPE{ \Smem_{t,k+1,i} }{ \F_{t,k} } -
m^{-1} \sum_{j=1}^m \bars_{ij} \circ \map(\hatS_{t,k}) = n^{-1}
\sum_{i=1}^n \left( \Smem_{t,k,i} - m^{-1} \sum_{j=1}^m \bars_{ij}
\circ \map(\hatS_{t,k-1}) \right) \eqsp.
\end{equation}
On the other hand, observe that
\begin{align*}
  H_{t,k} & = V_{t,k-1} + n^{-1} \sum_{i=1}^n \Q(\Delta_{t,k,i}) \\ &
  = V_{t,k-1} + n^{-1} \sum_{i=1}^n \Smem_{t,k,i} -\hatS_{t,k-1} -
  n^{-1} \sum_{i=1}^n V_{t,k-1,i} + n^{-1} \sum_{i=1}^n \left(
  \Q(\Delta_{t,k,i}) - \Delta_{t,k,i} \right) \\ & = n^{-1}
  \sum_{i=1}^n \Smem_{t,k,i} -\hatS_{t,k-1} + n^{-1} \sum_{i=1}^n
  \left( \Q(\Delta_{t,k,i}) - \Delta_{t,k,i} \right) \eqsp,
\end{align*}
where we used \Cref{lem:DS:V}.  This yields
\begin{multline} \label{eq:tool206}
  H_{t,k} - \mf(\hatS_{t,k-1}) \\ = n^{-1} \sum_{i=1}^n \left(
  \Smem_{t,k,i} - m^{-1} \sum_{j=1}^m \bars_{ij} \circ
  \map(\hatS_{t,k-1}) \right) + n^{-1} \sum_{i=1}^n \left(
  \Q(\Delta_{t,k,i}) - \Delta_{t,k,i} \right) \eqsp.
\end{multline}
The proof is concluded by combining \eqref{eq:tool204},
\eqref{eq:tool205} and \eqref{eq:tool206}.
\end{proof}

\begin{proposition}
  Assume \Cref{hyp:var:quantif} and \Cref{hyp:DS:lipschitz}.
  \label{prop:DS:Hvariance}
  For any $t \in [\kouter]^\star$,
  \[
\CPE{       \| H_{t,1} - \mf(\hatS_{t,0}) \|^2 }{ \F_{t,0} }\leq \frac{\omega}{n} \left( \frac{1}{n}\sum_{i=1}^n \, \| V_{t,0,i} -
\mf_i(\hatS_{t,0}) \|^2 \right) \eqsp,
\]
and for any $k \in [\kin-1]^\star$,
\begin{align*}
  \CPE{\| H_{t,k+1} - \mf(\hatS_{t,k}) \|^2 }{ \F_{t,0}
  } & \leq \frac{\omega}{n} \frac{1}{n} \sum_{i=1}^n
      \CPE{ \|\Delta_{t,k+1,i}\|^2 }{ \F_{t,0} } +
      \frac{L^2}{n \lbatch} \sum_{\ell=1}^k \pas_{t,\ell}^2 \CPE{ \|
      H_{t,\ell}\|^2 }{ \F_{t,0} } \eqsp, \\ \CPE{\|
  \CPE{H_{t,k+1} }{ \F_{t,k} } - \mf(\hatS_{t,k}) \|^2
  }{\F_{t,0} }& \leq \frac{L^2}{n \lbatch}
                \sum_{\ell=1}^{k-1} \pas_{t,\ell}^2 \CPE{ \| H_{t,\ell}\|^2
                }{\F_{t,0} } \eqsp.
\end{align*}
  \end{proposition}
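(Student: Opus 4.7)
My plan is to handle the three inequalities by the same orthogonal decomposition into a conditional bias term and a conditional variance term, then invoke the preliminary results already proved.

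\textbf{Decomposition.} For any $k \in [\kin-1]$, write
\begin{equation*}
H_{t,k+1} - \mf(\hatS_{t,k}) = \bigl(H_{t,k+1} - \CPE{H_{t,k+1}}{\F_{t,k}}\bigr) + \bigl(\CPE{H_{t,k+1}}{\F_{t,k}} - \mf(\hatS_{t,k})\bigr) \eqsp,
\end{equation*}
and take conditional expectation of the squared norm: the cross term vanishes, so I get the sum of a conditional variance and the squared conditional bias. Conditioning further on $\F_{t,0}$ and invoking the tower property yields
$$\CPE{\|H_{t,k+1} - \mf(\hatS_{t,k})\|^2}{\F_{t,0}} = \CPE{\|H_{t,k+1} - \CPE{H_{t,k+1}}{\F_{t,k}}\|^2}{\F_{t,0}} + \CPE{\|\CPE{H_{t,k+1}}{\F_{t,k}} - \mf(\hatS_{t,k})\|^2}{\F_{t,0}} \eqsp.$$

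\textbf{Conditional variance term.} Using $\CPE{\Q(\Delta_{t,k+1,i})}{\F_{t,k}}=\CPE{\Delta_{t,k+1,i}}{\F_{t,k}}$ (tower property with \Cref{hyp:var:quantif}) and the conditional independence of workers given $\F_{t,k}$, I get
$$\CPE{\|H_{t,k+1} - \CPE{H_{t,k+1}}{\F_{t,k}}\|^2}{\F_{t,k}} = \frac{1}{n^2}\sum_{i=1}^n \CPE{\|\Q(\Delta_{t,k+1,i}) - \CPE{\Delta_{t,k+1,i}}{\F_{t,k}}\|^2}{\F_{t,k}} \eqsp.$$
Taking $\CPE{\cdot}{\F_{t,0}}$ and dividing \Cref{prop:DS:varDelta} by $n$ then yields
$$\CPE{\|H_{t,k+1} - \CPE{H_{t,k+1}}{\F_{t,k}}\|^2}{\F_{t,0}} \leq \frac{\omega}{n}\frac{1}{n}\sum_{i=1}^n \CPE{\|\Delta_{t,k+1,i}\|^2}{\F_{t,0}} + \frac{L^2}{n\lbatch}\pas_{t,k}^2\CPE{\|H_{t,k}\|^2}{\F_{t,0}} \eqsp.$$

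\textbf{Conditional bias term.} By \Cref{prop:DS:biasH} (since $\F_{t,k,i}^+=\F_{t,k}$),
$$\CPE{H_{t,k+1}}{\F_{t,k}} - \mf(\hatS_{t,k}) = \frac{1}{n}\sum_{i=1}^n Z_{t,k,i} \eqsp, \qquad Z_{t,k,i} \eqdef \Smem_{t,k,i} - \frac{1}{m}\sum_{j=1}^m \bars_{ij}\circ\map(\hatS_{t,k-1}) \eqsp.$$
The random variables $Z_{t,k,i}$ are zero mean given $\F_{t,0}$ (by unbiasedness at the end of each outer loop, cf.\ \Cref{prop:DS:biasS}) and independent across workers given $\F_{t,0}$ (the minibatches $\batch_{t,\ell,i}$ are drawn independently), hence
$$\CPE{\bigl\|n^{-1}\sum_{i=1}^n Z_{t,k,i}\bigr\|^2}{\F_{t,0}} = \frac{1}{n^2}\sum_{i=1}^n \CPE{\|Z_{t,k,i}\|^2}{\F_{t,0}} \leq \frac{1}{n^2}\sum_{i=1}^n \frac{L_i^2}{\lbatch}\sum_{\ell=1}^{k-1}\pas_{t,\ell}^2\CPE{\|H_{t,\ell}\|^2}{\F_{t,0}}\eqsp,$$
where the last inequality is \Cref{coro:DS:biasS}. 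Recalling $L^2=n^{-1}\sum_i L_i^2$ gives exactly the third claimed bound. Summing the two pieces produces the second claimed bound.

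\textbf{Initial case.} For $k=0$, $\Smem_{t,1,i}$ is $\F_{t,0}$-measurable (Line~\ref{line:DS:initS} and \Cref{prop:DS:biasS}), so $\Delta_{t,1,i}=\mf_i(\hatS_{t,0})-V_{t,0,i}$ is $\F_{t,0}$-measurable. The bias term vanishes and only the quantization noise remains, giving
$$\CPE{\|H_{t,1}-\mf(\hatS_{t,0})\|^2}{\F_{t,0}} = \frac{1}{n^2}\sum_{i=1}^n\CPE{\|\Q(\Delta_{t,1,i})-\Delta_{t,1,i}\|^2}{\F_{t,0}} \leq \frac{\omega}{n}\frac{1}{n}\sum_{i=1}^n \|V_{t,0,i}-\mf_i(\hatS_{t,0})\|^2$$
by \Cref{hyp:var:quantif}. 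The only real subtlety is bookkeeping the right filtration at each step and exploiting the conditional independence of the workers; once this is set up, all the heavy lifting is done by the preceding propositions.
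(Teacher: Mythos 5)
Your proposal is correct and follows essentially the same route as the paper's proof: the same orthogonal decomposition into conditional variance plus conditional bias, \Cref{prop:DS:varDelta} for the variance term, \Cref{prop:DS:biasH}, \Cref{prop:DS:biasS} and \Cref{coro:DS:biasS} for the bias term, and the same observation that $\Delta_{t,1,i}=\mf_i(\hatS_{t,0})-V_{t,0,i}$ is $\F_{t,0}$-measurable for the initial case. Your appeal to "independence across workers given $\F_{t,0}$" for the $Z_{t,k,i}$ is stated at the same (slightly informal) level as the paper's own "independence of the workers" — strictly the cross terms vanish via a martingale-difference orthogonality argument since the $Z_{t,k,i}$ share the common iterates $\hatS_{t,\ell}$ — but this does not change the conclusion.
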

\begin{proof}
 {\bf $\bullet$ Case $k=1$.} From \Cref{prop:DS:biasH} and the definition of $H_{t,1}$, we have
  \begin{align*}
H_{t,1} - \mf(\hatS_{t,0}) & = H_{t,1} - \CPE{[H_{t,1} }{
  \F_{t,0} } = n^{-1} \sum_{i=1}^n \left( \Q(\Delta_{t,1,i}) -
\CPE{ \Q(\Delta_{t,1,i})}{ \F_{t,0} } \right) \\ & =
n^{-1} \sum_{i=1}^n \left( \Q(\Delta_{t,1,i}) - \CPE{
  \Delta_{t,1,i} }{ \F_{t,0}} \right) \eqsp,
  \end{align*}
  where we used $\CPE{ \Q(\Delta_{t,1,i}) }{\F_{t,1/2,i}} = \Delta_{t,1,i}$ and $\F_{t,0} \subset \F_{t,1/2,i}$ in
  the last equality. In addition, since $\hatS_{t,0} = \hatS_{t,-1}$, we have (see \Cref{prop:DS:biasS})
  \[
\Smem_{t,1,i} = \Smem_{t,0,i} = \mf_i(\hatS_{t,0}) + \hatS_{t,0} \eqsp.
\]
Hence,
  \[
\Delta_{t,1,i} = \Smem_{t,1,i} - \hatS_{t,0} - V_{t,0,i} =
\mf_i(\hatS_{t,0}) - V_{t,0,i} \eqsp.
  \]
Therefore, $\CPE{\Delta_{t,1,i} }{ \F_{t,0} } =
\Delta_{t,1,i} = \mf_i(\hatS_{t,0}) - V_{t,0,i}$.  Since the workers
are independent, we write
\[
\CPE{ \| H_{t,1} - \mf(\hatS_{t,0}) \|^2 }{\F_{t,0} } =
\frac{1}{n^2} \sum_{i=1}^n \CPE{ \| \Q(\mf_i(\hatS_{t,0}) -
  V_{t,0,i}) - \left( \mf_i(\hatS_{t,0}) - V_{t,0,i} \right) \|^2
  }{ \F_{t,0}}\eqsp.
\]
By \Cref{hyp:var:quantif}, this yields
\[
\CPE{ \| H_{t,1} - \mf(\hatS_{t,0}) \|^2 }{ \F_{t,0} }
\leq \frac{\omega}{n} \frac{1}{n} \sum_{i=1}^n \|\mf_i(\hatS_{t,0}) -
V_{t,0,i} \|^2 \eqsp.
\]

{\bf $\bullet$ Case $k \geq 1$.}  Let $t \in [\kouter]^\star$ and $k
\in [\kin-1]^\star$.  We write
\begin{multline}
\CPE{ \| H_{t,k+1} - \mf(\hatS_{t,k}) \|^2 }{\F_{t,0} }
= \CPE{ \| H_{t,k+1} - \CPE{ H_{t,k+1} }{ \F_{t,k} }
  \|^2 }{ \F_{t,0}} \\
+ \CPE{ \| \CPE{H_{t,k+1} }{
    \F_{t,k} } - \mf(\hatS_{t,k}) \|^2 }{ \F_{t,0} } \eqsp. \label{eq:tool510}
\end{multline}
Let us first consider the bias term. From \Cref{prop:DS:biasS}, \Cref{prop:DS:biasH} and the
definition of $\Smem_{t,k+1,i}$ (remember that $\Smem_{t,k,i}$,
$\hatS_{t,k}$ and $\hatS_{t,k-1}$ are in $\F_{t,k,i}^+ \supset
\F_{t,k}$), it holds
\begin{align*}
&\CPE{ \Big\| \CPE{ H_{t,k+1} }{\F_{t,k} } -
    \mf(\hatS_{t,k}) \Big\|^2 }{\F_{t,0} } \\ & \qquad =
  \CPE{ \Big\| n^{-1} \sum_{i=1}^n ( \CPE{ \Smem_{t,k+1,i}
      }{ \F_{t,k} }- m^{-1} \sum_{j=1}^m \bars_{ij} \circ
    \map(\hatS_{t,k}) ) \Big\|^2}{\F_{t,0}} \\ & \qquad
  \leq \CPE{\Big \| n^{-1} \sum_{i=1}^n( \Smem_{t,k,i} - m^{-1}
    \sum_{j=1}^m \bars_{ij} \circ \map(\hatS_{t,k-1}) )\Big \|^2
    }{ \F_{t,0} }\eqsp.
\end{align*}
By \Cref{prop:DS:biasS} again, the RHS is zero when $k=1$; when $k \geq 2$,
by \Cref{coro:DS:biasS} and the independence of the workers, we have
yields
\begin{equation} \label{eq:tool511}
\CPE{ \| \CPE{H_{t,k+1} }{ \F_{t,k} }-
  \mf(\hatS_{t,k}) \|^2 }{\F_{t,0} }\leq \frac{L^2}{n
  \lbatch} \sum_{\ell=1}^{k-1} \pas_{t,\ell}^2 \CPE{\|
  H_{t,\ell}\|^2 }{ \F_{t,0} } \eqsp.
\end{equation}
Let us now consider the variance term. We have from the definition of
$H_{t,k+1}$ and \Cref{hyp:var:quantif}
\[
H_{t,k+1} - \CPE{ H_{t,k+1} }{ \F_{t,k}  }= \frac{1}{n}
\sum_{i=1}^n \left( \Q(\Delta_{t,k+1,i}) - \CPE{\Delta_{t,k+1,i}
  }{ \F_{t,k} } \right)
\]
and here again, by the independence of the workers
\begin{multline}\label{eq:tool512}
\CPE{ \|H_{t,k+1} - \CPE{H_{t,k+1} }{ \F_{t,k} }
  \|^2 }{\F_{t,0} } \\ \leq \frac{1}{n^2} \sum_{i=1}^n
\CPE{ \| \Q(\Delta_{t,k+1,i}) - \CPE{\Delta_{t,k+1,i} }{
    \F_{t,k}  }\|^2 }{ \F_{t,0} }\eqsp.
\end{multline}
The proof follows from \eqref{eq:tool510} to \eqref{eq:tool512} and
\Cref{prop:DS:varDelta}.
\end{proof}

\subsection{Proof of~\autoref{theo:DS}}
\Cref{theo:DS} is a corollary of the more general following
proposition.
\begin{proposition}\label{prop:DS}
	Assume \Cref{hyp:model,hyp:bars,hyp:Tmap}, \Cref{hyp:DS:lyap},
	\Cref{hyp:var:quantif} and \Cref{hyp:DS:lipschitz}. Set
	$L^2 \eqdef n^{-1} m^{-1} \sum_{i=1}^n \sum_{j=1}^m L^2_{ij}$. Let
	$\{\hatS_{t,k}, t \in [\kouter]^\star, k \in [\kin-1] \}$ be given
	by~\autoref{algo:DIANASPIDEREM} run with any
	$\alpha \le 1/(1+\omega)$, and $ \lbatch\geq 1 $, with $V_{1,0,i} = \mf_i(\hatS_{1,0})$
	for any $i \in [n]^\star$.  Let $(\tau, K)$ be a uniform random variable on
	$[\kouter]^\star \times [\kin-1]$, independent of
	$\{\hatS_{t,k}, t \in [\kouter]^\star, k \in [\kin-1]\}$. Then, it
	holds
		\[
		v_{\min} \left( 1 - \pas \Lambda_\star \right) \PE\left[\|
		H_{\tau,K+1}\|^2 \right] \leq \pas^{-1} \kin^{-1} \kouter^{-1}
		\left( \PE\left[\lyap (\hatS_{1,0}) \right] - \min \lyap \right)
		\eqsp,
		\]
		where
		\[
		\Lambda_\star \eqdef \frac{L_{\dot \lyap}}{2 v_{\min}} + 2 \sqrt{2} \frac{v_{\max}}{v_{\min}}  \frac{L}{\sqrt{n}\alpha} \left( \omega + \frac{\kin \alpha^2}{8\lbatch} (1+10 \omega)\right)^{1/2} \eqsp.
		\]
\end{proposition}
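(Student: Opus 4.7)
The plan is to follow the standard variance-reduction template: build a composite Lyapunov function $W_{t,k} \eqdef \lyap(\hatS_{t,k}) + C\, G_{t,k}$, where $G_{t,k} \eqdef n^{-1}\sum_{i=1}^n \|V_{t,k,i} - \mf_i(\hatS_{t,k})\|^2$ measures the drift of the memory variables from the ideal controls, and show that it satisfies a one-step descent in $\CPE{\|H_{t,k+1}\|^2}{\F_{t,0}}$. Crucially, the reset rules of \autoref{algo:DIANASPIDEREM} enforce both $\hatS_{t,\kin} = \hatS_{t+1,0}$ and $V_{t+1,0,i} = V_{t,\kin,i}$, so $W_{t,\kin} = W_{t+1,0}$ and the sum over the outer loops telescopes cleanly; the initialization $V_{1,0,i} = \mf_i(\hatS_{1,0})$ kills $G_{1,0}$, and $W_{\kouter,\kin} \geq \min \lyap$. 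Averaging over the uniform pair $(\tau,K) \in [\kouter]^\star \times [\kin-1]$ will then produce the claimed bound.

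The per-step inequality is assembled from three pieces. First, $L_{\dot \lyap}$-smoothness of $\lyap$ applied to $\hatS_{t,k+1} - \hatS_{t,k} = \pas H_{t,k+1}$, combined with $\nabla \lyap(s) = -B(s)\mf(s)$ and the polarization identity $-2\pscal{B\mf}{H} = \pscal{\mf - H}{B(\mf - H)} - \pscal{\mf}{B\mf} - \pscal{H}{BH}$, delivers the descent bound
\[
\lyap(\hatS_{t,k+1}) - \lyap(\hatS_{t,k}) \leq \tfrac{\pas v_{\max}}{2}\|\mf(\hatS_{t,k}) - H_{t,k+1}\|^2 - \tfrac{\pas}{2}(v_{\min} - \pas L_{\dot \lyap})\|H_{t,k+1}\|^2,
\]
after using the nonpositivity of $-\frac{\pas v_{\min}}{2}\|\mf(\hatS_{t,k})\|^2$. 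Second, the mean-square error of $H_{t,k+1}$ is controlled by chaining \Cref{prop:DS:Hvariance} with \Cref{prop:DS:Delta}, producing a bound of the form $\tfrac{2\omega}{n}\CPE{G_{t,k}}{\F_{t,0}} + \tfrac{L^2(1+2\omega)}{n\lbatch}\sum_{\ell=1}^{k}\pas^2\CPE{\|H_{t,\ell}\|^2}{\F_{t,0}}$. Third, \Cref{prop:DS:driftV} yields a contractive recursion on $G_{t,k}$: because the choice $\alpha \leq 1/(1+\omega)$ makes the $\Delta$-term non-positive, iterating the recursion bounds $\sum_{k=0}^{\kin-1}\CPE{G_{t,k}}{\F_{t,0}}$ by a multiple of $G_{t,0}$ plus a quantity proportional to $\pas^2\sum_{k}\CPE{\|H_{t,k+1}\|^2}{\F_{t,0}}$, where the proportionality coefficient scales as $\alpha^{-2} + \alpha\kin/\lbatch$ after swapping the triangular double sum.

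Combining the three pieces with a judicious choice of $C$ cancels all residual $G_{t,k}$ contributions between consecutive inner iterations, leaves only the boundary terms $G_{t,0}$ and $G_{t,\kin}$ that telescope across outer loops, and collects on the left a sum $\sum_{t,k}\CPE{\|H_{t,k+1}\|^2}{\F_{1,0}}$ with a net coefficient of the form $\pas v_{\min} - \pas^2 L_{\dot \lyap} - \pas^2 v_{\max}\cdot \Theta(\omega,\alpha,\kin,\lbatch)$ that rearranges into the factor $v_{\min}(1 - \pas\Lambda_\star)$. The main obstacle is the delicate bookkeeping of $\Theta$: both the bias bound on $\|\mf - H\|^2$ and the drift recursion for $G_{t,k}$ feed $\|H\|^2$ back through triangular sums $\sum_{\ell=1}^k$ inherited from the SPIDER-style construction of $\Smem_{t,k+1,i}$. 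Swapping the order of summation converts each such sum into a factor of $\kin$, which then pairs with $\alpha^2/\lbatch$ to produce precisely the compound $\kin\alpha^2(1+10\omega)/(8\lbatch)$ that appears under the square root in $\Lambda_\star$; tracking the numerical constants $\omega$, $2\omega$, and $10\omega$ faithfully through the successive Young-inequality splittings, and tuning $C$ so that these contributions coalesce exactly, is mechanical but error-prone.
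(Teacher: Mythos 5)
Your architecture matches the paper's almost lemma for lemma: the composite quantity $\lyap(\hatS_{t,k}) + C\,G_{t,k}$, the absorption of the $G$-drift via \Cref{prop:DS:driftV}, the MSE chain \Cref{prop:DS:Hvariance}$\to$\Cref{prop:DS:Delta}, the swap of the triangular sums to produce the $\kin/\lbatch$ factor, the telescoping through $\hatS_{t,\kin}=\hatS_{t+1,0}$, $V_{t+1,0,i}=V_{t,\kin,i}$, and the vanishing of $G_{1,0}$; even your bookkeeping $\tfrac{1+2\omega}{8}+\omega=\tfrac{1+10\omega}{8}$ is the right one. The gap is in how you treat the cross term $-\pas\pscal{B(\hatS_{t,k})\mf(\hatS_{t,k})}{H_{t,k+1}}$. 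Your polarization identity is a \emph{fixed} split: it puts the coefficient $\tfrac{\pas v_{\max}}{2}$ in front of $\|\mf(\hatS_{t,k})-H_{t,k+1}\|^2$ and keeps only $\tfrac{\pas v_{\min}}{2}\|H_{t,k+1}\|^2$ of the good term. Every downstream contribution to the $\|H\|^2$ coefficient is then $O(\pas^3)$, so what you actually obtain is a bound with leading factor $\tfrac{v_{\min}}{2}\bigl(1-\pas\tfrac{L_{\dot\lyap}}{v_{\min}}-\pas^2 B'\bigr)$ for some $B'$ depending on $(\omega,\alpha,\kin,\lbatch,L,n,v_{\max}/v_{\min})$. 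This is \emph{not} of the form $v_{\min}(1-\pas\Lambda_\star)$ with the stated $\Lambda_\star$, and it does not imply it (already the prefactor is $v_{\min}/2$ instead of $v_{\min}$, and the $L_{\dot\lyap}$ term carries $1/v_{\min}$ instead of $1/(2v_{\min})$); your own summary, ``a net coefficient of the form $\pas v_{\min}-\pas^2 L_{\dot\lyap}-\pas^2 v_{\max}\Theta$ that rearranges into $v_{\min}(1-\pas\Lambda_\star)$,'' cannot hold with a $\Lambda_\star$ independent of $\pas$, since a quadratic-in-$\pas$ deficit cannot be rewritten as a linear one.

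The missing idea is a free parameter in the cross-term split. The paper uses
\[
-\pscal{B\mf}{H}\;\leq\;-\pscal{BH}{H}+\frac{\beta^2}{2}\|H\|^2+\frac{1}{2\beta^2}\|B(H-\mf)\|^2 ,
\]
which retains the full $-v_{\min}\|H\|^2$, charges $\tfrac{\beta^2}{2v_{\min}}$ to $\Lambda$, and puts $\tfrac{\pas v_{\max}^2}{2\beta^2}$ in front of the MSE. Setting $\beta^2=\pa\pas$ makes both the penalty $\tfrac{\pa\pas}{2v_{\min}}$ and the MSE-induced loss $\tfrac{\pas}{\pa}\cdot(\cdots)$ linear in $\pas$, and optimizing $\pa$ (the minimizer is $\pa_\star=2\sqrt{2}\,v_{\max}\tfrac{L}{\sqrt{n}\,\alpha}\bigl(\omega+\tfrac{\kin\alpha^2}{8\lbatch}(1+10\omega)\bigr)^{1/2}$) produces the $2\sqrt{AB}$ structure, i.e.\ exactly the second term of $\Lambda_\star$ with its $v_{\max}/v_{\min}$ prefactor. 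Your route yields a legitimate but different inequality (roughly $\sqrt{v_{\max}/v_{\min}}$ in place of $v_{\max}/v_{\min}$ inside a $\pas^2$ term); to prove the proposition as stated you must reinstate the tunable $\beta^2$ and carry out this optimization.
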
 
The proof of  \Cref{theo:DS} from \Cref{prop:DS} (which corresponds to particular choices of $ \lbatch, \alpha $, etc. is detailed in \Cref{subsec:proofthDS}).

\subsubsection{Control of  $H_{\tau, K}$}
Let $t \in [\kouter]^\star$ and $k \in [\kin-1]$. By
\Cref{hyp:DS:lyap}, we have
\[
\lyap (\hatS_{t,k+1}) \leq \lyap (\hatS_{t,k}) + \pscal{\nabla
  \lyap(\hatS_{t,k})}{\hatS_{t,k+1} - \hatS_{t,k}} + \frac{L_{\dot
    \lyap}}{2} \| \hatS_{t,k+1} - \hatS_{t,k}\|^2 \eqsp.
\]
Since $\hatS_{t,k+1} - \hatS_{t,k} = \pas_{t,k+1} H_{t,k+1}$, we have
using again \Cref{hyp:DS:lyap}
\[
\lyap (\hatS_{t,k+1}) \leq \lyap (\hatS_{t,k}) - \pas_{t,k+1}
\pscal{B(\hatS_{t,k}) \mf(\hatS_{t,k})}{H_{t,k+1}} + \frac{L_{\dot
    \lyap}}{2} \pas_{t,k+1}^2 \| H_{t,k+1}\|^2 \eqsp.
\]
We have the inequality, for any $\beta >0$:
\[
-\pscal{Bh}{H} \leq - \pscal{BH}{H} - \pscal{B(h-H)}{H} \leq -
\pscal{BH}{H} + \frac{\beta^2}{2} \|H\|^2 + \frac{1}{2\beta^2} \|B(H-h)\|^2 \eqsp.
\]
By \Cref{hyp:DS:lyap} again, this inequality yields for any
$\beta_{t,k+1} >0$ after applying the conditional expectation
\begin{multline}
\CPE{\lyap (\hatS_{t,k+1})  }{ \F_{t,0} } \leq \CPE{\lyap (\hatS_{t,k}) }{\F_{t,0} }- \pas_{t,k+1} v_{\min}
\Lambda_{t,k+1} \CPE{\| H_{t,k+1}\|^2 }{ \F_{t,0} }
\\ +
\frac{\pas_{t,k+1}}{2 \beta^2_{t,k+1}} v_{\max}^2 \CPE{\|H_{t,k+1} -
\mf(\hatS_{t,k}) \|^2 }{ \F_{t,0} } \eqsp, \label{eq:DS:maintheo:tool1}
\end{multline}
where
\[
\Lambda_{t,k+1} \eqdef 1 - \pas_{t,k+1}\frac{L_{\dot \lyap}}{2
  v_{\min} } - \frac{\beta^{2}_{t,k+1}}{2 v_{\min}} \eqsp.
\]
By \eqref{eq:DS:maintheo:tool1} and \Cref{prop:DS:Hvariance}, it holds
\begin{multline}
\PE\left[ \lyap (\hatS_{t,k+1})  \vert \F_{t,0} \right] \leq \PE\left[\lyap (\hatS_{t,k}) \vert \F_{t,0} \right]- \pas_{t,k+1} v_{\min}
\Lambda_{t,k+1} \PE\left[\| H_{t,k+1}\|^2 \vert \F_{t,0} \right]
\\
+ \frac{\pas_{t,k+1}}{2 \beta^2_{t,k+1}} v_{\max}^2 \frac{L^2}{n
     \lbatch} \sum_{\ell=1}^k \pas_{t,\ell}^2 \PE\left[ \|
     H_{t,\ell}\|^2 \vert \F_{t,0} \right] \\ +
\frac{\pas_{t,k+1}}{2 \beta^2_{t,k+1}} v_{\max}^2 \frac{\omega}{n} \frac{1}{n} \sum_{i=1}^n \PE\left[
  \|\Delta_{t,k+1,i}\|^2 \vert \F_{t,0} \right]   \eqsp. \label{eq:DS:maintheo:tool2}
\end{multline}
Set 
\[
G_{t,k} \eqdef \frac{1}{n}\sum_{i=1}^n \|V_{t,k,i} -
\mf_i(\hatS_{t,k}) \|^2 \eqsp.
\]
From \Cref{prop:DS:Delta}, we obtain
\begin{multline}
  \PE\left[ \lyap (\hatS_{t,k+1}) \vert \F_{t,0} \right] \leq
  \PE\left[\lyap (\hatS_{t,k}) \vert \F_{t,0} \right]- \pas_{t,k+1}
  v_{\min} \Lambda_{t,k+1} \PE\left[\| H_{t,k+1}\|^2 \vert \F_{t,0}
  \right]
  \\
  + \frac{\pas_{t,k+1}}{2 \beta^2_{t,k+1}} v_{\max}^2 \frac{L^2}{n
    \lbatch} (1+2 \omega) \sum_{\ell=1}^k \pas_{t,\ell}^2 \PE\left[ \|
    H_{t,\ell}\|^2 \vert \F_{t,0} \right] + \frac{\pas_{t,k+1}}{
    \beta^2_{t,k+1}} v_{\max}^2 \frac{\omega}{n} \CPE{ G_{t,k}}{
    \F_{t,0} } \eqsp. \label{eq:tool892}
\end{multline}
Assume that $k \mapsto \pas_{t,k+1}/\beta^2_{t,k+1}$ is a non-increasing sequence and set

\begin{align}
C_{t,k+1} \eqdef \frac{2 \omega}{\alpha n} v^2_{\max} \frac{\pas_{t,k+1}}{\beta^2_{t,k+1}} \eqsp.\label{eq:aux2}
\end{align}

From \Cref{prop:DS:driftV}, since $\alpha \in \ocint{0,1/(1+\omega)}$, we have 
\begin{multline}
  C_{t,k+1} \CPE{G_{t,k+1}}{\F_{t,0}} \leq (1-\alpha/2) C_{t,k+1}
  \CPE{G_{t,k}}{ \F_{t,0}} + \frac{2}{\alpha} L^2 \pas_{t,k+1}^2
  C_{t,k+1} \CPE{ \|H_{t,k+1}\|^2}{ \F_{t,0}} \\ + 2 \alpha
  \frac{L^2}{\lbatch} C_{t,k+1} \sum_{\ell=1}^k \pas_{t,\ell}^2
  \CPE{\|H_{t,\ell}\|^2 }{ \F_{t,0} } \eqsp. \label{eq:tool893}
\end{multline}
Upon noting that by definition of $C_{t,k+1}$ we have  (remember that $C_{t,k+1} \leq C_{t,k}$)
\[
(1-\alpha/2) C_{t,k+1} -C_{t,k} + \frac{\pas_{t,k+1}}{
    \beta^2_{t,k+1}} v_{\max}^2 \frac{\omega}{n}  \leq 0 \eqsp,
\]
this yields from \eqref{eq:tool892} and \eqref{eq:tool893}
\begin{multline*}
 \PE\left[ \lyap (\hatS_{t,k+1}) \vert \F_{t,0} \right] + C_{t,k+1}  \CPE{G_{t,k+1}}{\F_{t,0}}   \leq
  \PE\left[\lyap (\hatS_{t,k}) \vert \F_{t,0} \right] + C_{t,k}  \CPE{G_{t,k}}{\F_{t,0}}   \\
- \left( \pas_{t,k+1}
  v_{\min} \Lambda_{t,k+1} -\frac{2}{\alpha} L^2 \pas_{t,k+1}^2
  C_{t,k+1} \right) \PE\left[\| H_{t,k+1}\|^2 \vert \F_{t,0}
  \right]
  \\
  + \left( \frac{\pas_{t,k+1}}{2 \beta^2_{t,k+1}} v_{\max}^2 \frac{L^2}{n
    \lbatch} (1+2 \omega) + 2 \alpha
  \frac{L^2}{\lbatch} C_{t,k+1} \right) \sum_{\ell=1}^k \pas_{t,\ell}^2 \PE\left[ \|
    H_{t,\ell}\|^2 \vert \F_{t,0} \right]  \eqsp.
\end{multline*}
Let us restrict the computations to the case $\pas_{t,k} = \pas$,
$\beta_{t,k}= \beta$ (which implies $C_{t,k+1} = C_{t,k} =: C$); we
obtain
\begin{multline*}
  \pas v_{\min} \left( 1 - \pas\frac{L_{\dot \lyap}}{2 v_{\min} } -
    \frac{\beta^{2}}{2 v_{\min}} - \frac{\pas^2}{\beta^2} \frac{4
      v^2_{\max}}{v_{\min}} L^2 \frac{\omega}{\alpha^2 n} \right)
  \PE\left[\| H_{t,k+1}\|^2 \vert \F_{t,0}
  \right] \\
  \leq \PE\left[\lyap (\hatS_{t,k}) \vert \F_{t,0} \right] + C
  \CPE{G_{t,k}}{\F_{t,0}} - \PE\left[ \lyap (\hatS_{t,k+1}) \vert
    \F_{t,0} \right] - C \CPE{G_{t,k+1}}{\F_{t,0}}
  \\
  + \frac{\pas^3}{2 \beta^2} v_{\max}^2 \frac{L^2}{n
    \lbatch} \left( 1+10 \omega \right) \sum_{\ell=1}^k
  \PE\left[ \| H_{t,\ell}\|^2 \vert \F_{t,0} \right]
  \eqsp.
\end{multline*}
We now sum from $k=0$ to $k= \kin-1$ and divide by $\kin$:
\begin{multline*}
  \pas v_{\min} \left( 1 - \pas\frac{L_{\dot \lyap}}{2 v_{\min} } -
    \frac{\beta^{2}}{2 v_{\min}} - \frac{\pas^2}{\beta^2} \frac{4
      v^2_{\max}}{v_{\min}} L^2 \frac{\omega}{\alpha^2 n} \right)
\frac{1}{\kin} \sum_{k=1}^{\kin} \PE\left[\| H_{t,k}\|^2 \vert \F_{t,0}
  \right] \\
  \leq \kin^{-1} \PE\left[\lyap (\hatS_{t,0}) \vert \F_{t,0} \right] + \frac{C}{\kin}
  \CPE{G_{t,0}}{\F_{t,0}} \\
 - \kin^{-1} \PE\left[ \lyap (\hatS_{t, \kin}) \vert
    \F_{t,0} \right] - \frac{C}{\kin} \CPE{G_{t, \kin}}{\F_{t,0}}
  \\
  + \frac{\pas^3}{2 \beta^2} v_{\max}^2 \frac{L^2}{n
    \lbatch} \left( 1+10 \omega \right) \sum_{k=1}^{\kin}\PE\left[ \| H_{t,k}\|^2 \vert \F_{t,0} \right]
  \eqsp.
\end{multline*}
As a conclusion, we have 
\begin{align*}
  \pas v_{\min} \left( 1 - \pas\frac{L_{\dot \lyap}}{2 v_{\min} } -  \pas \bar \Lambda \right)
 & \frac{1}{\kin} \sum_{k=0}^{\kin-1} \PE\left[\| H_{t,k+1}\|^2 \vert \F_{t,0}
  \right] \\
  &\leq \kin^{-1} \PE\left[\lyap (\hatS_{t,0}) \vert \F_{t,0} \right] + \frac{C}{\kin}
  \CPE{G_{t,0}}{\F_{t,0}} \\
  &- \kin^{-1} \PE\left[ \lyap (\hatS_{t, \kin}) \vert
  \F_{t,0} \right] -\frac{C}{\kin} \CPE{G_{t, \kin}}{\F_{t,0}}
  \eqsp.
\end{align*}
where
\[
\bar \Lambda \eqdef \frac{\beta^{2}}{2 v_{\min} \pas} +
\frac{\pas}{\beta^2} \frac{4 v^2_{\max}}{v_{\min}} L^2
\frac{\omega}{\alpha^2 n} + \frac{\pas}{2 \beta^2} \frac{v_{\max}^2}{v_{\min}} \frac{L^2 \kin}{n
    \lbatch} \left( 1+10 \omega \right) \eqsp.
\]
Next, we sum from $t=1$ to $t = \kouter$, divide by $\kouter$.  
\begin{align}
\pas v_{\min} \left( 1 - \pas\frac{L_{\dot \lyap}}{2 v_{\min} } - \pas \bar \Lambda \right)
&    \frac{1}{\kouter \kin }	 \sum_{k=1}^{\kouter}  \sum_{k=1}^{\kin} \PE\left[\| H_{t,k+1}\|^2 
\right] \nonumber\\
&\leq \kin^{-1} \kouter^{-1}  \left( \PE\left[\lyap (\hatS_{1,0})  \right]  - \min \lyap \right) + \frac{C}{\kin \kouter}
\PE\left[G_{1,0} \right]  \label{eq:auxi_cor}
\eqsp.
\end{align}
Finally, we apply the expectation, with $(\tau, K)$ a uniform random
variable on $[\kouter]^\star \times [\kin-1]$, independent of
$\{\hatS_{t,k}, t \in [\kouter]^\star, k \in [\kin-1]\}$, upon noting
that $G_{t,\kin} = G_{t+1,0}$ and $\hatS_{t,\kin} = \hatS_{t+1,0}$,
this yields
\begin{align}
  \pas v_{\min} \left( 1 - \pas\frac{L_{\dot \lyap}}{2 v_{\min} } - \pas \bar \Lambda \right)
 &  \PE\left[\| H_{\tau,K+1}\|^2 
  \right] \nonumber\\
  &\leq \kin^{-1} \kouter^{-1}  \left( \PE\left[\lyap (\hatS_{1,0})  \right]  - \min \lyap \right) + \frac{C}{\kin \kouter} 
  \PE\left[G_{1,0} \right] \label{eq:mainresult}
  \eqsp.
\end{align}

\paragraph{Impact of initialization.} With
$V_{1,0,i} = \mf_i(\hatS_{1,0})$ for any $i \in [n]^\star$, we have
$ G_{1,0} =0 $. 

\paragraph{Choice of $ \beta $.} The latter inequality is true for all
parameter $ \beta^2 >0 $ (coming from Young's inequality). We can thus
optimize the value of $ \beta^2 $ to minimize the value of
$ \bar \Lambda $.  We here discuss this choice. First, to ensure that
$ \bar\Lambda $ is independent of $ \gamma $, we introduce $ \pa $,
and set $\beta^2 = \pa \pas$ so that 
\begin{align*}
\bar \Lambda  &=  \frac{\pa}{2 v_{\min}} +
 \frac{1}{\pa} \frac{4 v^2_{\max}}{v_{\min}} L^2
\frac{\omega}{\alpha^2 n} + \frac{1}{2 \pa} \frac{v_{\max}^2}{v_{\min}} \frac{L^2 \kin}{n
    \lbatch} \left( 1+10 \omega \right) \\ &=  \frac{\pa}{2 v_{\min}} +  \frac{4}{ \pa}  \frac{v_{\max}^2}{v_{\min}} \frac{L^2}{n
    \alpha^2} \left(\omega+ \frac{\kin \alpha^2}{8\lbatch} (1+10 \omega)\right)   \eqsp.
\end{align*}
Next, we optimize the value of $ \pa $.\footnote{Remark that this
  optimization step is crucial to optimize the dependency of
  $ \bar \Lambda $ w.r.t. $ \omega $: this ensures that
  $ \bar \Lambda \varpropto \omega^{3/2} $.}  Upon noting that
$\pa \mapsto A\pa + B/\pa$ (for $A,B >0$) is lower bounded by
$2 \sqrt{AB}$ and its minimizer is $\pa_\star \eqdef\sqrt{B/A}$, we
choose
\[
\pa_\star \eqdef  2 \sqrt{2} v_{\max} \frac{L}{\sqrt{n} \alpha}\left(\omega+ \frac{\kin \alpha^2}{8\lbatch} (1+10 \omega)  \right)^{1/2} \eqsp.
\]
and obtain 
\begin{align}
\bar \Lambda = 2 \sqrt{2} \frac{v_{\max}}{v_{\min}}  \frac{L}{\sqrt{n} \alpha} \left( \omega + \frac{\kin \alpha^2}{8\lbatch} (1+10 \omega)\right)^{1/2} \eqsp. \label{eq:barlambda}
\end{align}
Combining \Cref{eq:barlambda} and \Cref{eq:mainresult}, we obtain
	\[
v_{\min} \left( 1 - \pas \Lambda_\star \right) \PE\left[\|
H_{\tau,K+1}\|^2 \right] \leq \pas^{-1} \kin^{-1} \kouter^{-1}
\left( \PE\left[\lyap (\hatS_{1,0}) \right] - \min \lyap \right)
\eqsp,
\]
where
\begin{align}
\Lambda_\star \eqdef \frac{L_{\dot \lyap}}{2 v_{\min}} + 2 \sqrt{2} \frac{v_{\max}}{v_{\min}}  \frac{L}{\sqrt{n} \alpha} \left( \omega + \frac{\kin \alpha^2}{8\lbatch} (1+10 \omega)\right)^{1/2} \eqsp, \label{eq:lambdabaropt}
\end{align}
which is the result of \Cref{prop:DS}.

\subsection{Proof of \Cref{theo:DS} (\Cref{eq:thm-VR-conv}) from
  \Cref{prop:DS}}
\label{subsec:proofthDS}

We apply \Cref{prop:DS} with: 	$ \lbatch \eqdef \lceil\frac{\kin}{(1+\omega)^2} \rceil$ and the largest possible learning rate $ \alpha = (1+\omega) ^{-1}$: this gives in \Cref{eq:lambdabaropt}
\begin{align*}
\Lambda_\star & = \frac{L_{\dot \lyap}}{2 v_{\min}} + 2 \sqrt{2} \frac{v_{\max}}{v_{\min}}  \frac{L}{\sqrt{n}} (1+\omega) \left( \omega + \frac{1+10 \omega}{8}\right)^{1/2} \\
& = \frac{L_{\dot \lyap}}{2 v_{\min}}  \left (1 +  4 \sqrt{2} \frac{v_{\max}}{L_{\dot \lyap}}  \frac{L}{\sqrt{n}} (1+\omega) \left( \omega + \frac{1+10 \omega}{8}\right)^{1/2} \right ) \eqsp.
\end{align*}
Next, we choose $ \pas $ to be the largest possible value to ensure  $ \left( 1 - \pas \Lambda_\star \right) \geq \frac{1}{2}$. For all $ t,k $, 
\[
\pas_{t,k} = \pas \eqdef \frac{1}{2\Lambda_\star } = \frac{v_{\min}}{L_{\dot \lyap}}  \left (1 +  4 \sqrt{2} \frac{v_{\max}}{L_{\dot \lyap}}  \frac{L}{\sqrt{n}} (1+\omega) \left( \omega + \frac{1+10 \omega}{8}\right)^{1/2} \right )^{-1}. \] 
This gives the first result of \Cref{theo:DS}, namely \Cref{eq:thm-VR-conv}. We give the proof of the second result, \Cref{eq:thm-VR-control} in the following subsection.

\subsection{Proof of \Cref{theo:DS} (\Cref{eq:thm-VR-control}): control on $\mf(\hatS_{\tau,K})$}
We now establish~\eqref{eq:thm-VR-control} for $\pas_{t,k} = \pas$.
Let $t \in [\kouter]^\star$ and $k \in [\kin-1]$. We have
\begin{equation} \label{eq:tool625}
\| \mf(\hatS_{t,k}) \|^2 \leq 2 \|\PE\left[ H_{t,k+1} \vert \F_{t,k}
  \right] \|^2 + 2 \| \mf(\hatS_{t,k}) - \PE\left[H_{t,k+1} \vert \F_{t,k}
  \right] \|^2 \eqsp.
\end{equation}
Let us consider the first term in \eqref{eq:tool625}. By Jensen's inequality and the tower property of conditional expectations
\[
\PE\left[ \|\PE\left[ H_{t,k+1} \vert \F_{t,k} \right] \|^2 \vert
  \F_{t,0} \right] \leq \PE\left[ \PE\left[ \| H_{t,k+1} \|^2 \vert
    \F_{t,k} \right] \vert \F_{t,0} \right] = \PE\left[ \| H_{t,k+1}
  \|^2 \vert \F_{t,0} \right] \eqsp.
\]
Let us now consider the second term in \eqref{eq:tool625}.  By
\Cref{prop:DS:biasH} and \Cref{prop:DS:Hvariance}, we have
\[
\PE\left[\| \PE\left[ H_{t,k+1} \vert \F_{t,k} \right] -
  \mf(\hatS_{t,k}) \|^2 \vert \F_{t,0} \right] \leq \left\{
\begin{array}{ll}
  \pas^2 \frac{L^2 }{n \lbatch} \sum_{\ell=1}^{k-1} \PE\left[ \|
    H_{t,\ell} \|^2 \vert \F_{t,0} \right] & \text{when $k \geq 2$} \\ 0 &
  \text{when $k \in \{0,1\}$ \eqsp.}
\end{array} \right.
\]
Therefore, we write
\begin{align*}
\PE\left[ \| \mf(\hatS_{t,k}) \|^2 \right] \leq 2 \PE\left[
  \|H_{t,k+1}\|^2 \right]+ 2 \pas^2 \frac{L^2}{n \lbatch}
\sum_{\ell=1}^{k-1} \PE\left[ \| H_{t,\ell} \|^2\right]
  \end{align*}
We now sum from $k=0$ to $k = \kin-1$, then from $t=1$ to
$t=\kouter$, and finally we divide by $\kin \kouter$. This yields
\begin{align*}
\PE\left[ \| \mf(\hatS_{\tau,K}) \|^2 \right] & \leq 2 \PE\left[
  \|H_{\tau,K+1}\|^2 \right]+ 2 \pas^2 \frac{L^2}{n \lbatch}
\frac{1}{\kin \kouter} \sum_{t=1}^{\kouter}
\sum_{k=2}^{\kin-1}\sum_{\ell=1}^{k-1} \PE\left[ \| H_{t,\ell}
  \|^2\right] \\ & \leq 2 \PE\left[ \|H_{\tau,K+1}\|^2 \right]+ 2
\pas^2 \frac{L^2}{n \lbatch} \frac{1}{\kouter} \sum_{t=1}^{\kouter}
\sum_{k=1}^{\kin-2}\PE\left[ \| H_{t,k} \|^2\right] \\ & \leq 2
\PE\left[ \|H_{\tau,K+1}\|^2 \right]+ 2 \pas^2 \frac{L^2}{n}
\frac{\kin}{\lbatch} \PE\left[ \| H_{\tau,K+1} \|^2\right] \\ & \leq 2
\left(1+ \pas^2 \frac{L^2}{n} \frac{\kin}{\lbatch} \right) \PE\left[
  \| H_{\tau,K+1} \|^2\right] \eqsp.
\end{align*}

\subsection{On the convergence of the $V_{t,k,i}$'s}
In this subsection, we provide a complementary result, to support the
assertion made in the text, that the variable $V_{t,k,i}$
approximates $\mf_i(\hatS_{t,k})$. Recall that for
$t \in [\kouter]^\star$ and $k \in [\kin]$,
$ G_{t,k} \eqdef \frac{1}{n}\sum_{i=1}^n \|V_{t,k,i} -
\mf_i(\hatS_{t,k}) \|^2 \eqsp.$
\begin{proposition}\label{cor:control_var}
  When running~\autoref{algo:DIANASPIDEREM} with a constant step size
  $ \pas$ equal to
\[
\pas \eqdef \frac{v_{\min}}{L_{\dot \lyap}} \left (1 + 4 \sqrt{2}
  \frac{v_{\max}}{L_{\dot \lyap}} \frac{L}{\sqrt{n}} (1+\omega) \left(
    \omega + \frac{1+10 \omega}{8}\right)^{1/2} \right )^{-1} \eqsp, 
\] with $ \lbatch \eqdef \lceil\frac{\kin}{(1+\omega)^2} \rceil$ and
$ \alpha \eqdef 1/(\omega+1)$, we have
\[ 
\frac{ 1}{ \kouter \kin }	 \sum_{t=1}^{\kouter}  \sum_{k=1}^{\kin} 
            \PE[G_{t,k}] \leq 
            \frac{2(1+\omega)}{ \kin \kouter}	\PE[{G_{1,0}}] + 16 \frac{\pas}{\kin \kouter}  \frac{(1+\omega)^2 L^2}{ v_{\min}} 
            \left( \PE\left[\lyap (\hatS_{1,0})  \right]  - \min \lyap  \right)\eqsp.
            \]
      \end{proposition}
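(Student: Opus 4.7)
The plan is to reduce \Cref{cor:control_var} to a direct consequence of \Cref{prop:DS:driftV} combined with the already-established bound \eqref{eq:thm-VR-conv} from \Cref{theo:DS}. Observe that with the choice $\alpha=1/(1+\omega)$, the coefficient $\alpha(\alpha(1+\omega)-1)$ in \Cref{prop:DS:driftV} vanishes, so writing $g_{t,k} \eqdef \PE[G_{t,k}]$ and $h_{t,k} \eqdef \PE[\|H_{t,k}\|^2]$ (and using the tower property), we obtain the clean recursion
\begin{equation*}
g_{t,k+1} \leq (1-\alpha/2)\, g_{t,k} + \frac{2L^2\pas^2}{\alpha}\, h_{t,k+1} + \frac{2\alpha L^2 \pas^2}{\lbatch}\sum_{\ell=1}^{k} h_{t,\ell} \eqsp.
\end{equation*}

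The first step is to telescope within a single outer loop. Summing the rearranged inequality $g_{t,k+1}-(1-\alpha/2)g_{t,k}\le \beta_{t,k+1}$ for $k=0,\dots,\kin-1$ yields
\begin{equation*}
\tfrac{\alpha}{2}\sum_{k=1}^{\kin} g_{t,k} \;\le\; (1-\alpha/2)\,(g_{t,0}-g_{t,\kin}) \;+\; \sum_{k=0}^{\kin-1}\beta_{t,k+1}\eqsp,
\end{equation*}
where for the remainder term I would swap the order of summation to get $\sum_{k=0}^{\kin-1}\sum_{\ell=1}^{k}h_{t,\ell}=\sum_{\ell=1}^{\kin-1}(\kin-\ell)h_{t,\ell}\le \kin\sum_{\ell=1}^{\kin}h_{t,\ell}$, producing the bound $\sum_{k=0}^{\kin-1}\beta_{t,k+1}\le \bigl(\frac{2L^2\pas^2}{\alpha}+\frac{2\alpha L^2\pas^2\kin}{\lbatch}\bigr)\sum_{k=1}^{\kin}h_{t,k}$. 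The choices $\lbatch\ge \kin/(1+\omega)^2$ and $\alpha=1/(1+\omega)$ give $\alpha^2\kin/\lbatch\le 1$, which collapses this constant to $\frac{4L^2\pas^2}{\alpha}$.

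The second step is to sum the resulting inequality over $t=1,\dots,\kouter$. The crucial observation is that $V_{t+1,0,i}=V_{t,\kin,i}$ and $\hatS_{t+1,0}=\hatS_{t,\kin}$ (Lines \ref{algo:DS:init:V1}--\ref{algo:DS:init:V2}) imply $g_{t+1,0}=g_{t,\kin}$, so the telescoping sum $\sum_{t=1}^{\kouter}(g_{t,0}-g_{t,\kin})=g_{1,0}-g_{\kouter,\kin}\le g_{1,0}$ collapses cleanly. Multiplying by $2/\alpha$ and substituting $\alpha=1/(1+\omega)$ then yields
\begin{equation*}
\sum_{t=1}^{\kouter}\sum_{k=1}^{\kin} g_{t,k} \;\le\; \tfrac{2(1-\alpha/2)}{\alpha}\,g_{1,0} + \tfrac{8L^2\pas^2}{\alpha^2}\sum_{t=1}^{\kouter}\sum_{k=1}^{\kin} h_{t,k}\eqsp,
\end{equation*}
and using $2(1-\alpha/2)/\alpha\le 2(1+\omega)$ recovers the first term in the announced inequality.

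The third and final step is to invoke \eqref{eq:thm-VR-conv}, which exactly states $\frac{1}{\kin\kouter}\sum_{t,k}h_{t,k}=\PE[\|H_{\tau,K+1}\|^2]\le \frac{2(\PE[\lyap(\hatS_{1,0})]-\min\lyap)}{v_{\min}\pas\kin\kouter}$. Plugging this into the previous display and dividing by $\kin\kouter$ produces exactly the bound of \Cref{cor:control_var}. No significant obstacle is expected; the only subtlety is bookkeeping the inter-outer-loop boundary conditions carefully so that the telescope closes, and double-checking that the constant $8L^2\pas^2/\alpha^2=8L^2\pas^2(1+\omega)^2$ combines with the factor $2\pas^{-1}/v_{\min}$ from \eqref{eq:thm-VR-conv} to yield the announced constant $16\pas L^2(1+\omega)^2/v_{\min}$.
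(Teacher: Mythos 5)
Your proposal is correct and follows essentially the same route as the paper's proof: telescope the drift inequality of \Cref{prop:DS:driftV} over the inner loop (with $\alpha^2\kin/\lbatch\le 1$ absorbing the double sum), telescope across outer loops via $G_{t+1,0}=G_{t,\kin}$, and close with \eqref{eq:thm-VR-conv}. Your index bookkeeping ($\sum_{k=1}^{\kin}$ on the left-hand side, the $(1-\alpha/2)$ factor on the boundary term) is in fact slightly cleaner than the paper's, which silently shifts between $\sum_{k=0}^{\kin-1}$ and $\sum_{k=1}^{\kin}$.
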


In words, the Cesaro average
$ \frac{ 1}{ \kouter \kin } \sum_{t=1}^{\kouter} \sum_{k=1}^{\kin}
\PE[G_{t,k}] $
decreases proportionally to the number of iterations $ \kin \kouter
$.
Consequently, the average squared distance between $V_{t,k,i} $ and
$ \mf_i(\hatS_{t,k})$ (i.e., $ G_{t,k} $), converges to 0 in the sense
of Cesaro.
\begin{proof}
  From \Cref{prop:DS:driftV}, we have that, $t \in [\kouter]^\star$
  and $k \in [\kin]$, and any $ \alpha \le (\omega+1)^{-1} $:
	\begin{align*}
	\CPE{G_{t,k+1}}{ \F_{t,0}} & \leq (1-\alpha/2)
	\CPE{G_{t,k}}{ \F_{t,0}} \\ &  + \frac{2}{\alpha} L^2
	\pas_{t,k+1}^2  \CPE{ \|H_{t,k+1}\|^2}{ \F_{t,0}} + 2
	\alpha \frac{L^2}{\lbatch} \sum_{\ell=1}^k \pas_{t,\ell}^2
	\CPE{\|H_{t,\ell}\|^2 }{ \F_{t,0} }\eqsp.
	\end{align*}
	Equivalently:
		\begin{align*}
\alpha/2 
\CPE{G_{t,k}}{ \F_{t,0}}& \leq 
	\CPE{G_{t,k}}{ \F_{t,0}} - 	\CPE{G_{t,k+1}}{ \F_{t,0}} \\ &  + \frac{2}{\alpha} L^2
	\pas_{t,k+1}^2  \CPE{ \|H_{t,k+1}\|^2}{ \F_{t,0}} + 2
	\alpha \frac{L^2}{\lbatch} \sum_{\ell=1}^k \pas_{t,\ell}^2
	\CPE{\|H_{t,\ell}\|^2 }{ \F_{t,0} }\eqsp.
	\end{align*}
Summing from $k=0$ to $k= \kin-1$, we get, with $ \pas_{t,k+1}^2  =\pas$:
	\begin{align*}
\frac{\alpha}{2}  \sum_{k=0}^{\kin-1} 
\CPE{G_{t,k}}{ \F_{t,0}}& \leq 
\CPE{G_{t,0}}{ \F_{t,0}} - 	\CPE{G_{t,\kin}}{ \F_{t,0}} \\ &  + \frac{2}{\alpha} L^2
\pas^2  \sum_{k=1}^{\kin}  \CPE{ \|H_{t,k}\|^2}{ \F_{t,0}} + 2
\alpha \frac{L^2}{\lbatch} \sum_{k=1}^{\kin-1} \sum_{\ell=1}^k \pas^2
\CPE{\|H_{t,\ell}\|^2 }{ \F_{t,0} }\eqsp \\
& \leq 
\CPE{G_{t,0}}{ \F_{t,0}} - 	\CPE{G_{t,\kin}}{ \F_{t,0}} \\ &  + \frac{2}{\alpha} L^2
\pas^2  \sum_{k=1}^{\kin}  \CPE{ \|H_{t,k}\|^2}{ \F_{t,0}} + 2
\alpha \frac{L^2 \kin}{\lbatch} \sum_{k=1}^{\kin}  \pas^2
\CPE{\|H_{t,k}\|^2 }{ \F_{t,0} }\eqsp\\
& \leq 
\CPE{G_{t,0}}{ \F_{t,0}} - 	\CPE{G_{t,\kin}}{ \F_{t,0}} \\ &  + \frac{2}{\alpha} L^2
\pas^2 \left(1 + \frac{\alpha^2 \kin }{\lbatch}\right )  \sum_{k=1}^{\kin}  \CPE{ \|H_{t,k}\|^2}{ \F_{t,0}} \eqsp.
	\end{align*}
	Summing from $t=1$ to $t = \kouter$, dividing by $ \kouter
        \kin $, and taking expectation we get:
		\begin{align*}
                  \frac{1}{\kouter \kin }	 \sum_{t=1}^{\kouter}  \sum_{k=0}^{\kin-1} 
                  \PE[G_{t,k}]
                  & \leq 
                    \frac{2}{\alpha \kouter \kin}	\PE[G_{1,0}]\\ &  + \frac{4}{\alpha^2 \kouter \kin} L^2
                                                                         \pas^2 \left(1 + \frac{\alpha^2 \kin }{\lbatch}\right )  \sum_{t=1}^{\kouter}  \sum_{k=1}^{\kin}  \PE[ \|H_{t,k}\|^2] \eqsp.
	\end{align*}
        We used that $G_{t,\kin} = G_{t+1,0}$. By denoting $(\tau, K)$
        a uniform random variable on $[\kouter]^\star \times [\kin
          -1]$ -- independent of the path $\{\hatS_{t,k} , t \in
        [\kouter]^\star, k \in [\kin] \}$, we have
\[
\PE[G_{\tau,K}] \leq \frac{2}{\alpha \kouter \kin} \PE[G_{1,0}] +
\frac{4}{\alpha^2} L^2 \pas^2 \left(1 + \frac{\alpha^2 \kin
  }{\lbatch}\right ) \PE[ \|H_{\tau,K+1}\|^2] \eqsp.
\]
From \Cref{theo:DS}, this yields (note that $\alpha = (1+\omega)^{-1}$
and $\lbatch \geq \kin / (1+\omega)^2$)
\[
\PE[G_{\tau,K}] \leq \frac{2(1+\omega)}{ \kouter \kin} \PE[G_{1,0}] +
\pas \frac{16 (1+\omega)^2L^2}{v_{\min} \kin \kouter} \left(
  \lyap(\hatS_{1,0}) - \min \lyap\right) \eqsp.
\] 
\end{proof}

\section{Supplement to the numerical section}
\label{app:numerical}

This section gathers additional details concerning the models used in our numerical experiments. Namely, \Cref{app:subsec:num:GMM} presents the full derivations for the \FEDEM~algorithm for finite Gaussian Mixture Models, and \Cref{app:fedmiss} provides the detailed pseudo-code for the \texttt{FedMissEM} algorithm for federated missing values imputation introduced in \Cref{sec:numerical} and provides the necessary information to request access to the data we used on the eBird platform~\cite{eBird}.

\subsection{Gaussian Mixture Model}
\label{app:subsec:num:GMM}
Let $y_1, \ldots, y_N$ be $N$ $\rset^p$-valued observations; they are
modeled as the realization of a vector $(Y_1, \ldots, Y_N)$ with
distribution defined as follows:
\begin{itemize}
\item conditionally to a $\{1, \ldots, L\}$-valued vector of random
  variables $(Z_1, \ldots, Z_N)$, $(Y_1, \ldots, Y_N)$ are
  independent; and the conditional distribution of $Y_i$ is
  $\mathcal{N}_p(\mu_{Z_i}, \Sigma)$.
 \item the r.v.  $(Z_1, \ldots, Z_n)$ are i.i.d. with multinomial distribution
  of size $1$ and with probabilities $\pi_1, \ldots, \pi_L$.
\end{itemize}
Equivalently, the random variables $(Y_1, \ldots, Y_N)$ are
independent with distribution $\sum_{\ell=1}^L \pi_\ell \,
\mathcal{N}_p(\mu_\ell, \Sigma)$. For $1\leq i\leq N$, the negative
log-likelihood of the observation $Y_i$ is given up to an additive
constant term by
$$\param \mapsto \frac{1}{2} \ln \mathrm{det} \Sigma + \frac{1}{2} \pscal{Y_i Y_i^\top}{\Sigma^{-1}}- \ln \sum_{z=1}^L \exp\left(\pscal{s(Y_i,z)}{\phi(\param)} \right)$$
where, denoting $\mathsf{1}_{\{l\}}(z)$ the indicator function equal to $1$ if $z=l$ and $0$ otherwise:
\begin{equation}
  \label{supp:eq:si}
  s(y,z) \eqdef  \left(\begin{array}{c}
  \mathsf{1}_{\{1\}}(z) \\
  \vdots \\
  \mathsf{1}_{\{L\}}(z) \\
y\mathsf{1}_{\{1\}}(z) \\
  \vdots \\
y\mathsf{1}_{\{L\}}(z)
  \end{array} \right) \eqsp, \qquad
  \phi(\theta) \eqdef \left(\begin{array}{c} \log(\pi_1) - \frac{1}{2}
    \mu_1^\top\Sigma^{-1}\mu_1 \\ \vdots \\ \log(\pi_L) -
    \frac{1}{2}\mu_L^\top\Sigma^{-1}\mu_L \\ \Sigma^{-1}\mu_1
    \\ \vdots \\ \Sigma^{-1}\mu_L
  \end{array} \right) \eqsp.
  \end{equation}
  The goal is to estimate the parameter $\param \eqdef
  (\pi_1,\ldots,\pi_L,\mu_1,\ldots,\mu_L,\Sigma)$ by minimizing the
  normalized negative log-likelihood:
\begin{equation}
\label{eq:GGM-optim}
 F(\theta) \eqdef \frac{1}{2} \ln \mathrm{det} \Sigma + \frac{1}{2}
 \pscal{\frac{1}{N} \sum_{i=1}^N Y_i Y_i^\top}{\Sigma^{-1}}-
 \frac{1}{N} \sum_{i=1}^N \ln \int
 \exp\left(\pscal{s(Y_i,z)}{\phi(\param)} \right) \nu(\rmd z)
\end{equation}
where $\nu$ is the counting measure on $\{1, \ldots L\}$.

\paragraph{Classical EM algorithm}
We use the EM algorithm: in the Expectation (E) step, using the current value of the iterate $\theta_{\text{curr}}$, we compute a majorizing function $\theta \mapsto \mathsf{Q}(\theta, \theta_{\text{curr}})$ given up to an additive constant by
$$\mathsf{Q}(\theta, \theta_{\text{curr}}) = -\langle \bars ( \theta_{\text{curr}}), \phi(\theta) \rangle + \psi(\theta),$$
where
\[
\psi(\param) \eqdef \frac{1}{2} \ln \mathrm{det} \Sigma +
\frac{1}{2} \pscal{\frac{1}{N} \sum_{i=1}^N Y_i
  Y_i^\top}{\Sigma^{-1}} \eqsp,
\]
$\bars(\theta_{\text{curr}}) \eqdef
\frac{1}{N}\sum_{i=1}^N\bars_{i}(\theta)$, and for any $i \in
     [N]^{\star}$, $\bars_{i}(\theta)$ is the conditional
     expectation of the complete data sufficient statistics:
\begin{equation}
\label{eq:bars_i}
\bars_{i}(\theta) = \left(\begin{array}{c} \bar{\rho}_{i,1}(\theta)
  \\ \vdots \\ \bar{\rho}_{i,L}(\theta) \\ Y_i\bar{\rho}_{i,1}(\theta)
  \\ \vdots \\ Y_i\bar{\rho}_{i,L}(\theta)
  \end{array} \right) \text{, where for } \ell \in [L]^{\star} \text{, } \bar{\rho}_{i,l}(\theta) \eqdef \frac{\pi_\ell \ \mathcal{N}_p(\mu_\ell,
  \Sigma)[Y_i]}{\sum_{u=1}^L \pi_u\
  \mathcal{N}_p(\mu_u, \Sigma)[Y_i]} \eqsp.
\end{equation}
In~\eqref{eq:bars_i}, $\mathcal{N}_p(\mu, \Sigma)[y]$ is the density
function of the distribution $\mathcal{N}_p(\mu, \Sigma)$ evaluated at
$y$.

In the optimization step (M-step), a new value of
$\theta_{\text{curr}}$ is computed as a minimizer of
$\theta\mapsto\mathsf{Q}(\theta,\theta_{\text{curr}})$. Let us now
detail this step.

  \begin{algorithm}[htbp]
    \caption{Classical EM algorithm for mixture of Gaussians}
  \begin{algorithmic}[1]
   \STATE {\bfseries Input:} $\kmax \in \nset$, $X$, $\hat{S}_0$, $\hat{\theta}_0$
   \STATE {\bfseries Output:} The sequence of statistics: $\{\hat{S}_{k}, k \in [\kmax]\}$; the sequence of parameters $\{\hat{\theta}_{k}, k \in [\kmax]\}$
   \FOR{$k=0, \ldots,
     \kmax-1$}
      \STATE { \textit{Expectation step}: compute conditional expectations given current parameter $\hat{\theta}_k$: Set $\hat{S}_{k+1} = \frac{1}{N}\sum_{i=1}^N\bar{s}_{i}(\hat\theta^k)$}
   \STATE {\textit{Maximization step}: update parameter $\hat{\theta}_{k+1}$ based on current statistics $\hat{S}_{k+1}$ according to update rule~\eqref{eq:update}}
   \ENDFOR
       \end{algorithmic}
  \end{algorithm}
  
\paragraph{The M step: the map $\map$.} Let
\[
s = (s^{(1)}, s^{(2)}) = (s^{(1),1}, \ldots, s^{(1),L}, s^{(2),1},
\ldots, s^{(2),L}) \in \rset^L \times \rset^{pL}\eqsp;
\]
we write $\pscal{s}{\phi(\param)} = \sum_{j=1}^2
\pscal{s^{(j)}}{\phi^{(j)}(\param)}$ where the functions $\phi^{(j)}$
are defined by
\begin{equation}
  \label{supp:eq:phi}
  \phi^{(1)}(\theta) \eqdef \left(\begin{array}{c}
  \log(\pi_1) - \frac{1}{2}\mu_1^\top\Sigma^{-1}\mu_1 \\
  \vdots \\
  \log(\pi_L)  - \frac{1}{2} \mu_L^\top\Sigma^{-1}\mu_L
  \end{array} \right) \eqsp, \qquad 
  \phi^{(2)}(\theta) \eqdef \left(\begin{array}{c} \Sigma^{-1}\mu_1
    \\ \vdots \\ \Sigma^{-1}\mu_L
  \end{array} \right) \eqsp.
  \end{equation}

By definition, $\map(s) = \mathrm{argmin}_{\param \in \Param}
-\pscal{s}{\phi(\param)}+\psi(\param)$. Here, this optimum is unique
and defined by $\map(s) = \{\pi_\ell(s), \mu_\ell(s), \ell=1, \ldots, L;
\Sigma\}$ with
\begin{align}
\label{eq:update}
  \pi_\ell(s) &\eqdef \frac{s^{(1),\ell}}{ \sum_{u=1}^L s^{(1),u}}
  \eqsp, \\ \mu_\ell(s) & \eqdef \frac{s^{(2),\ell}}{s^{(1),\ell}}
  \eqsp, \\ \Sigma(s) & \eqdef \frac{1}{N} \sum_{i=1}^N Y_i Y_i^\top -
  \sum_{\ell=1}^L s^{(1),\ell} \mu_\ell(s) \, \mu_\ell^\top(s) \eqsp.
\end{align}
The expressions of $\pi_\ell(s)$ and $\mu_\ell(s)$ are easily obtained. We
provide details for the covariance matrix. We have for any symmetric
matrix $H$
  \begin{align*}
  \ln \frac{\mathrm{det}(\Gamma+H)}{\mathrm{det}(\Gamma)} & = \ln
  \mathrm{det}(I + \Gamma^{-1}H) = \ln (1+ \mathrm{Tr}(\Gamma^{-1} H)
  +o(\|H\|)) \\ & = \mathrm{Tr}(\Gamma^{-1} H) +o(\|H\|) =
  \pscal{H}{\Gamma^{-1}} + o(\|H\|)
  \end{align*}
  thus showing that the derivative of $\Gamma \mapsto \ln
  \mathrm{det}\Gamma$ is $\Gamma^{-1}$.  $\map(s)$ depends on
  $\Sigma^{-1}$ through the function
  \[
\Sigma^{-1} \mapsto -\frac{1}{2} \ln \mathrm{det}(\Sigma^{-1}) +
\frac{1}{2} \pscal{\Sigma^{-1}}{ \frac{1}{N}\sum_{i=1}^N Y_i Y_i^\top}
+ \pscal{\Sigma^{-1}}{ \frac{1}{2} \sum_{\ell=1}^L s^{(1),\ell}
  \mu_\ell \mu_\ell^\top - \sum_{\ell=1}^L \mu_\ell \,
  (s^{(2),\ell})^\top}\eqsp.
\]
The optimum solves
\[\Sigma  =  \frac{1}{N}\sum_{i=1}^N Y_i Y_i^\top + \sum_{\ell=1}^L s^{(1),\ell} \mu_\ell \mu_\ell^\top - 2 \sum_{\ell=1}^L \mu_\ell \,
  (s^{(2),\ell})^\top
\]
Hence, $\Sigma(s)$ is this solution when $\mu_\ell \leftarrow
\mu_\ell(s)$ which yields the expression since $s^{(2),\ell} =
s^{(1),\ell} \mu_\ell(s)$.

\paragraph{In the federated setting.}
In the federated setting, the data is distributed across $n$ local
servers. For all $c\in[n]^{\star}$, the $c$-th server possesses a
local data set of size $N_c$; $N_c\geq 1$ and $\sum_{c=1}^nN_c =
N$. We write
\[
\bigcup_{i=1}^N \{Y_i\} = \bigcup_{c=1}^n \bigcup_{j=1}^{N_c}
\{Y_{cj}\} \eqsp,
\]
thus meaning that each local worker $\# c$ processes the data set
$\{Y_{c1}, \ldots, Y_{c N_c} \}$.

The computation of the map $\map$ requires the knowledge of a
statistic of the full data set, namely $N^{-1} \sum_{i=1}^N Y_i
Y_i^\top$. For this reason, we want the map $\map$ to be available at
the central server only. Since
\[
\sum_{i=1}^N Y_i = \sum_{c=1}^n \sum_{j=1}^{N_c} Y_{cj}
\]
this full sum can be computed during the initialization of the
algorithm by the central server, by using the $n$ local summaries
$\sum_{j=1}^{N_c} Y_{cj}$ sent by the local workers.

In the FL setting, we write the objective function as follows
\begin{align*}
\param & \mapsto \psi(\param) - \frac{1}{N} \sum_{c=1}^n
\sum_{j=1}^{N_c} \ln \int \exp\left(\pscal{s(Y_{cj},z)}{\phi(\param)}
\right) \nu(\rmd z) \\ & = - \frac{1}{N} \sum_{c=1}^n \ln
\prod_{j=1}^{N_c} \int \exp\left(\pscal{s(Y_{cj},z)}{\phi(\param)} -
\frac{N}{n N_c} \psi(\param) \right) \nu(\rmd z) \\ & \propto -
\frac{1}{n} \sum_{c=1}^n \ln \prod_{j=1}^{N_c} \int
\exp\left(\pscal{s(Y_{cj},z)}{ \phi(\param)} - \frac{N}{n N_c}
\psi(\param) \right) \nu(\rmd z) \eqsp.
\end{align*}
It is of the form \eqref{eq:problem} with $\R(\param) = 0$ and
\[
\loss{c}(\param) \eqdef - \ln \prod_{j=1}^{N_c} \int
\exp\left(\pscal{s(Y_{cj},z)}{ \phi(\param)} - \frac{N}{n N_c}
\psi(\param) \right) \nu(\rmd z) \eqsp.
\]
In the case $n N_c = N$ for any $c \in [n]^\star$, we have
\[
\loss{c}(\param) = - \sum_{j=1}^{N/n} \ln p(Y_{cj}; \param) \eqsp, \qquad 
\]
with
\[
p(y; \param) \eqdef \int p(y,z; \param) \, \nu(\rmd z) \qquad
p(y,z;\param) \eqdef \exp\left(\pscal{s(y,z)}{ \phi(\param)} -
\psi(\param) \right) \nu(\rmd z) \eqsp.
\]
$p(y,z;\param)$ is of the form \eqref{eq:complete-likelihood}; this
yields
\[
\bars_{cj}(\param) \eqdef \sum_{z=1}^L s(Y_{cj},z)
\bar{\rho}_{cj,z}(\theta) \eqsp, \qquad \bars_c(\param) \eqdef
\frac{n}{N} \sum_{j=1}^{N/n} \bars_{cj} \eqsp,
\]
where $\bar{\rho}_{cj,z}(\param)$ is defined by \eqref{eq:bars_i}.

 The pseudo code for the \FEDEM~algorithm is given in
 Algorithm~\ref{algo:fed-GMM-nocompress}.
\begin{algorithm}[htbp]
\caption{Federated EM algorithm for distributed GMM without compression}
\label{algo:fed-GMM-nocompress}
\label{algo:fedImput}
\begin{algorithmic}[1]
\STATE {\bfseries Input:} $\kmax \in \nset$; for $c \in [n]^\star$,
$V_{0,c} \in \rset^{L+pL}$; $\hatS_0 \in \rset^{L+pL}$; $\hat{\theta}_0 \in \rset^{L}\times(\rset^p)^L\times\rset^{p\times p}$; a positive sequence
   $\{\pas_{k+1}, k \in [\kmax-1]\}$; $\alpha$
\STATE {\bfseries Output:} The \FEDEM
   sequence: $\{\hatS_{k}, k \in [\kmax]\}$
\FOR{$k=0, \ldots,
     \kmax-1$}
     		\FOR{$c=1, \ldots, n$}
     				\STATE {\em (agent $\# i$, locally)}
  					 \STATE Sample a batch $\mathcal{I}_{k,c}\subset [N_c]$
   					\STATE Set $\Smem_{k+1,c} = \frac{1}{|\mathcal{I}_{k,c}|}\sum_{i\in\mathcal{I}_{k,c}}\bar{s}_i(\hat{\theta}_k)$, where $\bar{s}_i$ is defined in~\eqref{eq:bars_i}
   					\STATE Set $\Delta_{k+1,c} =\Smem_{k+1,c}  - \hatS_{k} - V_{k,c}$
   					\STATE Update $V_{k+1,c} = V_{k,c} + \alpha\, \Q{(\Delta_{k+1,c})}$
   					\STATE Send $\Q{(\Delta_{k+1,c})}$ to the controller
		\ENDFOR
 			\STATE {\em (the controller)}
		 \STATE Compute
   $H_{k+1} = V_k  + \frac{1}{n}\sum_{c=1}^n
   \Q{(\Delta_{k+1,c})}$
   			\STATE Set $\hatS_{k+1} = \hatS_k + \pas_{k+1}
   H_{k+1}$
  			 \STATE Set $V_{k+1} = V_k + \alpha n^{-1} \sum_{c=1}^n
   \Q(\Delta_{k+1,c})$.
   			\STATE Send $\hatS_{k+1}$ and $\hat{\theta}_{k+1} = \map
   (\hatS_{k+1})$ to the agents, where $\map(\hat{S}_{k+1})$ is given by the update rule~\eqref{eq:update}
   \ENDFOR
       \end{algorithmic}
\end{algorithm}

\subsection{Federated missing values imputation}
\label{app:fedmiss}

{\em $\bullet$ Model and the \texttt{FedMissEM} algorithm.}  $I$
observers participate in
the programme, there are $J$ ecological sites and $L$ time
stamps. Each observer $\# i$ provides a $J \times L$ matrix $X^i$ and
a subset of indices $\Omega^i \subseteq [J]^\star \times
[L]^\star$. For $j \in [J]^{\star}$ and $\ell \in [L]^{\star}$, the
variable $X_{j\ell}^i$ encodes the observation that would be collected
by observer $\# i$ if the site $\# j$ were visited at time stamp $\#
\ell$; since there are unvisited sites, we denote by $Y^i \eqdef
\{X^i_{j\ell}, (j,\ell)\in\Omega^i\}$ the set of observed values and
$Z^i \eqdef \{X^i_{j\ell}, (j,\ell)\notin\Omega^i\}$ the set of
unobserved values.  The statistical model is parameterized by a matrix
$\theta\in\rset^{J \times L}$, where $\theta_{j\ell}$ is a scalar
parameter characterizing the distribution of species individuals at
site $j$ and time stamp $\ell$. For instance, $\theta_{j\ell}$ is the
log-intensity of a Poisson distribution when the observations are
count data or the log-odd of a binomial model when the observations
are presence-absence data. This model could be extended to the case
observers $\# i$ and $\# i'$ count different number of specimens on
average at the same location and time stamp, because they
do not have access to the same material or do not have the same level
of expertise: heterogeneity between observers could be modeled by
using different parameters for each individual $\# i$ say
$\theta^{i}\in\rset^{J \times L}$.  Here, we consider the case when $
\theta_{j\ell}^{i} = \theta_{j\ell}$ for all $(j,\ell)\in
      [J]^\star\times[L]^\star$ and $i \in [I]^\star$.
      
We further assume that the entries $\{ X_{j\ell}^i, i \in [I]^\star,
  j \in [J]^\star, \ell \in [L]^\star\}$ are independent with a
  distribution from an exponential family with respect to some
  reference measure $\nu$ on $\rset$ of the form: $x
  \mapsto \rho(x) \exp\{ x \theta_{j\ell} - \psi(\theta_{j\ell})\}.$
  The function $\psi$ is for instance defined by $\psi(\tau) =
  -\frac{1}{2} \tau^2$ for a Gaussian model with
    expectation $\tau$ and variance $1$, $\psi(\tau) =
  \log(1+\mathrm{e}^{\tau})$ for a Bernoulli model
with success probability $\tau$, and $\psi(\tau) =
  \mathrm{e}^{\tau}$ for a Poisson model with
    intensity $\tau$. Therefore, the joint distribution of $(Y^i,
  Z^i)$ is given by 
  $
p_i(y^i,z^i;\theta) \eqdef \Big(\prod_{(j,\ell) \in \Omega^i}
\rho(y^i_{j\ell}) \Big) \ \Big( \prod_{(j,\ell) \notin \Omega^i}
\rho(z^i_{j\ell}) \Big) \exp\Big( \pscal{s_i(y^i,z^i)}{\theta} -
\sum_{j\ell} \psi(\param_{j\ell})\Big) \eqsp;
$
where $s_i(Y^i,Z^i)$ is a $J \times L$ matrix with entry $\# (j,\ell)$
given by $Y^i_{j\ell}$ if $(j,\ell) \in \Omega^i$ and $Z^i_{j,\ell}$
otherwise.

In order to estimate the unknown matrix $\theta\in\rset^{J\times L}$,
we assume that $\theta$ is low-rank; we use the parameterization
$\theta = U V^{\top}$, where $U\in\rset^{J\times r}$ and
$V\in\rset^{L\times r}$ with $\operatorname{rank}(\theta) = r$ and
$r<\min(J,L)$.  The estimator is defined as a minimizer of the
negative penalized log-likelihood:
$\min_{U\in \rset^{J\times r}, V\in \rset^{L\times r}} F(U,V)$,  with $ F(U,V) \eqdef \frac{1}{n}\sum_{i=1}^n \mathcal{L}^i(UV^{\top})
+ \frac{\lambda}{2}\left(\|U\|_F^2 +\|V\|_F^2\right),$ where for
$\theta\in\rset^{J\times L}$, $\mathcal{L}^i(\theta) \eqdef -\log \int
p_{i}(Y^i,z^i;\theta) \ \prod_{(j,\ell) \notin \Omega^i} \nu(\rmd
z^i_{j\ell})$.

\paragraph{\texttt{FedMissEM} algorithm.}
Algorithm~\ref{algo:fedImput} provides the pseudo-code for the Federated EM algorithm for mising values imputation.

\begin{algorithm}[htbp]
    \caption{Federated EM algorithm for distributed missing data imputation}
    \label{algo:fedImput}
  \begin{algorithmic}[1]
   \STATE {\bfseries Input:} $\kmax \in \nset$; for $c \in [n]^\star$,
   $V_{0}^c \in \rset^{I\times J}$; $\hatS_0 \in \rset^{I\times J}$; a positive sequence
   $\{\pas_{k+1}, k \in [\kmax-1]\}$; $\alpha$; the
   quantization function $\Q$ \STATE {\bfseries Output:} The \FEDEM~
   sequence: $\{\hatS_{k}, k \in [\kmax]\}$ \FOR{$k=0, \ldots,
     \kmax-1$} \FOR{$c=1, \ldots, n$} \STATE {\em (agent $\# i$, locally)}
     \STATE Initialize  $\Smem_{k+1,c}=0$ and $\Delta_{k+1,c}=0$ everywhere.
   \STATE Sample a minibatch $(\mathcal{I}_{k}^c, \mathcal{J}_{k}^c)\subset [I]^{\star}\times [J]^{\star}$
   		\FOR{$i \in \mathcal{I}_{k}^c$}
   		\FOR{$j \in \mathcal{J}_{k}^c$}
   		\STATE Set $(\Smem_{k+1}^c)_{i,j} =\mathsf{1}_{i,j\in\Omega^c}Y^c_{i,j} + (1-\mathsf{1}_{i,j\in\Omega^c})( \hat\theta_k)_{i,j}$
   		\STATE Set $(\Delta_{k+1}^c)_{i,j} =(\Smem_{k+1}^c)_{i,j}  - \hatS_{i,j} - (V_{k}^c)_{i,j}$
   		\ENDFOR
   		\ENDFOR
   		\STATE Update $V_{k+1}^c = V_{k}^c + \alpha\, \Q(\Delta_{k+1,c})$
   		\STATE Send $\Q(\Delta_{k+1}^c)$ to the controller
\ENDFOR
 \STATE {\em (the controller)}
 \STATE Compute
   $H_{k+1} = V_k + n^{-1}\sum_{c=1}^n
   \Q(\Delta_{k+1}^c)$
   \STATE Set $\hatS_{k+1} = \hatS_k + \pas_{k+1}
   H_{k+1}$
   \STATE Set $V_{k+1} = V_k + \alpha  n^{-1}\sum_{c=1}^n
   \Q(\Delta_{k+1}^c)$.
   \STATE Send $\hatS_{k+1}$ and $\hat{\theta}_{k+1} = \map
   (\hatS_{k+1})$ to the agents
   \STATE {\em (Note: thresholded SVD for Gaussian model or computed iteratively for a general exponential family model)}
   \ENDFOR
       \end{algorithmic}
\end{algorithm}

\paragraph{eBird data information.} In our experiments, we used a sample of the eBird data set~\cite{eBird}, provided upon request by the Cornell Lab of Ornithology. We are not allowed to disclose the data itself, but we provide here the details to reproduce our experiments on the same data set, after requesting acess on the eBird platform (https://ebird.org/data/request). We selected the counts recorded anywhere in France, between January 2000 and September 2020, for two different species: the Mallard and the Common Buzzard. These two species were analyzed independently (see \Cref{sec:numerical}); the corresponding code is also available as supplementary material.

\end{document}